\def\R{\mathbb R}
\def\Z{\mathbb Z}
\def\Q{\mathbb Q}
\newtheorem{crl}{Corollary}[section]
\newtheorem{lmm}{Lemma}[section]
\newtheorem{prp}{Proposition}[section]
\newtheorem{thm}{Theorem}[section]
\theoremstyle{definition}
\newtheorem{rem}{Remark}[section]
\newtheorem{dfn}{Definition}[section]
\newtheorem{exa}{Example}[section]
\newtheorem{assump}{Assumption}[section]
\theoremstyle{remark}
\title{Cyclic symmetry and adic convergence \\
in Lagrangian 
Floer theory
}
\author{Kenji Fukaya}
\begin{document}

\maketitle
\begin{abstract}                        
In this paper we use continuous family of multisections 
of the moduli space of pseudo holomorphic discs to partially improve the 
construction of Lagrangian Floer cohomology of \cite{FOOO080} in the case of $\R$ coefficient.
Namely we associate {\it cyclically symmetric}
filtered $A_{\infty}$ algebra to every relatively spin Lagrangian submanifold.
We use the same trick to construct a local rigid analytic family of filtered $A_{\infty}$
structure associated to a (family of) Lagrangian submanifolds.
We include the study of homological algebra of pseudo-isotopy of 
cyclic (filtered) $A_{\infty}$ algebra.
\end{abstract}

\tableofcontents                        

\section{Introduction}\footnote{Supported by JSPS Grant-in-Aid for Scientific Research
No.18104001 and Global COE Program G08.
}
In this paper we study de Rham version of Lagrangian Floer 
theory and use it to improve some of the results in \cite{FOOO080} over $\R$ coefficient. 
In particular, in this paper, we prove 
Conjectures 3.6.46, 3.6.48 and a part of Conjecture T in \cite{FOOO080}
over $\R$ coefficient.
Let $L$ be a relatively spin Lagrangian submanifold 
in a symplectic manifold $(M,\omega)$.
In this paper we always assume that $L$ is compact and 
$M$ is either compact or convex at infinity.
\par
The universal Novikov ring $\Lambda_{0,nov}^{\R}$ is defined in \cite{FOOO080}.
See also Definition \ref{Gnovikov}.
Let $H(L;\mathbb R)$ be the (de Rham) cohomology group of $L$ 
over $\R$ coefficient. We put 
$H(L;\Lambda_{0,nov}^{\R}) = H(L;\mathbb R) \otimes_{\R}\Lambda_{0,nov}^{\R}$.
\begin{thm}\label{main1}
$H(L;\Lambda_{0,nov}^{\R})$ has a structure of unital filtered {\bf cyclic} $A_{\infty}$ algebra
which is well-defined up to isomorphism.
\end{thm}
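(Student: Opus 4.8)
The plan is to build the operations first on the de Rham complex and then transport them to cohomology. Write $\Omega(L)\widehat\otimes\Lambda_{0,nov}^{\R}$ for the space of $\Lambda_{0,nov}^{\R}$-valued differential forms. For $\beta\in\pi_2(M,L)$ and $k\ge 0$ let $\mathcal{M}_{k+1}(\beta)$ be the moduli space of stable bordered holomorphic discs of class $\beta$ with one boundary component carrying $k+1$ cyclically ordered marked points $z_0,\dots,z_k$; it carries a Kuranishi structure with corners and evaluation maps $\mathrm{ev}=(\mathrm{ev}_0,\dots,\mathrm{ev}_k)\colon \mathcal{M}_{k+1}(\beta)\to L^{k+1}$. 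The first step is to equip every $\mathcal{M}_{k+1}(\beta)$ with a continuous family of multisections that is transverse to zero and for which the restriction of $\mathrm{ev}_0$ to the perturbed zero set is a submersion; then integration along the fibers of $\mathrm{ev}_0$ is defined and one sets
\[
\mathfrak{m}_{k,\beta}(h_1,\dots,h_k)=(\mathrm{ev}_0)_!\bigl(\mathrm{ev}_1^{*}h_1\wedge\cdots\wedge\mathrm{ev}_k^{*}h_k\bigr),\qquad \mathfrak{m}_k=\sum_{\beta}T^{\omega(\beta)}e^{\mu(\beta)/2}\,\mathfrak{m}_{k,\beta},
\]
with the $\beta=0$ contributions arranged so that $\mathfrak{m}_1$ contains the de Rham differential and $\mathfrak{m}_2$ contains the wedge product, up to sign.

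The $A_\infty$ relations follow from the description of the codimension-one boundary, $\partial\mathcal{M}_{k+1}(\beta)=\bigcup \mathcal{M}_{k_1+1}(\beta_1)\times_L\mathcal{M}_{k_2+1}(\beta_2)$ (fiber product over $L$ along the appropriate evaluation maps, $\beta_1+\beta_2=\beta$), combined with Stokes' theorem for fiberwise integration --- provided the chosen families of multisections are compatible with these fiber products. This compatibility, together with transversality and the submersion property of $\mathrm{ev}_0$, is secured by an induction on the pair $(\omega(\beta),k)$. Unitality --- the constant function $1\in H^0(L)$ is a strict unit --- comes from compatibility of the perturbations with the forgetful map $\mathcal{M}_{k+1}(\beta)\to\mathcal{M}_{k}(\beta)$ deleting the marked point where $1$ is inserted: since $1$ pulls back to $1$ along the (genuinely fibered) forgetful map, $(\mathrm{ev}_0)_!$ kills such an insertion except in the sporadic case $k=2$, $\beta=0$, where it reproduces the product with the unit.

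The cyclic symmetry, which is the new feature beyond \cite{FOOO080}, exploits the $\Z/(k+1)$-action on $\mathcal{M}_{k+1}(\beta)$ that cyclically rotates $z_0,\dots,z_k$. Because the operations are defined by integrating differential forms, one may choose the continuous families of multisections to be invariant under this action (it is precisely here that the de Rham model is used: symmetrizing singular-chain level perturbations is obstructed). Invariance then yields, for the Poincar\'e pairing $\langle h,h'\rangle=\int_L h\wedge h'$, the relation $\langle \mathfrak{m}_k(h_1,\dots,h_k),h_0\rangle=\pm\langle \mathfrak{m}_k(h_0,h_1,\dots,h_{k-1}),h_k\rangle$, i.e.\ cyclicity. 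To descend to $H(L;\Lambda_{0,nov}^{\R})$ I would transfer the structure along the Hodge-theoretic projection $\Omega(L)\to \mathcal{H}(L)\cong H(L;\R)$, using a homotopy compatible with the pairing so that cyclicity and unitality survive; finally, running the construction in one-parameter families shows that two choices produce pseudo-isotopic cyclic filtered $A_\infty$ algebras, and the homological algebra of pseudo-isotopies of cyclic $A_\infty$ algebras developed later in the paper upgrades this to an isomorphism, giving well-definedness up to isomorphism.

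The main obstacle is to satisfy simultaneously, with a single system of continuous families of multisections: transversality, the submersion property of $\mathrm{ev}_0$ (needed for fiber integration), compatibility with the boundary fiber products (needed for the $A_\infty$ relations), compatibility with the forgetful maps (needed for unitality), and $\Z/(k+1)$-invariance (needed for cyclicity). The last two requirements are in tension with the third, since a boundary stratum is not cyclically symmetric as a subset of $\mathcal{M}_{k+1}(\beta)$; the inductive construction must therefore average over the cyclic group while remaining consistent with the perturbations already fixed on the lower strata, and controlling this interaction --- keeping $\mathrm{ev}_0$ submersive throughout --- is the technical core of the proof.
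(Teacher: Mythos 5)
Your high-level plan---define the operations by fiber integration against continuous families of multisections, verify the $A_\infty$ relations by Stokes and boundary fiber-product compatibility, obtain unitality from forgetful-map compatibility, transfer to cohomology via harmonic forms, and prove well-definedness via pseudo-isotopy---matches the paper's architecture. But the mechanism you propose for cyclic symmetry is genuinely different from the paper's, and the difference matters.

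You propose to make the continuous families of multisections $\Z/(k+1)$-invariant by \emph{averaging} over the cyclic group, and you explicitly flag the resulting tension with boundary compatibility (cyclic rotation permutes the boundary strata $\mathcal M_{k_1+1}(\beta_1)\times_L\mathcal M_{k_2+1}(\beta_2)$ among themselves rather than preserving any single one) as the ``technical core'' to be controlled. The paper does not average at all. Instead it imposes compatibility with the forgetful map that deletes \emph{all} boundary marked points: the Kuranishi structure and multisection on $\mathcal M_{\ell,k+1}(\beta)$ are required to be pulled back, in the precise sense of Definition \ref{forgetcomp} and Corollary \ref{Corkura}.1, from $\mathcal M_{\ell,0}(\beta)$ (no boundary marked points, compactified through $\mathfrak{clop}$). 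Since the cyclic rotation of $z_0,\ldots,z_k$ covers the identity on $\mathcal M_{\ell,0}(\beta)$ under $\mathfrak{forget}_{k+1,1}\circ\mathfrak{forget}$, any pulled-back perturbation is automatically cyclic-invariant---Corollary \ref{Corkura}.2 ``follows immediately.'' Moreover the same pull-back condition simultaneously delivers the unitality computation ($\iota_X$ annihilates the integrand when the marked point carrying $\mathbf e$ is fibered out), so the paper gets cyclicity and unitality from a single constraint rather than two constraints in tension. The real inductive problem in the paper is therefore concentrated entirely on the unmarked moduli $\mathcal M_{\ell,0}(\beta)$ and $\mathcal M_{\ell,1}(\beta)$: one inducts on $\omega(\beta)$, keeping only $ev_0$ submersive on $\mathcal M_{\ell,1}(\beta)$ (Lemma \ref{ateachpointkuracomp} is the key local input), and then extends to all $k$ by pull-back---there is no induction on $k$ and no averaging step whose transversality, boundary consistency and submersivity one must control afterward.

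Two smaller points. First, your stated reason for working with de Rham (``symmetrizing chain-level perturbations is obstructed'') is not the paper's; the paper's obstruction (Remark \ref{remwksubmer}) is that forgetful-map compatibility is incompatible with $(ev_0,\ldots,ev_{k-1})$ being jointly submersive, so chain-level fiber products at $ev_1,\ldots,ev_k$ cannot be formed, whereas de Rham needs submersivity of $ev_0$ only because forms pull back freely along non-submersions. Second, your final paragraph elides that the construction initially exists only modulo $T^{E_0}$ (finitely many multisections), and the extension to the full $\Lambda_{0,nov}$ requires the algebraic machinery of pseudo-isotopy extension (Theorems \ref{pisotopyextention} and \ref{pisotohomotopyequiv}); this is not merely for well-definedness but is needed even to produce the object. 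As written, your averaging route is plausible but not established---you would have to show that averaging a family that is already compatible on lower strata (themselves fiber products of cyclically-invariant pieces) remains compatible, transversal, and keeps $ev_0$ submersive---whereas the paper's pull-back route renders the issue vacuous.
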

Hereafter we work over $\R$ coefficient. So we write $\Lambda_{0,nov}$ in place of 
$\Lambda_{0,nov}^{\R}$. We denote by $\Lambda_{0,nov}^+$ its maximal ideal.
\par
Let us explain the notions appearing in Theorem \ref{main1}.
We consider a (graded) anti-symmetric inner product $\langle \cdot \rangle$ on $H(L;\mathbb R)[1]$  by
\begin{equation}\label{inpro}
\langle u, v \rangle = (-1)^{\deg u\deg v+\deg u} \int_L u\wedge v
\end{equation}
Filtered $A_{\infty}$ structure 
defines a map
$$
\frak m_k : B_k(H(L;\Lambda_{0,nov})[1]) \to H(L;\Lambda_{0,nov})[1]
$$
of degree one (for $k\ge 0$) such that
\begin{equation}
\sum_{k_1+k_2=k+1}\sum_{i=1}^{k_1}
(-1)^{*}\frak m_{k_1}(\text{\bf x}_1,\ldots,\frak m_2(\text{\bf x}_i,\ldots,\text{\bf x}_{i+k_2-1}),\ldots,\text{\bf x}_k)
= 0.
\end{equation}
$* =\deg \text{\bf x}_1+\ldots+\deg \text{\bf x}_{i-1}+i-1$.
(We require $\frak m_0(1) \equiv 0 \mod \Lambda_{0,nov}^+$.)
\par
The filtered $A_{\infty}$ structure is said to be unital if $\text{\bf e} = 1 \in H^0(L;\R)$ satisfies
\begin{subequations}\label{unitalityissho}
\begin{equation}
\frak m_k(\text{\bf x}_1,\ldots,\text{\bf x}_i,\text{\bf e},\text{\bf x}_{i+2},\ldots,\text{\bf x}_k) = 0
\end{equation}
for $k\ne 2$ and 
\begin{equation}
\frak m_2(\text{\bf e},\text{\bf x}) = (-1)^{\deg \text{\bf x}}\frak m_2(\text{\bf x},\text{\bf e}).
\end{equation}
\end{subequations}
The filtered $A_{\infty}$ structure is said to be cyclically symmetric or cyclic if
\begin{equation}\label{cyclicform}
\langle 
\frak m_k(\text{\bf x}_1,\ldots,\text{\bf x}_k),\text{\bf x}_0
\rangle
= 
(-1)^*\langle 
\frak m_k(\text{\bf x}_0,\text{\bf x}_1,\ldots,\text{\bf x}_{k-1}),\text{\bf x}_{k} 
\rangle
\end{equation}
$* = (\deg \text{\bf x}_0+1)(\deg \text{\bf x}_1+\ldots + \deg \text{\bf x}_k + k)$.
Cyclically symmetric filtered $A_{\infty}$ algebra is said 
cyclic filtered $A_{\infty}$ algebra.
See Remark \ref{misremonequiv} for the definition of isomorphism of 
cyclic filtered $A_{\infty}$ algebra.
\begin{rem}
\begin{enumerate}
\item
Except the statement on cyclicity, Theorem \ref{main1} was 
proved in \cite{FOOO080} Theorem A.
Actually in that case we may take $\Q$ in place of $\R$.
They author does not know how to generalize Theorem  \ref{main1} 
to $\Q$ (or $\Lambda_{0,nov}^{\Q}$) coefficient.
\item The formula (\ref{cyclicform}) is slightly different from Proposition 8.4.8
\cite{FOOO080}. Also the sign in (\ref{inpro}) is different from those explained in Remark 8.4.7 (1) 
\cite{FOOO080}. Actually if we use sign convention in (\ref{inpro}) 
then (\ref{cyclicform}) becomes equivalent to one in Proposition 8.4.8 \cite{FOOO080}. 
(See Lemma \ref{hikakucyclicconv}.)  
The author thanks C.-H  Cho (See \cite{Cho}.) for this remark.
\end{enumerate}
\end{rem}
\par
We next state our result about adic convergence.
We take a basis $\text{\bf e}_i$ of $H(L;\R)$ 
($i=0,\ldots,b$) such that 
$\text{\bf e} = \text{\bf e}_0 = 1 \in H^0(L;\R)$, 
$\text{\bf e}_1,\ldots,\text{\bf e}_{b_1}$ are basis of $H^1(L;\R)$ 
and other $\text{\bf e}_i$'s are basis $H^k(L;\R)$, $k\ge 2$.
For $\text{\bf x} \in H^{odd}(L;\Lambda_{0,nov})$ we put
\begin{equation}
\text{\bf x} = \sum x_i\text{\bf e}_i.
\end{equation}
We introduce $y_i$ ($i=1,\ldots,b_1$) by
\begin{equation}
y_i = e^{x_i} = \sum_{k=0}^{\infty} \frac{1}{k!} x_i^k.
\end{equation}
(See Section \ref{canoalgsec} for a discussion of the convergence of the right hand side.)
We consider 
$$
\sum_{k=0}^N\frak m_k(\text{\bf x},\ldots,\text{\bf x}) = P_N(\text{\bf x}).
$$
We remark that
$
\lim_{N\to \infty} P_N(\text{\bf x})
$
converges in $T$ adic topology if $x_i \in \Lambda_{0,nov}^+$.
We used this fact to define Maurer-Cartan equation and its 
solution (bounding cochain) in \cite{FOOO06}.
We improve it as follows:
\begin{thm}\label{main2}
\begin{enumerate}
\item If $x_i \in \Lambda_{0,nov}$ then $\lim_{N\to \infty} P_N(\text{\bf x})$ 
converges.  We denote its limit by $\frak m(e^{\text{\bf x}})$.
\item  $\frak m(e^{\text{\bf x}})$ depends only on 
$y_1,\ldots, y_{b_1},x_{b_1+1},\ldots,x_b$.
\item 
There exists $\delta > 0$, such that $\frak m(e^{\text{\bf x}})$ 
extends to
\begin{equation}\label{MCteigiiki}
\aligned
\{(x_0,y_1,\ldots, y_{b_1},x_{b_1+1},\ldots,x_b) \mid 
&\,\,\delta > v(y_i) > -\delta, \\
&v(x_0),v(x_{b_1+1}),\ldots,v(x_b)>-\delta
  \}.
\endaligned
\end{equation}
\item Let $\mathcal M(L)_{\delta}$ be the set of $(x_0,y_1,\ldots, y_{b_1},x_{b_1+1},\ldots,x_b)$
in the domain $(\ref{MCteigiiki})$ such that $\frak m(e^{\text{\bf x}})=0$.
Then there exists a family of strict and unital cyclic filtered $A_{\infty}$ algebras 
parametrized by  $\mathcal M(L)_{\delta}$.
\end{enumerate}
\end{thm}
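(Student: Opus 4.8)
The plan is to construct, via the de Rham model with continuous families of multisections, a cyclic filtered $A_\infty$ structure on $H(L;\Lambda_{0,nov})$ (this is Theorem \ref{main1}), and then to analyze the series $P_N(\mathbf{x})$ term-by-term using the energy filtration on $\Lambda_{0,nov}$. The key point for (1) is that, unlike the topological/singular-chain model of \cite{FOOO080}, the de Rham model can be arranged so that for each fixed energy level $\beta$ the operator $\frak m_{k,\beta}$ is \emph{bounded in degree} of multilinearity in a suitable sense — more precisely, after the exponential substitution $y_i = e^{x_i}$ the contribution of a fixed $\beta$ to $\frak m(e^{\mathbf{x}})$ collapses to a finite-dimensional sum in the $x_i$ with $i>b_1$ and a convergent power series in the $y_i$, with the convergence controlled by $\omega(\beta)>0$. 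The mechanism is the divisor/unitality relation \eqref{unitalityissho}: inserting copies of the degree-one classes $\mathbf{e}_1,\dots,\mathbf{e}_{b_1}$ reduces against the disc boundary to produce the exponential, so that $\sum_k \frak m_k(x_1\mathbf{e}_1+\dots)$ over the degree-one part becomes $\exp$ of the relevant pairing; this is where the role of $y_i = e^{x_i}$ enters and gives (2).

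For (1) and (3) concretely: first I would fix the cyclic $A_\infty$ structure from Theorem \ref{main1} and write $\frak m_k = \sum_\beta T^{\omega(\beta)} e^{\mu(\beta)/2} \frak m_{k,\beta}$, summing over $\beta \in \pi_2(M,L)$ (or the appropriate quotient) with $\omega(\beta)\ge 0$. Next, for $\mathbf{x} = \sum x_i \mathbf{e}_i$ with the $x_i \in \Lambda_{0,nov}$ (no positivity!), I would expand $P_N(\mathbf{x})$ and collect, for each fixed $\beta$, all terms of the form $T^{\omega(\beta)}(\cdots)$. Using \eqref{unitalityissho} to absorb the $\mathbf{e}_0 = \mathbf{e}$ insertions and the divisor-class behavior to absorb the $\mathbf{e}_1,\dots,\mathbf{e}_{b_1}$ insertions, the remaining sum over how many copies of each higher $\mathbf{e}_i$ ($i>b_1$) appear is finite because those classes have degree $\ge 2$ and can appear only boundedly often in a nonzero $\frak m_{k,\beta}$ for dimension reasons (the virtual dimension of the relevant moduli space grows with the number of marked points but the inputs from $H^{\ge 2}$ each cut dimension by at least $1$). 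Hence the $\beta$-contribution is $T^{\omega(\beta)} e^{\mu(\beta)/2}$ times a polynomial in the $x_{b_1+1},\dots,x_b,x_0$ whose coefficients are power series in $y_1,\dots,y_{b_1}$. Summing over $\beta$: since for each energy bound $E$ there are only finitely many $\beta$ with $\omega(\beta)<E$ (Gromov compactness), the total is a well-defined element of $\Lambda_{0,nov}$ once the power series in the $y_i$ converge; and they converge (and extend to the domain \eqref{MCteigiiki}) precisely because the radius of convergence in $v(y_i)$ is bounded below by a positive constant determined by $\min\{\omega(\beta) : \frak m_{k,\beta}\ne 0, \beta\ne 0\} > 0$ — this is the choice of $\delta$. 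This gives (1), (2), (3) simultaneously.

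For (4): once $\frak m(e^{\mathbf{x}})$ is defined and analytic on $\mathcal M(L)_\delta$, the standard bulk/boundary deformation formula gives, for each $b \in \mathcal M(L)_\delta$, deformed operations $\frak m^b_k(\mathbf{y}_1,\dots,\mathbf{y}_k) = \sum_{\ell_0,\dots,\ell_k} \frak m_{k+\sum \ell_j}(\mathbf{x}^{\ell_0},\mathbf{y}_1,\mathbf{x}^{\ell_1},\dots,\mathbf{y}_k,\mathbf{x}^{\ell_k})$ where $\mathbf{x}$ corresponds to $b$; the Maurer–Cartan condition $\frak m(e^{\mathbf{x}}) = 0$ is exactly what makes $\{\frak m^b_k\}$ satisfy the $A_\infty$ relations with $\frak m^b_0 = 0$, i.e. strict. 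Unitality of $\{\frak m^b_k\}$ is inherited from \eqref{unitalityissho} by the same insertion argument. Cyclicity of $\{\frak m^b_k\}$ follows from cyclicity of $\{\frak m_k\}$ (equation \eqref{cyclicform}) together with the fact that $\mathbf{x} \in H^{odd}$, which makes the reordering signs in the deformation sum consistent; I would verify this sign bookkeeping directly from \eqref{cyclicform}. The convergence of these deformation sums on $\mathcal M(L)_\delta$ is the same analysis as in (1)–(3), applied after a further exponential substitution in the degree-one part, so no new estimate is needed. Finally, independence up to isomorphism (for the family) reduces to the pseudo-isotopy theory for cyclic filtered $A_\infty$ algebras developed later in the paper.

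The main obstacle is step (1): showing that for fixed $\beta$ the insertion of the degree-one classes \emph{exactly} exponentiates (rather than producing some other, possibly divergent, series), and that the remaining insertions of higher-degree classes are genuinely finite in number. The first half requires a careful treatment of forgetful maps on the moduli of bordered stable discs in the de Rham/continuous-family-of-multisections setting — one must know that forgetting a marked point carrying $\mathbf{e}_i$ ($1\le i\le b_1$) is compatible with the chosen multisections up to the expected combinatorial factor, which is the delicate compatibility one has to build into the construction from the start. The second half is a dimension count that is routine once the first is in place. Everything else is bookkeeping with the energy filtration and with the signs dictated by \eqref{inpro} and \eqref{cyclicform}.
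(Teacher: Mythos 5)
Your approach to parts (1), (2), and (4) essentially matches the paper's: the degree-one insertions exponentiate via the forgetful-map compatibility of the perturbations (this is exactly Lemma \ref{m1jikurasireru} and its canonical-model version Lemma \ref{m1jikurasireru2}), the higher-degree insertions are controlled by the Maslov index/dimension count (Lemma \ref{highconv}), and strictness/unitality/cyclicity of the deformed structure at a Maurer--Cartan solution follows from the undeformed properties as you say. So far so good.

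The gap is in part (3). You write that the radius of convergence in $v(y_i)$ ``is bounded below by a positive constant determined by $\min\{\omega(\beta) : \frak m_{k,\beta}\ne 0, \beta\ne 0\} > 0$.'' This does not follow. Having a positive energy gap gives convergence for $v(x_i)\ge 0$ (part (1)), but to extend to $-\delta < v(y_i) < \delta$ you must contend with the factor $\exp\bigl(\sum_i (\partial\beta\cap \text{\bf e}_i) x_i\bigr)$: if $|\partial\beta\cap\text{\bf e}_i|$ were allowed to grow superlinearly in $\omega(\beta)$, the negative-valuation contribution would eventually overwhelm $T^{\omega(\beta)}$ no matter how small $\delta$ is. What you actually need is a \emph{quantitative} bound of the form $|\partial\beta\cap\text{\bf e}_i|\le A\,\omega(\beta)+B$, and you neither state nor prove such an estimate; the energy gap $\min\{\omega(\beta)\}$ by itself is irrelevant to the radius. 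The paper avoids proving any such isoperimetric-type inequality by a geometric transfer argument: take the graph $L(c)$ of the closed one-form $\sum c_i\text{\bf e}_i$ in a Weinstein neighborhood, a diffeomorphism $F_c$ with $F_c(L)=L(c)$ and $(F_c)_*J$ \emph{tamed} (not compatible --- this is why tame almost complex structures are used throughout), and observe via Lemmas \ref{134} and \ref{weightchange} that replacing $J$ by $(F_c)_*J$ is exactly the substitution $y_i\mapsto T^{c_i}y_i$ in the coordinates. Since the theory already gives convergence for the Lagrangian $L(c)$ at $v=0$, sweeping over $c\in[-\delta_2,\delta_2]^{b_1}$ yields the domain \eqref{MCteigiiki} for free. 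This is a genuinely different mechanism from the analytic estimate your argument implicitly relies on, and it is the step your proposal does not supply.
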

Here $v : \Lambda \to \R$ is a valuation defined by
$$
v\left(\sum a_i T^{\lambda_i}\right) = \inf\{ \lambda_i \mid a_i \ne 0\}.
$$
(Here we assume $a_i \in \R[e,e^{-1}]$ and $\lambda_i \ne \lambda_j$ for $i\ne j$.)
\par
A filtered $A_{\infty}$ algebra is said to be strict if $\frak m_0 = 0$.
\par
In \cite{FOOO06} we considered the case $\text{\bf x} \equiv 0 \mod \Lambda_{0,nov}^+$.
In that case, we defined
\begin{equation}\label{18}
\frak m_k^{\text{\bf x}}(\text{\bf x}_1,\ldots,\text{\bf x}_k)
= 
\sum_{m_0,\ldots,m_k=0}^{\infty}
\frak m_{k + \sum_{i=0}^k m_i}
(\text{\bf x}^{\otimes m_0},\text{\bf x}_1,\ldots,\text{\bf x}_k,\text{\bf x}^{\otimes m_k}).
\end{equation}
Then $(H(L;\Lambda_{0,nov})),\frak m_k^{\text{\bf x}})$ is a filtered $A_{\infty}$
algebra. It is unital or cyclic if $(H(L;\Lambda_{0,nov})),\frak m_k)$ is unital or cyclic, 
respectively. It is strict if and only if  $\frak m(e^{\text{\bf x}})=0$.
\par
Using the techinique of \cite{Cho2}, we can relax the condition $\text{\bf x} \equiv 0 \mod \Lambda_{0,nov}^+$ to 
$\text{\bf x} \equiv 0 \in H^1(L;\Lambda_{0,nov})$.
(See \cite{FOOO08I} where the case when $M$ is toric is discussed.)
\par
Theorem \ref{main2}.4 says that we can further extend this story to the case of 
$\text{\bf x}$ contained in a larger domain.
The convergence of (\ref{18}) is one on $T$-adic topology.
The convergence in Theorem \ref{main2}.1 is different from $T$-adic topology 
and is a mixture of Archimedean and non-Archimedean topology. 
See Definition \ref{mixconv}.
\par
Our main theorems, Theorems \ref{main1} and \ref{main2}, are related as follows.
In Theorem \ref{main2} it is essential that we can change variables from $x_i$ to $y_i = e^{x_i}$.
This becomes possible after we perform  the whole construction of 
Kuranishi structure and its perturbation in a way compatible with 
the forgetful map.
Then, for example, 
$
\frak m_{3,\beta}(\text{\bf e}_i,\text{\bf e}_i,\text{\bf x}) + \frak m_{3,\beta}(\text{\bf e}_i,\text{\bf x},\text{\bf e}_i)
+ \frak m_{3,\beta}(\text{\bf x},\text{\bf e}_i,\text{\bf e}_i)
$
is related to 
$
\frak m_{1,\beta}(\text{\bf x})$
by the formula
\begin{equation}\label{compforetres}
\frak m_{3,\beta}(\text{\bf e}_i,\text{\bf e}_i,\text{\bf x}) + \frak m_{3,\beta}(\text{\bf e}_i,\text{\bf x},\text{\bf e}_i)
+ \frak m_{3,\beta}(\text{\bf x},\text{\bf e}_i,\text{\bf e}_i)
= \frac{1}{2!}(\beta\cap \text{\bf e}_i)^2\frak m_{3,\beta}(\text{\bf x}).
\end{equation}
Here $\text{\bf e}_i$ is a degree 1 cohomology class and 
$\beta$ is an element of $H_2(X;\Z)$. $\frak m_{k,\beta}$ is the contribution of pseudo-holomorphic 
disc of homology class $\beta$ to $\frak m_k$.
(\ref{compforetres}) and a similar formulas make the change of coordinate 
$y_i = e^{x_i}$ possible.
\par
Compatibility with forgetful map (which is the reason why  (\ref{compforetres}) is correct) is
also used to prove cyclic symmetry (\ref{cyclicform}).
It is used also to prove unitality (\ref{unitalityissho}).
\par
Thus the main technical part of this paper is occupied to 
work out the way to construct Kuranishi structure and 
(abstract multivalued continuous family of) perturbations 
on it which is invariant of the process of forgetting boundary marked points.
This construction is performed in Sections \ref{kurareview}-\ref{fgcompfamisec}.
We use it to prove a version of Theorem \ref{main1} (namely a version 
modulo $T^{E}$) in Sections \ref{cyclicmodTEsec}-\ref{cyclicTEderamsec}.
To go from this version to Theorem \ref{main1} we use a similar 
trick as one in \cite{FOOO080} Chapter 7. We need to discuss some 
homological algebra of cyclic filtered $A_{\infty}$ algebra, 
which is in Section \ref{homalgsec}-\ref{canoalgsec}.
In Section  \ref{geoisoto}, we work out one parameter family version of the 
construction of Sections \ref{kurareview}-\ref{fgcompfamisec}, 
which is used to prove well-definedness of the structure up to 
pseudo-isotopy.
Theorem  \ref{main1} then is proved in Section \ref{cycAinfseccoh}.
(The proof of independence of choices is completed in Section \ref{pisoofpisosec}.)
In Section \ref{adicconv} we prove Theorem \ref{main2}.
\par
Our main result of this paper will have two applications. 
\par
One is to define a numerical invariant of (special) Lagrangian submanifold 
in Calabi-Yau 3 fold, which is a rational homology sphere.
Roughly speaking it counts the number of pseudo-holomorphic discs 
with appropriate weights. This is invariant of perturbation and other 
choices but depends on almost complex structure.
Existence of such an invariant was expected by several people especially 
by D. Joyce. For the rigorous construction it is esseitial
to find a perturbation which is compatible with forgetful maps. 
Such a pertrubation is provided in this paper. 
We will use it to define this invariant and discuss its 
properties in \cite{F1}.
\par
The other application is to define a (ridig analytic) family of Floer cohomologies.
Actually the space $\mathcal M(L)_{\delta}$ in Theorem \ref{main2} is a 
chart of certain rigid analytic space. We can glue them up to define appropriate 
rigid analytic space. Then, by extending the construction of Theorem  \ref{main2}
to a pair (or triple etc.) of Lagrangian submanifolds, we can show 
that another Lagrangian submanifold $L'$ gives an 
object of derived category of coherent sheaves on this 
rigid analytic space.
This is a proof of a part of Conjecture U \cite{FOOO080} and 
a step to realize the project to construct homological mirror functor 
by using family of Floer cohomology. (This project was started around 
1998 in \cite{Fuk02}. 
See also \cite{Fuk00}. Its rigid analytic version was first proposed by \cite{KoSo}.)
The construction of rigid analytic family of Floer cohomology will be given 
by extending the construction of this paper in \cite{F2}, there 
its application to mirror symmetry of torus in the form more general than 
 \cite{Fuk02} and \cite{KoSo} will be given. The story will 
 be further generalized to include the case of singular fiber 
in \cite{F3}, in dimension 2.
\par
The author would like to thank Y.-G. Oh, H. Ohta and K. Ono with whom he is 
happy to share may of the ideas of this paper.
\section{Kuranishi structure on the moduli space of pseudo-holomorphic discs: review}
\label{kurareview}
For the purpose of this paper, we need to take Kuranishi structure on 
the moduli space of pseudo-holomorphic discs with some additional 
properties. We will construct such Kuranishi structure in the 
next section. In this section we review definition of Kuranishi structure and 
the construction of it on the moduli space of pseudo-holomorphic discs, 
which was due to \cite{FOOO00}.
\par
Let $\mathcal M$ be a compact space. A Kuranishi chart  
is $(V_{\alpha},E_{\alpha},
\Gamma_{\alpha},\psi_{\alpha},s_{\alpha})$
which satisfies the following:\smallskip
\begin{enumerate}
\item $V_{\alpha}$ is a smooth manifold (with boundaries or corners) and
$\Gamma_{\alpha}$ is a finite group acting effectively on $V_{\alpha}$.
\item $\text{pr}_{\alpha} : E_{\alpha} \to V_{\alpha}$ is a finite dimensional vector bundle on which
$\Gamma_{\alpha}$ acts so that $\text{pr}_{\alpha}$ is $\Gamma_{\alpha}$ equivariant.
\item $s_{\alpha}$ is a $\Gamma_{\alpha}$ equivariant section of $E_{\alpha}$.
\item $\psi_{\alpha} : s_{\alpha}^{-1}(0)/\Gamma_{\alpha} \to \mathcal M$ is a homeomorphism
to its image, which is an open subset.
\end{enumerate}\par
\smallskip
We call $E_{\alpha}$ an {\it obstruction bundle} and $s_{\alpha}$ a {\it Kuranishi map}.
If $\text{\bf p} \in \psi_{\alpha}(s_{\alpha}^{-1}(0)/\Gamma_{\alpha})$, we call 
$(V_{\alpha},E_{\alpha},
\Gamma_{\alpha},\psi_{\alpha},s_{\alpha})$  a {\it Kuranishi neighborhood} of $\text{\bf p}$.
\par
Let $\psi_{\alpha_1}(s_{\alpha_1}^{-1}(0)/\Gamma_{\alpha_1}) 
\cap \psi_{\alpha_2}(s_{\alpha_2}^{-1}(0)/\Gamma_{\alpha_2}) \ne \emptyset$.
A {\it coordinate transformation} from $(V_{\alpha_1},E_{\alpha_1},
\Gamma_{\alpha_1},\psi_{\alpha_1},s_{\alpha_1})$
to  $(V_{\alpha_2},E_{\alpha_2},
\Gamma_{\alpha_2},\psi_{\alpha_2},s_{\alpha_2})$ is 
$(\hat\phi_{\alpha_2\alpha_1},\phi_{\alpha_2\alpha_1},h_{\alpha_2\alpha_1})$
such that
\smallskip 
\begin{enumerate}
\item
$h_{\alpha_2\alpha_1}$ is an injective homomorphism $\Gamma_{\alpha_1} \to \Gamma_{\alpha_2}$.
\par
\item $\phi_{\alpha_2\alpha_1} : V_{\alpha_1\alpha_2} \to V_{\alpha_2}$ is an
$h_{\alpha_2\alpha_1}$ equivariant smooth embedding
from a $\Gamma_{\alpha_1}$ invariant open set $V_{\alpha_1\alpha_2}$ to $V_{\alpha_1}$,
such that the induced map
$\underline{\phi}_{\alpha_2\alpha_1}:
V_{\alpha_1\alpha_2}/\Gamma_{\alpha_1} \to V_{\alpha_2}/\Gamma_{\alpha_2}$ is injective.
\par
\noindent
\item $(\hat\phi_{\alpha_2\alpha_1},\phi_{\alpha_2\alpha_1})$ is an $h_{\alpha_2\alpha_1}$
equivariant embedding of vector bundles $E_{\alpha_1}\vert_{V_{\alpha_1\alpha_2}} \to E_{\alpha_2}$.
\item $\hat\phi_{\alpha_2\alpha_1}\circ s_{\alpha_1} =
s_{\alpha_2}\circ\phi_{\alpha_2\alpha_1}$. 
\item $\psi_{\alpha_1} =
\psi_{\alpha_2} \circ \underline{\phi}_{\alpha_2\alpha_1}$ on $(s_{\alpha_2} ^{-1}(0) \cap V_{\alpha_1\alpha_2})/\Gamma_{\alpha_2} $.
Here $\underline{\phi}_{\alpha_2\alpha_1}$ is as in 2.
\item
The map $h_{\alpha_2\alpha_1}$ restricts to an isomorphism
$(\Gamma_{\alpha_1})_x \to (\Gamma_{\alpha_1})_{\phi_{\alpha_2\alpha_1}(x)}$ 
between isotopy groups, for any 
$x \in V_{\alpha_1\alpha_2}$.
\item
$\psi_{\alpha_1}(s_{\alpha_1}^{-1}(0)/\Gamma_{\alpha_1}) 
\cap \psi_{\alpha_2}(s_{\alpha_2}^{-1}(0)/\Gamma_{\alpha_2}) 
= \psi_{\alpha_1}((s_{\alpha_1}^{-1}(0)\cap V_{\alpha_1\alpha_2})/\Gamma_{\alpha_1})$
\end{enumerate}
\par\medskip
A {\it Kuranishi structure} on $\mathcal M$ assignes 
a Kuranishi neighborhood $(V_{\text{\bf p}},E_{\text{\bf p}},
\Gamma_{\text{\bf p}},\psi_{\text{\bf p}},s_{\text{\bf p}})$ 
to each $\text{\bf p} \in \mathcal M$, such that 
if $\text{\bf q} \in \psi_{\text{\bf p}}(V_{\text{\bf p}}/\Gamma_{\text{\bf p}})$
then there exists 
a coordinate transformation 
$(\hat\phi_{\text{\bf p}\text{\bf q}},\phi_{\text{\bf p}\text{\bf q}},h_{\text{\bf p}\text{\bf q}})$
from 
$(V_{\text{\bf q}},E_{\text{\bf q}},
\Gamma_{\text{\bf q}},\psi_{\text{\bf q}},s_{\text{\bf q}})$ 
to 
$(V_{\text{\bf p}},E_{\text{\bf p}},
\Gamma_{\text{\bf p}},\psi_{\text{\bf p}},s_{\text{\bf p}})$. 
We assume appropriate compatibility conditions 
among coordinate transformations, which we omit 
and refer to \cite{FO}.
\par
Let $\mathcal M$ have a Kuranishi structure.
We consider the normal bundle $N_{\phi_{\text{\bf p}\text{\bf q}}(V_{\text{\bf q}})}V_{\text{\bf p}}$.
We take the
fiber derivative of the Kuranishi map $s_{\text{\bf p}}$ and obtain a homomorphism
$$
d_{\text{fiber}}s_{\text{\bf p}} : N_{\phi_{\text{\bf p}\text{\bf q}}(V_{\text{\bf q}})}V_{\text{\bf p}} \to E_{\text{\bf p}}\vert_{\text{\rm Im}\phi_{\text{\bf p}\text{\bf q}}}
$$
which is an $h_{\psi_{\text{\bf p}\text{\bf q}}}$-equivariant bundle homomorphism.
We say that the space with Kuranishi structure
$\mathcal M$ {\it has the tangent bundle} if $d_{\text{fiber}}s_{\text{\bf p}}$
induces a bundle isomorphism
\begin{equation}\label{identifi}
N_{\phi_{\text{\bf p}\text{\bf q}}(V_{\text{\bf q}})}V_{\text{\bf p}}
\cong \frac{E_{\text{\bf p}}\vert_{\text{\rm Im}\phi_{\text{\bf p}\text{\bf q}}}}
{\hat\phi_{\text{\bf p}\text{\bf q}}(E_{\text{\bf q}})}.
\end{equation}
We call a space with Kuranishi structure which has a tangent bundle a
{\it Kuranishi space} or $K$-space. 
\par
We call a Kuranishi space to be oriented if $V_{\text{\bf p}}$ and $E_{\text{\bf p}}$  are 
oriented and if (\ref{identifi}) is orientation preserving.
\par\medskip
Let $(M,\omega)$ be a symplectic manifold of (real) dimension $2n$.
We take an almost complex structure $J$, which is tamed by $\omega$.
For each $\alpha \in H_2(M;\Z)$ and $\ell \ge 0$, we denote 
by $\mathcal M^{\text{\rm cl}}_{\ell}(\alpha;J)$ the moduli space of 
stable $J$-holomorphic curve of genus zero with $\ell$ marked points and 
of homology class $\alpha$.
(In this paper we use only genus zero pseudo-holomorphic curve.)
We sometimes write it as $\mathcal M^{\text{\rm cl}}_{\ell}(\alpha)$.
It has a Kuranishi structure of dimension 
$
2(n + \ell - 3 + c^1(M)\cap \alpha).
$
(See \cite{FO}.)

Let $L$ be a relatively spin Lagrangian submanifold of $M$.
For each $\beta \in H_2(M,L;\Z)$ and $\ell \ge 0$, $k \ge 0$,
we denote 
by $\mathcal M_{\ell,k}(\beta;L;J)$ the moduli space of 
bordered stable $J$-holomorphic curve of genus zero with $\ell$ interior 
marked points and $k$ boundary marked points, 
one boundary component and
of homology class $\beta$.
We sometimes write it as  $\mathcal M_{\ell,k}(\beta)$.
In case $\ell=0$ we write  $\mathcal M_{k}(\beta)$  also.
It has a Kuranishi structure with corner and boundary 
of dimension 
$
2n + 2\ell  + k - 3 + \mu(\beta),
$
where $\mu: H_2(M,L;\Z) \to \Z$ is a Maslov index.
(See \cite{FOOO00}.)
\begin{rem}
In  \cite{FOOO00} we used compatible almost complex structure. 
However all the constructions there work for tame almost 
complex structure. \end{rem}
We review the construction of these Kuranishi structures since 
we need to modify them so that they have some additional 
properties, in the next section.
\par
Let 
$\text{\bf p} = (\Sigma,v) \in \mathcal M_{\ell,k}(\beta;L;J)$ (resp. 
$(\Sigma,v) \in \mathcal M_{\ell}^{\text{\rm  cl}}(\alpha;J)$).
Here $\Sigma$ is a semi-stable marked bordered Riemann surface of genus $0$ 
with one boundary component
and $v : (\Sigma,\partial\Sigma) \to (M,L)$ is a $J$-holomorphic map.
(resp. $\Sigma$ is a semi-stable marked Riemann surface of genus $0$ 
and $v : \Sigma \to M$ is a $J$-holomorphic map.) 
We assume that the enumeration of the boundary marked points respects the 
counter clockwise cyclic order of the boundary of $\Sigma$.
\begin{rem}
In \cite{FOOO080} we wrote $ \mathcal M^{\text{\rm main}}_{\ell,k}(\beta;L;J)$. 
Here main means the compatibility of the enumeration of the 
boundary marked points with the 
counter clockwise cyclic order of the boundary of $\Sigma$.
We omit this symbol in this paper since we only consider such $\Sigma$.
\end{rem}
We consider decomposition 
\begin{equation}\label{irdec}
\Sigma = \bigcup_{a \in A} \Sigma_a
\end{equation}
to irreducible components. Here $\Sigma$ is either a disc or a sphere component.
(resp. is a sphere component.)
Let $\Gamma_{\text{\bf p}}$ be a finite group 
consisting of bi-holomorphic maps $\varphi : \Sigma \to \Sigma$ such that 
$v\circ \varphi = v$.
\par
We choose an open set $U_a$ for each $\Sigma_a$.
We may choose them so that they are $\Gamma_{\text{\bf p}}$ 
invariant in an obvious sense.
We also assume that the closure $U_a$ does not intersect with 
the boundary, marked points or singular points.
Let $\Lambda^{01}$ be a bundle on the disjoint union $\sqcup \Sigma_a$
of $(0,1)$ forms. 
We are going to take 
a finite dimensional vector spaces 
\begin{equation}\label{whereEa}
E_a \subset C^{\infty}_0(U_a;v^*TM\otimes \Lambda^{01})
\end{equation}
and take its direct sum $E_a^0 = \bigoplus_{a\in A} E_a$ as 
a `tentative choice' of the obstruction bundle.
The explanation of this choice of $E_a$ is in order.
(The right hand side of (\ref{whereEa}) denotes 
the space of compactly supported smooth sections of the pull back bundle 
$v^*TM$ on $U_a$.)
\par
We take a positive integer $p$ and consider 
the space of sections $W^{1,p}(\Sigma_a,v^*TM)$ 
of the pull-back bundle $v^*TM$ on $\Sigma_a$ 
of $W^{1,p}$-class.
We choose $p$ sufficiently large so that 
element of $W^{1,p}(\Sigma_a,v^*TM)$ 
is continuous on $\Sigma_a$.
In case when $\Sigma_a$ has a boundary 
we put boundary condition 
\begin{equation}\label{bdrycondxi}
\xi \vert_{\partial \Sigma_a} \subset  W^{1-1/p,p}(\partial \Sigma_a,v^*TL)
\end{equation}
and denote by $W^{1,p}(\Sigma_a,v^*TM;v^*TL)$ the 
space of $\xi $ satisfying (\ref{bdrycondxi}).
By taking $p$ large we may assume the restriction to 
$\partial \Sigma_a$ is also continuous.
We write 
$W^{1,p}(\Sigma_a,v^*TM;v^*TL) 
= W^{1,p}(\Sigma_a,v^*TM)$ in case $\Sigma_a$ is a sphere component.
\par
We now consider the sum 
\begin{equation}\label{sumW1pa}
\bigoplus_a W^{1,p}(\Sigma_a,v^*TM;v^*TL).
\end{equation}
We require the following additional condition at the singular 
points. Let $p$ be a singular point. 
There exist $\Sigma_{a_1}$ and $\Sigma_{a_2}$
($a_1 \ne a_2$) with $p \in \Sigma_{a_1} \cap \Sigma_{a_2}$.
(Using the fact that $\Sigma$ has only nodal point as a singularity 
and that genus is $0$, there exist such $a_1, a_2$ uniquely.)
We now require 
$(\xi_a) \in \bigoplus_a W^{1,p}(\Sigma_a,v^*TM;v^*TL)$
to satisfy
\begin{equation}\label{compatsing}
\xi_{a_1}(p) = \xi_{a_2}(p).
\end{equation}
We  denote by
\begin{equation}
W^{1,p}(\Sigma,v^*TM;v^*TL) 
\end{equation}
the subspace of (\ref{sumW1pa}) of the elements $(\xi_a)$ 
satisfying (\ref{compatsing}) at every singular point $p$.
\par
Let $\Lambda^{01}$ be a bundle on $\cup \Sigma_a$
of $(0,1)$ forms. We put
\begin{equation}
L^{p}(\Sigma,v^*TM\otimes\Lambda^{01}) 
= \bigoplus_a L^{p}(\Sigma_a,v^*TM\otimes\Lambda^{01}).
\end{equation}
The linearization of the pseudo-holomorphic curve equation 
defines an operator
\begin{equation}\label{lineq}
D_v\overline{\partial} 
: W^{1,p}(\Sigma,v^*TM;v^*TL) \to L^{p}(\Sigma,v^*TM) .
\end{equation}
(\ref{lineq}) is a Fredholm operator.
\begin{dfn}\label{Fredreg1}
We say $(\Sigma,v)$ is {\it Fredholm regular} if (\ref{lineq}) is surjective.
\end{dfn}
In case $(\Sigma,v)$ is Fredholm regular we can take its neighborhood 
in $\mathcal M_{\ell,k}(\beta;L;J)$ (resp. $\mathcal M_{\ell}(\alpha;J)$) so that it is 
an orbifold with boundary/corner (resp. orbifold).
In general we need to take obstruction bundle $E_{\text{\bf p}}$.
We consider the finite dimenional subspaces $E_a$ as in (\ref{whereEa}).
We require that $ \bigoplus_a E_a$ is $\Gamma_{\text{\bf p}}$
invariant.
\begin{dfn}\label{Fredreg2}
We say that $((\Sigma,v),(E_a))$ is {\it Fredholm regular} if 
\begin{equation}
\text{\rm Im}\,D_v\overline{\partial} 
+ \bigoplus_a E_a
= L^{p}(\Sigma,v^*TM).
\end{equation}
\end{dfn}
Under this assumption we will construct a Kuranishi 
neighborhood of $\text{\bf p}$.
We introduce the following notation 
(Definition \ref{defcompnentwise}) for convenience.
We attach each of the tree of sphere components 
to the disc component to which the component is rooted. 
Let
\begin{equation}\label{discdec}
\Sigma = \bigcup_{b \in B}\Sigma_b = \bigcup_{b \in B}\bigcup_{a \in A_b} \Sigma_a
\end{equation}
be the resulting decomposition.
(Here $\bigcup_{b}A_b = A$.) 
\begin{dfn}\label{defcompnentwise}
The choice of $(E_a)$ is said to be 
{\it component-wise} if $E_a$ depends only 
on $(\Sigma_a,v\vert_{\Sigma_a})$ and 
is independent of other components or the restriction of 
$v$ to other components.
\par
The choice of $(E_a)$ is said to be 
{\it disc-component-wise} if $E_a$ depends only 
on $(\Sigma_{a'},v\vert_{\Sigma_{a'}})$ 
with $a,a' \in A_b$ so some $b$, and 
is inependent of the components $(\Sigma_{a'},v\vert_{\Sigma_{a'}})$ 
with $a' \notin A_b$ where $a \in A_b$.
\end{dfn}
Before reviewing the construction of Kuranishi neighborhood 
for Fredholm regular $((\Sigma,v),(E_a))$ we need to 
discuss the stability of the domain. 
We here follow \cite{FO} appendix.
Let $\Sigma_a$ be a component of $\Sigma$.
We remark that we include marked or singular points 
to the notation $\Sigma_a$. Namely 
$\Sigma_a$ is a marked disc or marked sphere 
where its marked points are either singular or marked 
point of $\Sigma$ which is on $\Sigma_a$.
We say that $\Sigma_a$ is {\it stable} if 
its automorphism group is of finite order.
(In case of disc component it is equivalent to 
$k_a + 2\ell_a \ge 3$ where $k_a$ is the number of 
boundary marked points and $\ell_a$ is the number of 
interior marked points.
In case of sphere component it is equivalent to 
$\ell_a \ge 3$.)
\par
If $\Sigma_a$ is not stable we add some interior marked points 
and make it stable. We denote it by $\Sigma^+_a$.
Note that $v\vert_{\Sigma_a}$ is nontrivial 
(in case $\Sigma_a$ is unstable), 
since $(\Sigma,v)$ is stable.
Therefore $v\vert_{\Sigma_a}$ 
is an immersion at the generic point.
We assume that $v$ is an immersion at the addtional marked points. 
We glue them and obtain $\Sigma^+$.
Let $\text{\rm mkadd}(\Sigma^+)$ be the 
set of the marked points we add.
\par
We require that $\text{\rm mkadd}(\Sigma^+)$ 
are invariant of the $\Gamma_{\text{\bf p}}$ action.
Namely we assume that none of the  nontrivial element of 
$\Gamma_{\text{\bf p}}$ fix any of $p \in \text{\rm mkadd}(\Sigma^+)$
and $\Gamma_{\text{\bf p}}$ exchanges elements of 
$\text{\rm mkadd}(\Sigma^+)$.
\par
For each added marked point $p \in \text{\rm mkadd}(\Sigma^+)$
we take $2n-2$ dimensional submanifold $XM_p \subset M$ 
such that $XM_p \subset M$ intersect with 
$v : \Sigma_a \to M$ transversally at $v(p)$.
(Here $\Sigma_a$ is a component containing $p$.)
We also require that if $\gamma \in \Gamma_{\text{\bf p}}$ then 
$XM_p = XM_{\gamma(p)}$.
\par
Now we start with $(\Sigma,v)$ and consider $(\Sigma^+,v)$.
We take $E(\Sigma^+,v) = \bigoplus E_{a}(\Sigma^+,v)$  such that 
$((\Sigma^+,v),E(\Sigma^+,v))$ is Fredholm regular.
We remark that all the components of $\Sigma^+$ are now stable.
Let $U(\text{\rm defresolv};\Sigma^+)$ be a neighborhood of $\Sigma^+$ in 
the moduli space of marked bordered Riemann surface of genus zero and 
with one boundary component
(resp. marked Riemann surface of genus $0$).
It is an orbifold with boundary or corners.
We write
$$
U(\text{\rm defresolv};\Sigma^+) = V(\text{\rm defresolv};\Sigma^+)/\text{Aut}(\Sigma^+)
$$
where $V(\text{\rm defresolv};\Sigma^+)$ is an manifold with boundary or corner and 
$\text{Aut}(\Sigma^+)$ is a finite group of automorphism of $\Sigma^+$.
By our choice of $\text{\rm mkadd}(\Sigma^+)$ the group 
$\Gamma_{\text{\bf p}}$ of automorphisms of $(\Sigma,v)$ is a subgroup of 
$\text{Aut}(\Sigma^+)$.
\par
Let $\frak v \in  V(\text{\rm defresolv};\Sigma^+)$ and 
$\Sigma^+(\frak v)$ be the corresponding marked bordered Riemann surface of 
genus $0$. 
$\Sigma^+(\frak v)$ minus small neighborhood of `neck region' is canonically isomorphic to 
$\Sigma^+$ minus small neighborhood of singular point. (See \cite{FO} Section 12.)
Therefore the support $U_a$ of $E_a$ for each $E_a = E_a(\Sigma^+,v)$ has 
a canonical embedding to $\Sigma^+(\frak v)$.
(Note we use here the fact  $\frak v$ is an element of $V(\text{\rm defresolv};\Sigma^+)$ 
not just an element of $U(\text{\rm defresolv};\Sigma^+)$.)
We consider $v' : (\Sigma^+(\frak v),\partial \Sigma^+(\frak v) \to (M,L)$ 
which is $C^0$ close to $v$. 
\par
To define this $C^0$ closed-ness precisely we proceed as follows.
We decompose
$$
\Sigma^+ = \Sigma^+_{\text{reg}} \cup \Sigma^+_{\text{sin}}
$$
where the second term is a small neighborhood ($\epsilon$-neighborhood)  of the singular point.
We then decompose
$$
\Sigma^+(\frak v) = \Sigma^+_{\text{reg}}(\frak v) \cup \Sigma^+_{\text{sin}}(\frak v)
$$
where $\Sigma^+_{\text{reg}}(\frak v)$ is biholomorphic to $\Sigma^+_{\text{reg}}$.
(For each $\epsilon$ we may take $V(\text{\rm defresolv};\Sigma^+)$ small 
enough so that such decomposition exists.)
We may assume $U_a(\Sigma^+,v) \subset \Sigma^+_{\text{reg}}$ and 
hence $U_a(\Sigma^+,v) \subset \Sigma^+_{\text{reg}}(\frak v)$.
Now $v' $ is said to be $\epsilon$ close to $v$ if:
\smallskip
\begin{enumerate}
\item $dist(v(x),v'(x)) \le \epsilon$, if $x \in  \Sigma^+_{\text{reg}}= \Sigma^+_{\text{reg}}(\frak v)$.
\item The diameter of the $v'$  image of each connected component of $\Sigma^+_{\text{sin}}(\frak v)$
is smaller than $\epsilon$.
\end{enumerate}
\par\smallskip
Now for $v'$ which is $\epsilon$ close to $v$ we define 
$E(\Sigma^+(\frak v),v')$ as follows.
For $U_a$ we consider the isomorphism
\begin{equation}\label{Obstpara}
C^{\infty}(U_a;v^*TM) \cong C^{\infty}(U_a;{v'}^*TM)
\end{equation}
by taking parallel transport along the minimal geodesic joining $v(x)$ to $v'(x)$.
(If $\epsilon$ is smaller than injectivity radius of $M$ such minimal geodesic exists 
uniquely.)
Using (\ref{Obstpara}) we regard
$E_a(\Sigma^+,v)$ as a subspace of $L^p(\Sigma^+(\frak v);{v'}^*TM)$.
Now we consider
\begin{equation}
\aligned
V^+(\text{\bf p}) = \{(\frak v,v') 
\mid v' : (\Sigma^+(\frak v),\partial \Sigma^+(\frak v) \to (M,L) 
&\text{  is $\epsilon$ close to $v$} 
\\
&\hskip-1cm\text{and satisfies (\ref{perturbeq})  below.}\}
\endaligned\end{equation}
\begin{equation}\label{perturbeq}
\overline{\partial} v' \in \bigoplus_a E_a(\Sigma^+,v).
\end{equation}
The following is a consequence of standard glueing analysis. (See \cite{FOOO00}.)
\begin{prp}\label{gluing}
If $((\Sigma^+,v),E(\Sigma^+,v))$ is Fredholm regular and $\epsilon$ is sufficiently small
then $V^+(\text{\bf p})$ is a smooth manifold with boundary and corner.
\end{prp}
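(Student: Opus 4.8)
The plan is to exhibit $V^+(\text{\bf p})$ as the zero locus of a Fredholm section of a Banach bundle and apply the implicit function theorem; the only real issue is the degeneration of the domain as $\frak v \to 0$, which is handled by the standard pre-gluing plus Newton iteration scheme of \cite{FOOO00}.

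First I would fix, for each small $\frak v \in V(\text{\rm defresolv};\Sigma^+)$, the identification of $\Sigma^+_{\text{reg}}(\frak v)$ with $\Sigma^+_{\text{reg}}$ already described before the statement, and choose cut-off functions supported in $\Sigma^+_{\text{reg}}$ away from the neck region. This is exactly what makes the transplantation $E_a(\Sigma^+,v) \subset L^p(\Sigma^+(\frak v);{v'}^*TM\otimes\Lambda^{01})$ via (\ref{Obstpara}) well defined, and it also lets one form a pre-glued map $v'_{\text{pre}}(\frak v) : (\Sigma^+(\frak v),\partial\Sigma^+(\frak v)) \to (M,L)$ by interpolating $v$ across each neck with the exponential map of $M$ (the interpolation at a boundary node respecting the Lagrangian boundary condition). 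I would then consider the Banach bundle over a neighborhood of $(\frak v,v') = (0,v'_{\text{pre}})$ in the total space of pairs $(\frak v, v')$ with $v' \in W^{1,p}(\Sigma^+(\frak v),v^*TM;v^*TL)$ close to $v'_{\text{pre}}(\frak v)$, whose fiber is $L^p(\Sigma^+(\frak v);{v'}^*TM\otimes\Lambda^{01})$ modulo the transplanted subspace $\bigoplus_a E_a(\Sigma^+,v)$; the section is $(\frak v,v') \mapsto [\overline{\partial} v']$. By construction its zero set is precisely $V^+(\text{\bf p})$.

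The heart of the matter is that the linearization of this section in the $v'$ direction is surjective for small $\frak v$, with a right inverse $Q_{\frak v}$ bounded uniformly as $\frak v \to 0$. This is deduced from Fredholm regularity of $((\Sigma^+,v),E(\Sigma^+,v))$ — i.e.\ $\text{\rm Im}\,D_v\overline{\partial} + \bigoplus_a E_a = L^p(\Sigma^+,v^*TM)$ on the nodal domain — by the usual argument comparing the approximate kernel/cokernel on $\Sigma^+(\frak v)$ with those on $\Sigma^+$, where the matching conditions (\ref{compatsing}) at the nodes and the disc-component-wise structure of the obstruction spaces (Definition \ref{defcompnentwise}) are what make the comparison work, together with a weighted Sobolev estimate on the neck. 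Given a uniformly bounded $Q_{\frak v}$, the quadratic estimate for the nonlinear term $\overline{\partial}\exp_{v'}(\xi) - \overline{\partial}v' - D_{v'}\overline{\partial}\,\xi$ and the contraction mapping principle produce, for each $\frak v$, a unique small solution near $v'_{\text{pre}}(\frak v)$, and the implicit function theorem applied in $(\frak v,v')$ jointly gives smoothness in $\frak v$. Since all of these estimates are carried out in \cite{FOOO00}, I would cite them rather than reproduce them.

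Finally, the boundary and corner structure: the smoothing parameters at the boundary nodes of $\Sigma^+$ are real and range over half-open intervals, while those at interior nodes are complex, so the moduli $U(\text{\rm defresolv};\Sigma^+)$, hence $V(\text{\rm defresolv};\Sigma^+)$, is a manifold with corners whose codimension-$c$ strata are the loci on which $c$ boundary nodes persist. Because the map $(\frak v,v') \mapsto \frak v$ from $V^+(\text{\bf p})$ is smooth by the above and identifies $V^+(\text{\bf p})$ with (an open subset of a bundle of solutions over) $V(\text{\rm defresolv};\Sigma^+)$, this corner structure pulls back to make $V^+(\text{\bf p})$ a smooth manifold with boundary and corners. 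The main obstacle is precisely the $\frak v$-uniform right inverse near $\frak v = 0$: obtaining constants independent of the degeneration requires the weighted-norm gluing estimate on the necks, and that is the single place where the matching conditions (\ref{compatsing}) and the component-wise form of the $E_a$'s are genuinely used.
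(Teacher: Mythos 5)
Your sketch is exactly the standard gluing argument that the paper invokes: the paper gives no independent proof of Proposition~\ref{gluing}, simply stating that it is ``a consequence of standard glueing analysis'' and citing \cite{FOOO00}, and the pre-gluing plus uniformly bounded right inverse plus contraction-mapping/implicit-function scheme, with the corner structure of $V^+(\text{\bf p})$ inherited from the real smoothing parameters at boundary nodes in $V(\text{\rm defresolv};\Sigma^+)$, is precisely what that citation refers to. Your emphasis on the uniform right inverse near $\frak v = 0$ and on the role of the matching condition (\ref{compatsing}) and the requirement that the $E_a$ be supported away from nodes is the right identification of where the real analytic work lies.
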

We next define evaluation map at added marked point.
Namely we define
\begin{equation}\label{addeval}
ev^{\text{int,add}} : V^+(\text{\bf p}) \to M^{\#(\text{\rm mkadd}(\Sigma^+))}
\end{equation}
by
$$
(\frak v,v') \mapsto (v'(p_1),\ldots,v'(p_{\text{\rm mkadd}(\Sigma^+)})).
$$
Using the fact that 
$v$ is immersion and is transversal to $XM_p$ for each 
$p\in \text{\rm mkadd}(\Sigma^+)$ it follows that the fiber produce
\begin{equation}
V_{\text{\bf p}} = ev^{\text{int,add}} {}_{ev^{\text{int,add}}}\times_{M^{\#(\text{\rm mkadd}(\Sigma^+))}} 
\prod_{p\in \text{\rm mkadd}(\Sigma^+)} XM_{p}
\end{equation}
is transversal. In particular $V_{\text{\bf p}}$ is a smooth manifold with corners.
We define $E_{\text{\bf p}}$ by 
$$
E_{\text{\bf p}}(\frak v,v') = \bigoplus_a E_a(\Sigma^+,v)
$$
using the isomorphism (\ref{Obstpara}).
The section (Kuranishi map) 
$s_{\text{\bf p}}$ is defined by
$$
s_{\text{\bf p}}(\frak v,v') = \overline{\partial} v' \in E_{\text{\bf p}}(\frak v,v').
$$
They are $\Gamma_{\text{\bf p}}$ equivariant by construction.
\par
Let $s_{\text{\bf p}}(\frak v,v')= 0$. Then $v' : 
(\Sigma^+(\frak v),\partial \Sigma^+(\frak v) \to (M,L)$ is pseudo-holomorphic.
We forget added marked point and obtain 
$\overline v' : (\Sigma(\frak v),\partial \Sigma(\frak v)) \to (M,L)$.
We put
$$
\psi_{\text{\bf p}}(\frak v,v') = (\Sigma(\frak v),\overline v') 
\in \mathcal M_{\ell,k}(\beta).
$$
We thus described a construction of Kuranishi neighborhood for 
each given choice of $E_a(\Sigma^+,v)$, $\text{\rm mkadd}(\Sigma^+)$.
We next review how to glue them.
\par
For each element $\text{\bf p} \in  \mathcal M_{\ell,k}(\beta)$
we fix $E_{\text{\bf p}}$ and $\text{\rm mkadd}(\text{\bf p})$.
We write them as $E^0_{\text{\bf p}}$ and $\text{\rm mkadd}^0(\text{\bf p})$ 
since later we will change them. We also take sufficiently small $\epsilon$
depending on  $\text{\bf p}$ and 
construct a (tentative) Kuranishi neighborhood, 
which we denote by 
$(V^0_{\text{\bf p}},E^0_{\text{\bf p}},\Gamma^0_{\text{\bf p}},s^0_{\text{\bf p}},\psi^0_{\text{\bf p}})$.
We consider the covering
$$
\mathcal M_{\ell,k}(\beta;L;J) 
= \bigcup_{\text{\bf p}}\psi^0_{\text{\bf p}}((s^0_{\text{\bf p}})^{-1}(0))/\Gamma^0_{\text{\bf p}}).
$$
We now take a finite set $\text{\bf p}_{\frak a}$, $\frak a \in \frak A$ and closed subset 
$W(\text{\bf p}_{\frak a})$ of 
$\psi^0_{\text{\bf p}_{\frak a}}((s^0_{\text{\bf p}_{\frak a}})^{-1}(0))/\Gamma^0_{\text{\bf p}_{\frak a}})$
such that 
$$
\bigcup_{\frak a \in \frak A}\text{\rm Int}\,W(\text{\bf p}_{\frak a}) = \mathcal M_{\ell,k}(\beta;L;J) .
$$
Now for each $\text{\bf p} = (\Sigma,v) \in \mathcal M_{\ell,k}(\beta;L;J)$ we choose 
$E_{\text{\bf p}}$ and $\text{\rm mkadd}(\text{\bf p})$ as follows.
We put 
$$
\frak A(\text{\bf p}) = \{ \frak a \in \frak A \mid \text{\bf p}  
\in W(\text{\bf p}_{\frak a})\}.
$$
then 
\begin{equation}
\text{\rm mkadd}(\text{\bf p}) 
= \bigcup_{\frak a \in \frak A(\text{\bf p})} \text{\rm mkadd}^0(\text{\bf p}_{\frak a}) 
\end{equation}
and
\begin{equation}
E_{\text{\bf p}} = \bigoplus_{\frak a \in \frak A(\text{\bf p})} E^0{\text{\bf p}}.
\end{equation}
We remark that we may regard
$\text{\rm mkadd}^0(\text{\bf p}_{\frak a})  \subset \Sigma
$, 
and 
$
E^0_{\text{\bf p}} \subset L^{p}(\Sigma,v^*TM\otimes\Lambda^{01}) 
$
by using $\psi_{\text{\bf p}_{\frak a}}$.
We can use them to define 
Kuranishi neighborhood 
$(V_{\text{\bf p}},E_{\text{\bf p}},\Gamma_{\text{\bf p}},s_{\text{\bf p}},\psi_{\text{\bf p}})$ 
of each point in $\mathcal M_{\ell,k}(\beta;L;J)$.
\par
Using the closed-ness of the set $W(\text{\bf p}_{\frak a})$ we can prove that 
if $\text{\bf q} \in \psi_{\text{\bf p}}((s_{\text{\bf p}})^{-1}(0))/\Gamma_{\text{\bf p}})$ then
$
E_{\text{\bf q}} \subseteq E_{\text{\bf p}}
$
and 
$\text{\rm mkadd}^0(\text{\bf q}) 
\subseteq \text{\rm mkadd}^0(\text{\bf p})$.
We can use it to construct coordinate change and then 
obtain Kuranishi structure.
\par
We remark that here we construct Kuranishi structure on $\mathcal M_{\ell,k}(\beta;L;J)$ 
for each of $\ell,k,\beta$ individually. We actually 
need to construct them so that they are related to each other at their boundaries.
In next section we will do it, in a way slightly different from \cite{FOOO080}.
Namely in \cite{FOOO080} the constructions are not compatible with 
forgetful map.  We will modify it so that it is compatible with the forgetful map 
of boundary marked points. For this purpose we include $k=0$ (the case 
of no boundary marked points.)
This is the technical heart of the whole construction of this paper.
\par
We will use the following notions in the next section.
\begin{dfn}\label{stcontwksub}
Let $\mathcal M$ be a Kuranishi space and $N$ be a smooth manifold.
A {\it strongly continuous smooth} map $f : \mathcal M \to N$ is a 
family $f = \{f_{\text{\bf p}}\}$ of $\Gamma_{\text{\bf p}}$ equivariant smooth maps 
$
f_{\text{\bf p}} : V_{\text{\bf p}} \to N
$
which induces 
$
f_{\text{\bf p}} : V_{\text{\bf p}}/\Gamma_{\text{\bf p}} \to N
$
and such that
$
f_{\text{\bf p}} \circ \overline{\varphi}_{\text{\bf p}\text{\bf q}}
= f_{\text{\bf q}}
$
on $V_{\text{\bf q}\text{\bf p}}/\Gamma_{\text{\bf q}}$.
\par
We say the  $\{f_{\text{\bf p}}\}$ is {\it weakly submersive} 
if each of $f_{\text{\bf p}}$ is 
a submersion.
\end{dfn}
In case Kuranishi space has boundary or corner, 
for the map to be submersive, we require that the restriction 
to each stratum is submersive.
\par
We consider the moduli spaces $\mathcal M_{\ell,k}(\beta;L;J)$
and $\mathcal M_{\ell}^{\text{\rm cl}}(\alpha;J)$. 
The evaluation at marked points induces maps
$$
\aligned
&ev = (ev^{\text{\rm int}},ev) = ((ev^{\text{\rm int}}_1,\ldots,ev^{\text{\rm int}}_{\ell}),(ev_0,\ldots,ev_{k-1})) : 
\mathcal M_{\ell,k}(\beta) \to M^{\ell} \times L^k, \\
& ev = ev^{\text{\rm int}} = (ev^{\text{\rm int}}_1,\ldots,ev^{\text{\rm int}}_{\ell}) : 
\mathcal M_{\ell}^{\text{\rm cl}}(\tilde\beta) \to M^{\ell}.
\endaligned$$
These maps are strongly continuous. In \cite{FOOO080} the Kuranishi structure is chosen 
so that they are weakly submersive. 
In the next section we do {\it not} choose so. 
We will explain the reason in Remark \ref{remwksubmer}.
\par
We next review fiber product of Kuranishi structures.
Let $\mathcal M_1$, $\mathcal M_2$ be Kuranishi spaces and 
$f = \{f_{\text{\bf p}}\}$,  $g = \{g_{\text{\bf p}'}\}$ be strongly 
continuous maps from them to a manifold $N$.
We consider 
\begin{equation}\label{fiberprodform}
\mathcal M_1 \,{}_{f}\times_g \mathcal M_2 
= \{ (x,y) \in \mathcal M_1 \times \mathcal M_2 
\mid f(x) = g(y)\}.
\end{equation}
\begin{lmm}\label{kurafiber}
If either $f$ or $g$ are weakly submersive then the fiber product 
$(\ref{fiberprodform})$ has a Kuranishi structure.
\end{lmm}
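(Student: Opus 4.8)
The plan is to build the Kuranishi structure on $\mathcal M_1 \,{}_{f}\times_g \mathcal M_2$ chart by chart, taking fiber products of the Kuranishi data of the two factors. Assume without loss of generality that $g$ is weakly submersive. Let $(x,y)$ be a point of the fiber product, and let $\text{\bf p} \in \mathcal M_1$, $\text{\bf q} \in \mathcal M_2$ be points whose Kuranishi neighborhoods $(V_{\text{\bf p}},E_{\text{\bf p}},\Gamma_{\text{\bf p}},\psi_{\text{\bf p}},s_{\text{\bf p}})$, $(V_{\text{\bf q}},E_{\text{\bf q}},\Gamma_{\text{\bf q}},\psi_{\text{\bf q}},s_{\text{\bf q}})$ carry $x$, $y$, so that $f_{\text{\bf p}}(x) = g_{\text{\bf q}}(y)$. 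First I would set
\[
V_{(\text{\bf p},\text{\bf q})} = V_{\text{\bf p}} \,{}_{f_{\text{\bf p}}}\times_{g_{\text{\bf q}}} V_{\text{\bf q}} = \{(\xi,\eta) \mid f_{\text{\bf p}}(\xi)=g_{\text{\bf q}}(\eta)\},
\]
with projections $\pi_1,\pi_2$. Since $g_{\text{\bf q}}$ is a submersion this is a smooth manifold (with corners when $V_{\text{\bf q}}$ has corners) of dimension $\dim V_{\text{\bf p}}+\dim V_{\text{\bf q}}-\dim N$. I then put $\Gamma_{(\text{\bf p},\text{\bf q})}=\Gamma_{\text{\bf p}}\times\Gamma_{\text{\bf q}}$ acting componentwise (the action descends because $f_{\text{\bf p}},g_{\text{\bf q}}$ are equivariant for the trivial $\Gamma$-action on $N$), $E_{(\text{\bf p},\text{\bf q})}=\pi_1^*E_{\text{\bf p}}\oplus\pi_2^*E_{\text{\bf q}}$, and $s_{(\text{\bf p},\text{\bf q})}=(s_{\text{\bf p}}\circ\pi_1)\oplus(s_{\text{\bf q}}\circ\pi_2)$. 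Then $s_{(\text{\bf p},\text{\bf q})}^{-1}(0)=s_{\text{\bf p}}^{-1}(0)\,{}_{f}\times_g s_{\text{\bf q}}^{-1}(0)$, and $\psi_{\text{\bf p}}\times\psi_{\text{\bf q}}$ descends to a homeomorphism $\psi_{(\text{\bf p},\text{\bf q})}$ of the $\Gamma_{(\text{\bf p},\text{\bf q})}$-quotient onto an open neighborhood of $(x,y)$ in the fiber product. This is a Kuranishi neighborhood of $(x,y)$.

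Next I would construct the coordinate changes. If $(x',y')\in\mathrm{Im}\,\psi_{(\text{\bf p},\text{\bf q})}$ then $x'$, $y'$ lie in the charts at $\text{\bf p}$, $\text{\bf q}$, and the Kuranishi structures of $\mathcal M_1$, $\mathcal M_2$ supply coordinate transformations $(\hat\phi^{1},\phi^{1},h^{1})$, $(\hat\phi^{2},\phi^{2},h^{2})$ from the charts at the chosen points $\text{\bf p}'$, $\text{\bf q}'$ to those at $\text{\bf p}$, $\text{\bf q}$. Because $f,g$ are strongly continuous, $f_{\text{\bf p}}\circ\phi^{1}=f_{\text{\bf p}'}$ and $g_{\text{\bf q}}\circ\phi^{2}=g_{\text{\bf q}'}$ on the relevant overlaps, so $\phi^{1}\times\phi^{2}$ restricts to a smooth embedding of an open subset of $V_{(\text{\bf p}',\text{\bf q}')}$ into $V_{(\text{\bf p},\text{\bf q})}$; together with $\hat\phi^{1}\oplus\hat\phi^{2}$ on obstruction bundles and $h^{1}\times h^{2}$ on groups this is a coordinate transformation for the fiber product. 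Its seven defining properties follow directly from those of the two given transformations, and the compatibility conditions among coordinate transformations are inherited factorwise.

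Finally I would check that the space has a tangent bundle. Submersivity of $g_{\text{\bf q}}$ yields a canonical isomorphism
\[
N_{(\phi^{1}\times\phi^{2})(V_{(\text{\bf p}',\text{\bf q}')})}V_{(\text{\bf p},\text{\bf q})} \cong \pi_1^*\,N_{\phi^{1}(V_{\text{\bf p}'})}V_{\text{\bf p}} \;\oplus\; \pi_2^*\,N_{\phi^{2}(V_{\text{\bf q}'})}V_{\text{\bf q}},
\]
and $E_{(\text{\bf p},\text{\bf q})}/\hat\phi(E_{(\text{\bf p}',\text{\bf q}')})$ is the direct sum of the two corresponding obstruction-bundle quotients; under these splittings $d_{\mathrm{fiber}}s_{(\text{\bf p},\text{\bf q})}$ is the direct sum of $d_{\mathrm{fiber}}s_{\text{\bf p}}$ and $d_{\mathrm{fiber}}s_{\text{\bf q}}$, so $(\ref{identifi})$ for the fiber product follows from $(\ref{identifi})$ for $\mathcal M_1$ and $\mathcal M_2$. (If both factors are oriented, the same computation, recording the order of the two factors, orients the fiber product.)

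The one genuinely delicate point — and the reason the weak submersivity hypothesis is phrased, in the remark preceding the lemma, in terms of the strata — is the corner structure: one needs that the fiber product of manifolds with corners along a map that is submersive on each stratum is again a manifold with corners with the expected corner stratification, and that the normal- and obstruction-bundle splittings above respect this stratification, so that the tangent bundle condition (and orientations, when present) hold on every stratum and not merely the top one. Granting this, the remainder of the argument is bookkeeping; a detailed treatment of this fiber-product construction, including its behavior under iteration, is given in \cite{FO} and \cite{FOOO080}, and the present proof amounts to assembling those ingredients.
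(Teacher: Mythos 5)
Your construction is correct and is essentially the argument the paper defers to: the paper does not prove Lemma \ref{kurafiber} itself but cites \cite{FOOO080} Section A1.3, where the Kuranishi structure on a fiber product is built exactly as you do — chart by chart, with $V_{(\text{\bf p},\text{\bf q})} = V_{\text{\bf p}} \times_N V_{\text{\bf q}}$, product isotropy groups, direct-sum obstruction bundles and Kuranishi maps, product coordinate changes, and the normal-bundle splitting giving the tangent-bundle condition stratumwise. Your closing remark correctly flags the only delicate point (corners and stratified submersivity), which is also where \cite{FOOO080} expends its care.
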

See \cite{FOOO080} Section A1.3 for its proof. 
Let $h = \{h_{\text{\bf p}'}\} : \mathcal M_2 \to N'$ be another strongly 
continuous map. 
It induces a strongly continuous map 
\begin{equation}\label{fiberprodformh}
\mathcal M_1 \,{}_{f}\times_g \mathcal M_2 \to N'
\end{equation}
in an obvious way.
\begin{lmm}\label{kurafibermap}
If $f$ and $h$ are weakly submersive then the map 
$(\ref{fiberprodformh})$ is also weakly submersive.
\end{lmm}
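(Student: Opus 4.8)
The statement to prove is Lemma \ref{kurafibermap}: if $f : \mathcal M_1 \to N$ and $h : \mathcal M_2 \to N'$ are weakly submersive strongly continuous maps, then the induced map $\mathcal M_1 \,{}_f\times_g \mathcal M_2 \to N'$ of \eqref{fiberprodformh} is weakly submersive. The approach is entirely local and chart-by-chart: we unwind the construction of the Kuranishi structure on the fiber product supplied by Lemma \ref{kurafiber}, read off what the chart maps to $N'$ are, and check the submersion condition on each chart (and, in the bordered case, on each stratum).

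First I would recall the local model from \cite{FOOO080} Section A1.3. Near a point $(\text{\bf p}, \text{\bf p}')$ of the fiber product, with Kuranishi neighborhoods $(V_{\text{\bf p}}, E_{\text{\bf p}}, \Gamma_{\text{\bf p}}, \psi_{\text{\bf p}}, s_{\text{\bf p}})$ and $(V_{\text{\bf p}'}, E_{\text{\bf p}'}, \Gamma_{\text{\bf p}'}, \psi_{\text{\bf p}'}, s_{\text{\bf p}'})$, the Kuranishi neighborhood of the fiber product is built on $V_{\text{\bf p}} \,{}_{f_{\text{\bf p}}}\!\times_{g_{\text{\bf p}'}} V_{\text{\bf p}'}$, with obstruction bundle the pullback of $E_{\text{\bf p}} \boxtimes E_{\text{\bf p}'}$, group $\Gamma_{\text{\bf p}} \times \Gamma_{\text{\bf p}'}$, and Kuranishi map $s_{\text{\bf p}} \oplus s_{\text{\bf p}'}$. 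Since (say) $g_{\text{\bf p}'}$ is a submersion, this manifold fiber product is transverse, hence a smooth manifold (with corners in the bordered case), and the induced map to $N'$ in the chart is simply the composite $V_{\text{\bf p}} \,{}_{f_{\text{\bf p}}}\!\times_{g_{\text{\bf p}'}} V_{\text{\bf p}'} \to V_{\text{\bf p}'} \xrightarrow{h_{\text{\bf p}'}} N'$, where the first arrow is the projection onto the second factor.

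The heart of the argument is then the following elementary fact about manifolds: if $g : V' \to N$ is a submersion and $h : V' \to N'$ is a submersion, then for any smooth $f : V \to N$ the projection $V \,{}_f\times_g V' \to V'$ is a submersion, so $h$ composed with it is again a submersion. I would prove this by a direct tangent-space computation: at a point $(x, x')$ of the fiber product, $T_{(x,x')}(V \,{}_f\times_g V') = \{(\xi, \xi') : d f_x(\xi) = d g_{x'}(\xi')\}$; since $dg_{x'}$ is surjective, for any target vector $\eta' \in T_{x'}V'$ we can set $\xi' = \eta'$ and, because $dg_{x'}$ is onto, find $\xi$ with $df_x(\xi) = dg_{x'}(\eta')$, giving a tangent vector of the fiber product projecting to $\eta'$. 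Composing with the surjection $dh_{x'}$ shows the map to $N'$ is a submersion. For the bordered/cornered case I would note that the manifold fiber product as constructed is compatible with the stratifications by corner-type, the component maps were assumed stratum-wise submersive (per the convention stated just after Definition \ref{stcontwksub}), and the transverse fiber product of stratified maps is again stratum-wise submersive by the same linear-algebra argument applied to each stratum.

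Finally I would check compatibility with coordinate changes — that these chart-wise maps glue into a strongly continuous map in the sense of Definition \ref{stcontwksub} — but this is immediate since $h = \{h_{\text{\bf p}'}\}$ is already strongly continuous and the coordinate changes on the fiber product are, by the construction of Lemma \ref{kurafiber}, induced by the coordinate changes of $\mathcal M_1$ and $\mathcal M_2$; the relevant commuting squares follow from those for $h$. I do not expect any serious obstacle here: the only mild subtlety is bookkeeping the corner strata in the bordered case, and making sure the transversality giving the smooth manifold structure is the same transversality that makes the projection a submersion — which it is, since both come from surjectivity of $dg_{\text{\bf p}'}$. Everything else is a routine unwinding of the construction already cited for Lemma \ref{kurafiber}.
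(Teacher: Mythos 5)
Your approach is the right one and essentially matches the paper's terse proof — reduce to the chart-level statement about smooth fiber products and quote the construction of the fiber product Kuranishi structure. But you have the names swapped in the crucial step, and the mislabeling turns your stated elementary fact into a false statement. The hypothesis of the lemma is that $f : \mathcal M_1 \to N$ and $h : \mathcal M_2 \to N'$ are weakly submersive — not $g : \mathcal M_2 \to N$. You wrote ``(say) $g_{\text{\bf p}'}$ is a submersion'' to justify the existence of the fiber product, and you stated your manifold-level fact as ``if $g : V' \to N$ is a submersion and $h : V' \to N'$ is a submersion, then $\ldots$ the projection $V \,{}_f\times_g V' \to V'$ is a submersion.'' Both attributions are wrong, and the latter fact is false as stated: if $g$ is a submersion it is the projection onto $V$ (the opposite factor), not $V'$, that is a submersion. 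A counterexample: take $V = \{\mathrm{pt}\}$, $V' = N = \R$, $g = \mathrm{id}$, $f$ constant; then $g$ is a submersion but the fiber product is a single point and the projection to $V'$ is not onto.

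The correct fact — which is what your tangent-space computation actually proves once the labels are fixed — is that if $f : V \to N$ is a submersion then the projection $V \,{}_f\times_g V' \to V'$ is a submersion. Your identification $T_{(x,x')}\bigl(V \,{}_f\times_g V'\bigr) = \{(\xi,\xi') : df_x(\xi) = dg_{x'}(\xi')\}$ is right, and to hit a prescribed $\eta' \in T_{x'}V'$ you set $\xi' = \eta'$ and look for $\xi$ with $df_x(\xi) = dg_{x'}(\eta')$; but the existence of such $\xi$ uses surjectivity of $df_x$, not of $dg_{x'}$ as you wrote. Since the lemma does assume $f$ weakly submersive, the argument closes once you fix this. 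The rest of your proposal — reading off the chart-level map as $h_{\text{\bf p}'}$ composed with the projection to $V_{\text{\bf p}'}$, handling the corner strata stratum-wise, and checking compatibility with coordinate changes — is correct and is precisely what the paper's one-line proof is pointing at.
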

The proof is immediate from the corresponding statement 
for the submersion of smooth maps between manifolds 
and from the construction of fiber product in \cite{FOOO080}.
\section{Forgetfulmap compatible Kuranishi structure}
\label{Seckuranishiforget}
Let $(M,\omega)$ be a symplectic manifold and $L$ its Lagrangian submanifold.
For $\beta \in H_2(M,L;\Z)$, we defined the moduli space $\mathcal M_{\ell,k}(\beta)$
as in Section \ref{kurareview}.
In Section \ref{kurareview}, we wrote $\mathcal M_{\ell,k}(\beta;L;J)$ or 
$\mathcal M_{\ell}^{\text{\rm cl}}(\alpha;J)$. 
Hereafter we omit $J$ from notation when no confusion can occur.
\par
We include the case $k=0$. In that case we compactify it by adding 
$\mathcal M^{\text{\rm cl}}_{1+\ell}(\tilde{\beta}) \times L$ if $\partial\beta = 0$ as is explained in 
\cite{FOOO080} Subsection 7.4.1. Namely we have an embedding
\begin{equation}\label{closeopencllapse}
\mathfrak{clop} : \mathcal M^{\text{\rm cl}}_{1+\ell}(\tilde{\beta}) \,{}_{ev^{\text{\rm int}}_0}\times_M L 
\to  \mathcal M_{\ell,0}(\beta).
\end{equation}
Note $\mathcal M^{\text{\rm cl}}_{1+\ell}(\tilde{\beta})$ is a moduli space 
of pseudo-holomorphic map from genus 0 stable map (without boundary).
Here $\tilde{\beta} \in H_2(X;\Z)$ is a class which goes to the class 
$\beta$ by the natural homomorphism $H_2(X;\Z) \to H_2(X,L;\Z)$.
We fix compatible system of Kuranishi structures on them so that evaluation maps are 
submersion. And we will not change them.
\par
We have a forgetful map
\begin{equation}
\mathfrak{forget} : \mathcal M_{\ell,1}(\beta) \to \mathcal M_{\ell,0}(\beta).
\end{equation} 
We also consider the embedding
\begin{equation}\label{gluemap}
\mathfrak{glue} : \mathcal M_{\ell_1,1}(\beta_1)\, {}_{ev_0} \times_{ev_0} \mathcal M_{\ell_2,1}(\beta_2)
\to \partial\mathcal M_{\ell_1+\ell_2,0}(\beta_1+\beta_2).
\end{equation}
\begin{dfn}\label{forgetcomp}
Kuranishi structures of $\mathcal M_{\ell,1}(\beta)$ and of $\mathcal M_{\ell,0}(\beta)$ are 
said to be {\it forgetfulmap compatible} to each other if the following holds.
\par
Let $\tilde{\text{\bf p}} = [(\Sigma,z_0,\vec z\,^{\text{int}}),u] \in \mathcal M_{\ell,1}(\beta)$ and 
$\text{\bf p} =\mathfrak{forget}([(\Sigma,z_0,\vec z\,^{\text{int}}),u]) = [(\Sigma,\vec z\,^{\text{int}}),u] \in \mathcal M_{\ell,0}(\beta)$.
We first consider the case when $\text{\bf p}$ is not in the image of (\ref{closeopencllapse}).
\par
Let $(V_{\tilde{\text{\bf p}}},E_{\tilde{\text{\bf p}}},\Gamma_{\tilde{\text{\bf p}}},
\psi_{\tilde{\text{\bf p}}},s_{\tilde{\text{\bf p}}})$ and 
$(V_{\text{\bf p}},E_{{\text{\bf p}}},\Gamma_{{\text{\bf p}}},
\psi_{\text{\bf p}},s_{{\text{\bf p}}})$ be 
a Kuranishi neighborhoods of them, respectively. Then:
\par\smallskip
\begin{enumerate}
\item $V_{\tilde{\text{\bf p}}} = V_{\text{\bf p}} \times (0,1)$.
\item $E_{\tilde{\text{\bf p}}} = E_{\text{\bf p}} \times (0,1)$.
\item $\Gamma_{\tilde{\text{\bf p}}} = \Gamma_{{\text{\bf p}}}$. 
The action of  $\Gamma_{\tilde{\text{\bf p}}}$ preserves identifications 
given in 1 and 2, where the action to the factor $(0,1)$ is trivial.
\item $s_{\tilde{\text{\bf p}}}(x,t) = (s_{{\text{\bf p}}}(x),t)$ by the identification given in 1 and 2.
\item $\mathfrak{forget} \circ \psi_{\tilde{\text{\bf p}}}$ coincides 
with the composition of $\psi_{{\text{\bf p}}}$ and the projection to the 
first factor.
\end{enumerate}
\par\smallskip
We next consider the case when $\text{\bf p}$ is in the image of (\ref{closeopencllapse}).
This implies that on
the disc component $\Sigma_0$ containing $z_0$, 
$u$ is a constant map, and that $\Sigma_0$ has exactly one 
singular point and has no interior or boundary marked points other than $z_0$.
Let $\overline{\Sigma}$ be a closed semistable curve of genus $0$ 
without boundary, which is obtained 
by removing $\Sigma_0$ from $\Sigma$.
Let $z_0^{\text{int}}$ be the point $\overline{\Sigma} \cap \Sigma_0 
\subset \overline{\Sigma}$.
We put $\vec z_+^{\,\,\text{int}} = (z_0^{\text{int}},\vec z\,^{\text{int}})$.
Then $(((\overline{\Sigma},\vec z_+^{\,\,\text{int}},u),u(z_0^{\text{int}}))$ is an element of 
$\mathcal M^{\text{\rm cl}}_{1+\ell}(\tilde{\beta}) \times L$ such that
$$
\mathfrak{clop}(((\overline{\Sigma},\vec z_+^{\,\,\text{int}},u),u(z_0^{\text{int}}))
= \text{\bf p}.
$$
\par
Let $(V_{\tilde{\text{\bf p}}},E_{\tilde{\text{\bf p}}},\Gamma_{\tilde{\text{\bf p}}},\psi_{\tilde{\text{\bf p}}},s_{\tilde{\text{\bf p}}})$ be 
a Kuranishi neighborhood of $\tilde{\text{\bf p}}$.
\par
Let $\overline{\text{\bf p}} = [\overline{\Sigma},\vec z_+^{\,\,\text{int}},u]$ and 
$(V_{\overline{\text{\bf p}}},E_{\overline{\text{\bf p}}},\Gamma_{\overline{\text{\bf p}}},
\psi_{\overline{\text{\bf p}}},s_{\overline{\text{\bf p}}})$
be its Kuranishi neighborhood in $\mathcal M^{\text{\rm cl}}_{1+\ell}(\tilde{\beta})$.
Since $ev_{0}^{\text{\rm int}} : \mathcal M^{\text{\rm cl}}_{1+\ell}(\tilde{\beta}) \to M$ 
is strogly continuous and weakly submersive, it induces a submersion
$
ev_{0}^{\text{\rm int}} : V_{\tilde{\text{\bf p}}} \to M.
$
Let
$
V_{\tilde{\text{\bf p}}} \cap L = (ev_{0}^{\text{\rm int}})^{-1}(L) \subset V_{\tilde{\text{\bf p}}}.
$
The fiber product Kuranishi structure on 
$\mathcal M^{\text{\rm cl}}_{1+\ell}(\tilde{\beta}) \,{}_{ev^{\text{\rm int}}_0}\times_M L$
is 
$(V_{\overline{\text{\bf p}}}\cap L,E_{\overline{\text{\bf p}}}\cap L,\Gamma_{\overline{\text{\bf p}}},
\psi_{\overline{\text{\bf p}}},s_{\overline{\text{\bf p}}})$.
(Here we write $E_{\overline{\text{\bf p}}} \cap L$ for the restrictionof 
$E_{\overline{\text{\bf p}}}$  to 
$V_{\overline{\text{\bf p}}}\cap L$ by abuse of notation. We also 
write $s_{\overline{\text{\bf p}}}$ or $\psi_{\overline{\text{\bf p}}}$ for its appropriate restriction.)
\smallskip
\begin{enumerate}
\item $V_{\tilde{\text{\bf p}}} = (V_{\overline{\text{\bf p}}}\cap L) \times [0,1)$.
\item $E_{\tilde{\text{\bf p}}} = E_{\overline{\text{\bf p}}}\cap L \times (0,1)$.
\item $\Gamma_{\tilde{\text{\bf p}}} = \Gamma_{\overline{\text{\bf p}}}$. 
The action of  $\Gamma_{\tilde{\text{\bf p}}}$ preserves identifications 
given in 1 and 2, where the action to the factor $(0,1)$ is trivial.
\item $s_{\tilde{\text{\bf p}}}(x,t) = (s_{{\text{\bf p}}}(x),t)$ by the identification given in 1 and 2.
\item $\mathfrak{forget} \circ \psi_{\tilde{\text{\bf p}}}$ coincides with 
the composition of $\psi_{\overline{\text{\bf p}}}$
and the projection to the 
first factor.
\end{enumerate}
\end{dfn}
The main result of this section is:
\begin{thm}\label{forgetcompKUra}
There exists a system of Kuranishi structures on $\mathcal M_{\ell,1}(\beta)$ and 
$\mathcal M_{\ell,0}(\beta)$,
such that: 
\smallskip
\begin{enumerate}
\item They are forgetfulmap compatible in the sense of Definition 
\ref{forgetcomp}.
\item $ev_0 : \mathcal M_{\ell,1}(\beta) \to L$ is strongly submersive.
\item They are compatible with $(\ref{gluemap})$. Namely the fiber 
product Kuranishi structure on 
$\mathcal M_{\ell_1,1}(\beta_1)\, {}_{ev_0} \times_{ev_0} \mathcal M_{\ell_2,1}(\beta_2)$
(which is well-defined by $2$) coincides with the pull back of the 
$\partial\mathcal M_{\ell_1+\ell_2,0}(\beta_1+\beta_2)$ by $\mathfrak{glue}$.
\item They are component-wise in the sense of Definition \ref{defcompnentwise}.
\end{enumerate}
\end{thm}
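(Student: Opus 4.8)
The plan is to construct the Kuranishi structures on $\mathcal M_{\ell,0}(\beta)$ first, then pull them back to $\mathcal M_{\ell,1}(\beta)$ via the forgetful map, and only afterwards check that $ev_0$ can be made a submersion by a further small enlargement of the obstruction spaces. First I would fix, once and for all, compatible Kuranishi structures on the moduli spaces $\mathcal M^{\text{cl}}_{1+\ell}(\tilde\beta)$ of genus-zero stable maps without boundary, chosen so that the interior evaluation maps are weakly submersive; these are the structures referred to after \eqref{closeopencllapse}, and they are never changed. For a point $\text{\bf p}=(\Sigma,\vec z\,^{\text{int}},u)\in\mathcal M_{\ell,0}(\beta)$ that is \emph{not} in the image of $\mathfrak{clop}$, I would run the construction reviewed in Section \ref{kurareview}, but insisting that the obstruction spaces $E_a\subset C^\infty_0(U_a;v^*TM\otimes\Lambda^{01})$ be chosen \emph{component-wise} in the sense of Definition \ref{defcompnentwise}: each $E_a$ depends only on $(\Sigma_a,v|_{\Sigma_a})$. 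One must check that component-wise choices still achieve Fredholm regularity — this is possible because surjectivity of $D_v\overline\partial+\bigoplus_a E_a$ can be arranged component by component (the cokernel of the linearization on $\Sigma$ is built from the cokernels on the $\Sigma_a$ together with the finitely many matching conditions \eqref{compatsing}, so enlarging each $E_a$ separately suffices). For $\text{\bf p}$ in the image of $\mathfrak{clop}$ the Kuranishi neighbourhood is forced to be a product with $[0,1)$ built from the already-fixed structure on $\mathcal M^{\text{cl}}_{1+\ell}(\tilde\beta)\,{}_{ev_0^{\text{int}}}\times_M L$, exactly as prescribed in the second half of Definition \ref{forgetcomp}; here I use that $ev_0^{\text{int}}$ is weakly submersive so the fibre product Kuranishi structure exists (Lemma \ref{kurafiber}).

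Next I would \emph{define} the Kuranishi structure on $\mathcal M_{\ell,1}(\beta)$ by pullback: for $\tilde{\text{\bf p}}=(\Sigma,z_0,\vec z\,^{\text{int}},u)$ with $\text{\bf p}=\mathfrak{forget}(\tilde{\text{\bf p}})$, set $V_{\tilde{\text{\bf p}}}=V_{\text{\bf p}}\times(0,1)$ (or $\times[0,1)$ in the $\mathfrak{clop}$ case), $E_{\tilde{\text{\bf p}}}=E_{\text{\bf p}}\times(0,1)$, $\Gamma_{\tilde{\text{\bf p}}}=\Gamma_{\text{\bf p}}$, and $s_{\tilde{\text{\bf p}}}(x,t)=(s_{\text{\bf p}}(x),t)$, with the extra coordinate recording the position of $z_0$ on the boundary circle of the (stabilised) domain. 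Properties 1–5 of Definition \ref{forgetcomp} are then true by construction, and property 4 of the theorem is inherited from the component-wise choice downstairs. The only subtle point in the pullback is the behaviour where $z_0$ collides with an interior marked point, a singular point, or runs around the whole boundary circle: the $(0,1)$-factor there must be glued to the neighbouring charts correctly, and in the degenerate case it limits onto the $\mathfrak{clop}$-stratum, which is precisely why that stratum was set up as a product with $[0,1)$. This gluing is bookkeeping parallel to the domain-stabilisation discussion in Section \ref{kurareview} and \cite{FO} Section 12.

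For item 2, strong submersivity of $ev_0:\mathcal M_{\ell,1}(\beta)\to L$, the product Kuranishi structure just defined is \emph{not} good enough as it stands: on the factor $V_{\text{\bf p}}\times(0,1)$, $ev_0$ only moves along the $(0,1)$-direction (the position of $z_0$), so its differential need not be surjective onto $T_{ev_0}L$. I would fix this by enlarging the obstruction spaces once more in an $L$-direction: add to $E_a$ (for $a$ the boundary component carrying $z_0$) finitely many sections supported near $z_0$ so that, after stabilisation, the combined map $(ev_0, s_{\tilde{\text{\bf p}}})$ — equivalently the map $V_{\text{\bf p}}\times(0,1)\to L$ together with the enlarged Kuranishi map — becomes submersive; concretely one uses the freedom in choosing $XM_p$ and in the defining equation $\overline\partial v'\in\bigoplus E_a$ to let $v'$ vary transversally near the boundary. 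Because the enlargement is done componentwise and near $z_0$, properties 4 and 1 survive, and because it is done compatibly with the forgetful and gluing structure it is still coherent across charts. Finally, item 3 (compatibility with $\mathfrak{glue}$): the normalised boundary stratum of $\mathcal M_{\ell_1+\ell_2,0}(\beta_1+\beta_2)$ is the fibre product $\mathcal M_{\ell_1,1}(\beta_1)\,{}_{ev_0}\times_{ev_0}\mathcal M_{\ell_2,1}(\beta_2)$, and since $ev_0$ is now weakly submersive on each factor, the fibre-product Kuranishi structure of Lemma \ref{kurafiber} is defined; one checks it agrees with the restriction of the structure on $\mathcal M_{\ell_1+\ell_2,0}$, which holds because near such a boundary point the obstruction spaces on the two sides were chosen as the direct sum coming from the two bubble components — this is exactly where the component-wise property is used, and the matching is tautological on obstruction spaces and a standard gluing identification on the $V$'s.

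\textbf{Main obstacle.} The hard part will be carrying out all three enlargements — component-wise obstruction spaces, the $(0,1)$-factor pullback with its degenerations onto the $\mathfrak{clop}$-stratum, and the extra $L$-direction needed for $ev_0$ — \emph{simultaneously and coherently} over the whole moduli space, i.e.\ making the $W(\text{\bf p}_{\frak a})$-covering argument of Section \ref{kurareview} respect the forgetful map, the gluing map, and componentwise-ness at once, so that the three requirements of the theorem do not conflict with each other on chart overlaps. This is the step the paper identifies as "the technical heart," and it is where essentially all the work of Sections \ref{kurareview}--\ref{fgcompfamisec} goes.
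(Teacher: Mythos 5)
Your overall plan — build the Kuranishi structure on $\mathcal M_{\ell,0}(\beta)$ first with component‑wise obstruction spaces, pull back to $\mathcal M_{\ell,1}(\beta)$, and handle the $\mathfrak{clop}$ stratum via the closed moduli space — matches the paper's architecture, and you correctly see that the hard point is making $ev_0$ submersive after the pullback. But the step you propose to achieve that is where the argument breaks, and it is precisely the place where the paper inserts its key lemma.

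You write that after pulling back you would ``enlarge the obstruction spaces once more in an $L$-direction: add to $E_a$ finitely many sections supported near $z_0$.'' This is not available to you. First, the obstruction sections in this construction are required to be compactly supported in the interior of the disc, away from boundary, marked, and singular points (this is built into the choice $E_a\subset C^\infty_0(U_a;v^*TM\otimes\Lambda^{01})$ with $\overline{U}_a\subset\operatorname{Int}\Sigma_a$), so ``supported near $z_0\in\partial D^2$'' is disallowed from the start. Second, and more fundamentally, any enlargement performed on $\mathcal M_{\ell,1}(\beta)$ that \emph{depends on the position of $z_0$} destroys the very forgetful-map compatibility you are trying to establish: Definition \ref{forgetcomp} forces $E_{\tilde{\text{\bf p}}}=E_{\text{\bf p}}\times(0,1)$, i.e.\ the obstruction bundle upstairs must be the $z_0$-independent pullback of the one downstairs. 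There is no room for a post-hoc $z_0$-dependent correction.

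The paper resolves this with Lemma \ref{ateachpointkuracomp}: one can choose a \emph{single} finite-dimensional $E(u)$, supported in a fixed interior open set $U\subset\operatorname{Int}D^2$ (hence already legal on $\mathcal M_{\ell,0}(\beta)$, with no marked point in sight), so large that $Ev_{z_0}\colon K(u)=(D_u\overline\partial)^{-1}(E(u))\to T_{u(z_0)}L$ is surjective for \emph{every} $z_0\in\partial D^2$ simultaneously. The proof is a compactness argument: for each fixed $z_0$ pick $E(u;z_0)$ by unique continuation, observe surjectivity persists on a neighborhood $W(z_0)$, cover $\partial D^2$ by finitely many $W(z_i)$, and take $E(u)=\bigoplus_i E(u;z_i)$. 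With this in hand the pullback chart $V_{\text{\bf p}}\times(0,1)$ automatically has $ev_0$ submersive at every $t$, no further enlargement needed; that is the whole point. (Your claim that on the pullback chart ``$ev_0$ only moves along the $(0,1)$-direction'' is also not right: $ev_0(v',t)=v'(z_0(t))$ varies with $v'$ as well, and Lemma \ref{ateachpointkuracomp} is exactly the statement that the $V_{\text{\bf p}}$-direction alone already surjects onto $T_{v(z_0)}L$.) You should also make the induction on $\beta\cap\omega$ explicit, since it is what lets you take the fibre-product Kuranishi structure on the boundary strata and check, via Lemma \ref{kurafibermap} applied to the three boundary types (\ref{bdrycomp1})–(\ref{bdrycomp3}), that $ev_0$ stays weakly submersive there before extending inward.
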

\begin{proof}
The most essential part of the proof is the following lemma.
Let $\tilde{\text{\bf p}} = [(\Sigma,z_0,\vec z\,^{\text{int}}),u] \in \mathcal M_{\ell,1}(\beta)$ and 
$\text{\bf p} =\mathfrak{forget}([(\Sigma,z_0,\vec z\,^{\text{int}}),u]) = [(\Sigma,\vec z\,^{\text{int}}),u] \in \mathcal M_{\ell,0}(\beta)$.
We consider the case when $\text{\bf p}$ is not in the image of (\ref{closeopencllapse}).
\par
We consider the linearization of the pseudo-holomorphic curve equation (\ref{lineq}).
We consider the case $\Sigma = D^2$.
\begin{lmm}\label{ateachpointkuracomp}
For any open subset $U$ of $\text{\rm Int}\, D^2$, 
there exists a finite dimensional linear subspace $E(u)$ of 
sections of $u^*TM\otimes \Lambda^{01}$ such that 
the following holds.
\begin{enumerate}
\item Each of elements of $E(u)$ is smooth and supported in $U$.
\item We put
\begin{equation}
K(u) = (D_u\overline{\partial})^{-1}(E(u)).
\end{equation}
Then for {\bf any} $z_0 \in \partial D^2$ the map
$
Ev_{z_0} :  K(u) \to T_{u(z_0)}L
$
defined by
\begin{equation}\label{evalatz0}
Ev_{z_0}(v) = v(z_0),
\end{equation}
is surjective.
\end{enumerate}
\end{lmm}
We remark that elements of $K(u)$ are smooth by elliptic regularity. 
Therefore (\ref{evalatz0}) is well-defined.
\begin{proof}
For each {\it fixed} $z_0$ we can find $E(u;z_0)$ satisfying 1 and 
such that for 
$
K(u;z_0) = (D_u\overline{\partial})^{-1}(E(u;z_0))
$ 
the map
$
Ev_{z_0;z_0} :  K(u;z_0) \to T_{u(z_0)}L
$
defined by (\ref{evalatz0}) is surjective.
(This is a consequence of unique continuation. See \cite{FOOO080}.)
\par
Then there exists a neighborhood $W(z_0)$ of $z_0$ in 
$\partial D^2$ such that if $z \in W(z_0)$ then the map
$
Ev_{z_0;z} :  K(u;z_0) \to T_{u(z)}L
$
defined by $Ev_{z_0;z}(v) = v(z)$ is a submersion.
We cover $\partial D^2$ by finitely many $W(z_0)$ 
say $W(z_i)$, $i=1,\ldots,N$. Then 
$E(u) = \bigoplus_{i=1}^N E(u;z_i)$ has the required property.
\end{proof}
We next consider the case of the boundary point 
corresponding to 
$\mathcal M_{\ell+1}^{\text{\rm cl}}(\tilde \beta) \,{}_{ev^{\text{\rm int}}_1}\times_M\, L$.
For this purpose, we need to take Kuranishi structure on 
the moduli space of pseudo-holomorphic sphere.
\begin{lmm}\label{closedmoduliKura}
There exists a system of Kuranishi structures on 
$\mathcal M_{\ell}^{\text{\rm cl}}(\alpha)$ for various $\ell \ge 0$ and 
$\alpha \in \pi_2(M)$ with the following properties.
\par\smallskip
\begin{enumerate}
\item The action of permutation group of order $\ell !$ on $\mathcal M^{\text{\rm cl}}_{\ell}(\alpha)$ exchanging 
marked points is extended to an action of the Kuranishi structures.
\item The evaluation map $ev^{\text{\rm int}} : \mathcal M_{\ell}^{\text{\rm cl}}(\alpha) \to M^{\ell}$ 
is strongly continuous and weakly submersive.
\item 
Let $\ell_1 + \ell_2 = \ell + 2$, $\alpha_1 + \alpha_2 = \alpha$. Then 
by the embedding 
$$
\mathcal M^{\text{\rm cl}}_{\ell_1}(\alpha_1) {}_{ev^{\text{\rm int}}_1}\times_{{ev^{\text{\rm int}}_1}}
\mathcal M^{\text{\rm cl}}_{\ell_2}(\alpha_2)
\subset \mathcal M^{\text{\rm cl}}_{\ell}(\alpha)
$$
the Kuranishi structure in the right hand side restricts to the 
fiber product Kuranishi structure in the left hand side.
In particular, our Kuranishi structures are component-wise.
\end{enumerate}
\end{lmm} 
This is proved in \cite{FO}.
\par\smallskip
We now consider the boundary point $
(v,x) \in \mathcal M_{\ell+1}^{\text{\rm cl}}(\tilde \beta) \,{}_{ev^{\text{\rm int}}_1}\times_M\, L$,
where $v \in \mathcal M^{\text{\rm cl}}_{\ell+1}(\tilde \beta)$ and $x = ev^{\text{\rm int}}_1(v) \in L$.
Let $(V,\Gamma,E,s,\psi)$ be a Kuranishi neighborhood of $v$ we have taken 
in Lemma \ref{closedmoduliKura}. 
W take 
$$
V' = (ev^{\text{\rm int}}_1)^{-1}(L).
$$
This is smooth and 
$ev : V'  \to L$, (which is the restriction of $ev^{\text{\rm int}}$ to $V'$ is a submersion.
Therefore we take
$(V',\Gamma,E\vert_{V'},s\vert_{V'},\psi\vert_{(s\vert_{V'})^{-1}(0)/\Gamma})$
can be regarded as the Kuranishi neighborhood of $(v,x)$ 
in $\mathcal M^{\text{\rm cl}}_{\ell+1}(\widetilde{\beta}) \,{}_{ev^{\text{\rm int}}_1}\times_M\, L$.
We put
$$
\operatorname{Conf}^{o}(k;(\partial D^2,0)) 
= \{(z_1,\ldots,z_k) \in(\partial D^2)^k \mid z_i \text{ respects cyclic order} \}/S^1
$$
and let $\operatorname{Conf}(k;(\partial D^2,0))$ be its compactification.
Then 
$$
\operatorname{Conf}(k;(\partial D^2,0)) 
\times
V'
$$
can be regarded as a stratum of a Kuranishi neighborhood of 
the pull back of $(v,x)$ in $\mathcal M^{\text{\rm cl}}_{\ell+1}(\widetilde{\beta})$.
We can extend it to a Kuranishi neighborhood of $(v,x)$.
Then the properties 1,2 of Lemma \ref{ateachpointkuracomp} are satisfied.
\par
We thus described a way to obtain 
a space $E(u)$ satisfying the properties 1,2 of Lemma \ref{ateachpointkuracomp} 
at each point of the compactification of $\mathcal M_1(\beta)$.
\par
Now we are in the position to complete the proof of Theorem \ref{forgetcompKUra}. 
The proof is by induction on $\beta\cap \omega$.
We assume that we have chosen already 
the Kuranishi structure satisfying the conclusion of Theorem 3.1
for $\beta'$ with $\beta'\cap \omega <\beta\cap \omega$.
\par
We consider $\mathcal M_{0}(\beta)$.
A component of its boundary is
$$
\mathcal M_{1}(\beta_1) \times_L \mathcal M_{1}(\beta_2)
$$
with $\beta_1 + \beta_2 = \beta$.
We already fixed a Kuranishi structure on each of the factors.
We take the fiber product Kuranishi sructure on it.
We then lift it to a Kuranishi structure of 
a boundary component of $\mathcal M_{1}(\beta)$.
We claim that
$ev_0$ is weakly submersive for this Kuranishi structure.
In fact
the boundary component we are studying is one of the following three cases:
\begin{subequations}
\begin{equation}\label{bdrycomp1}
\mathcal M_{2}(\beta_1) \,{}_{ev_0}\times_{ev_0} \mathcal M_{1}(\beta_2)
\end{equation}
where $ev_0 : \mathcal M_{1}(\beta) \to L$ is the map $ev_1$ of the first factor.
\begin{equation}\label{bdrycomp2}
\mathcal M_{1}(\beta_1) \,{}_{ev_0}\times_{ev_0} \mathcal M_{2}(\beta_2),
\end{equation}
where $ev_0 : \mathcal M_{1}(\beta) \to L$ is the map $ev_1$ of the second factor.
\begin{equation}\label{bdrycomp3}
\mathcal M_{1}(\beta_1) \,{}_{ev_0}\times_{ev_0} \mathcal M_{1}(\beta_2),
\end{equation}
\end{subequations}
where $ev_0$ is the map 
$((\Sigma_1,v_1),(\Sigma_2,v_2)) \mapsto ev_0(\Sigma_1,v_1) = ev_0(\Sigma_2,v_2)$.
\par
We remark that (\ref{bdrycomp3}) is identified with
$$
(\mathcal M_{1}(\beta_1) \times \mathcal M_{1}(\beta_2)) 
\,{}_{(ev_0,ev_0)}\times_{(ev_1,ev_2)}\mathcal M_{3}(0). 
$$
Here $0$ in $\mathcal M_{3}(0)$ is $0 \in H_2(M,L;\Z)$.
\par
We first consdier the case (\ref{bdrycomp1}).
By induction hypothesis
$$
(ev_1,ev_0) : 
\mathcal M_{2}(\beta_1) \times \mathcal M_{1}(\beta_2) \to L^2
$$
is weakly submersive. 
It follows from Lemma \ref{kurafibermap} that $ev_0$ is submersive on (\ref{bdrycomp1}).
The case (\ref{bdrycomp2}), (\ref{bdrycomp3}) are similar.
\par
In case $[\partial \beta] = 0$
there is another boundary component 
$\mathcal M_{\ell+1}^{\text{\rm cl}}(\tilde\beta)\times_M L$ of 
$\mathcal M_{\ell,0}(\beta)$.
We already explained the way 
to impose obstruction bundle and then Kuranishi structure  on
$\mathcal M_{\ell+1}^{\text{\rm cl}}(\tilde\beta)$ so that the map $ev_0$ 
is weakly submersive.
Therefore we can 
define a Kuranishi structure on a neighborhood of this 
boundary component of
$\mathcal M_{\ell,0}(\beta)$ by extending 
fiber product Kuranishi structure.
We thus constructed the required Kuranishi structure on a 
neighborhood of the boundary of $\mathcal M_{\ell,0}(\beta)$.
Because of the inductive way to constructing our Kuranishi structures 
they are compatible at their intersections.
\par
Now we can use Lemma \ref{ateachpointkuracomp} in the same way as 
\cite{FOOO080} Section 7.2 to extend it the whole $\mathcal M_{\ell,0}(\beta)$.
We define Kuranishi structure on $\mathcal M_{\ell,1}(\beta)$ by 
taking pull back of the Kuranishi structure of $\mathcal M_{\ell,0}(\beta)$
via forgetful map. The proof of Theorem \ref{forgetcompKUra}
is now complete.
\end{proof}
We consider the forgetful map
$$
\mathfrak{forget}_{k+1,1}
: \mathcal M_{\ell,k+1}(\beta)  \to \mathcal M_{\ell,1}(\beta)
$$
forgetting 2nd,\dots,$k+1$-th marked points.
Note we enumerate marked points as $z_0,\ldots,z_k$.  We forget 
$z_1,\ldots,z_k$.
\begin{crl}\label{Corkura}
There exists a system of Kuranishi structures on 
$\mathcal M_{\ell,k+1}(\beta)$ $k\ge 0$, $\ell\ge 0$,
with the following properties.
\par\smallskip
\begin{enumerate}
\item It is compatible with $\mathfrak{forget}_{k+1,1}$.
\item It is invariant under the cyclic permutation of the 
boundary marked points.
\item It is invariant of the permutation of interior marked points.
\item $ev_0 : \mathcal M_{\ell,k+1}(\beta) \to L$ is strongly 
submersive.
\item We consider the decomposition of the boundary:
\begin{equation}\label{bdcompati0}
\aligned
\partial \mathcal M_{\ell,k+1}(\beta)
=
\bigcup_{1\le i\le j+1 \le k+1} 
&\bigcup_{\beta_1+\beta_2=\beta}
\bigcup_{L_1\cup L_2=\{1,\ldots,\ell\}} \\
&\mathcal M_{\#L_1,j-i+1}(\beta_1) {}_{ev_0} \times_{ev_i} 
\mathcal M_{\#L_2,k-j+i}(\beta_2).
\endaligned
\end{equation}
(See {\rm \cite{FOOO080}} Subsection {\rm 7.1.1.}) Then the restriction of the Kuranishi structure of 
$\mathcal M_{\ell,k+1}(\beta)$ in the left hand side 
coincides with the fiber product Kuranishi structure in 
the right hand side.
\end{enumerate}
\end{crl}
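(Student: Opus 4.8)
The plan is to \emph{define} the required Kuranishi structure on $\mathcal M_{\ell,k+1}(\beta)$ to be the pull-back, via the forgetful map $\mathfrak{forget}_{k+1,0}\colon\mathcal M_{\ell,k+1}(\beta)\to\mathcal M_{\ell,0}(\beta)$ which forgets all of $z_0,\dots,z_k$, of the Kuranishi structure on $\mathcal M_{\ell,0}(\beta)$ produced by Theorem \ref{forgetcompKUra}. Concretely, for $\tilde{\text{\bf p}}\in\mathcal M_{\ell,k+1}(\beta)$ with $\text{\bf p}=\mathfrak{forget}_{k+1,0}(\tilde{\text{\bf p}})\in\mathcal M_{\ell,0}(\beta)$ one sets $V_{\tilde{\text{\bf p}}}=V_{\text{\bf p}}\times F_{\tilde{\text{\bf p}}}$, $E_{\tilde{\text{\bf p}}}=E_{\text{\bf p}}\times F_{\tilde{\text{\bf p}}}$, $\Gamma_{\tilde{\text{\bf p}}}=\Gamma_{\text{\bf p}}$, $s_{\tilde{\text{\bf p}}}=s_{\text{\bf p}}\times\mathrm{id}$ and $\psi_{\tilde{\text{\bf p}}}$ the obvious lift, where $F_{\tilde{\text{\bf p}}}$ is a local model of the fibre of $\mathfrak{forget}_{k+1,0}$ (a neighbourhood of the configuration of the $k+1$ boundary points, with the same handling of the points at which an unstable disc component carrying a constant map is created as in Definition \ref{forgetcomp}). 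This is the $(k+1)$-fold iterate of the pull-back construction used at the end of the proof of Theorem \ref{forgetcompKUra}, and it is a Kuranishi structure for the same reason; in particular the coordinate changes are products of those downstairs with the identity on the fibre. Since $\mathfrak{forget}_{1,0}\circ\mathfrak{forget}_{k+1,1}=\mathfrak{forget}_{k+1,0}$ and the structure on $\mathcal M_{\ell,1}(\beta)$ in Theorem \ref{forgetcompKUra} is itself the pull-back of the one on $\mathcal M_{\ell,0}(\beta)$, functoriality of the pull-back, $(\mathfrak{forget}_{1,0}\circ\mathfrak{forget}_{k+1,1})^{*}=\mathfrak{forget}_{k+1,1}^{*}\circ\mathfrak{forget}_{1,0}^{*}$, shows that our structure is the pull-back of the $\mathcal M_{\ell,1}(\beta)$-structure by $\mathfrak{forget}_{k+1,1}$, which is property (1) (and, more generally, compatibility with every $\mathfrak{forget}_{k+1,k'+1}$, which we will use in the proof of (5)).

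Properties (2), (3) and (4) are then read off. For (2), the cyclic rotation $c$ of $(z_0,\dots,z_k)$ satisfies $\mathfrak{forget}_{k+1,0}\circ c=\mathfrak{forget}_{k+1,0}$ because forgetting all boundary marked points is insensitive to their labels, so the pull-back structure is automatically $c$-invariant. For (3), the construction in the proof of Theorem \ref{forgetcompKUra} can be carried out symmetrically in the interior marked points: the obstruction spaces $E(u)$ of Lemma \ref{ateachpointkuracomp} refer to no marked points, the closed-string input $\mathcal M^{\text{\rm cl}}_{1+\ell}(\tilde\beta)$ carries a permutation-invariant Kuranishi structure by Lemma \ref{closedmoduliKura}(1), and the inductive choices of the auxiliary data can be taken $S_{\ell}$-invariant; since $\mathfrak{forget}_{k+1,0}$ intertwines the two $S_{\ell}$-actions, the pull-back inherits the invariance. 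For (4), $ev_0$ on $\mathcal M_{\ell,k+1}(\beta)$ factors as $ev_0\circ\mathfrak{forget}_{k+1,1}$, and on a pulled-back chart $V_{\tilde{\text{\bf p}}}=V_{\mathfrak{forget}_{k+1,1}(\tilde{\text{\bf p}})}\times(\text{fibre})$ it is the composition of the projection with $ev_0$ on the lower chart, which is a submersion by Theorem \ref{forgetcompKUra}(2); a submersion precomposed with a surjective submersion is a submersion, so $ev_0$ is strongly submersive. Applied inductively to smaller $\beta\cap\omega$, this submersivity is exactly what makes the fibre products in property (5) well-defined via Lemma \ref{kurafiber}.

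Property (5) is the substantive point, and I would prove it by induction on $\beta\cap\omega$ (with the $\beta_i=0$ pieces being the canonical configuration-space structures). The forgetful map sends a boundary stratum $\mathcal M_{\#L_1,j-i+1}(\beta_1)\,{}_{ev_0}\times_{ev_i}\mathcal M_{\#L_2,k-j+i}(\beta_2)$ of $\mathcal M_{\ell,k+1}(\beta)$ into $\mathcal M_{\ell,0}(\beta)$ compatibly with the gluing map $\mathfrak{glue}$ of $(\ref{gluemap})$ downstairs and with the boundary embedding $(\ref{bdcompati0})$ upstairs, the forgotten boundary points being distributed to the two components and one component possibly being destabilised so that the image lies in the interior of $\mathcal M_{\ell,0}(\beta)$. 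Combining this with Theorem \ref{forgetcompKUra}(3) (the gluing compatibility of the base structures), the compatibility of the pull-back operation with fibre products (the analogue of Lemma \ref{kurafibermap}), and the inductive hypothesis that the two factors already carry the asserted pull-back structures, one identifies the restriction of our structure to the stratum with the fibre product of the structures on the factors. The main obstacle is exactly the bookkeeping in this last step: tracking which forgotten boundary points migrate to which component under degeneration, treating the cases where forgetting collapses a disc bubble, and checking that the product decompositions $V_{\tilde{\text{\bf p}}}=V_{\text{\bf p}}\times F_{\tilde{\text{\bf p}}}$ match across the stratum. No analytic input beyond Theorem \ref{forgetcompKUra} is needed.
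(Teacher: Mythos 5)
Your proposal is correct and takes essentially the same route as the paper: define the Kuranishi structure on $\mathcal M_{\ell,k+1}(\beta)$ by pulling back via the forgetful map (the paper pulls back from $\mathcal M_{\ell,1}(\beta)$, you from $\mathcal M_{\ell,0}(\beta)$ and then reconcile the two by functoriality, which is only a cosmetic difference), and then read off properties (2)--(5) from the corresponding items of Theorem \ref{forgetcompKUra}. The paper's own proof is a five-line verification of exactly these deductions; your version merely fills in some of the bookkeeping that the paper leaves implicit.
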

We first explain the statement. 
\par
1 is similar to the Definition \ref{forgetcomp}.
The only difference is we replace 
$(0,1)$ appearing there with 
$(0,1)^{k-m} \times [0,1)^m$ for some appropriate $m$.
\par
The cyclic permutation of boundary marked points is defined as follows.
Let $(\Sigma,v)$ be a point in  $\mathcal M_{\ell,k+1}(\beta)$.
Let $z_0,z_1,\ldots,z_k$ be boundary marked points of $\Sigma$.
We change them to $z_1,\ldots,z_k,z_0$ to obtain 
$(\Sigma',v)$. 2 claims that this action extends to the Kuranishi structure.
(See \cite{FOOO080} Subsection A1.3 for the definition of finite group action to Kuranishi structure.)
\par
The meaning of 3 is similar. Here we consider not only cyclic permutation 
but also an arbitrary permutation of the interior marked points.
\par
We remark that 4 and Lemma \ref{fiberprodform} imply that the right hand side of (\ref{bdcompati0})
has a Kuranishi structure.
Then 5 claims that the boundary of the moduli space of $\mathcal M_{\ell,k+1}(\beta)$ 
as Kuranishi space decomposes as in the right hand side of  (\ref{bdcompati0}).
We remark that the $i$-th, \dots, $j$-th boundary marked points of
 $\mathcal M_{\ell,k+1}(\beta)$ correspond to the $1$-st, \dots, $j-i+1$-th 
 marked points of the first factor and 
 the other boundary marked points (except the $0$-th) of  $\mathcal M_{\ell,k+1}(\beta)$ correspond 
 to the boundary marked points (except $i$-th) of the right hand side.
Also the interior marked points correspond  to each other in an obvious way. 
We then have a compatibility statement of evaluation maps in an obvious way.
It is a part of the statement 5.
\par
Note $k+1\ge 1$. Therefore the extra boundary component 
$\mathcal M_{\ell+1}^{\text{\rm cl}}(\tilde\beta) \times_M L$ of 
$\mathcal M_{\ell,0}(\beta)$ does not appear in (\ref{bdcompati0}).
\begin{proof}
We already defined the Kuranishi structure on $\mathcal M_{\ell,1}(\beta)$ 
in Theorem \ref{forgetcompKUra}.
We define the Kuranishi structure on $\mathcal M_{\ell,k+1}(\beta)$ 
so that 1 holds. (This determines the Kuranishi structure uniquely.)
Then, by Theorem \ref{forgetcompKUra}.1, our Kuranishi structure of 
$\mathcal M_{\ell,k+1}(\beta)$  is pulled back 
from one on $\mathcal M_{\ell,0}(\beta)$. 
2 follows immediately.
3 is a consequence of Theorem \ref{forgetcompKUra}.4.
4 is a consequence of Theorem \ref{forgetcompKUra}.2.
5 is a consequence of Theorem \ref{forgetcompKUra}.3.
\end{proof}
\begin{rem}
The Kuranishi structure we constructed is {\it not} compatible with the 
forgetful map of the {\it interior} marked points.
The reason is rather technical. Namely we require the support of 
the obsruction bundle $E_a$ to be disjoint from marked or singular points.
This is automatic for the boundary marked points since we also 
assume that it is disjoint from the boundary.
However if we try to imitate the proof of Theorem \ref{forgetcompKUra} 
to obtain a Kuranishi structure which is compatible with the forgetful map, this causes a 
problem. Namely if we construct 
Kuranishi structure of $\mathcal M_{0,0}(\beta)$ and 
$\mathcal M^{\text{\rm cl}}_{0}(\alpha)$ so that the evaluation map 
is submersive 
for the pull back Kuranishi structure on 
$\mathcal M_{1,0}(\beta)$ and 
$\mathcal M^{\text{\rm cl}}_{1}(\alpha)$, then, 
a priori, we can not assume the support of obsruction bundle $E_a$
to be disjoint from marked points.
(In fact we need to fix $E_a$ for elements of 
$\mathcal M^{\text{\rm cl}}_{0}(\alpha)$. So 
the interior marked point of the pull back Kuranishi structure 
$\mathcal M^{\text{\rm cl}}_{1}(\alpha)$ can be arbitrary. 
Therefore, we can not exclude that it is in the support of $E_a$.)
\par
The reason why the support of obsruction bundle $E_a$
is assumed to be disjoint from singular point
is as follows:
\par\smallskip
\begin{enumerate}
\item The glueing analysis is easier. 
If $E_a$ hits the singular point we need to study the case 
when perturbation is put on the neck region also.  
\item 
We need to identify obstruction bundle of the pieces 
$\Sigma_a$ as a section of appropriate bundles 
after resolving singularity of $\cup \Sigma_a$.
(See (\ref{Obstpara}).) This is easier in case when the support of $E_a$ is 
away from singular points.
\end{enumerate}
\par\smallskip
We remark that we can not distinguish marked points from singular points when we want to 
make perturbation component-wise.
This is the reason why it is assumed that the support of 
$E_a$ is disjoint from marked points. 
However the description above also shows that by 
working harder we might remove this restriction and 
then can find a Kuranishi structure on $\mathcal M_{\ell,k}(\beta)$
which is compatible with the forgetful map of the interior marked 
points also. Since we do not need it in this paper, we do not try to prove it 
here.
\end{rem}
\begin{rem}\label{remwksubmer}
For the Kuranishi structure we constructed in Corollary \ref{Corkura},
the evaluation map $ev_0$ is weakly submersive.
As a consequence of cyclic symmetry it implies that 
$ev_i$ is submersive for each but fixed $i$.
On the other hand, in \cite{FOOO080}, we used a Kuranishi structure 
such that 
$(ev_0,\ldots,ev_{k-1}) : \mathcal M_{\ell,k}(\beta) \to L^k$
is weakly submersive.
We remark that there does not exist a system of Kuranishi structures 
such that $(ev_0,\ldots,ev_{k-1})$ is weakly submersive 
and is compatible with the forgetful map in the sense of 
Corollary \ref{Corkura}.1, at the same time.
In fact if $d$ is a  dimension of the Kuranishi neighborhood 
$V_{\text{\bf p}}$
of a point in $\mathcal M_{\ell,1}(\beta)$, 
then the dimension of the Kuranishi neighborhood 
$V_{\widetilde{\text{\bf p}}}$ of compatible 
Kuranishi structure in $\mathcal M_{\ell,k}(\beta)$ 
is $k+d-1$. 
(We remark that dimension here is one of the manifold 
$V_{\text{\bf p}}$. It is different from the 
dimension as the Kuranishi space.)
If $k$ is large then dimension of $L^k$, 
which is $nk$, is certainly bigger than
$k+d-1$. Therefore $(ev_0,\ldots,ev_{k-1})$ can 
not be weakly submersive.
\par\smallskip
A key idea of the proof of Corollary \ref{Corkura} 
is the observation that the 
submersivity of $ev_0$ is enough to carry out 
the inductive construction. 
(This observation was due to \cite{FOOO08I}.)
We remark also that for this observation to hold the assumption,
genus $=0$ is essential. Namely 
the same method does not work for its generalization 
to higher genus.
\end{rem}
\section{Continuous family of multisections: review}
In the next section we will construct perturbation of the Kuranishi map of the 
Kuranishi structure constructed in the last section so that it is compatible 
with the forgetful map of the boundary marked points and is 
cyclically symmetric (as its consequence). 
The author does not know how to do it using 
multi (but finitely many) valued sections. 
So we use continuous family of multisections.
The notion of continuous family of multisections had already been used in 
\cite{FOOO06} Section 33,  \cite{Fuk07I},\cite{FOOO08II} etc.. We first review them in this section.
\par
We first recall the notion of good coordinate system.
Let  $(V_{\alpha},E_{\alpha},
\Gamma_{\alpha},\psi_{\alpha},s_{\alpha})$ be Kuranishi charts 
parametrized by $\alpha \in \mathfrak A$.
We assume that the index set $\mathfrak A$ has a partial order $<$, where 
either $\alpha_1 \le \alpha_2$ or $\alpha_2 \le \alpha_1$ holds 
for $\alpha_1, \alpha_2 \in \mathfrak A$
if 
$$
\psi_{\alpha_1}(s_{\alpha_1}^{-1}(0)/\Gamma_{\alpha_1}) \cap
\psi_{\alpha_2}(s_{\alpha_2}^{-1}(0)/\Gamma_{\alpha_2}) \ne \emptyset.
$$
Moreover we assume that if  $\alpha_1, \alpha_2 \in \mathfrak A$ and $\alpha_1 \le \alpha_2$ 
then there exists a coordinate transformation from 
$(V_{\alpha_1},E_{\alpha_1},
\Gamma_{\alpha_1},\psi_{\alpha_1},s_{\alpha_1})$ 
to 
$(V_{\alpha_2},E_{\alpha_2},
\Gamma_{\alpha_2},\psi_{\alpha_2},s_{\alpha_2})$, 
in the sense described in Section \ref{kurareview}.
We assume compatibility between coordinate transformations 
in the sense of \cite{FO}, \cite{FOOO080}. 
Existence of good coordinate system is proved in \cite{FO} Lemma 6.3.
\par
We next review multisection. (See \cite{FO} section 3.)
Let $(V_{\alpha},E_{\alpha},\psi_{\alpha},s_{\alpha},\Gamma_{\alpha})$ be a Kuranishi 
chart of $\mathcal M$. For $x \in V_{\alpha}$ we consider the fiber $E_{\alpha,x}$ of 
the bundle $E_{\alpha}$ at $x$. We take its $l$ copies and consider the direct product
$
E_{\alpha,x}^{l}
$. We divide it by the action of symmetric group of order $l!$ and let 
$\mathcal S^l(E_{\alpha,x})$ be the quotient space.
There exists a map
$
tm_m : \mathcal S^l(E_{\alpha,x}) 
\to \mathcal S^{lm}(E_{\alpha,x}),
$
which sends $[a_1,\ldots,a_l]$ to
$
[\,\underbrace {a_1,\ldots,a_1}_{\text{$m$ copies}},\ldots,
\underbrace {a_l,\ldots,a_l}_{\text{$m$ copies}}].
$
A {\it multisection} $s$ of the orbibundle 
$
E_{\alpha} \to V_{\alpha}
$
consists of an open covering 
$
\bigcup_iU_i = V_{\alpha}
$ 
and 
$s_i$ which sends $x\in U_i$ to $s_i(x) \in \mathcal S^{l_i}(E_{\alpha,x})$.
They are required to have the following properties.
\begin{enumerate}
\item $U_i$ is $\Gamma_{\alpha}$ invariant. $s_i$ is $\Gamma_{\alpha}$ 
equivariant. (We remark that there exists an obvious map 
$
\gamma : \mathcal S^{l_i}(E_{\alpha,x}) \to \mathcal S^{l_i}(E_{\alpha,\gamma x})
$
for each $\gamma \in \Gamma_{\alpha}$.)
\item If $x \in U_i \cap U_j$ then we have
$
tm_{l_j}(s_i(x)) = tm_{l_i}(s_j(x)) \in \mathcal S^{l_il_j}(E_{\alpha,\gamma x}).
$
\item $s_i$ is {\it liftable and smooth}  in the following sense. 
For each $x$ there exists a smooth section $\tilde s_i$ of 
$\underbrace{E_{\alpha} \oplus \ldots \oplus E_{\alpha}}_{\text{$l_i$ times}}$ 
in a neighborhood of $x$ such that 
\begin{equation}\label{locallift}
\tilde s_i(y) = (s_{i,1}(y),\ldots,s_{i,l_i}(y)), 
\quad
s_i(y) = [s_{i,1}(y),\ldots,s_{i,l_i}(y)].
\end{equation}
\end{enumerate}
We identify two multisections $(\{U_i\},\{s_i\},\{l_i\})$,  $(\{U'_i\},\{s'_i\},\{l'_i\})$
if 
$$
tm_{l_j}(s_i(x)) = tm_{l'_i}(s'_j(x)) \in \mathcal S^{l_il'_j}(E_{\alpha,\gamma x}).
$$
on $U_i \cap U'_j$. 
We say $s_{i,j}$ to be a {\it branch} of $s_i$ in the situation of (\ref{locallift}).
\par
We next discuss continuous family of multisections and its transversality.
Let $W_{\alpha}$  be
a finite dimensional manifold and consider the pull-back bundle
$$
\pi_\alpha^*E_\alpha \to V_\alpha \times W_\alpha
$$
under $\pi_\alpha: V_\alpha \times W_\alpha \to V_\alpha$. 
The action of $\Gamma_{\alpha}$ on $W_{\alpha}$ is, by definition, trivial.
\begin{dfn}\label{defnmultisec}
\begin{enumerate}
\item
A $W_{\alpha}$ parametrized family $\frak s_{\alpha}$ of multisections is by definition a 
multisection of 
$\pi_\alpha^*E_\alpha$.
\item
We fix a metric of our bundle $E_{\alpha}$.
We say $\frak s_{\alpha}$ is $\epsilon$ close to $s_{\alpha}$ in 
$C^0$ topology if the following holds.
Let $(x,w) \in V_{\alpha} \times W_{\alpha}$. Then for any branch 
$\frak s_{\alpha,i,j}$ of $\frak s_{\alpha}$ we have
$$
dist(\frak s_{\alpha,i,j}(w,y),s_{\alpha}(y)) < \epsilon
$$
if $y$ is in a neighborhood of $x$.
\item
$\frak s_{\alpha}$ is said to be transversal to $0$ if the following holds :
Let $(x,w) \in  V_{\alpha}\times W_{\alpha} $. Then any branch 
$\frak s_{\alpha,i,j}$ of $\frak s_{\alpha}$ is transversal to $0$.
\item
Let $f_{\alpha} : V_{\alpha} \to M$ be a $\Gamma_{\alpha}$ equivariant smooth map. 
We assume that $\frak s_{\alpha}$ is transversal to $0$.
We then say that $f_{\alpha}\vert_{\frak s_{\alpha}^{-1}(0)}$ is a submersion 
if the following holds. 
Let $(x,w) \in  V_{\alpha}\times W_{\alpha}$. Then for any branch 
$\frak s_{\alpha,i,j}$ of $\frak s_{\alpha}$ the restriction of 
$
f_{\alpha} \circ \pi_{\alpha} : V_{\alpha} \times W_{\alpha} \to M
$
to 
\begin{equation}\label{fras-1(0)}
\{ (x,w) \mid \frak s_{\alpha,i,j}(x,w) = 0\}
\end{equation}
is a submersion. We remark that (\ref{fras-1(0)}) is a smooth manifold by 3. 
\end{enumerate}
\end{dfn}
\begin{rem}
In case $\mathcal M$ has boundary or corner, (\ref{fras-1(0)}) has 
boundary or corner. In this case we require that the restriction of $f_{\alpha}$ 
to each of the stratum of (\ref{fras-1(0)}) is a submersion.
\end{rem}
\begin{lmm}\label{existencesubmersion}
We assume that $f_{\alpha} : V_{\alpha} \to M$ is a submersion. Then 
there exists $W_{\alpha}$ such that for any  $\epsilon$ there 
exists a $W_{\alpha}$ parametrized family $\frak s_{\alpha}$ of multisections 
which is $\epsilon$ close to $s_{\alpha}$, transversal to $0$ and 
such that $f_{\alpha}\vert_{\frak s_{\alpha}^{-1}(0)}$ is a submersion.
\par
Moreover there exists $0\in W_{\alpha}$ such that 
the restriction of $\frak s_{\alpha}$ to 
$V_{\alpha} \times \{0\}$ coincides with $s_{\alpha}$.
\par
If $\frak s_{\alpha}$ is already given and satisfies the required condition 
on a neighborhood of a compact set $K_{\alpha}$, then we may 
extend it to the whole $W_{\alpha}$ without changing it on $K_{\alpha}$.
\end{lmm}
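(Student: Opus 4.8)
The plan is to produce $W_\alpha$ and $\frak s_\alpha$ as an explicit ``universal'' finite dimensional family of perturbations of $s_\alpha$, and to read off both transversality requirements directly from the construction. First I would reduce to the case that $\Gamma_\alpha$ is trivial, so that a multisection is an honest section, and (shrinking $V_\alpha$ if necessary) that $V_\alpha$ is relatively compact; the general equivariant case is recovered at the end. Then I would fix a finite covering $V_\alpha=\bigcup_{c=1}^{C}B_c$ by coordinate balls over which $E_\alpha$ is trivialized, a subordinate partition of unity $\{\chi_c\}$ with $\sum_c\chi_c\equiv 1$, and let $W_\alpha$ be the product of the unit balls $B^{(c)}$ in the model fibers of $E_\alpha|_{B_c}$, with the trivial $\Gamma_\alpha$-action. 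For a given $\epsilon$, choosing $\epsilon'>0$ sufficiently small, I would set
\[
\frak s_\alpha(x,(w_c))=s_\alpha(x)+\epsilon'\sum_{c=1}^{C}\chi_c(x)\,w_c ,
\]
where $\chi_c(x)w_c$ is regarded as an element of $E_{\alpha,x}$ via the trivialization over $B_c$, extended by $0$. By construction this is $\epsilon$-close to $s_\alpha$ in $C^0$ and restricts to $s_\alpha$ on $V_\alpha\times\{0\}$.

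The two transversality statements will both follow from the observation that the derivative of $\frak s_\alpha$ in the $W_\alpha$-directions at any $(x,w)$ is the map $(\eta_c)\mapsto\epsilon'\sum_c\chi_c(x)\eta_c$, which is surjective onto $E_{\alpha,x}$ for every $x$ because $\sum_c\chi_c(x)=1$ forces $\chi_{c_0}(x)>0$ for some $c_0$. Hence $\frak s_\alpha$ is transversal to $0$ in the sense of Definition \ref{defnmultisec}.3, and $Z:=\frak s_\alpha^{-1}(0)$ is a smooth manifold with corners in $V_\alpha\times W_\alpha$. For the submersivity of $f_\alpha\circ\pi_\alpha$ on $Z$ I would argue at a point $(x,w)\in Z$ with $\xi\in T_{f_\alpha(x)}M$ given: since $f_\alpha$ is a submersion pick $v\in T_xV_\alpha$ with $df_\alpha(v)=\xi$, then by the surjectivity above pick $(\eta_c)$ with $\epsilon'\sum_c\chi_c(x)\eta_c=-d_x\frak s_\alpha(v)$; the vector $(v,(\eta_c))$ is then tangent to $Z$ and maps to $\xi$ under $d(f_\alpha\circ\pi_\alpha)$, and the same works stratum by stratum on corners. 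The conceptual point to highlight is that the hypothesis ``$f_\alpha$ is already a submersion'' is exactly what allows the parameter directions to be spent only on solving the $s_\alpha$-equation rather than also on moving in $M$.

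For the relative statement I would enlarge the parameter space. Given a good family $\frak s_\alpha^{0}$ over parameter space $W_\alpha^{0}$ defined on a neighborhood $N$ of the compact set $K_\alpha$, pick nested open sets $K_\alpha\subset N''$, $\overline{N''}\subset N'$, $\overline{N'}\subset N$, and arrange the covering $\{B_c\}$ so that it covers $V_\alpha\setminus N''$, the support of each $\chi_c$ lies in $V_\alpha\setminus N''$, and $\sum_c\chi_c\equiv 1$ on $V_\alpha\setminus N'$. Set $W_\alpha=W_\alpha^{0}\times\prod_cB^{(c)}$ and $\frak s_\alpha(x,(w^{0},(w_c)))=\frak s_\alpha^{0}(x,w^{0})+\epsilon'\sum_c\chi_c(x)w_c$. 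This coincides with $\frak s_\alpha^{0}$ over $N''\supset K_\alpha$, hence is unchanged on $K_\alpha$. Every point $x$ of $V_\alpha$ lies either in $N'$, where transversality and the submersivity of $f_\alpha\circ\pi_\alpha$ on the zero set hold by the hypothesis on $\frak s_\alpha^{0}$ (correcting in the $W_\alpha^{0}$-directions), or in $V_\alpha\setminus N'$, where $\sum_c\chi_c\equiv 1$ and the argument of the previous paragraph applies (correcting in the $\prod_cB^{(c)}$-directions); adding parameter directions only enlarges the images of the relevant differentials, so the two checks are compatible where both apply.

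The genuinely fiddly parts, I expect, are bookkeeping rather than mathematics: (a) making the construction $\Gamma_\alpha$-equivariant — one fixes a local perturbation as above and replaces it by the multisection whose branches are its $\Gamma_\alpha$-translates, keeping the $\Gamma_\alpha$-action on $W_\alpha$ trivial, so that each branch is locally of the model form and the branchwise conditions of Definition \ref{defnmultisec} reduce to the single valued case (this is the standard device of \cite{FO}, Section 3); and (b) choosing the covers and cutoffs in the extension step so that ``good on $N$'' survives verbatim on $K_\alpha$. The substantive content is just the parametric transversality observation above, which simultaneously yields transversality of $\frak s_\alpha$ and submersivity of $f_\alpha$ on its zero locus once $f_\alpha$ is assumed submersive.
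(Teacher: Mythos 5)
The paper omits the proof of this lemma and defers to \cite{FOOO08II}, so there is no in-paper proof to compare against; your argument does follow the same standard parametric-transversality strategy that the paper itself uses in the closely analogous Lemmas \ref{lemmulticonst} and \ref{famimultilocconst} (there $W(u) = E(u)$, $s_u(v,w) = D_v\overline\partial + \chi(v)w$, and a partition of unity glues the local perturbations, which is exactly your construction written slightly differently). Your key observation — that the derivative of $\frak s_\alpha$ in the $W_\alpha$-directions is $(\eta_c)\mapsto \epsilon'\sum_c\chi_c(x)\eta_c$, surjective because some $\chi_{c_0}(x)>0$, and that this surjectivity lets you both achieve transversality and, combined with $f_\alpha$ being a submersion on $V_\alpha$, lift any tangent vector of $M$ to the zero set — is precisely the point, and your treatment of corners by restricting to strata is consistent with how the paper handles submersivity on boundary strata. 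The equivariance step (taking as branches the $\Gamma_\alpha$-translates of the single-valued perturbation, with the $\Gamma_\alpha$-action on $W_\alpha$ trivial) is the standard device from \cite{FO}, Section 3, and you correctly note that each branch inherits the branchwise surjectivity since $s_\alpha$ is equivariant while the correction terms are translated along.

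One small point of interpretation: the phrase ``extend it to the whole $W_\alpha$'' in the statement is loose, and you reasonably read it as allowing the ambient parameter space to be enlarged to $W_\alpha^0\times\prod_c B^{(c)}$ (so that the original data embeds as the $(w_c)=0$ slice). This matches how the lemma is actually applied in Proposition \ref{globalfamikura}, where $W_{\alpha_2}=W_{\alpha_1}\times W_{\alpha_2\alpha_1}$ grows as one passes up the good coordinate system; so your reading is the intended one. I see no gap.
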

We omit the proof. See \cite{FOOO08II}.
We next describe the compatibility conditions among the $W_\alpha$-parametrized 
families of
multisections for various $\alpha$.
During the construction we need to shrink $V_{\alpha}$ a bit several times.
We do not mention it explicitely below.
\par
Let $\alpha_1 < \alpha_2$.
We consider the normal bundle 
$N_{\phi_{\alpha_2\alpha_1}(V_{\alpha_1\alpha_2})}V_{\alpha_2}$ 
of the embedding $\phi_{\alpha_2\alpha_1} : 
V_{\alpha_1\alpha_2} \to V_{\alpha_2}$.
We take a small neighborhood $U_{\epsilon}(\phi_{\alpha_2\alpha_1}(V_{\alpha_1\alpha_2}))$ 
of its image and identify it with an $\epsilon$-neighborhood 
$B_{\epsilon}N_{\phi_{\alpha_2\alpha_1}(V_{\alpha_1\alpha_2})}V_{\alpha_2}$ of 
the zero section $\cong V_{\alpha_1\alpha_2})$ of 
$N_{\phi_{\alpha_2\alpha_1}(V_{\alpha_1\alpha_2})}V_{\alpha_2}$.
We then have a projection 
$\text{Pr} : U_{\epsilon}(\phi_{\alpha_2\alpha_1}(V_{\alpha_1\alpha_2})) 
\to V_{\alpha_1\alpha_2}$.
\par
We pull back the bundle $E_{\alpha_1}\vert_{V_{\alpha_1\alpha_2})}$ by 
$\text{Pr}$ to obtain 
$\text{Pr}^*E_{\alpha_1} \to V_{\alpha_1\alpha_2}$.
We extend the bundle embedding 
$\hat{\phi}_{\alpha_2\alpha_1} : E_{\alpha_1}\vert_{V_{\alpha_1\alpha_2}} 
\to E_{\alpha_2} $ to a bundle embedding:
$$
\hat{\phi}_{\alpha_2\alpha_1} 
: \text{Pr}^*E_{\alpha_1} \to E_{\alpha_2}\vert_{U_{\epsilon}(\phi_{\alpha_2\alpha_1}(V_{\alpha_1\alpha_2}))}. 
$$
We consider the section (Kuranishi map) $s_{\alpha_2} : U_{\epsilon}(\phi_{\alpha_2\alpha_1}(V_{\alpha_1\alpha_2})) 
\to E_{\alpha_2}$ and compose it with the projection to obtain:
\begin{equation}\label{piesalph}
\pi \circ s_{\alpha_2} : U_{\epsilon}(\phi_{\alpha_2\alpha_1}(V_{\alpha_1\alpha_2}))  
\to \frac{ E_{\alpha_2}}{\text{Pr}^*E_{\alpha_1}}.
\end{equation}
We remark that  (\ref{piesalph}) is zero on the zero section 
$=\phi_{\alpha_2\alpha_1}(V_{\alpha_1\alpha_2})$
and the fiber derivative of it there induces an isomorphism.
\par
Let $\text{\rm Exp} : B_{\epsilon}N_{\phi_{\alpha_2\alpha_1}(V_{\alpha_1\alpha_2})}V_{\alpha_2}
\to U_{\epsilon}(\phi_{\alpha_2\alpha_1}(V_{\alpha_1\alpha_2}))$ be the 
isomorphism we mentioned above. By modifying it using fiber-wise 
diffeomorphism we may assume that 
\begin{equation}\label{Expiesalph}
\pi \circ s_{\alpha_2} \circ \text{\rm Exp} 
: B_{\epsilon}N_{\phi_{\alpha_2\alpha_1}(V_{\alpha_1\alpha_2})}V_{\alpha_2}
\to \frac{ E_{\alpha_2}}{\text{Pr}^*E_{\alpha_1}}
\end{equation}
is a restriction of (linear) isomorphism of vector bundles.
\par
Now let $U_{i,\alpha_1} \subset V_{\alpha_1}$ and 
 $\{\frak s_{\alpha_1,i,j}\mid j=1,\ldots,l_i\}$ be a multisection on 
 $U_{i,\alpha_1}\times W_{\alpha_1}$.
 We take $W_{\alpha_2,\alpha_1}$ and put 
 $W_{\alpha_2} = W_{\alpha_1} \times W_{\alpha_2,\alpha_1}$.
 We define 
 $$
 \text{Pr}^{-1}(\phi_{\alpha_2\alpha_1}(V_{\alpha_1\alpha_2}))
 = U_{i,\alpha_2}\subset U_{\epsilon}(\phi_{\alpha_2\alpha_1}(V_{\alpha_1\alpha_2})).
 $$
 \begin{dfn}\label{multiseccomp}
$W_{\alpha_2}$ parametrized family of multisections 
$\{\frak s_{\alpha_2,i,j}\mid j=1,\ldots,l_i\}$ on $U_{i,\alpha_2}$  is said to be 
{\it compatible} with  $\{\frak s_{\alpha_1,i,j}\mid j=1,\ldots,l_i\}$ 
if the following holds.
\par
$\{\frak s_{\alpha_2,i,j}\mid j=1,\ldots,l_i\}$ is a multisection of $E_{\alpha_2}$ 
on $U_{i,\alpha_2} \times W_{\alpha_2}$. 
\par
Let $y = \text{\rm Exp}(x,\xi)$ with $x \in U_{i,\alpha_1} \cap U_{\alpha_1,\alpha_2}$, 
$\xi \in (N_{\phi_{\alpha_2\alpha_1}(V_{\alpha_1\alpha_2})}V_{\alpha_2})_{x}$
($\Vert\xi\Vert < \epsilon$) and $w =(w_1,w_2) \in W_{\alpha_2} = 
W_{\alpha_1} \times W_{\alpha_2,\alpha_1}$. Then, we have:
\begin{equation}
\frak s_{\alpha_2,i,j}(y,w) 
\equiv
(\pi \circ s_{\alpha_2})(y) \mod \text{Pr}^*E_{\alpha_1}.
\end{equation}
We assume also that 
\begin{equation}
\frak s_{\alpha_2,i,j}(\text{\rm Exp}(x,0),w) = \frak s_{\alpha_1,i,j}(x,w_1).
\end{equation}
 \end{dfn}
 We remark that for given  $\{\frak s_{\alpha_1,i,j}\mid j=1,\ldots,l_i\}$ 
 we can always find $\{\frak s_{\alpha_2,i,j}\mid j=1,\ldots,l_i\}$ 
 which is compatible to it. In fact we can use the splitting
 $$
 E_{\alpha_2} =  \text{Pr}^*E_{\alpha_1} \oplus \frac{ E_{\alpha_2}}{\text{Pr}^*E_{\alpha_1}}
 $$
 to construct it.
 \par
 Moreover if $f = \{f_{\alpha}\} : \mathcal M \to N$ is strongly continuous and weakly
 submersive map and $f_{\alpha_1}\vert_{\frak s_{\alpha_1}^{-1}(0)}$ is a submersion, 
 then $f_{\alpha_2}\vert_{\frak s_{\alpha_2}^{-1}(0)}$ is a submersion, 
 for small $\epsilon$.
 \par
 Thus we can prove the following by induction of $\alpha$ with respect to the 
 order $<$ and by using Lemma \ref{existencesubmersion}.
 \begin{prp}\label{globalfamikura}
Let $\mathcal M$ be a Kuranishi space with a good coordinate system 
$(V_{\alpha},E_{\alpha},
\Gamma_{\alpha},\psi_{\alpha},s_{\alpha})$ ($\alpha \in \frak A$).
Let $f = \{f_{\alpha}\} : \mathcal M \to N$  be a strongly continuous and weakly
submersive map.
Then there exists a system of continuous family of multisections 
$\frak s_{\alpha} = \{\frak s_{\alpha,i,j}\mid j=1,\ldots,l_i\}$ of 
$E_{\alpha}$ such that:
\smallskip 
\begin{enumerate}
\item They are compatible in the sense of Definition \ref{multiseccomp}.
\item They are transversal to $0$ in the sense of Definition \ref{defnmultisec}.$3$.
\item $f_{\alpha}\vert_{\frak s_{\alpha}^{-1}(0)}$ is a submersion
 in the sense of Definition \ref{defnmultisec}.$4$.
\item They are $C^0$ close to the original Kuranishi map in the sense of 
Definition \ref{defnmultisec}.$2$.
\end{enumerate}
\end{prp}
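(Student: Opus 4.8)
The plan is to argue by induction over the finite poset $\frak A$. Fix a total order refining $<$, say $\frak A=\{\alpha_1<\cdots<\alpha_N\}$, introduce a fresh parameter space $W^{\text{new}}_{\alpha_k}$ at each stage, and set $W_{\alpha_k}=\prod_{j\le k}W^{\text{new}}_{\alpha_j}$, so that the projection $W_{\alpha_k}\to W_{\alpha_j}$ appearing in Definition \ref{multiseccomp} is a coordinate projection. The inductive hypothesis at stage $k$ is that we have $W_{\alpha_j}$-parametrized families $\frak s_{\alpha_j}$ for $j<k$ that are mutually compatible in the sense of Definition \ref{multiseccomp} and satisfy (1)--(4) of the Proposition. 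The base case $k=1$ is immediate: since $f$ is weakly submersive, $f_{\alpha_1}:V_{\alpha_1}\to N$ is a submersion, so Lemma \ref{existencesubmersion} supplies $W_{\alpha_1}$ and a family $\frak s_{\alpha_1}$ that is $C^0$-close to $s_{\alpha_1}$, transversal to $0$, and submersive over $f_{\alpha_1}$ on its zero set.

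For the inductive step, put $J(k)=\{\,j<k\mid \alpha_j<\alpha_k\,\}$. For each $j\in J(k)$ the recipe preceding Definition \ref{multiseccomp} determines a candidate for $\frak s_{\alpha_k}$ on the tubular neighborhood $U_\epsilon(\phi_{\alpha_k\alpha_j}(V_{\alpha_j\alpha_k}))$: using the splitting $E_{\alpha_k}=\text{Pr}^*E_{\alpha_j}\oplus(E_{\alpha_k}/\text{Pr}^*E_{\alpha_j})$, prescribe the $E_{\alpha_k}/\text{Pr}^*E_{\alpha_j}$-component to be $\pi\circ s_{\alpha_k}$ (straightened to a linear isomorphism as in (\ref{Expiesalph})) and let the $\text{Pr}^*E_{\alpha_j}$-component be the pullback of $\frak s_{\alpha_j}$. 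The key point is that for $j,j'\in J(k)$ these candidates agree on the overlap of the two neighborhoods: modulo $\text{Pr}^*E_{\alpha_j}+\text{Pr}^*E_{\alpha_{j'}}$ both equal $\pi\circ s_{\alpha_k}$, and in the complementary directions agreement follows from the cocycle compatibility of $\hat\phi_{\alpha_k\alpha_j}$, $\hat\phi_{\alpha_k\alpha_{j'}}$ and $\hat\phi_{\alpha_j\alpha_{j'}}$ built into the good coordinate system, together with the assumed compatibility between $\frak s_{\alpha_j}$ and $\frak s_{\alpha_{j'}}$. Hence, after shrinking $V_{\alpha_k}$ slightly, we obtain one well-defined family on a neighborhood of the compact set $K_{\alpha_k}=\bigcup_{j\in J(k)}\phi_{\alpha_k\alpha_j}(V_{\alpha_j\alpha_k})$.

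Near $K_{\alpha_k}$ this family is transversal to $0$, because the $E_{\alpha_k}/\text{Pr}^*E_{\alpha_j}$-component has an isomorphism as fibre derivative (the remark after (\ref{piesalph})) while the complementary directions are transversal by the inductive hypothesis; it makes $f_{\alpha_k}$ restricted to its zero set a submersion by the remark following Definition \ref{multiseccomp}; and it is $C^0$-close to $s_{\alpha_k}$ once $\epsilon$ is small. Since $f_{\alpha_k}:V_{\alpha_k}\to N$ is a submersion, the final clause of Lemma \ref{existencesubmersion} now lets us extend this family — enlarging $W_{\alpha_k}$ by the fresh factor $W^{\text{new}}_{\alpha_k}$ — to a $W_{\alpha_k}$-parametrized family $\frak s_{\alpha_k}$ on all of $V_{\alpha_k}$ that is transversal to $0$, $C^0$-close to $s_{\alpha_k}$, and submersive over $f_{\alpha_k}$, and which is unchanged on $K_{\alpha_k}$. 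By construction $\frak s_{\alpha_k}$ is compatible with every $\frak s_{\alpha_j}$, $j\in J(k)$, so (1)--(4) hold at stage $k$ and the induction closes.

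The step I expect to be the main obstacle is the consistency check on multiple overlaps in the inductive step: that the candidates prescribed from the various lower charts $\alpha_j$, $j\in J(k)$, patch into one family. This is exactly where the cocycle conditions of the good coordinate system must be used, and one also has to arrange the diffeomorphisms straightening $\pi\circ s_{\alpha_k}$ (as in (\ref{Expiesalph})) compatibly for all the relevant pairs at once. A more routine nuisance is the bookkeeping: organizing the parameter spaces $W_\alpha$ coherently and carrying out the repeated shrinkings of the $V_\alpha$ so that the sets $\psi_\alpha(s_\alpha^{-1}(0)/\Gamma_\alpha)$ still cover $\mathcal M$ at the end, as in \cite{FOOO08II}.
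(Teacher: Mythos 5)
Your proof follows essentially the same route as the paper's: induction over $\frak A$ with respect to the partial order, using the splitting $E_{\alpha_2}=\text{Pr}^*E_{\alpha_1}\oplus E_{\alpha_2}/\text{Pr}^*E_{\alpha_1}$ to propagate the family to a neighborhood of the images of lower charts, and then invoking the relative clause of Lemma \ref{existencesubmersion} to extend from there to all of $V_{\alpha_k}$ while keeping the compatibility. The paper compresses this to a one-sentence argument; you merely supply the routine bookkeeping (the choice $W_{\alpha_k}=\prod_{j\le k}W^{\text{new}}_{\alpha_j}$, the multi-overlap consistency check via the good-coordinate-system cocycle conditions, and the chart shrinkings), which is the standard elaboration rather than a different method.
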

\begin{rem}
We can prove a relative version of Proposition \ref{globalfamikura}.
Namely if there exists an open set $\mathcal U \subset \mathcal M$ and 
a compact set $K \subset \mathcal U$ and $\frak s_{\alpha}$ 
satisfying $1,2,3,4$ above are given on $\mathcal U$, then 
we can extend it with required properties, without changing it on $K$.
\end{rem}
We next review the way to use family of multisections to define 
smooth correspondence.
\par
We work on the following situation.
Let $\mathcal M$ be a Kuranishi space, 
$f^{s} =\{f^s_{\alpha}\} : \mathcal M \to N_s$  a strongly continuous map 
and $f^{t} =\{f^t_{\alpha}\} : \mathcal M \to N_t$  a strongly continuous and 
weakly submersive map.
Here $N_s,N_t$ are smooth manifolds. (Here $s$ and $t$ stand for source and 
target, respectively.)
Let $\Lambda^d(M)$ denote the set of smooth $d$ forms on $M$.
We will define
\begin{equation}\label{correspondence}
\operatorname{Corr}_*(\mathcal M;f^s,f^t) : \Lambda^d(N_s) \to \Lambda^{\ell}(N_t)
\end{equation}
where 
$
\ell = d + \dim N_t - \dim \mathcal M.
$
\par
We first take a continuous family of multisections 
$\frak s_{\alpha} = \{\frak s_{\alpha,i,j}\mid j=1,\ldots,l_i\}$ 
on $\mathcal M$ such that 
$f^t_{\alpha}\vert_{\frak s_{\alpha}^{-1}(0)}$ is a submersion.
Let $\rho \in \Lambda^d(N_s)$.
 We consider a representative $\frak s_{\alpha,i,j}$ of $\frak s_{\alpha}$ 
 it is a section of $E_{\alpha}$ on $U_{i,\alpha} \times W_{\alpha}$.
 We take a top dimensional smooth form $\omega_{\alpha}$ on $W_{\alpha}$ of compact support 
 such that its total mass is $1$.
 Let $\chi_i$ be a partition of unity subordinate to the covering 
 $\{U_{i,\alpha}\}$. We consider
 \begin{equation}\label{localdefofcorr}
\sum_i  f^{t}_{\alpha !} \frac{1}{l_{i}}\chi_i \left( (f^s_{\alpha})^* \rho \wedge \omega_{\alpha} \right)
\vert_{\frak s_{\alpha,i,j}}.
\end{equation}
 Here $l_i$ is the number of branches and $f^{t}_{\alpha !}$ are integration along fiber.
 This defines `$U_{\alpha}$ part' of $\operatorname{Corr}_*(\mathcal M;f^s,f^t)$.
 We use partition of unity again to glue them for various $\alpha$ 
 to obtain a map (\ref{correspondence}).
 See \cite{FOOO08II} for detail.
 \begin{rem}
 The map (\ref{correspondence}) depends on the choice of multisection $\frak s_{\alpha}$ and the 
 smooth form $\omega_{\alpha}$. But it is independent of the choice 
 of partition of unity.
 \end{rem}
 The smooth correspondence we defined above has the following two properties.
 \begin{prp}[Stokes]\label{stokes}
 We have
 \begin{equation}
d \circ \operatorname{Corr}_*(\mathcal M;f^s,f^t) 
- \operatorname{Corr}_*(\mathcal M;f^s,f^t) \circ d 
= \operatorname{Corr}_*(\partial\mathcal M;f^s,f^t).
\end{equation}
\end{prp}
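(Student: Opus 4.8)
The plan is to reduce the identity to the classical Stokes formula for integration along the fiber of a submersion between manifolds with corners, applied branch by branch to the local expression (\ref{localdefofcorr}), and then to account for the extra terms that appear; this is the argument of \cite{FOOO08II}, and I would set it up as follows. First I would fix a good coordinate system $(V_\alpha, E_\alpha, \Gamma_\alpha, \psi_\alpha, s_\alpha)$ ($\alpha\in\frak A$) for $\mathcal M$, a compatible system of continuous families of multisections $\frak s_\alpha = \{\frak s_{\alpha,i,j}\}$ with $f^t_\alpha\vert_{\frak s_\alpha^{-1}(0)}$ a submersion (Proposition \ref{globalfamikura}), a partition of unity $\{\rho_\alpha\}$ subordinate to the good coordinate system, the partitions of unity $\{\chi_i\}$ subordinate to $\{U_{i,\alpha}\}$ inside each chart, and compactly supported top forms $\omega_\alpha$ on $W_\alpha$ of total mass $1$. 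After shrinking the $V_\alpha$ a little I may also assume that $\rho_\alpha$, $\chi_i$ and $\omega_\alpha$ are supported away from the faces of $\partial(V_\alpha\times W_\alpha)$ which are artifacts of the shrinking or come from $\partial W_\alpha$, so that the only boundary faces that contribute are those coming from $\partial\mathcal M$. With these data $\operatorname{Corr}_*(\mathcal M;f^s,f^t)\rho$ is a finite sum over $\alpha,i,j$ of pushforwards $\frac{1}{l_i}\,f^t_{\alpha !}\big(\rho_\alpha\,\chi_i\,(f^s_\alpha)^*\rho\wedge\omega_\alpha\big)\vert_{\frak s_{\alpha,i,j}^{-1}(0)}$, and since $\frak s_{\alpha,i,j}^{-1}(0)\subset V_\alpha\times W_\alpha$ is a smooth manifold with corners on which $f^t_\alpha\circ\pi_\alpha$ restricts to a proper submersion (Definition \ref{defnmultisec}), the fiberwise Stokes formula applies to each summand.

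I would then apply $d$ to each summand, use the Leibniz rule and fiberwise Stokes, and sort the resulting terms into four groups. The term where $d$ hits $(f^s_\alpha)^*\rho$ reassembles, since $d$ commutes with pullback, into $\operatorname{Corr}_*(\mathcal M;f^s,f^t)(d\rho)$. The terms where $d$ hits $\rho_\alpha$ or $\chi_i$ vanish after summation, because $\sum_\alpha d\rho_\alpha = d(\sum_\alpha\rho_\alpha)=0$ and $\sum_i d\chi_i = 0$ once one knows that on the relevant overlaps the quantity pushed forward is independent of the summation index; for the $\chi_i$ this uses the matching relation $tm_{l_j}\frak s_{\alpha,i} = tm_{l_i}\frak s_{\alpha,j}$ on $U_{i,\alpha}\cap U_{j,\alpha}$ together with the averaging effect of the factor $1/l_i$. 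The fiberwise boundary terms coming from $\partial W_\alpha$ or from the artificial faces of $V_\alpha$ vanish because of the support conditions arranged above. Finally the fiberwise boundary terms coming from the faces of $V_\alpha$ that correspond to $\partial\mathcal M$ reassemble, using that the boundary Kuranishi structure, the restriction $\frak s_\alpha\vert_{\partial V_\alpha}$ and the restricted partitions of unity are exactly the data defining $\operatorname{Corr}_*(\partial\mathcal M;f^s,f^t)$, into $\operatorname{Corr}_*(\partial\mathcal M;f^s,f^t)\rho$. Collecting the four groups yields the formula.

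The main obstacle is the step, needed both for the cancellation of the $d\rho_\alpha$ terms above and already for the well-definedness of $\operatorname{Corr}_*$ itself, that on an overlap $V_{\alpha_1\alpha_2}$ the contribution computed in the chart $\alpha_2$ agrees with the one computed in $\alpha_1$. For this I would use the compatibility of Definition \ref{multiseccomp}: through $\text{Exp}$ and the fixed linear isomorphism (\ref{Expiesalph}), $V_{\alpha_2}$ near the overlap is a tubular neighborhood of $\phi_{\alpha_2\alpha_1}(V_{\alpha_1\alpha_2})$ on whose normal directions $\frak s_{\alpha_2}$ is that linear isomorphism onto $E_{\alpha_2}/\text{Pr}^*E_{\alpha_1}$, while $W_{\alpha_2}=W_{\alpha_1}\times W_{\alpha_2,\alpha_1}$ with $\omega_{\alpha_2}$ the product of $\omega_{\alpha_1}$ with a mass-one form on $W_{\alpha_2,\alpha_1}$, and $\frak s_{\alpha_2}$ restricts to $\frak s_{\alpha_1}$ on the zero section; a Fubini computation along the normal and the $W_{\alpha_2,\alpha_1}$ directions then collapses the $\alpha_2$-integral onto the $\alpha_1$-integral. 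The signs throughout are dictated by the chosen orientation conventions on $V_\alpha$, $E_\alpha$ and $W_\alpha$ and on integration along the fiber, and are arranged so that the identity holds with the stated signs; for the detailed bookkeeping I would refer to \cite{FOOO08II}.
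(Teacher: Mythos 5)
Your proposal is correct and follows the same route the paper takes: the paper's own proof is the one-liner ``on each chart this is a consequence of Stokes' theorem; by using partition of unity in a standard way we obtain the proposition,'' referring implicitly to \cite{FOOO08II}, and your write-up is just a careful unwinding of exactly that argument (chartwise fiberwise Stokes, cancellation of $d\rho_\alpha$ and $d\chi_i$ terms after checking overlap compatibility via Definition \ref{multiseccomp}, and identification of the remaining boundary contribution with $\operatorname{Corr}_*(\partial\mathcal M;f^s,f^t)$).
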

\begin{proof}
On each chart this is a consequence of Stokes' theorem. 
By using partition of unity in a standard way we obtain the 
proposition.
\end{proof}
To state the next proposition, we need some notation.
We consider oriented Kuranishi spaces $\mathcal M_1$, $\mathcal M_2$.
Let $f^{1,s} : \mathcal M_1 \to N_s^1$, 
$f^{2,s} : \mathcal M_2 \to N_t^1 \times N_s^2$ be 
strongly continuous maps and 
$f^{1,t} : \mathcal M_1 \to N_t^1$, 
$f^{2,t} : \mathcal M_2 \to N_t^2$ be 
strongly continuous and weakly submersive maps. 
We put
\begin{equation}
\mathcal M = \mathcal M_1\,\, {}_{f^{1,t}}\times_{M_t^1} \mathcal M_2
\end{equation}
where we use the second factor $: \mathcal M_2 \to N_t^1$ of the map $f^{2,s}$
to define the above fiber product. 
$f^{1,s}$ and $f^{2,s}$ induces a strongly continuous map
$f^s : \mathcal M \to N_s^1 \times N_s^2$.
$f^{2,t}$ induces a strongly continuous and weakly submersive
map $f^t : \mathcal M \to N_t^2$.
(Lemma \ref{kurafiber}.)
\begin{prp}[Composition formula]\label{comprop}
If $\rho_i \in \Lambda(N^2_i)$ $(i=1,2)$ then we have
\begin{equation}
\aligned
&\operatorname{Corr}_*(\mathcal M;f^s,f^t)(\rho_1 \times \rho_2) \\
&= 
\operatorname{Corr}_*(\mathcal M_2;f^{2,s},f^{2,t})
(\operatorname{Corr}_*(\mathcal M_1;f^{1,s},f^{1,t})(\rho_1) \times \rho_2).
\endaligned\end{equation}
\end{prp}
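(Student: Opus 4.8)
The plan is to reduce the identity to a statement on a single good coordinate system and there to identify it with the iterated (Fubini-type) behaviour of integration along fibers. First I would fix good coordinate systems and, by Proposition \ref{globalfamikura}, continuous families of multisections $\frak s^{(1)}=\{\frak s^{(1)}_{\alpha_1,i,j}\}$ on $\mathcal M_1$ and $\frak s^{(2)}=\{\frak s^{(2)}_{\alpha_2,i,j}\}$ on $\mathcal M_2$ for which $f^{1,t}$ and $f^{2,t}$ are submersions on the respective zero loci. On $\mathcal M=\mathcal M_1\,{}_{f^{1,t}}\times_{N_t^1}\mathcal M_2$ — which carries a Kuranishi structure by Lemma \ref{kurafiber} since $f^{1,t}$ is weakly submersive — I would use the fiber-product good coordinate system, with Kuranishi charts $V_{\alpha_1}\,{}_{f^{1,t}}\times_{N_t^1}V_{\alpha_2}$, obstruction bundle the pulled-back direct sum $E_{\alpha_1}\oplus E_{\alpha_2}$, parameter manifolds $W_\alpha=W_{\alpha_1}\times W_{\alpha_2}$, and multisection $\frak s_\alpha$ obtained from $\frak s^{(1)}_{\alpha_1}$ and $\frak s^{(2)}_{\alpha_2}$ by pullback and direct sum. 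The points to record here are: (i) since $f^{1,t}$ restricted to $(\frak s^{(1)})^{-1}(0)$ is a submersion, the zero locus of a branch of $\frak s_\alpha$ is precisely the transverse, hence smooth, fiber product over $N_t^1$ of the zero loci of the corresponding branches of $\frak s^{(1)}_{\alpha_1}$ and $\frak s^{(2)}_{\alpha_2}$, and $f^t$ is a submersion on it; (ii) the branch counts multiply, so that the weights $1/l_i$ in (\ref{localdefofcorr}) match under the product; (iii) if $\omega_{\alpha_i}$ is a compactly supported top form of total mass $1$ on $W_{\alpha_i}$, then $\omega_{\alpha_1}\wedge\omega_{\alpha_2}$ is one on $W_\alpha$.

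Next I would expand the local expression (\ref{localdefofcorr}) for $\operatorname{Corr}_*(\mathcal M;f^s,f^t)(\rho_1\times\rho_2)$. Because $f^s$ is $f^{1,s}$ on the first factor and the $N_s^2$-component of $f^{2,s}$ on the second, the form $(f^s)^*(\rho_1\times\rho_2)\wedge(\omega_{\alpha_1}\wedge\omega_{\alpha_2})$ factors, up to the Koszul sign coming from $\deg\rho_1$, into a piece pulled back from $V_{\alpha_1}\times W_{\alpha_1}$ and a piece pulled back from $V_{\alpha_2}\times W_{\alpha_2}$. On the other hand $f^t$ factors through the projection $\mathcal M\to\mathcal M_2$, whose fiber over a point of the zero locus in $V_{\alpha_2}$ is a base change of the fiber of $f^{1,t}$ on the zero locus in $V_{\alpha_1}$. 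Hence integration along the fiber of $f^t$ is the composition of integration along the fiber of $\mathcal M\to\mathcal M_2$ with integration along the fiber of $f^{2,t}$; carrying out the first integration produces exactly the local piece of $\operatorname{Corr}_*(\mathcal M_1;f^{1,s},f^{1,t})(\rho_1)$, pulled back to $\mathcal M_2$ via the $N_t^1$-component of $f^{2,s}$ and wedged with the $\rho_2$-contribution, and carrying out the second integration then yields the local piece of $\operatorname{Corr}_*(\mathcal M_2;f^{2,s},f^{2,t})$ applied to that form. This is precisely the composition of two instances of (\ref{localdefofcorr}) describing the right-hand side. Finally I would glue the local identities with a partition of unity, invoking the Remark after (\ref{localdefofcorr}) that $\operatorname{Corr}_*$ is independent of the partition of unity, so that I may take the one on $\mathcal M$ subordinate to the product covering and compatible with those on $\mathcal M_1$ and $\mathcal M_2$.

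The main obstacle is bookkeeping rather than anything conceptual: orienting the fiber-product Kuranishi structure and pinning down the sign in the iterated integration-along-fibers identity (the Fubini statement for a composition of submersions, the sign being governed by the fiber-product orientation convention together with $\deg\rho_1$ and $\deg\rho_2$), and checking that all the compatible choices used above — good coordinate systems, the $W_\alpha$, the forms $\omega_\alpha$, and the multisections $\frak s_\alpha$ — can be arranged simultaneously, which rests on the relative versions of Lemma \ref{existencesubmersion} and Proposition \ref{globalfamikura}. A secondary point to verify is that the $W_\alpha$-directions are integrated out so that only the total-mass-$1$ normalization remains, which is automatic for the product form $\omega_{\alpha_1}\wedge\omega_{\alpha_2}$. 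The routine verification of all signs is carried out in \cite{FOOO08II} (compare also \cite{FOOO080} Section A1.3), to which I refer for those details.
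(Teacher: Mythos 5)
Your proposal is correct and follows essentially the same route as the paper: the paper's proof is simply "On each chart this is obvious. So we can use partition of unity in a standard way," and your argument supplies exactly the chart-level Fubini/iterated-integration-along-fibers content that the paper treats as obvious, together with the partition-of-unity gluing. The extra bookkeeping you spell out (fiber-product Kuranishi data, $W_\alpha=W_{\alpha_1}\times W_{\alpha_2}$, multiplicativity of branch counts, normalization of $\omega_{\alpha_1}\wedge\omega_{\alpha_2}$, and deferral of sign/orientation details as in Remark \ref{Corandint}) is all consistent with the paper's setup and does not change the approach.
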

 \begin{proof}
 On each chart this is obvious. So we can use partition of unity in a standard way 
 to prove the propositions.
 \end{proof}
 \begin{rem}\label{Corandint}
 We need to take and fix appropriate orientation in Propositions  \ref{stokes},  \ref{comprop}.
 We will do it later but only in case we use. See the end of Section \ref{cyclicTEderamsec}.
 \end{rem}
 We also remark the next lemma.
 Let $(\mathcal M;f^s,f^t)$ be as above. Here $f^t : \mathcal M \to N_t$ is weakly submersive.
We consider $(\mathcal M;f^s\times f^t,\text{\rm const})$ where $\text{\rm const} : \mathcal M \to \{\text{\rm point}\}$
is a constant map to a point.
 \begin{lmm}\label{Corandint2}
Let $\rho_s \in \Lambda(N_s)$ and $\rho_t \in \Lambda(N_t)$. Then 
\begin{equation}
\int_{N_t}\operatorname{Corr}(\mathcal M;f^s,f^t)(\rho_s) \wedge \rho_t
= \operatorname{Corr}(\mathcal M;f^s \times f^t,\text{\rm const})(\rho_s \wedge \rho_t).
\end{equation}
 \end{lmm}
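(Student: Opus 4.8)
The quickest route is to recognise the identity as a special case of the composition formula, Proposition \ref{comprop}. The plan is to apply that formula with $\mathcal M_1=\mathcal M$, $f^{1,s}=f^s$, $f^{1,t}=f^t$, and with $\mathcal M_2=N_t$ regarded as a Kuranishi space with trivial Kuranishi structure, $f^{2,s}=\Delta:N_t\to N_t\times N_t$ the diagonal (so that $N_t^1=N_s^2=N_t$ and $N_t^2=\{\text{\rm point}\}$), and $f^{2,t}=\text{\rm const}$; take $\rho_1=\rho_s$ and $\rho_2=\rho_t$. Since $\text{\rm const}$ is trivially weakly submersive and $f^t$ is weakly submersive by hypothesis, Proposition \ref{comprop} applies. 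The fibre product $\mathcal M_1\,{}_{f^{1,t}}\times_{N_t}\mathcal M_2=\mathcal M\,{}_{f^t}\times_{N_t}N_t$ is canonically identified with $\mathcal M$, and under this identification the induced source map becomes $f^s\times f^t:\mathcal M\to N_s\times N_t$ and the induced target map becomes $\text{\rm const}$.

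It then remains to identify the two sides of the resulting identity with the two sides of the lemma. The left side of Proposition \ref{comprop} is $\operatorname{Corr}(\mathcal M;f^s\times f^t,\text{\rm const})(\rho_s\times\rho_t)$, which is the right side of the lemma. On the right side of Proposition \ref{comprop} one first forms $\sigma:=\operatorname{Corr}(\mathcal M;f^s,f^t)(\rho_s)\in\Lambda(N_t)$ and then applies $\operatorname{Corr}(N_t;\Delta,\text{\rm const})$ to the external product $\sigma\times\rho_t$. For a manifold with trivial Kuranishi structure the construction of $\operatorname{Corr}$ degenerates: the parameter space $W_\alpha$ is a point, there is no multisection, and integration along the fibre of $\text{\rm const}$ is integration over the whole manifold, so $\operatorname{Corr}(N_t;\Delta,\text{\rm const})(\sigma\times\rho_t)=\int_{N_t}\Delta^*(\sigma\times\rho_t)=\int_{N_t}\sigma\wedge\rho_t$, which is the left side of the lemma.

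As a cross-check — and this is essentially the computation underlying the previous paragraph — one can also argue directly from the local formula (\ref{localdefofcorr}): on a chart $V_\alpha\times W_\alpha$ and a branch $\frak s_{\alpha,i,j}$ of the chosen family of multisections, the projection formula $f^t_{\alpha!}(\eta)\wedge\rho_t=f^t_{\alpha!}\bigl(\eta\wedge(f^t_\alpha)^*\rho_t\bigr)$ together with $\int_{N_t}f^t_{\alpha!}(\xi)=\int_{\{\frak s_{\alpha,i,j}=0\}}\xi$ — legitimate because $\mathcal M$ is compact and $\omega_\alpha$ has compact support, so $\xi$ does too — turns the chartwise contribution to $\int_{N_t}\operatorname{Corr}(\mathcal M;f^s,f^t)(\rho_s)\wedge\rho_t$ into $\int_{\{\frak s_{\alpha,i,j}=0\}}\frac{1}{l_i}\chi_i\,(f^s_\alpha)^*\rho_s\wedge(f^t_\alpha)^*\rho_t\wedge\omega_\alpha$, which is precisely the chartwise contribution to $\operatorname{Corr}(\mathcal M;f^s\times f^t,\text{\rm const})(\rho_s\wedge\rho_t)$. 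Summing over branches with the weights $1/l_i$, over $i$ using $\{\chi_i\}$, and over $\alpha$ using the partition of unity on $\mathcal M$ recovers the global identity.

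The only point that needs care is the sign and orientation bookkeeping. As noted in Remark \ref{Corandint}, the orientation conventions in Propositions \ref{stokes} and \ref{comprop} are pinned down only where they are used, so here I would have to check that the identification $\mathcal M\,{}_{f^t}\times_{N_t}N_t\cong\mathcal M$ is orientation-preserving for the chosen conventions, and that moving $(f^t_\alpha)^*\rho_t$ past $\omega_\alpha$ in the direct computation introduces no residual sign. With the conventions of \cite{FOOO08II} this is routine and requires no analytic input; that sign check is the main — and essentially the only — obstacle.
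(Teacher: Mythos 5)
The paper states Lemma \ref{Corandint2} without proof (it is introduced merely by ``We also remark the next lemma''), so there is no author-supplied argument to compare against; your job is to fill the gap, and both of your routes do that correctly. Your third paragraph's chart-by-chart computation is almost certainly the argument the authors have in mind: combine the projection formula $f^t_{\alpha!}(\eta)\wedge\rho_t=f^t_{\alpha!}(\eta\wedge(f^t_\alpha)^*\rho_t)$ with the Fubini-type identity $\int_{N_t}f^t_{\alpha!}\xi=\int_{\{\frak s_{\alpha,i,j}=0\}}\xi$, then reassemble over branches, charts $U_i$, and Kuranishi neighborhoods $\alpha$ using precisely the partitions of unity built into the definition (\ref{localdefofcorr}) of $\operatorname{Corr}$. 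That argument is self-contained and needs nothing beyond the compactness of $\mathcal M$ and of $\operatorname{supp}\omega_\alpha$.

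Your reduction to Proposition \ref{comprop} is clean and instructive, but carries one caveat worth flagging explicitly: Kuranishi spaces in this paper are compact spaces by definition, so taking $\mathcal M_2=N_t$ with trivial Kuranishi structure, and invoking the fiber-product machinery of Lemma \ref{kurafiber}, is literal only when $N_t$ is compact. This holds wherever the lemma is actually used (e.g.\ in (\ref{tobecycilc}), where $N_t=L$), but since the lemma is stated for a general target $N_t$, the direct computation is the safer argument and arguably should be promoted to the primary one. Your closing remark that the residual issue is sign and orientation is also well-placed: the one nontrivial commutation is moving $\omega_\alpha$ (of degree $\dim W_\alpha$) past $(f^t_\alpha)^*\rho_t$, and the resulting sign must be absorbed by the orientation convention on the fiber of $f^t_\alpha$ and on the fiber-product identification. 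Deferring it to the conventions fixed at the end of Section \ref{cyclicTEderamsec} is exactly what the authors themselves do via Remark \ref{Corandint}, so no gap remains.
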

 Note the right hand side is an element of $\Lambda( \{\text{point}\}) = \R$.
\section{Forgetfulmap compatible continuous family of multisections}
\label{fgcompfamisec}
In this section, we define a system of continuous family of multisections on 
the Kuranishi space produced in Section \ref{Seckuranishiforget}.
We will construct the system so that it is compatible with forgetful maps.
\par
We first define this compatibility precisely.
We consider the moduli spaces $\mathcal M_{\ell,1}(\beta)$, 
$\mathcal M_{\ell,0}(\beta)$ and consider their Kuranishi structures 
which are compatible in the sense of Definition \ref{forgetcomp}.
We take their good coordinate systems which are 
compatible in a similar sense. 
More precise definition of this compatibility is in order.
We have Kuranishi charts  $(V_{\alpha},E_{\alpha},\psi_{\alpha},s_{\alpha},\Gamma_{\alpha})$ 
($\alpha \in \frak A$) of $\mathcal M_{\ell,0}(\beta)$ 
and 
$(V_{\tilde\alpha},E_{\tilde\alpha},\psi_{\tilde\alpha},s_{\tilde\alpha},\Gamma_{\tilde\alpha})$ 
($\tilde\alpha \in \tilde{\frak A}$) of $\mathcal M_{\ell,1}(\beta)$. 
Here $\frak A$ and $\tilde{\frak A}$ are partially ordered sets.
We require that there exists an order preserving map 
$\tilde{\frak A} \to \frak A$, $\tilde\alpha \mapsto \alpha$ such that:
\par\smallskip
\begin{enumerate}
\item $V_{\tilde\alpha} = V_{\alpha} \times (0,1)$.
\item $E_{\tilde\alpha} = E_{\alpha} \times (0,1)$.
\item $\Gamma_{\tilde\alpha} = \Gamma_{\alpha}$. 
The action of  $\Gamma_{\tilde\alpha}$ preserves identifications 
given in 1 and 2, where the action to the factor $(0,1)$ is trivial.
\item $s_{\tilde\alpha}(x,t) = (s_{\alpha}(x),t)$ by the identification given in 1 and 2.
\item $\mathfrak{forget} \circ \psi_{\tilde\alpha}$ coincides 
with the composition of $\psi_{\alpha}$ and the projection to the 
first factor.
\end{enumerate}
\begin{dfn}\label{forgetmulticomp}
Let $U_{\alpha,i}$, $W_{\alpha}$. $\{\frak s_{\alpha,i,j}\}$ define a 
compatible system of family of multisections on $\mathcal M_{\ell,0}(\beta)$ 
and 
$U_{\tilde\alpha,i}$, $W_{\tilde\alpha}$. $\{\frak s_{\tilde\alpha,i,j}\}$ define a 
compatible system of family of multisections on $\mathcal M_{\ell,1}(\beta)$. 
\par
We say that they are {\it compatible} if the following conditions are satisfied:
\par\smallskip
\begin{enumerate}
\item $U_{\tilde\alpha,i} = U_{\alpha,i} \times (0,1)$. 
\item $W_{\tilde\alpha} = W_{\alpha}$.
\item $\frak s_{\tilde\alpha,i,j}(w,x,t) = \frak s_{\alpha,i,j}(w,(x,t))$.
\end{enumerate}
\end{dfn}
The main result of this section is as follows:
\begin{thm}\label{multicontforgetmain}
For each $E_0>0$, $\ell_0 \in \Z_{\ge 0}$ and $\epsilon > 0$, there exists a 
compatible systems of familis of multisections 
$(U_{\tilde\alpha,i},W_{\tilde\alpha},\{\frak s_{\tilde\alpha,i,j}\})$, 
$(U_{\alpha,i},W_{\alpha},\{\frak s_{\alpha,i,j}\})$ on 
$\mathcal M_{\ell,1}(\beta)$, $\mathcal M_{\ell,0}(\beta)$ 
for $\beta \cap \omega \le E_0$ and $\ell \le \ell_0$, with the following properties:
\par\smallskip
\begin{enumerate}
\item They are $\epsilon$ close to the Kuranishi map.
\item They are compatible in the sense of Definition \ref{forgetmulticomp}.
\item They are transversal to $0$ in the sense of Definition \ref{defnmultisec}.$3$.
\item $ev_0 : \mathcal M_{\ell,1}(\beta) \to L$ induces submersions 
$(ev_0)_{\tilde\alpha}\vert_{\frak s_{\tilde\alpha}^{-1}(0)} : \frak s_{\tilde\alpha}^{-1}(0) \to L$.
\item They are compatible with $(\ref{gluemap})$ in the sense we describe below.
\end{enumerate}
\end{thm}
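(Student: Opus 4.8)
The plan is to construct the family of multisections \emph{only} on the spaces $\mathcal M_{\ell,0}(\beta)$ and to \emph{define} the one on $\mathcal M_{\ell,1}(\beta)$ as its pullback through $\mathfrak{forget}$; then property~(2) holds by construction, since the product form $V_{\tilde\alpha}=V_{\alpha}\times(0,1)$ of the compatible good coordinate systems (which we may assume, as in the paragraph preceding Definition~\ref{forgetmulticomp}) makes such a pullback meaningful and forces $U_{\tilde\alpha,i}=U_{\alpha,i}\times(0,1)$, $W_{\tilde\alpha}=W_{\alpha}$ and $\frak s_{\tilde\alpha,i,j}$ independent of the last coordinate. The construction of the $\{\frak s_{\alpha,i,j}\}$ on $\mathcal M_{\ell,0}(\beta)$ proceeds by induction on the energy $E=\beta\cap\omega$ (and, for each fixed energy, on $\ell$): since every nonconstant stable holomorphic disc or sphere has positive energy, all strata of $\partial\mathcal M_{\ell,0}(\beta)$ involve only moduli spaces with strictly smaller $E$. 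Before starting the induction we fix, once and for all, a compatible continuous family of multisections on each $\mathcal M^{\text{\rm cl}}_{\ell}(\alpha)$ for which $ev^{\text{\rm int}}|_{\frak s^{-1}(0)}$ is a submersion; this exists by Lemma~\ref{closedmoduliKura} together with Proposition~\ref{globalfamikura}, and by fiber product with $L$ it perturbs the stratum $\mathcal M^{\text{\rm cl}}_{1+\ell}(\tilde\beta)\,{}_{ev^{\text{\rm int}}_0}\times_M L$ of $\partial\mathcal M_{\ell,0}(\beta)$ occurring when $\partial\beta=0$.

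For the inductive step, fix $\beta$ and assume the $\{\frak s_{\alpha,i,j}\}$, with properties (1)--(5), are built for all smaller energy. The boundary $\partial\mathcal M_{\ell,0}(\beta)$ decomposes, via $\mathfrak{glue}$ of (\ref{gluemap}), into the strata $\mathcal M_{\ell_1,1}(\beta_1)\, {}_{ev_0} \times_{ev_0} \mathcal M_{\ell_2,1}(\beta_2)$ with $\beta_1+\beta_2=\beta$ (the interior marked points distributed among the two factors), together with the clop stratum above when $\partial\beta=0$. On each such stratum property~(5) \emph{forces} the multisection to be the fiber product of the ones already constructed on the two factors --- each $\mathcal M_{\ell_i,1}(\beta_i)$ carrying, by the inductive hypothesis and property~(2), the $\mathfrak{forget}$-pullback of the perturbation of $\mathcal M_{\ell_i,0}(\beta_i)$ --- and this fiber product is transversal to $0$ (here one uses that $ev_0$ on the first factor, restricted to its perturbed zero locus, is a submersion, which is property~(4) at energy $\beta_1$, established in the earlier inductive step; the general argument is given below) and is $\epsilon$-close to the Kuranishi map. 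One checks that these forced values agree along the corners, which correspond to threefold or higher bubbling, by the inductive compatibility (5) built into the pieces. Then, applying the relative version of Proposition~\ref{globalfamikura} stated in the Remark following it to the good coordinate system of $\mathcal M_{\ell,0}(\beta)$ --- which has no extra $(0,1)$ factors --- one extends the multisection from a neighborhood of $\partial\mathcal M_{\ell,0}(\beta)$ to all of $\mathcal M_{\ell,0}(\beta)$, preserving transversality and $\epsilon$-closeness and without altering it near the boundary; this gives (1), (2), (3), (5), and pulling back through $\mathfrak{forget}$ yields the corresponding data $(U_{\tilde\alpha,i},W_{\tilde\alpha},\{\frak s_{\tilde\alpha,i,j}\})$ on $\mathcal M_{\ell,1}(\beta)$.

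It remains to verify property~(4), submersivity of $ev_0$ on the perturbed zero set, and the point is that this is \emph{not} imposed during the construction above (indeed there is no $ev_0$ on $\mathcal M_{\ell,0}(\beta)$), but is inherited from the shape of the obstruction bundle produced in Section~\ref{Seckuranishiforget}. By Theorem~\ref{forgetcompKUra} and its proof, the Kuranishi neighborhood of $\tilde{\text{\bf p}}\in\mathcal M_{\ell,1}(\beta)$ away from the clop locus is $V_{\alpha}\times(0,1)$ with obstruction bundle pulled back from $V_{\alpha}$, and --- by the component-wise property and Lemma~\ref{ateachpointkuracomp} applied to the disc component carrying $z_0$ --- this bundle was chosen so that for \emph{every} position $z_0\in\partial D^2$ the map $Ev_{z_0}:K(u)\to T_{u(z_0)}L$ of (\ref{evalatz0}) is surjective. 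Since $K(u)$ sits inside $T(V_{\alpha}\times(0,1))$ and $d\,ev_0$ restricts on it to $Ev_{z_0}$, the map $ev_0:V_{\alpha}\times(0,1)\to L$ is a submersion for every value of the $(0,1)$-parameter (near the clop locus this holds trivially, the $L$-direction of $V_{\overline{\text{\bf p}}}\cap L$ already surjecting onto $TL$). Because the perturbed zero set is $\frak s_{\alpha}^{-1}(0)\times(0,1)$ and the tangent space $\ker d\frak s_{\alpha,i,j}$ of each branch is, for $\epsilon$ small, $C^0$-close to $K(u)$, surjectivity of $d\,ev_0$ on $\ker d\frak s_{\alpha,i,j}\times\R$ is an open condition and persists; this is (4), and the induction closes.

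The step I expect to be the main obstacle is the corner bookkeeping, and specifically the compatibility of the product form $V_{\tilde\alpha}=V_{\alpha}\times(0,1)$ of the $\mathcal M_{\ell,1}$-charts with the way these charts embed, via $\mathfrak{glue}$, into the boundary charts of $\mathcal M_{\ell,0}(\beta)$: one must check that when the two $(0,1)$-parameters attached to the factors $\mathcal M_{\ell_i,1}(\beta_i)$ combine with the gluing (neck) parameter of $\partial\mathcal M_{\ell,0}(\beta)$, the resulting forced fiber-product multisection on the stratum is still of the restricted type allowed by Definition~\ref{forgetmulticomp} (in particular the parameter spaces $W$ remaining products near the boundary and unchanged under $\mathfrak{forget}$) and admits a product-compatible extension inward. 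The underlying gluing and stratification facts are those of \cite{FOOO080}; the real care lies in arranging transversality, $\epsilon$-closeness, forgetful-map compatibility and gluing-compatibility to hold simultaneously and consistently at every corner and for every $(\ell,\beta)$ in the finite range $\beta\cap\omega\le E_0$, $\ell\le\ell_0$. The base of the induction --- $\beta=0$ and $\beta$ of minimal positive energy, where the relevant $\mathcal M_{\ell,0}(\beta)$ are fiber products involving only $L$ and are already transversal --- is routine.
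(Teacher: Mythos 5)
The overall strategy here---construct the families of multisections only on $\mathcal M_{\ell,0}(\beta)$ by induction on energy, impose the fiber-product values on $\partial\mathcal M_{\ell,0}(\beta)$ via (\ref{gluemap}) and the clop stratum, extend inward, then pull back by $\mathfrak{forget}$ to get $\mathcal M_{\ell,1}(\beta)$---is essentially the same as the paper's. But there is a genuine gap at the one step you single out as requiring no work: property (4).

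You claim that submersivity of $ev_0$ on the perturbed zero set is ``not imposed during the construction'' but is ``inherited'' because $ev_0:V_{\tilde\alpha}=V_\alpha\times(0,1)\to L$ is a submersion and the perturbation is $C^0$-small. This inference is false. Submersivity of a map on the ambient chart $V_\alpha$ does not imply submersivity of its restriction to a submanifold of $V_\alpha\times W_\alpha$, and $C^0$-closeness of $\frak s_\alpha$ to $s_\alpha$ gives no control whatsoever over $\ker d\frak s_{\alpha,i,j}$ (in general $\ker d\frak s_{\alpha,i,j}$ does not even have the same dimension as $K(u)=T_uV_\alpha$, and in the cases where it does the two subspaces need not be close: e.g.\ take $V_\alpha=\R^2_{(v_1,v_2)}$, $E_\alpha$ the trivial line bundle, $s_\alpha\equiv 0$, $W_\alpha=\R_w$, $ev_0(v_1,v_2)=v_1$, and $\frak s_\alpha(v_1,v_2,w)=\epsilon(v_1-w^2)$: this is transversal and $\epsilon$-close to $s_\alpha$, yet $d\,ev_0$ vanishes on $\ker d\frak s_\alpha$ along $w=0$). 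The conclusion $ev_0|_{\frak s_\alpha^{-1}(0)}$ submersive requires a \emph{choice} of perturbation, and the relevant existence statement, Lemma~\ref{existencesubmersion}, is stated precisely because this is not automatic. Moreover, on $\mathcal M_{\ell,0}(\beta)$ there is no boundary marked point and hence no canonical $ev_0$; the condition one must impose is the stronger one, that $Ev_{z_0}|_{\frak s_\alpha^{-1}(0)}$ be a submersion for \emph{every} $z_0\in\partial D^2$ simultaneously, and Proposition~\ref{globalfamikura} as stated does not deliver this.

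What is missing is a local lemma saying that near each point of $\mathcal M_{\ell,0}(\beta)$ one can build a family of multisections $\frak s_\alpha$ which is transversal and for which $D_{(v,0)}\frak s_\alpha\oplus d_{v,0}Ev_{z_0}$ is surjective for every $z_0\in\partial D^2$---this is Lemma~\ref{lemmulticonst}.4 of the paper, achieved by taking $W(u)=E(u)$ and $s_u(v,w)=D_v\overline{\partial}+\chi(v)w$ and invoking Lemma~\ref{ateachpointkuracomp}. The globalization via partition of unity (Lemma~\ref{famimultilocconst}, in particular its item 2) then certifies property (4) for the glued family, and this is what makes the inductive extension step legitimate. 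So the gap is not in the architecture of your induction but in the unjustified belief that $C^0$-small perturbations automatically preserve submersivity of evaluation maps on cut-out zero sets; you need to write that condition into the construction of the local perturbations, exactly as the paper does.

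As a secondary remark: for the closed-curve input you cite Lemma~\ref{closedmoduliKura} together with Proposition~\ref{globalfamikura}, but Proposition~\ref{globalfamikura} does not by itself give the compatibility with the bubbling strata $\mathcal M^{\text{\rm cl}}_{\ell_1}(\alpha_1)\,{}_{ev_1^{\text{\rm int}}}\times_{ev_1^{\text{\rm int}}}\mathcal M^{\text{\rm cl}}_{\ell_2}(\alpha_2)\subset\mathcal M^{\text{\rm cl}}_\ell(\alpha)$ nor the $\mathfrak{S}_\ell$-invariance required in Lemma~\ref{closedmoduliKurapert}; one needs a separate inductive argument for the sphere moduli, as in the paper.
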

We describe the above Condition 5 precisely below.
We consider $U_{\alpha_1,i_1}$, $\frak s_{\tilde\alpha_1,i_1,j_1}$ 
of $\mathcal M_{\ell_1,1}(\beta_1)$ and 
$U_{\alpha_2,i_2}$, $\frak s_{\tilde\alpha_2,i_2,j_2}$ 
of $\mathcal M_{\ell_2,1}(\beta_2)$.
We put $\alpha = (\alpha_1,\alpha_2)$. The fiber product
\begin{equation}\label{Vfiberprod}
V_{\alpha} := V_{\alpha_1} \,{}_{ev_0}\times_{ev_0} V_{\alpha_2}
\end{equation}
is a Kuranishi neighborhood of the Kuranishi space 
$$
\mathcal M_{\ell_1,1}(\beta_1)\,{}_{ev_0}\times_{ev_0}\mathcal M_{\ell_2,1}(\beta_2).
$$
(\ref{Vfiberprod}) contains a fiber product 
$U_{\alpha,(i_1,i_2)} = U_{\alpha_1,i_1} \,{}_{ev_0}\times_{ev_0} U_{\alpha_2,i_2}$.
An obstruction bundle on (\ref{Vfiberprod})  is a 
restriction of $E_{\alpha_1} \times E_{\alpha_2}$.
We put $W_{\alpha} = W_{\alpha_1} \times W_{\alpha_2}$.
Now we define
\begin{equation}\label{fprocontmulti}
\frak s_{\alpha,(i_1,i_2),(j_1,j_2)} = (\frak s_{\alpha_1,i_1,j_1},\frak s_{\alpha_2,i_2,j_2})\vert
_{U_{\alpha,(i_1,i_2)}}.
\end{equation}
(\ref{fprocontmulti}) defines a compatible system of continuous family of 
multisections on 
$\mathcal M_{\ell_1,1}(\beta_1)\,{}_{ev_0}\times_{ev_0}\mathcal M_{\ell_2,1}(\beta_2)$.
Condition 5 requires that this system coincides with the restriction 
of one in $\mathcal M_{\ell,0}(\beta)$. Here $\beta = \beta_1+\beta_2$, 
$\ell = \ell_1+\ell_2$.
\begin{proof}
The strategy of the proof is similar to the proof of Theorem \ref{forgetcompKUra}.
We first consider the situation of Lemma \ref{ateachpointkuracomp}
and use the notations there.
\begin{lmm}\label{lemmulticonst}
Let $V_0 \subset \overline V_0 \subset V_1 \subset \overline V_1 \subset V$
be neighborhoods of $u$ in $V_{\alpha}$, where $V_{\alpha}$ 
is a Kuranishi neighborhood of $u$.
\par
Then there exists a finite dimensional vector space $W(u)$ and a section
$
s_u : V \times W(u) \to E(u)\times W(u)
$
with the following properties.
\smallskip
\begin{enumerate}
\item $s_u(v,0) = D_v\overline{\partial}$.
\item $s_u(v,w) = D_v\overline{\partial}$ if $v \notin V_1$.
\item If $D_v\overline{\partial} = 0$ and $v \in V_0$ then the map
$$
D_{(v,0)} s_u : T_vV \times T_0W(u) \to E(u)
$$
is surjective.
\item If $v$ is as in $3$ and if $z_0 \in \partial D^2$, then 
$$
D_{(v,0)} s_u  \oplus d_{v,0}Ev_{z_0} : T_vV \times T_0W(u) \to E(u)
\oplus T_{v(z_0)}L
$$
is surjective.
\end{enumerate}
\end{lmm}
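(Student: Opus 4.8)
The plan is to obtain $W(u)$ and $s_u$ almost for free from the obstruction space $E(u)$ already produced by Lemma \ref{ateachpointkuracomp}; all the analytic content sits in that earlier lemma. I would take $W(u)=E(u)$, fix a $\Gamma_\alpha$-invariant smooth cutoff function $\chi\colon V\to[0,1]$ that is identically $1$ on an open neighborhood of $\overline V_0$ and has support contained in $V_1$, and set
$$
s_u(v,w)=\bigl(\overline{\partial}v+\chi(v)\,w,\ w\bigr)\in E(u)\times W(u),
$$
where $\overline{\partial}v$ denotes the unperturbed Kuranishi map value at $v$ (written $D_v\overline{\partial}$ in the statement). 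Properties 1 and 2 are then immediate: the perturbation term vanishes at $w=0$, and it vanishes for $v\notin V_1$ since $\operatorname{supp}\chi\subset V_1$.

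For property 3, if $\overline{\partial}v=0$ and $v\in V_0$ then $\chi\equiv1$ near $v$, so the $E(u)$-component of $D_{(v,0)}s_u$ is $(\xi,w')\mapsto D_v\overline{\partial}\,\xi+w'$, and already the summand $\{0\}\times T_0W(u)$ surjects onto $E(u)$ via $w'\mapsto w'$. For property 4 the point is that
$$
Ev_{z_0}\colon T_vV=(D_v\overline{\partial})^{-1}(E(u))\longrightarrow T_{v(z_0)}L
$$
is surjective: at $v=u$ this is exactly the conclusion of Lemma \ref{ateachpointkuracomp}, which was arranged so that surjectivity holds for \emph{every} $z_0\in\partial D^2$ (via the finite subcover $W(z_1),\dots,W(z_N)$ of the boundary and $E(u)=\bigoplus_i E(u;z_i)$), and since surjectivity of a continuous family of linear maps is an open condition and $\partial D^2$ is compact, it persists for $v$ in a neighborhood of $u$, uniformly in $z_0$, after shrinking $V$ if necessary. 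Granting this, given $(e,\eta)\in E(u)\oplus T_{v(z_0)}L$ choose $\xi_0\in T_vV$ with $Ev_{z_0}(\xi_0)=\eta$; because $\xi_0\in T_vV$ we have $D_v\overline{\partial}\,\xi_0\in E(u)$, so $w':=e-D_v\overline{\partial}\,\xi_0$ lies in $W(u)=E(u)$, and $(\xi_0,w')$ maps to $(e,\eta)$ under $D_{(v,0)}s_u\oplus d_{v,0}Ev_{z_0}$.

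The main obstacle I anticipate is precisely this uniform transfer of the surjectivity of $Ev_{z_0}$ from the single point $u$ to a full neighborhood $V$ and to all $z_0\in\partial D^2$ simultaneously: it is only the openness of the Fredholm-regularity condition together with compactness of $\partial D^2$ that makes it go through, and quantitatively it forces $V$ — equivalently the gluing parameter $\epsilon$ in Proposition \ref{gluing} — to be taken sufficiently small, which must be kept consistent with the nested choice $V_0\subset\overline V_0\subset V_1\subset\overline V_1\subset V$. Everything else is routine: the verification of 1--4 above is elementary linear algebra, the section $s_u$ is genuinely single-valued at this local stage (multivaluedness enters only when such local models with differing isotropy groups are patched into the global family), and the cutoff construction is standard. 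This local lemma is then fed, together with the fiber-product models at the nodal boundary strata, into the inductive construction behind Theorem \ref{multicontforgetmain}.
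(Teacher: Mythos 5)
Your construction is exactly the paper's: set $W(u)=E(u)$, pick a cutoff $\chi$ that is $1$ on $V_0$ and vanishes outside $V_1$, and put $s_u(v,w)=\overline{\partial}v+\chi(v)w$; properties 1--3 are immediate and property 4 reduces to the surjectivity of $Ev_{z_0}$ from Lemma \ref{ateachpointkuracomp}. You have also correctly spelled out the one point the paper leaves tacit, namely that the surjectivity of $Ev_{z_0}$ established at $u$ transfers to nearby $v$ uniformly in $z_0\in\partial D^2$ by openness plus compactness of the circle (after shrinking $V$); the only minor remark is that the cutoff need not be made $\Gamma_\alpha$-invariant, and indeed the paper explicitly does \emph{not} require $s_u$ to be equivariant --- equivariance is recovered only later at the level of the resulting multisection.
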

\begin{proof}
Let $\chi : V \to [0,1]$ be a smooth function such that $\chi \equiv 1$ on $V_0$ 
and $\chi \equiv 0$ on the complement of $V_1$. 
We put $W(u) = E(u)$ and 
$$
s_u(v,w) = D_v\overline{\partial} + \chi(v)w.
$$
The lemma then follows easily from (\ref{evalatz0}).
\end{proof}
We consider a Kuranishi chart $V_{\alpha}$ of $\mathcal M_0(\beta)$.
For simplicity of notation, we assume that each element of $V_{\alpha}$ is 
represented by a map from a disc (without singular point).
Let $u\in V_{\alpha}$.
We apply Lemma \ref{lemmulticonst} and obtain $V_0$, $V_1$, $V$, 
which we denote by $V_0(u)$, $V_1(u)$, $V(u)$.
We also have $E(u)$ and obtain $s_u$, $W(u)$. 
(We remark that $E(u)$ here is the restriction of $E_{\alpha}$, the 
obstruction bundle of our Kuranishi chart.)
Let $\Gamma(u) \subset \Gamma_{\alpha}$ be the isotropy group of $u$.
We may assume that 
$U(u) \cap \gamma V(u) \ne \emptyset$, $\gamma \in \Gamma_{\alpha}$ 
implies that $\gamma \in \Gamma(u)$ and that 
$V_0(u)$, $V_1(u)$, $V_2(u)$ are $\Gamma(u)$ invariant.
(We do not (and can not) require $s_u$ to be $\Gamma(u)$ equivariant.)
\par
We take $\Gamma_{\alpha}$ invariant relatively compact subset $V'_{\alpha} \subset V_{\alpha}$ such that 
$V'_{\alpha}/\Gamma_{\alpha}$ still cover $\mathcal M_{0}(\beta)$.
$V'_{\alpha}/\Gamma_{\alpha}$ is covered by finitely many of $V_0(u)/\Gamma(u)$'s, which 
we write $V_0(u_c)/\Gamma(u_c)$, $c=1,\ldots, N$. 
We put
$$
W_{\alpha} = W(u_1) \times \ldots \times W(u_N). 
$$
For each $c$, the map $s_{u_c}$ determines a multisection on $V_{\alpha}/\Gamma_{\alpha}$
which coincides with $s_{\alpha}$ ourside $V_1(u_c)/\Gamma(u_c)$. 
(Lemma \ref{lemmulticonst}.2.) We denote it by 
$s_{u_c}$ by abuse of notation. 
Let $\chi_c : V_{\alpha}/\Gamma(u_c) \to [0,1]$ be a 
partition of unity subordinate to the covering $\{V_1(u_c)/\Gamma(u_c)\mid c=1,
\ldots, N\}$. We now put
\begin{equation}\label{stasu}
\frak s_{\alpha}(v;w_1,\ldots,w_I) 
= s_{\alpha}(v) + \sum_i \chi_c(v) \left( s_c(v,w) - s_{\alpha}(v) \right).
\end{equation}
(See \cite{FO} Definition 3.4 for the sum of multisections, which appear in the right hand side of 
(\ref{stasu}).)  Now we have:
\begin{lmm}\label{famimultilocconst}
There exists $\epsilon >0$ such that the following holds.
\smallskip
\begin{enumerate}
\item $\frak s_{\alpha}$ is transversal to $0$.
\item  We take $U_i \subset V'_{\alpha}/\Gamma_{\alpha}$ such that 
$\frak s_{\alpha}$ has a representative 
$(\frak s_{\alpha,i,j})_{j=1}^{l_j}$ on $U_i \times W$. We put
$$
\frak s_{\alpha,i,j}^{-1}(0) 
= \{ (v,w) \in U_i \times W \mid 
\frak s_{\alpha,i,j}(v,w) = 0, \,\,\, \Vert w\Vert \le \epsilon \}.
$$
This is a manifold by $1$. Then for each $z_0 \in \partial D^2$ the map
$$
Ev_{z_0} : \frak s_{\alpha,i,j}^{-1}(0)  \to L
$$
defined by $(v,w) \mapsto Ev_{z_0}(v,w) = v(z_0)$, is a submersion.
\item $\frak s_{\alpha,i,j}(v,0) = s_{\alpha}(v)$.
\end{enumerate}
\end{lmm}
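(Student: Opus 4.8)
The plan is to extract all three items from the explicit formula (\ref{stasu}) together with the properties of the building blocks $s_{u_c}$ supplied by Lemma \ref{lemmulticonst}, the only substantive point being the choice of $\epsilon$ in item (2). Write $\widetilde\chi_c$ for the cut-off used to construct $s_{u_c}$ in the proof of Lemma \ref{lemmulticonst} (so $\widetilde\chi_c\equiv1$ on $V_0(u_c)$ and $\widetilde\chi_c\equiv0$ off $V_1(u_c)$), so that $s_{u_c}(v,w_c)-s_\alpha(v)=\widetilde\chi_c(v)\,w_c$ with $w_c$ in the fibre $W(u_c)=E(u_c)$ of the obstruction bundle $E_\alpha$. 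Then (\ref{stasu}) becomes
$$\frak s_\alpha(v;w_1,\dots,w_N)=s_\alpha(v)+\sum_{c=1}^N\chi_c(v)\,\widetilde\chi_c(v)\,w_c.$$
At each $v\in V'_\alpha$ some $\chi_c(v)$ is positive and its support lies where the corresponding $\widetilde\chi_c$ is positive, so $\lambda(v):=\sum_c\chi_c(v)\widetilde\chi_c(v)>0$ on $V'_\alpha$ (arrange the $\widetilde\chi_c$ positive on $V_1(u_c)$, or take the $\chi_c$ subordinate to $\{V_0(u_c)/\Gamma(u_c)\}$, on which $\widetilde\chi_c\equiv1$). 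Item (3) is then immediate: at $w=0$ every branch of $\frak s_\alpha$ equals $s_\alpha$.

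For item (1): the derivative of $\frak s_\alpha$ in the $W_\alpha$-directions at any $(v,w)$ is the linear map $(\xi_c)_c\mapsto\sum_c\chi_c(v)\widetilde\chi_c(v)\,\xi_c$ into $E_{\alpha,v}$, which is onto because $\lambda(v)>0$; this computation is unaffected by passing to local branches and by the symmetrisation over the isotropy groups $\Gamma(u_c)$ that turns the (deliberately non-equivariant) $s_{u_c}$ into a multisection. Hence every branch of $\frak s_\alpha$ is already a submersion onto $E_\alpha$, a fortiori transversal to $0$, for any $\epsilon$, and the zero sets $\frak s_{\alpha,i,j}^{-1}(0)$ of item (2) are smooth manifolds with corners.

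For item (2): by Lemma \ref{lemmulticonst}.4, whenever $v_0$ is a zero of $s_\alpha$ lying in some $V_0(u_c)$ and $z_0\in\partial D^2$, the map $D_{(v_0,0)}s_{u_c}\oplus d Ev_{z_0}$ is onto $E_{\alpha,v_0}\oplus T_{v_0(z_0)}L$. At $(v_0,0)$ the $v$-derivative of $\frak s_\alpha$ coincides with that of $s_{u_c}$ (both equal $D_{v_0}s_\alpha$, since the $w$-dependent terms vanish at $w=0$), while its $w$-derivative is onto $E_{\alpha,v_0}$ by the previous paragraph; combining these, $D_{(v_0,0)}\frak s_\alpha\oplus d Ev_{z_0}$ is again onto $E_{\alpha,v_0}\oplus T_{v_0(z_0)}L$. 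Surjectivity of a linear map is an open condition, so this persists on a neighbourhood of the closed set of such points $(v_0,0)$; I would then argue by contradiction that, for $\epsilon$ small enough, every zero $(v,w)$ of a branch with $\Vert w\Vert\le\epsilon$ and $v\in V'_\alpha$ lies in that neighbourhood. Indeed, a sequence $(v_n,w_n)$ of such zeros with $\epsilon_n\to0$ at which $D\frak s_\alpha\oplus d Ev_{z_0}$ fails to be onto would have, along a subsequence, $w_n\to0$ and $v_n\to v_\infty$ with $s_\alpha(v_\infty)=\lim\bigl(-\sum_c\chi_c(v_n)\widetilde\chi_c(v_n)w_{n,c}\bigr)=0$, so $v_\infty$ is a zero of $s_\alpha$ in $\overline{V'_\alpha}$, hence (after a harmless shrinking of the $V_0(u_c)$ keeping them a cover) in some $V_0(u_c)$, where the combined map is onto — contradicting openness. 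Taking $\epsilon$ below the resulting threshold, and noting that only finitely many branches and finitely many $z_0$-neighbourhoods from Lemma \ref{lemmulticonst} are at play, gives item (2).

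I expect the main difficulty to be not any single estimate but the bookkeeping surrounding it: simultaneously shrinking the sets $V_0(u_c)\subset V_1(u_c)\subset V(u_c)$ so that they stay relatively compact and still cover $V'_\alpha/\Gamma_\alpha$, tracking the finite groups $\Gamma(u_c)$ and the non-equivariance of $s_{u_c}$ through the passage to multisections and branches, and making the compactness argument uniform over all branches and over the (finitely many) boundary-point neighbourhoods built into Lemma \ref{lemmulticonst}. None of this is deep, but it is where the care goes.
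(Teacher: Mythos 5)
Your proposal is correct and is essentially the intended expansion of the paper's one-line proof, which only records ``1,2,3 follows from Lemma \ref{lemmulticonst} 3,4,1 respectively.''  For items (2) and (3) you read off exactly the intended mechanism: item (3) is the $w=0$ evaluation of (\ref{stasu}) via Lemma \ref{lemmulticonst}.1, and item (2) uses Lemma \ref{lemmulticonst}.4 at zeros of $s_\alpha$ in the $V_0(u_c)$'s, propagated by openness and secured for small $\|w\|\le\epsilon$ by the compactness argument you spell out — this last step (how $\epsilon$ is chosen) is exactly what the paper's terse proof leaves implicit.

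For item (1) you take a slightly different and in fact cleaner route than the paper signals.  The paper points to Lemma \ref{lemmulticonst}.3 (surjectivity of $D_{(v,0)}s_u$ at zeros in $V_0$), which would require the same openness-plus-compactness argument as item (2); you instead observe that after the innocuous adjustment of taking the partition of unity $\chi_c$ subordinate to $\{V_0(u_c)/\Gamma(u_c)\}$ (so that $\chi_c\widetilde\chi_c=\chi_c$), the $W_\alpha$-derivative of every branch of $\frak s_\alpha$ is already onto $E_{\alpha,v}$ at \emph{every} $(v,w)$ with $v\in V'_\alpha$, which gives transversality for all $\epsilon$ with no limiting argument.  This is stronger than needed and also feeds usefully into your item (2) argument.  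The only caveat — which you acknowledge — is the bookkeeping through branches, the non-equivariant $s_{u_c}$, and the finite covers; your handling of these is sound.
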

\begin{proof}
1,2,3 follows from Lemma \ref{lemmulticonst} 3,4,1 respectively.
\end{proof}
Lemma \ref{famimultilocconst} enables us to construct a 
family of multisections required in Theorem \ref{multicontforgetmain} locally. 
We can prove the relative version of Lemma \ref{famimultilocconst} 
also in the same way.
\begin{rem}
We remark that we can use Lemma \ref{famimultilocconst}.3 to prove the property 1 in 
Theorem  \ref{multicontforgetmain}. Namely we can shrink $W$ to its 
small neighborhood of $0$. Then Lemma \ref{famimultilocconst}.3
implies that $\frak s_{\alpha,i,j}$ is $C^0$ close to the Kuranishi map 
$s_{\alpha}$.
\end{rem}
We next state the analog of Lemma \ref{closedmoduliKura}.
\begin{lmm}\label{closedmoduliKurapert}
For each $E_0$ and $\ell_0$, 
there exists a system of continuous family of 
multisections on various 
$\mathcal M_{\ell}^{\text{\rm cl}}(\alpha)$ for various $\ell \ge 0$ and 
$\alpha \in \pi_2(M)$ with $\alpha\cap \omega \le E_0$ and $\ell \le \ell_0$.
It has the following properties.
\par\smallskip
\begin{enumerate}
\item The action of permutation group of order $\ell !$ on $\mathcal M^{\text{\rm cl}}_{\ell}(\alpha)$ exchanging 
marked points preserves the family of 
multisections.
\item It is transversal to $0$.
\item The evaluation map $ev^{\text{\rm int}} : \mathcal M_{\ell}^{\text{\rm cl}}(\alpha) \to M^{\ell}$ 
is a submersion on the zero set.
\item 
Let $\ell_1 + \ell_2 = \ell + 2$, $\alpha_1 + \alpha_2 = \alpha$. Then 
the embedding 
$$
\mathcal M^{\text{\rm cl}}_{\ell_1}(\alpha_1) {}_{ev^{\text{\rm int}}_1}\times_{{ev^{\text{\rm int}}_1}}
\mathcal M^{\text{\rm cl}}_{\ell_2}(\alpha_2)
\subset \mathcal M^{\text{\rm cl}}_{\ell}(\alpha)
$$
is compatible with our system of continuous families of multisections.
\end{enumerate}
\end{lmm}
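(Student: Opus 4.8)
The plan is to run a streamlined version of the proof of Theorem \ref{multicontforgetmain}: induct on the energy $\alpha\cap\omega$, and for fixed energy on the number $\ell$ of interior marked points (this secondary induction is needed only for strata carrying a constant component, where the energy does not drop but one factor acquires strictly fewer marked points). There is no forgetful map to respect here; the only extra features demanded are the $S_\ell$-symmetry of Lemma \ref{closedmoduliKura}.1 and the component-wiseness of Lemma \ref{closedmoduliKura}.4. First I would fix, once and for all, the permutation-equivariant, component-wise Kuranishi structures on the $\mathcal M_\ell^{\text{\rm cl}}(\alpha)$ provided by Lemma \ref{closedmoduliKura}, together with good coordinate systems for them (\cite{FO} Lemma 6.3) chosen $S_\ell$-invariantly and so as to restrict, on each nodal stratum $\mathcal M_{\ell_1}^{\text{\rm cl}}(\alpha_1){}_{ev^{\text{\rm int}}_1}\times_M\mathcal M_{\ell_2}^{\text{\rm cl}}(\alpha_2)$, to the fiber product of the good coordinate systems of the two factors; this is possible because the Kuranishi structures already have these compatibilities.

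In the inductive step, assume families of multisections with properties 1--4 have been constructed on all the moduli spaces lower in the order. Every nodal stratum of $\mathcal M_\ell^{\text{\rm cl}}(\alpha)$ is such a space: either both factors have strictly smaller energy, or one factor is a constant component (necessarily with $\ell_1\ge 3$ marked points), in which case the other has $\ell_2\le\ell-1$ marked points and the same energy. Taking $W=W_{\alpha_1}\times W_{\alpha_2}$ and $\frak s=(\frak s_{\alpha_1},\frak s_{\alpha_2})$ as in $(\ref{fprocontmulti})$ defines, on each such stratum, a family of multisections; because the families on the factors are component-wise, the family so obtained on a deeper stratum (whose domain has several nodes) is independent of the way that stratum is exhibited as an iterated fiber product, so these match up to a single family $\frak s$, $C^0$-close to the Kuranishi map, on an open neighborhood $\mathcal U$ of the union of all lower strata. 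It is transversal to $0$ (each branch is a product of transversal branches), and $ev^{\text{\rm int}}$ is a submersion on its zero set: on the glued surface $ev^{\text{\rm int}}$ is the pair of the remaining evaluations of the two factors, and fibering over $M$ along the node-evaluation, submersivity persists by the family-of-multisections analogue of Lemma \ref{kurafibermap}.

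It then remains to extend $\frak s$ from $\mathcal U$ to all of $\mathcal M_\ell^{\text{\rm cl}}(\alpha)$. I would apply the relative form of Proposition \ref{globalfamikura} (the Remark after it) to the weakly submersive map $ev^{\text{\rm int}}:\mathcal M_\ell^{\text{\rm cl}}(\alpha)\to M^\ell$ of Lemma \ref{closedmoduliKura}.2; this produces a compatible, transversal family with $ev^{\text{\rm int}}$ submersive on the zero set, $C^0$-close to the Kuranishi map, and agreeing with $\frak s$ near the given compact part of $\mathcal U$. Equivariance under $S_\ell$ is built in during this extension: $S_\ell$ permutes the charts of the good coordinate system and the lower strata among themselves compatibly with the families already fixed, so one performs the extension on a single representative of each $S_\ell$-orbit of charts and transports it by the group action; the equivariance of the coordinate changes makes this consistent on overlaps. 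Running the induction through all $(\ell,\alpha)$ with $\alpha\cap\omega\le E_0$ and $\ell\le\ell_0$ (a finite list, by Gromov compactness) finishes the proof.

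The hard part, as in Theorem \ref{multicontforgetmain}, will be the simultaneous consistency in the inductive step: that the fiber-product families agree on all overlapping lower strata — equivalently, that an iterated fiber product is insensitive to the order of degeneration — and that this is compatible with the $S_\ell$-action and with the submersivity of $ev^{\text{\rm int}}$. The point that makes it go through is that the whole construction is component-wise, so a multisection on a stratum is a function of the data on its irreducible pieces alone; the overlap compatibilities are then forced, and only the bookkeeping of $S_\ell$-orbits of strata and charts remains. A minor technical point is to verify that weak submersivity of $ev^{\text{\rm int}}$ genuinely passes to the fiber product over $M$; this is the family-of-multisections version of Lemma \ref{kurafibermap}, obtained branch by branch from the elementary fact for submersions of manifolds.
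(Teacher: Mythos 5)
Your proposal matches the paper's proof: the paper likewise inducts on $(\alpha,\ell)$ (citing \cite{FOOO080} Section 7.2 for the ordering, which is exactly your secondary induction on $\ell$ for strata with a constant ghost component), uses property~4 of the fiber product factors to pin down the family on the nodal strata and deduce~2,3 there, and then invokes the relative extension argument of the previous section (the remark after Proposition \ref{globalfamikura}) to extend over the top stratum. The paper's version is terser — it does not spell out the $S_\ell$-orbit bookkeeping or the fiber-product analogue of Lemma \ref{kurafibermap} for submersivity of $ev^{\text{\rm int}}$ — but these are exactly the two technical points you correctly identify and dispatch, so your write-up is a faithful (and more explicit) rendering of the same argument.
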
 
\begin{proof}
The proof is by induction on $(\alpha,\ell)$. We can 
organize the order of the induction in the same way as \cite{FOOO080} Section 7.2.
4 determines the family of multisections in the singular locus of each 
of $ \mathcal M^{\text{\rm cl}}_{\ell}(\alpha)$.
By 2,3 of the fiber product factors, we can prove that 
2,3 are satisfied on the singular locus. We can then 
use the argument of the last section to extend it to 
$ \mathcal M^{\text{\rm cl}}_{\ell}(\alpha)$.
\end{proof}
Note we do not require compatibility with forgetful map here. 
So the proof of Lemma \ref{closedmoduliKurapert} is easier than that 
of Theorem \ref{multicontforgetmain}.
\par
Now we are in the position to complete the proof of Theorem \ref{multicontforgetmain}.
The proof is by induction on $(\beta,\ell)$. 
We assume that we have constructed the required family of multisections 
on $\mathcal M_{\ell',0}(\beta')$ for $\beta'\cap \omega 
< \beta\cap \omega$ ad $\ell' \le \ell$.
We study the case of  $\mathcal M_{\ell,0}(\beta)$.
We consider the boundary  $\partial\mathcal M_{\ell,0}(\beta)$.
One of its boundary component is 
\begin{equation}\label{discfpro}
\mathcal M_{\ell_1,1}(\beta_1) {}_{ev_0}\times_{ev_0}
\mathcal M_{\ell_2,1}(\beta_2)
\end{equation}
where $\ell_1 + \ell_2 = \ell + 2$, $\alpha_1 + \alpha_2 = \alpha$.
Continuous family of multisections on the factors of (\ref{discfpro}) 
is already given by induction hypothesis.
Moreover the properties 3,4 of the factors of (\ref{discfpro})  imply that 
the same properties for the fiber product family of multisections.
\par
For the other type of boundary component
\begin{equation}
\mathcal M^{\text{\rm cl}}_{\ell}(\tilde\beta) {}_{ev^{\text{\rm int}}_1}\times_M L
\end{equation}
we can apply Lemma \ref{closedmoduliKurapert} to 
obtain the required family of multisections.
We thus obtain family of multisections which has required 
properties.
\par
Those families of the multisections on the components of 
the boundary $\partial \mathcal M_{\ell,0}(\beta)$ 
is consistent at the overlapped part.
(See \cite{FOOO080} Lemma 7.2.55.)
\par
We can then extend it to a neighborhood of the boundary.
It is easy to see that this extended one 
still has the required transversal properties.
Therefore we can use relative version of Lemma \ref{famimultilocconst}
to discuss in the same way as the last section 
to extend this family of multisections to 
$\mathcal M_{\ell,0}(\beta)$.
Since we have only finitely many steps to work out, 
we can choose our family so that 1 of Theorem \ref{multicontforgetmain}
is satisfied. The proof of Theorem \ref{multicontforgetmain} is now complete.
\end{proof}
\begin{rem}
See \cite{FOOO080} Subsection 7.2.3 for the reason why we need to 
fix $E_0$, $\ell_0$ and stop the construction of the 
multisections after $\beta\cap \omega > E_0$ or $\ell > \ell_0$. 
\end{rem}
In the same way as Section \ref{Seckuranishiforget}, Theorem \ref{multicontforgetmain} has 
the following corollary.
\begin{crl}\label{Corkuramulti}
For each $\epsilon$, $E_0$ and $\ell_0$, 
there exists a system of continuous family of multisections on 
$\mathcal M_{\ell,k+1}(\beta)$ $k\ge 0$, $\ell_0\ge \ell\ge 0$,
$\beta \cap \omega \le E_0$, 
with the following properties.
\par\smallskip
\begin{enumerate}
\item It is $\epsilon$-close to the Kuranishi map in $C^0$ sense.
\item It is compatible with $\mathfrak{forget}_{k+1,1}$.
\item It is invariant under the cyclic permutation of the 
boundary marked points.
\item It is invariant by the permutation of interior marked points.
\item $ev_0 : \mathcal M_{\ell,k+1}(\beta) \to L$ induces a submersion on its zero set.
\item We consider the decomposition of the boundary:
\begin{equation}\label{bdcompati}
\aligned
\partial \mathcal M_{\ell,k+1}(\beta)
=
\bigcup_{1\le i\le j+1 \le k+1} 
&\bigcup_{\beta_1+\beta_2=\beta}
\bigcup_{L_1\cup L_2=\{1,\ldots,\ell\}} \\
&\mathcal M_{\#L_1,j-i+1}(\beta_1) {}_{ev_0} \times_{ev_i} 
\mathcal M_{\#L_2,k-j+i}(\beta_2).
\endaligned
\end{equation}
(See {\rm \cite{FOOO080}} Subsection {\rm 7.1.1}.) Then the restriction of our family of multisections of 
$\mathcal M_{\ell,k+1}(\beta)$ in the left hand side 
coincides with the fiber product family of multisections in 
the right hand side.
\end{enumerate}
\end{crl}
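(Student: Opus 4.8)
The plan is to run the argument of Corollary \ref{Corkura} one level up, pulling back continuous families of multisections in place of Kuranishi structures. Concretely, I would \emph{define} the family of multisections on $\mathcal M_{\ell,k+1}(\beta)$ to be the pull-back, under $\mathfrak{forget}_{k+1,1}:\mathcal M_{\ell,k+1}(\beta)\to\mathcal M_{\ell,1}(\beta)$, of the family $\{\frak s_{\tilde\alpha,i,j}\}$ furnished by Theorem \ref{multicontforgetmain}. This is legitimate since, by Corollary \ref{Corkura}.1 and the discussion following it, a Kuranishi chart of $\mathcal M_{\ell,k+1}(\beta)$ lying over a chart $V_{\tilde\alpha}$ of $\mathcal M_{\ell,1}(\beta)$ has the product form $V_{\tilde\alpha}\times(0,1)^{k-m}\times[0,1)^m$, with obstruction bundle and Kuranishi map pulled back; composing a representative $\frak s_{\tilde\alpha,i,j}$ on $U_{\tilde\alpha,i}\times W_{\tilde\alpha}$ with the projection to $V_{\tilde\alpha}$ then produces a $W_{\tilde\alpha}$-parametrized multisection on that chart, and these glue. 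By construction the resulting family satisfies (2), and (2) pins it down.

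Assertions (1), (3), (5) are then read off from Theorem \ref{multicontforgetmain}. Since the Kuranishi map of $\mathcal M_{\ell,k+1}(\beta)$ is itself a pull-back (Corollary \ref{Corkura}.1), pulling back along the projection alters neither it nor the branches, so the $C^0$ distances are unchanged and (1) follows from Theorem \ref{multicontforgetmain}.1; likewise transversality, which (5) presupposes, follows from Theorem \ref{multicontforgetmain}.3, the zero locus of the pulled-back multisection being $\frak s_{\tilde\alpha,i,j}^{-1}(0)$ times a cube. For (5) itself, $\mathfrak{forget}_{k+1,1}$ leaves $z_0$ untouched, so $ev_0$ on $\mathcal M_{\ell,k+1}(\beta)$ equals $ev_0\circ\mathfrak{forget}_{k+1,1}$, and on the zero locus this is the projection followed by $ev_0\vert_{\frak s_{\tilde\alpha}^{-1}(0)}$, a submersion by Theorem \ref{multicontforgetmain}.4. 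For (3) and (4) I would use that, by Theorem \ref{multicontforgetmain}.2 and Definition \ref{forgetmulticomp}, the family on $\mathcal M_{\ell,1}(\beta)$ is itself pulled back under $\mathfrak{forget}:\mathcal M_{\ell,1}(\beta)\to\mathcal M_{\ell,0}(\beta)$ from the family on $\mathcal M_{\ell,0}(\beta)$; hence the family on $\mathcal M_{\ell,k+1}(\beta)$ is the pull-back of the latter under $\mathfrak{forget}\circ\mathfrak{forget}_{k+1,1}$, a map invariant under every permutation $\sigma$ of the $k+1$ boundary marked points. From $\sigma^*((\mathfrak{forget}\circ\mathfrak{forget}_{k+1,1})^*\frak s)=(\mathfrak{forget}\circ\mathfrak{forget}_{k+1,1}\circ\sigma)^*\frak s=(\mathfrak{forget}\circ\mathfrak{forget}_{k+1,1})^*\frak s$ one gets (3), cyclic $\sigma$ being a special case; and (4) follows likewise once one observes---just as (3) of Corollary \ref{Corkura} came from the component-wiseness in Theorem \ref{forgetcompKUra}.4---that the construction of Theorem \ref{multicontforgetmain} may be performed so that the family on $\mathcal M_{\ell,0}(\beta)$ is already invariant under permutations of interior marked points, using Lemma \ref{closedmoduliKurapert}.1 on the closed components.

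Finally, (6) is the counterpart of (5) of Corollary \ref{Corkura}, and I would deduce it from Theorem \ref{multicontforgetmain}.5 by the same bookkeeping. Under $\mathfrak{forget}\circ\mathfrak{forget}_{k+1,1}$ each boundary stratum in (\ref{bdcompati}) maps onto a stratum $\mathcal M_{\#L_1,1}(\beta_1)\,{}_{ev_0}\times_{ev_0}\mathcal M_{\#L_2,1}(\beta_2)$ of $\partial\mathcal M_{\ell,0}(\beta)$, all boundary marked points but the two nodes being forgotten; by Theorem \ref{multicontforgetmain}.5 the family on $\mathcal M_{\ell,0}(\beta)$ restricts there to the fiber-product family, and since pull-back commutes with fiber products and with multiplying by configuration-space cubes, its pull-back to the stratum in (\ref{bdcompati}) is the fiber product of the families on $\mathcal M_{\#L_1,j-i+1}(\beta_1)$ and $\mathcal M_{\#L_2,k-j+i}(\beta_2)$. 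I expect the only delicate point to be this tracking of marked points and nodes along the boundary under the forgetful maps; but it is identical to the Kuranishi-structure version already settled in Corollary \ref{Corkura}, and since all the genuine transversality and gluing analysis is confined to Theorem \ref{multicontforgetmain} and Lemma \ref{closedmoduliKurapert}, no new analytic difficulty arises---the one thing to keep straight being a consistent use of the product/pull-back formalism of Definitions \ref{defnmultisec} and \ref{forgetmulticomp}.
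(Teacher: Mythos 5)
Your proposal takes essentially the same route as the paper: the paper supplies no separate proof of Corollary \ref{Corkuramulti}, only the remark ``In the same way as Section \ref{Seckuranishiforget}, Theorem \ref{multicontforgetmain} has the following corollary,'' and your plan—define the family on $\mathcal M_{\ell,k+1}(\beta)$ as the pull-back of the family on $\mathcal M_{\ell,1}(\beta)$ via $\mathfrak{forget}_{k+1,1}$, then read off the properties—is precisely the analogue of the paper's proof of Corollary \ref{Corkura}, transposed to multisections. Your verifications of (1), (2), (3), (5), (6) are all the intended arguments and are correct.

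Two comments worth keeping in mind. First, for (4) you correctly notice that Theorem \ref{multicontforgetmain} does not explicitly assert interior-permutation-invariance of the family on $\mathcal M_{\ell,0}(\beta)$—unlike the Kuranishi-structure case, where Corollary \ref{Corkura}.3 could simply quote Theorem \ref{forgetcompKUra}.4 (component-wiseness). You are right that this symmetry must be built into the inductive construction of Theorem \ref{multicontforgetmain} itself: on the stratum $\mathcal M^{\text{\rm cl}}_{\ell}(\tilde\beta)\times_M L$ it comes from Lemma \ref{closedmoduliKurapert}.1, on the fiber-product strata (\ref{discfpro}) it comes from the inductive hypothesis on $\mathcal M_{\ell',0}(\beta')$ for $(\beta',\ell')<(\beta,\ell)$, and the extension to the interior must then be made equivariantly; the paper does not spell this out, so your flagging it is a small but genuine improvement in explicitness. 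Second, on (6) your parenthetical worry—tracking what the boundary strata (\ref{bdcompati}) do under $\mathfrak{forget}_{k+1,1}$, since forgetting marked points can destabilize components—is the right thing to be nervous about, but it is already settled at the level of the Kuranishi structure (Corollary \ref{Corkura}.1 and \ref{Corkura}.5, where the charts over a boundary stratum are products with cubes and corners $[0,1)^m$), so nothing new is needed for multisections beyond the fiber-product compatibility of Theorem \ref{multicontforgetmain}.5. In short: your proof matches the paper's intent, and fills in the one point the paper leaves tacit.
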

\section{Cyclic filtered $A_{\infty}$ algebra and cyclic filtered $A_{\infty}$ algebra modulo $T^E$}
\label{cyclicmodTEsec}
Let $\overline C$ be a graded vector space over $\R$ and $n$ be a positive integer.
We conider $\R$ bilinear map
\begin{equation}\label{innerprod}
\langle \cdot \rangle : \overline C^k \otimes \overline C^{n-k} \to \R.
\end{equation}
such that
\begin{equation}\label{gradedansym}
\langle x,y \rangle = (-1)^{1+\deg'x\deg'y}\langle y,x \rangle.
\end{equation}
Here and hereafter
$$
\deg' x = \deg x -1.
$$
We consider a sequence of operators
\begin{equation}\label{mkopewithfil}
\overline{\frak m}_k : B_k(\overline C[1]) \to \overline C[1]
\end{equation}
of degree $1$, for $k=1,2,\ldots$.
(Here $B_k(\overline C[1])$ is the tensor product of $k$ copies 
of $\overline C[1]$.)
\begin{dfn}\label{cyclicainf}
We say $(\overline C,\langle \cdot \rangle,\{\overline{\frak m}_k\}_{k=1}^{\infty})$
is a {\it cyclic $A_{\infty}$ algebra} of dimension $n$ if:
\smallskip
\begin{enumerate}\label{ainfty1}
\item $\{\overline{\frak m}_k\}_{k=1}^{\infty}$ satisfies the 
following $A_{\infty}$ relation:
\begin{equation}\label{ainfty12}
\sum_{k_1+k_2=k+1}\sum_{i=1}^{k-k_2+1}
(-1)^*\overline{\frak m}_{k_1}(x_1,\ldots,\overline{\frak m}_{k_2}(x_i,\ldots,x_{i+k_2-1}),\ldots,x_{k}) = 0,
\end{equation}
where $* = \deg'x_1+\ldots+\deg'x_{i-1}$.
\item We have:
 \begin{equation}\label{cyclicform2}
\langle 
\overline{\frak m}_k(\text{\bf x}_1,\ldots,\text{\bf x}_k),\text{\bf x}_0
\rangle
= 
(-1)^*\langle 
\overline{\frak m}_k(\text{\bf x}_0,\text{\bf x}_1,\ldots,\text{\bf x}_{k-1}),\text{\bf x}_{k} 
\rangle,
\end{equation}
where $* = \deg'x_0(\deg' x_1+\ldots +\deg'x_k)$.
\item $\langle \cdot \rangle$ is non-degenerate and induces a perfect 
pairing on $H(\overline C)= \text{\rm Ker}\overline{\frak m}_{1}/\text{\rm Im}\overline{\frak m}_{1}$.
\end{enumerate}
\end{dfn}
We remark that $\overline{\frak m}_{1} \circ \overline{\frak m}_{1}=0$ by 1. Therefore $H(\overline C)$ 
is well-defined. 2 implies 
$
\langle \overline m_1(x),y\rangle = \pm \langle \overline m_1(y),x\rangle
$
Therefore $\langle \cdot \rangle$ induces one on $H(\overline C)$ which 
satisfies  (\ref{gradedansym}).
\begin{exa}\label{derhamAinift}
Let $M$ be an $n$-dimensional oriented closed manifold.
Let $\overline C= \Lambda(M)$ be the de-Rham complex. 
We put
$$
\overline{\frak m}_1(u) = (-1)^{\deg u}du, \quad
\overline{\frak m}_2(u,v) = (-1)^{\deg u\deg v + \deg u}u \wedge v,
$$
$\overline{\frak m}_k = 0$ for $k\ne 1,2$, and 
$$
\langle u,v\rangle = (-1)^{\deg u\deg v + \deg u}\int_M u \wedge v.
$$
\end{exa}
\begin{dfn}\label{discmonoid}
We say a subset $G \subset \R_{\ge 0}\times 2\Z$ a {\it discrete submonoid} if 
the following holds. 
We denote by $E : G  \to \R$ and $\mu : G  \to 2\Z$ the projections to each of 
the component.
\smallskip
\begin{enumerate}
\item If $\beta_1,\beta_2 \in G$ then $\beta_1 + \beta_2 \in G$.
$(0,0) \in G$.
\item The image $E(G) \subset \R_{\ge 0}$ is discrete.
\item For each $\lambda \in \R_{\ge 0}$ the inverse image 
$G \cap E^{-1}(\lambda)$ is a finite set.
\end{enumerate}
\end{dfn}
We remark that 1,2,3 imply that $E^{-1}(0) \cap G = \{(0,0)\}$.
\begin{dfn}\label{Gnovikov}
We put
$$
\Lambda_0^G = \left\{ \sum a_{\beta}T^{E(\beta)}e^{\mu(\beta)/2} \mid \beta \in G,\,\, a_{\beta} 
\in \R \right\},
$$
where the sum may be either finite or infinite sum. For each $E_0 \in \R_{\ge 0}$
we have a filtration
$$
F^{E_0}\Lambda_0^G = \left\{ \sum a_{\beta}T^{E(\beta)}e^{\mu(\beta)/2}\in \Lambda^G \mid E(\beta)\ge E_0 
\right\}.
$$
It determines a topology on $\Lambda_0^G$ with which $\Lambda_0^G$ is complete.
\par
We define a grading on $\Lambda_0^G$ by $\deg T = 0$, $\deg e =2$.
\end{dfn}
We remark that $\Lambda_0^G$ contains a (semi)-group ring of the monoid $G$ that is 
nothing but the element of $\Lambda_0^G$ with only finitely many nonzero $a_{\beta}$.
$\Lambda_0^G$ is its completion.
\par
The universal Novikov ring $\Lambda_{0,nov}$ which was introduced by \cite{FOOO00} is 
the union (inductive limit) of all $\Lambda_0^G$ for various discrete submonoids $G$.
\begin{dfn}
A $G$-gapped cyclic filtered $A_{\infty}$ algebra structure on $\overline C$ is a 
sequence of opeartors
\begin{equation}\label{mkbeta}
\frak m_{k,\beta} : B_k(\overline C[1]) \to \overline C[1]
\end{equation}
for each $\beta\in G$ and $k\in \Z_{\ge 0}$ of degree $1-\mu(\beta)$ 
and $\langle \cdot \rangle$ with the following properties.
\smallskip
\begin{enumerate}
\item
$\frak m_{0,\beta} = 0$ for $\beta = (0,0)$.
\item 
\begin{equation}\label{Ainfinityrelbeta}
\aligned
\sum_{k_1+k_2=k+1}&\sum_{\beta_1+\beta_2=\beta}\sum_{i=1}^{k-k_2+1} \\
&(-1)^*{\frak m}_{k_1,\beta_1}(x_1,\ldots,{\frak m}_{k_2,\beta_2}(x_i,\ldots,x_{i+k_2-1}),\ldots,x_{k}) = 0,
\endaligned\end{equation}
holds for any $\beta \in G$ and $k$. Here the sign is as in (\ref{ainfty1}).
\item $\langle \cdot \rangle$ satisfies
(\ref{innerprod}), (\ref{gradedansym}) and 
\begin{equation}\label{cyclicform22}
\langle 
{\frak m}_{k,\beta}(\text{\bf x}_1,\ldots,\text{\bf x}_k),\text{\bf x}_0
\rangle
= 
(-1)^*\langle 
{\frak m}_{k,\beta}(\text{\bf x}_0,\text{\bf x}_1,\ldots,\text{\bf x}_{k-1}),\text{\bf x}_{k} 
\rangle,
\end{equation}
holds for any $\beta$ and $k$. Here the sign is as in (\ref{cyclicform2}).
3 of Definition \ref{cyclicainf} also holds.
\end{enumerate}
\end{dfn}
\begin{dfn}
A G-gapped cyclic filtered $A_{\infty}$ algebra structure modulo $T^{E_0}$ 
is sequence of operators (\ref{mkbeta}) for $E(\beta) < E_0$ 
which satisfies the same properties, except (\ref{Ainfinityrelbeta})  and (\ref{cyclicform22})
are assumed only for $E(\beta) < E_0$.
\end{dfn}
\begin{dfn}
$\text{\bf e} \in \overline C$ of degree $0$ is said to be a 
(strict) unit of $A_{\infty}$ algebra if
\begin{equation}\label{unitalityeq}
\aligned
\frak m_2(\text{\bf e},x) &= (-1)^{\deg x} \frak m_2(x,\text{\bf e}) = x,  \\
\frak m_k(\ldots,\text{\bf e},\ldots) &= 0\quad\text{for $k\ne 2$}.
\endaligned\end{equation}
$\text{\bf e}$ is said to be a unit of filtered $A_{\infty}$ algebra if
\begin{equation}\aligned
\frak m_{2,(0,0)}(\text{\bf e},x) &= (-1)^{\deg x} \frak m_{2,(0,0)}(x,\text{\bf e}) = x,  \\
\frak m_{k,\beta}(\ldots,\text{\bf e},\ldots) &= 0\quad\text{for $(k,\beta)\ne (2,(0,0))$}.
\endaligned\end{equation}
Unit for filtered $A_{\infty}$ algebra modulo $T^E$ is defined in the same way.
(filtered) $A_{\infty}$ algebra with unit is said to be unital.
\end{dfn}
We put
$$
\frak m_k = \sum_{\beta} T^{E(\beta)} e^{\mu(\beta)/2} \frak m_{k,\beta} : 
B_k(C[1]) \to C[1]. 
$$
It satisfies (\ref{ainfty12}), (\ref{cyclicform2}).
\begin{rem}
The sign convention of this paper is slightly different from one in \cite{FOOO080}.
We explain the difference below.
In \cite{FOOO080} the inner product 
\begin{equation}\label{()and<>}
(u,v) = (-1)^{\deg u\deg v}\int_L u\wedge v = (-1)^{\deg u} \langle u,v\rangle
\end{equation}
is used. 
(See \cite{FOOO080} Remark 8.4.7 (1).)
Then the cyclic symmetry of $\frak m_k$ takes the form
\begin{equation}\label{()cyclic}
(x_0,\frak m_{k,\beta}(x_1,\ldots,x_k)) = (-1)^*(x_1,\frak m_{k,\beta}(x_2,\ldots,x_{k},x_0)),
\end{equation}
where $* = \deg'x_0(\deg' x_1+\ldots+\deg'x_k)$.
(See \cite{FOOO080} Proposition 8.4.8.)
\begin{lmm}\label{hikakucyclicconv}
If $(\cdot)$ and $\langle \cdot \rangle$ are related by $(\ref{()and<>})$, 
then $(\ref{cyclicform22})$ is equivalent to $(\ref{()cyclic})$.
\end{lmm}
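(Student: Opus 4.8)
The plan is to reduce both $(\ref{cyclicform22})$ and $(\ref{()cyclic})$ to one and the same symmetry statement for the multilinear form
\[
\Phi(x_0;x_1,\dots,x_k) := \langle \frak m_{k,\beta}(x_1,\dots,x_k),x_0\rangle
\]
on $(\overline C[1])^{\otimes(k+1)}$, and then to observe that these two reductions differ only by a reindexing of the variables. First I would record the one degree fact that enters the sign computation: since $\mu(\beta)\in 2\Z$, the operator $\frak m_{k,\beta}$ has odd shifted degree, so $\deg'\frak m_{k,\beta}(x_1,\dots,x_k)\equiv \deg'x_1+\dots+\deg'x_k+1 \pmod 2$. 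Everything below is a sign-chase modulo $2$, and the claimed equivalence is between $(\ref{cyclicform22})$ and $(\ref{()cyclic})$ for each fixed $\beta$ and $k$.

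Next I would rewrite the pairing $(\cdot)$ in terms of $\langle\cdot\rangle$ using $(\ref{()and<>})$ and then interchange the two slots of $\langle\cdot\rangle$ using the graded antisymmetry $(\ref{gradedansym})$. A short bookkeeping — in which $\deg x_0=\deg'x_0+1$, the $+1$ coming from $(\ref{gradedansym})$, and the contribution $\deg'x_0\cdot 1$ of the trailing $+1$ in $\deg'\frak m_{k,\beta}(\cdot)$ combine to an even total — gives
\[
(x_0,\frak m_{k,\beta}(x_1,\dots,x_k)) = (-1)^{\deg'x_0(\deg'x_1+\dots+\deg'x_k)}\,\Phi(x_0;x_1,\dots,x_k),
\]
and, by the same computation applied to the rotated tuple, $(x_1,\frak m_{k,\beta}(x_2,\dots,x_k,x_0)) = (-1)^{\deg'x_1(\deg'x_0+\deg'x_2+\dots+\deg'x_k)}\,\Phi(x_1;x_2,\dots,x_k,x_0)$. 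Substituting both expressions into $(\ref{()cyclic})$ and cancelling the common factor $(-1)^{\deg'x_0(\deg'x_1+\dots+\deg'x_k)}$ present on both sides, $(\ref{()cyclic})$ becomes equivalent to
\[
\Phi(x_0;x_1,\dots,x_k) = (-1)^{\deg'x_1(\deg'x_0+\deg'x_2+\dots+\deg'x_k)}\,\Phi(x_1;x_2,\dots,x_k,x_0).
\]

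Finally I would note that $(\ref{cyclicform22})$ is, literally by the definition of $\Phi$, the identity $\Phi(x_0;x_1,\dots,x_k)=(-1)^{\deg'x_0(\deg'x_1+\dots+\deg'x_k)}\Phi(x_k;x_0,\dots,x_{k-1})$, and that feeding the $(k+1)$-tuple $(x_1,x_2,\dots,x_k,x_0)$ into this identity in place of $(x_0,x_1,\dots,x_k)$ produces exactly the displayed consequence of $(\ref{()cyclic})$ above (after multiplying through by that sign, which is $\pm1$ hence its own inverse). Since every step used — rewriting $(\cdot)$ via $(\ref{()and<>})$, applying $(\ref{gradedansym})$, the cancellation, and the reindexing substitution — is reversible, this yields $(\ref{cyclicform22})\Leftrightarrow(\ref{()cyclic})$. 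The only delicate point is the sign arithmetic of the middle step, namely checking that after converting $(\cdot)$ to $\langle\cdot\rangle$ and swapping, the leftover exponent is precisely $\deg'x_0(\deg'x_1+\dots+\deg'x_k)$ with no parity error; I expect no genuine obstacle, the lemma being a formal computation once $\Phi$ has been introduced.
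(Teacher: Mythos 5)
Your proof is correct and takes essentially the same route as the paper's: both are direct sign-chases that convert between the two pairings via $(\ref{()and<>})$ and $(\ref{gradedansym})$, cancel the common sign factor, and finish by a cyclic reindexing of the variables. Your reformulation in terms of the auxiliary form $\Phi$ merely packages the paper's chain of equalities in a slightly more symmetric way and makes the final reindexing step explicit.
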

\begin{proof}
We assume (\ref{()and<>}) and (\ref{()cyclic}). Then we calculate
$$\aligned
&\langle \frak m_{k,\beta}(x_1,\ldots,x_k),x_0\rangle \\
&= (-1)^{\sum_{i=1}^k\deg'x_i}
(\frak m_{k,\beta}(x_1,\ldots,x_k),x_0)\\
&= (-1)^{\left(\sum_{i=1}^k\deg'x_i\right)(\deg'x_0)}
(x_0, \frak m_{k,\beta}(x_1,\ldots,x_k)) \\
&= 
(x_1, \frak m_{k,\beta}(x_2,\ldots,x_k,x_0)) \\
&= 
(-1)^{*_1} (\frak m_{k,\beta}(x_2,\ldots,x_k,x_0),x_1) \\
&= (-1)^{*_2} \langle\frak m_{k,\beta}(x_2,\ldots,x_k,x_0),x_1\rangle.
\endaligned$$
Here 
$$
*_1 = \left(\sum_{i\ne 1}\deg'x_i\right)(\deg'x_1 + 1),
$$
and
$$
*_2 = *_1 + \sum_{i\ne 1}\deg'x_i
= \left(\sum_{i\ne 1}\deg'x_i\right)\deg'x_1.
$$
We thus have $(\ref{cyclicform22})$.
The proof of the converse is similar.
\end{proof}
\end{rem}
By Lemma \ref{hikakucyclicconv} we can apply the discussion of 
orientation and sign in \cite{FOOO080}  Subsection 8.4.2 for the purpose of 
this paper.
\section{Cyclic filtered $A_{\infty}$ structure modulo $T^E$ on de-Rham complex}
\label{cyclicTEderamsec}
The main result of this section is as follows.
\begin{thm}\label{existifilaAinfmodTE}
For any relatively spin Lagrangian submanifold $L$ of $(M,\omega)$. We can assign $G$ such that for each
$E_0>0$ the de Rham complex $\Lambda(L)$ has a structure of $G$ gapped cyclic unital filtered $A_{\infty}$ algebra modulo $T^{E_0}$.
\end{thm}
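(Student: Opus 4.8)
The plan is to produce the operations $\frak m_{k,\beta}$ on $\overline C = \Lambda(L)$ as smooth correspondences attached to the moduli spaces $\mathcal M_{0,k+1}(\beta)$ of bordered stable maps with $k+1$ boundary marked points and no interior marked points, using the forgetfulmap compatible Kuranishi structures of Corollary \ref{Corkura} and the forgetfulmap compatible continuous families of multisections of Corollary \ref{Corkuramulti} (applied with $\ell_0 = 0$). For $\beta\in G$ with $E(\beta) > 0$ I set
$$
\frak m_{k,\beta}(x_1,\ldots,x_k) := \operatorname{Corr}_*\bigl(\mathcal M_{0,k+1}(\beta);(ev_1,\ldots,ev_k),ev_0\bigr)(x_1\times\cdots\times x_k),
$$
which makes sense by Corollary \ref{Corkuramulti}.5, since $ev_0$ is submersive on the zero set of the chosen family of multisections. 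For $\beta = (0,0)$ I take the classical de Rham values of Example \ref{derhamAinift}: $\frak m_{1,(0,0)}(x) = (-1)^{\deg x}dx$, $\frak m_{2,(0,0)}(x,y) = (-1)^{\deg x\deg y + \deg x}x\wedge y$, and $\frak m_{k,(0,0)} = 0$ for $k\ne 1,2$. For $G$ I take the discrete submonoid of $\R_{\ge 0}\times 2\Z$ generated by $(0,0)$ together with all $(\omega\cap\beta,\mu(\beta))$ for which $\mathcal M_{0,k}(\beta)\ne\emptyset$ for some $k$; Gromov compactness gives conditions 2,3 of Definition \ref{discmonoid}. For a given $E_0 > 0$ one invokes Corollaries \ref{Corkura} and \ref{Corkuramulti} with that $E_0$, so that only the finitely many $\beta$ with $E(\beta) < E_0$ enter.

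To check the $A_\infty$ relation (\ref{Ainfinityrelbeta}) modulo $T^{E_0}$, apply Stokes (Proposition \ref{stokes}) to $\mathcal M_{0,k+1}(\beta)$. The term $d\circ\operatorname{Corr}_* - \operatorname{Corr}_*\circ d$ on the left reproduces precisely the summands of (\ref{Ainfinityrelbeta}) in which the factor $\frak m_{1,(0,0)} = \pm d$ appears. On the right, $\operatorname{Corr}_*(\partial\mathcal M_{0,k+1}(\beta))$ is evaluated by inserting the boundary decomposition (\ref{bdcompati}) and the composition formula (Proposition \ref{comprop}): the strata with $\beta_1,\beta_2\ne(0,0)$ give the terms $\frak m_{k_1,\beta_1}(\ldots,\frak m_{k_2,\beta_2}(\ldots),\ldots)$, while the strata in which one factor carries trivial homology class — where it is a moduli space of constant maps, i.e.\ essentially a configuration space of boundary points — give the remaining terms involving $\frak m_{2,(0,0)} = \pm\wedge$. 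Summing over all $\beta$ with $E(\beta) < E_0$ yields the relation modulo $T^{E_0}$. Signs are treated exactly as in \cite{FOOO080} Chapter 8; by Lemma \ref{hikakucyclicconv} those conventions transcribe to the present ones.

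For cyclic symmetry (\ref{cyclicform22}), since $\langle u,v\rangle = \pm\int_L u\wedge v$, Lemma \ref{Corandint2} gives
$$
\langle\frak m_{k,\beta}(x_1,\ldots,x_k),x_0\rangle = \pm\operatorname{Corr}_*\bigl(\mathcal M_{0,k+1}(\beta);ev_1\times\cdots\times ev_k\times ev_0,\text{\rm const}\bigr)(x_1\wedge\cdots\wedge x_k\wedge x_0),
$$
a number now attached symmetrically to all $k+1$ marked points. Cyclic rotation of the boundary marked points is an automorphism of the Kuranishi structure (Corollary \ref{Corkura}.2) under which the chosen family of multisections is invariant (Corollary \ref{Corkuramulti}.3), so this number is unchanged under relabelling of the marked points, up to the orientation sign of the rotation on $\mathcal M_{0,k+1}(\beta)$ and the Koszul sign from reordering $x_1\wedge\cdots\wedge x_k\wedge x_0$ into $x_0\wedge x_1\wedge\cdots\wedge x_k$. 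Matching the two yields (\ref{cyclicform22}) with the stated $* = \deg'x_0(\deg'x_1+\cdots+\deg'x_k)$. Non-degeneracy of $\langle\cdot\rangle$ on $H(\Lambda(L)) \cong H(L;\R)$ (item 3 of Definition \ref{cyclicainf}) is Poincaré duality. Unitality with $\text{\bf e} = 1\in\Lambda^0(L)$: for $\beta = (0,0)$ the required identities are immediate from the formulas of Example \ref{derhamAinift} (in particular $\frak m_{2,(0,0)}(1,x) = x = (-1)^{\deg x}\frak m_{2,(0,0)}(x,1)$); for $(k,\beta)\ne(2,(0,0))$, if one input is the constant $0$-form $1$, forgetfulmap compatibility (Corollary \ref{Corkuramulti}.2) exhibits the relevant chart of $\mathcal M_{0,k+1}(\beta)$ as a product fibering over the moduli space with that marked point forgotten, and integration of $ev^*1 = 1$ over the positive-dimensional fiber vanishes. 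Finally $\frak m_{0,(0,0)} = 0$ because $\mathcal M_{0,1}((0,0)) = \emptyset$.

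The main obstacle is the orientation and sign bookkeeping: one must check that the orientations induced on $\mathcal M_{0,k+1}(\beta)$ and on the fiber products of (\ref{bdcompati}) by the relative spin structure of $L$ match the signs prescribed in (\ref{Ainfinityrelbeta}), and that the cyclic automorphism acts on those orientations by exactly the sign appearing in (\ref{cyclicform22}). The purely geometric input — a perturbation that is simultaneously transversal, invariant under cyclic permutation of the boundary marked points, submersive for $ev_0$, and compatible with the gluing and forgetful maps — has already been secured in Corollaries \ref{Corkura} and \ref{Corkuramulti}; what remains here is genuinely the algebraic/sign part, for which the computations of \cite{FOOO080} Subsection 8.4.2 apply via Lemma \ref{hikakucyclicconv}.
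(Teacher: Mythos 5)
Your proposal is correct and follows essentially the same route as the paper: define $\frak m_{k,\beta}$ for $\beta\ne(0,0)$ via $\operatorname{Corr}_*$ on $\mathcal M_{k+1}(\beta)$ with the forgetfulmap-compatible continuous families of multisections, take Example \ref{derhamAinift} for $\beta=(0,0)$, deduce the $A_\infty$ relation from Stokes plus the boundary decomposition and composition formula, cyclic symmetry from Lemma \ref{Corandint2} plus cyclic invariance of the perturbation, and unitality from forgetfulmap compatibility, deferring signs to \cite{FOOO080}. Your citation of Corollary \ref{Corkuramulti}.3 for cyclic symmetry is in fact the correct item (the paper's text points to item 4, which concerns interior marked points, and appears to be a typo).
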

\begin{proof}
This theorem follows from Corollary \ref{Corkuramulti} as follows.
We fix tame almost complex structure $J$ and let $G$ be the 
submonoid of $\R_{\ge 0} \times 2\Z$ generated by the set
$$
\{(\beta\cap \omega,\mu(\beta)) \mid \beta \in H_2(X,L;\Z), 
\,\, \mathcal M_0(\beta) \ne \emptyset \}.
$$
Gromov compactness implies that $G$ satisfies the conditions in Definition \ref{discmonoid}.
\par
We apply Corollary \ref{Corkuramulti} by putting $E_0$ as above and $\ell =0$.
Then for $\rho_1,\ldots,\rho_k \in \Lambda(L)$, we define
\begin{equation}\label{geodef}
\aligned
&\frak m_{k,\beta}(\rho_1,\ldots,\rho_k) \\
&= \text{\rm Corr}(\mathcal M_{k+1}(\beta);(ev_1,\ldots,ev_k),ev_0)(\rho_1\times \ldots \rho_k)
\in \Lambda(L),
\endaligned\end{equation}
for $\beta \ne (0,0)$. 
Since $ev_0$ is weakly submersive by Corollary \ref{Corkuramulti}.5, the right hand 
side is well-defined and is a smooth form.
We define $\frak m_{k,\beta_0}$ for $\beta_0 = (0,0)$ and $\langle \cdot \rangle$ 
as in (\ref{derhamAinift}).
\par
The cyclic symmetry follows from Corollary \ref{Corkuramulti}.4 (up to sign) as follows.
By Lemma \ref{Corandint2} and definitions we have:
\begin{equation}\label{tobecycilc}
\aligned
\langle 
&{\frak m}_{k,\beta}(\rho_1,\ldots,\rho_k),\rho_0
\rangle\\
&= \text{\rm Corr}(\mathcal M_{k+1}(\beta),(ev_1,\ldots,ev_k,ev_0),\text{\rm const})
(\rho_1\times \dots \times \rho_k\times \rho_0)
\endaligned
\end{equation}
By Corollary \ref{Corkuramulti}.4 the right hand side of  (\ref{tobecycilc}) 
is cyclically symmetric.
\par
Filtered $A_{\infty}$ relation (\ref{Ainfinityrelbeta}) is a consequence of 
Corollary \ref{Corkuramulti}.6 (up to sign) and
Propositions \ref{stokes} and \ref{comprop}. (See \cite{FOOO080} Section 3.5.)
\par
The (strict) unitality (\ref{unitalityeq}) follows from Corollary \ref{Corkuramulti}.2
as follows.
We consider $\beta \ne (0,0)$ and $k\ge 1$ and studies
\begin{equation}\label{zeroifunit}
\frak m_{k,\beta}(\rho_1,\ldots,\rho_{i-1},1,\rho_{i+1},\ldots,\rho_k).
\end{equation}
Here $1 \in \Lambda^0(L)$ is the 0 form $\equiv 1$.
We consider the forgetful map 
$$
\frak{forget}_i : \mathcal M_{k}(\beta) \to \mathcal M_{k-1}(\beta)
$$
which forgets $i$-th marked point.
We have chosen the Kuranishi structure and the (family of) multisections 
are invariant of forgetful map.
We consider the expression (\ref{localdefofcorr}).
 \begin{equation}\label{recall4.8}
\sum_i  f^{t}_{\alpha !} \frac{1}{l_{i}}\chi_i \left( (f^s_{\alpha})^* \rho \wedge \omega_{\alpha} \right)
\vert_{\frak s_{\alpha,i,j}}
\end{equation}
In our case $\rho = \rho_1 \times \ldots \times \rho_{i-1} \times 1\times \rho_{i+1} \times \ldots
\times \rho_k$.
Let $X$ be the vector field tangent to the fiber of $\frak{forget}_i$. Then we have
$$
i_X((f^s_{\alpha})^* \rho \wedge \omega_{\alpha})= 0.
$$
On the other hand, $f^t = ev_0$ factors through $\frak{forget}_i$.
Therefore, (\ref{recall4.8}) is zero in our case.
Since (\ref{zeroifunit}) is obtained from  (\ref{recall4.8}) by summing up using partition of 
unity, it follows that (\ref{zeroifunit}) is zero also. We thus proved the strict unitality.
\par
We finally remark that we can handle the sign in the same way as \cite{FOOO080} Subsection 8.10.3 
and \cite{Fuk07I} Section 12. Namely we can reduce the sign in the de Rham version to one in singular 
homology version.
\end{proof}
\begin{rem}
We remark that the evaluation map $ev_0$ from our perturbed moduli space 
is submersive. This is enough to work with de Rham theory since 
we can pull back differential forms by $ev_1,\ldots,ev_k$ without assuming 
its submersivity. 
\par
In case we work with (singular) chains, we need to take 
fiber product of singular chains $P_i$ of $L$ with our perturbed moduli space  
by evaluation  maps $ev_1,\ldots,ev_k$. 
This requires submersivity of $ev_1,\ldots,ev_k$. As we explained in 
Remark \ref{remwksubmer} it is impossible to do so while keeping compatibility with 
forgetful map.
\end{rem}
\section{Homological algebra of cyclic filtered $A_{\infty}$ algebra: statement}
\label{homalgsec}
In Sections \ref{homalgsec}-\ref{canoalgsec} we study homological 
algebra of cyclic filtered $A_{\infty}$ algebra. 
We follow \cite{Ka} in various places. 
(See also \cite{Cho}.) However our 
discussion is different from \cite{Ka} not only we study 
filtered case also but also in several other points.
In \cite{Ka} $\langle u,v\rangle \ne 0$ only when 
$\deg u+\deg v$ is odd. Thus our situation is included in  \cite{Ka} 
when $\dim L$ is odd. In that case 
according to Remark 2.1.2 \cite{Ka}, the sign convention 
of cyclic symmetry in \cite{Ka} is similar to  (\ref{()cyclic})
(that is the convention of \cite{FOOO080}).
The notion of pseudo-isotopy of cyclic filtered $A_{\infty}$ algebra 
which we introduce in this section is also new.
\par
We first review the definition of filtered $A_{\infty}$ homomorphism from 
\cite{FOOO080} Chapter 4.
Let $G \subset \R_{\ge 0} \times 2\Z$ be a discrete submonoid and 
$(C,\{\frak m_{k,\beta}\})$, $(C',\{\frak m'_{k,\beta}\})$ be $G$-gapped 
filtered $A_{\infty}$ algebras.
\begin{dfn}
A sequence of $\R$-linear maps
$$
\frak f_{k,\beta} : B_k(\overline C[1]) \to \overline C[1]
$$
of degree $-\mu(\beta)$ for $k=0,1,2,\ldots$, $\beta \in G$ is said to be a 
$G$-{\it gapped filtered $A_{\infty}$ homomorphism} if:
\smallskip
\begin{enumerate}
\item $\frak f_{0,(0,0)} = 0$.
\item 
For each $\beta$, $k =0,1,2,\ldots$, with $(k,\beta) \ne (0,(0,0))$ and 
$x_1,\ldots,x_k \in \overline C[1]$, we have
\begin{equation}\label{inftyhomoeq}
\aligned
&\sum \frak m_{\ell,\beta_0}
\left(
\frak f_{k_1,\beta_1}(x_1,\ldots),\ldots,
\frak f_{k_{\ell},\beta_{\ell}}(\ldots,x_k)
\right)
\\
&= \sum_{k_1+k_2=k+1}\sum_{\beta_1+\beta_2=\beta}
\sum_{i=1}^{k-k_2+1}
(-1)^*\frak f_{\beta_1,k_1}(x_1,\ldots,\frak m_{k_2,\beta_2}(x_i,\ldots),\ldots,x_k)
\endaligned
\end{equation}
Here the sum in the left hand side is taken over $\ell$, $\beta_i$, $k_i$ such that 
$\beta_0+\beta_1+\ldots+\beta_{\ell} = \beta$ and $k_1+\ldots+k_{\ell} = k$, and 
the sign in the right hand side is $*=\deg'x_1+\ldots+\deg'x_{i-1}$.
\end{enumerate}
\par\smallskip
In case $\frak f_{k,\beta}$ is defined only for $E(\beta) < E_0$ and 
(\ref{inftyhomoeq}) holds only for  $E(\beta) < E_0$, 
we call it a filtered $A_{\infty}$ homomorphism modulo $T^{E_0}$.
This is defined if $(C,\{\frak m_{k,\beta}\})$, $(C',\{\frak m'_{k,\beta}\})$ are filtered $A_{\infty}$
algebra modulo $T^{E_0}$.
\end{dfn}
\begin{dfn}
\begin{enumerate}
\item A filtered $A_{\infty}$ homomorphism is said to be {\it strict} if $\frak f_{0,\beta}=0$ for any $\beta$.
\item Suppose $(C,\{\frak m_{k,\beta}\})$, $(C',\{\frak m'_{k,\beta}\})$ are unital. We say a filtered $A_{\infty}$
homomorphism to be {\it unital} if:
\begin{equation}
\aligned
&\frak f_{1,\beta}(\text{\bf e}) 
= \begin{cases}
\text{\bf e}   &\text{if $\beta= (0,0)$}, \\
0   &\text{if $\beta\ne (0,0)$}.
\end{cases}
\\
&\frak f_{k,\beta}(\ldots,\text{\bf e},\ldots) = 0, \qquad k> 1.
\endaligned
\end{equation}
\item We say a filtered algebra or homomorphism to be {\it gapped} in case 
it is $G$-gapped for some $G$, which we do not specify.
\end{enumerate}
\end{dfn}
\begin{dfn}[\cite{Ka} Definition 2.13]
Let $(C,\langle \cdot \rangle,\{\frak m_{k,\beta}\})$, $(C',\langle \cdot \rangle,\{\frak m'_{k,\beta}\})$ be
$G$-gapped cyclic filtered $A_{\infty}$ algebras.
A $G$-gapped filtered $A_{\infty}$ homomorphism $\frak f = \{\frak f_{k,\beta}\} : C \to C'$ is said to be 
{\it cyclic} if the following holds.
\begin{equation}\label{homocyc1}
\langle \frak f_{1,(0,0)}(x),\frak f_{1,(0,0)}(y) \rangle = \langle x,y \rangle
\end{equation}
for any $x,y$.
\begin{equation}\label{homocyc2}
\sum_{\beta_1+\beta_2=\beta}\sum_{k_1+k_2=k} \langle \frak f_{k_1,\beta_1}(x_1,\ldots,x_{k_1}),\frak f_{k_2,\beta_2}(x_{k_1+1},\ldots,x_k) \rangle =
0,
\end{equation}
holds  for $(k,\beta) \ne (2,(0,0))$ and $x_1,\ldots,x_k$.
\par
In case (\ref{homocyc2}) holds only for $E(\beta) < E_0$ we say that it is cyclic modulo $T^{E_0}$.
\par
We define composition of filtered $A_{\infty}$ homomorphism as in \cite{FOOO080} Definition 3.2.31.
It is easy to see that composition of cyclic  filtered $A_{\infty}$ homomorphisms are cyclic.
\end{dfn}
\begin{rem}\label{misremonequiv}
\begin{enumerate}
\item
We say a (cyclic) filtered $A_{\infty}$ homomorphism $\{\frak f_{k,\beta}\}$ to be a {\it weak homotopy equivalence} 
if $\frak f_{1,(0,0)}$  induces isomorpism on $\frak m_{1,(0,0)}$.
\item
We say a  filtered $A_{\infty}$ homomorphism $\{\frak f_{k,\beta}\}$ is an {\it isomorphism}
if it has an inverse, that is a filtered $A_{\infty}$ homomorphism $\{\frak g_{k,\beta}\}$ 
such that the compositions of them are identity. (An identity morphism 
$\{\frak h_{k,\beta}\}$ is defined by $\frak h_{k,\beta} = 0$ for $(k,\beta) \ne (1,(0,0))$ and 
$\frak h_{1,(0,0)} = $identity. 
It is easy to see that an inverse of cyclic filtered $A_{\infty}$ homomorphism is automatically cyclic.
\end{enumerate}
\end{rem}
\par
We next define and study the properties of pseudo-isotopy of cyclic filtered $A_{\infty}$ algebras.
Let $\overline C$ be a graded $\R$ vector space. 
We take a basis $\frak e_i$ of $\overline C^k$ and define 
$C^{\infty}([0,1],\overline C^k)$ to be a finite sum
$$
\sum_i a_i(t) \frak e_i
$$
such that $a_i : [0,1] \to \R$ are smooth functions.
\begin{rem}
In case $\overline C = \Lambda(L)$ the de Rham complex, we need to 
take into acount the Fr\'echet topology of $\Lambda(L)$ and 
define 
$C^{\infty}([0,1],\overline C)$ in a different way. See Section \ref{geoisoto}.
\end{rem}
We consider the set of formal sums
\begin{equation}\label{elementdt}
a(t) + dt \wedge b(t)
\end{equation}
where $a(t) \in C^{\infty}([0,1],\overline C^k)$, 
$b(t) \in C^{\infty}([0,1],\overline C^{k-1})$. We write the totality of 
such (\ref{elementdt}) by
$
C^{\infty}([0,1] \times \overline C)^k.
$
We will consider filtered $A_{\infty}$ structure on it. More precisely we proceed as follows.
\par
We assume that, for each $t\in [0,1]$, we have operations:
\begin{equation}\label{param}
\frak m^t_{k,\beta} : B_k(\overline C[1]) \to \overline C[1]
\end{equation}
of degree $-\mu(\beta)+1$ and
\begin{equation}\label{parac}
\frak c^t_{k,\beta} : B_k(\overline C[1]) \to \overline C[1]
\end{equation}
of degree $-\mu(\beta)$.
\begin{dfn}\label{smoothmtandct}
We say $\frak m^t_{k,\beta}$ is {\it smooth} if for 
each $x_1,\ldots,x_k$
$$
t\mapsto \frak m^t_{k,\beta}(x_1,\ldots,x_k)
$$
is an element of $C^{\infty}([0,1],\overline C)$.
\par
The smoothness of $\frak c^t_{k,\beta}$ is defined in the same way.
\end{dfn}
\begin{dfn}\label{pisotopydef}
We say $(C,\langle \cdot \rangle,\{\frak m^t_{k,\beta}\},\{\frak c^t_{k,\beta}\})$
is a {\it pseudo-isotopy} of $G$-gapped cyclic filtered $A_{\infty}$ algebras if the following holds:
\smallskip
\begin{enumerate}
\item  $\frak m^t_{k,\beta}$ and $\frak c^t_{k,\beta}$ are smooth.
\item For each (but fixed) $t$, the triple  $(C,\langle \cdot \rangle,\{\frak m^t_{k,\beta}\})$ defines a cyclic fitered $A_{\infty}$
algebra.
\item For each (but fixed) $t$, and $x_i \in \overline C[1]$, we have
\begin{equation}
\langle 
\frak c^t_{k,\beta}(x_1,\ldots,x_k),x_0
\rangle
= 
(-1)^*\langle 
\frak c^t_{k,\beta}(x_0,x_1,\ldots,x_{k-1}),x_{k} 
\rangle
\end{equation}
$* = (\deg \text{\bf x}_0+1)(\deg \text{\bf x}_1+\ldots + \deg \text{\bf x}_k + k)$.
\item  For each $x_i \in \overline C[1]$
\begin{equation}\label{isotopymaineq}
\aligned
&\frac{d}{dt} \frak m_{k,\beta}^t(x_1,\ldots,x_k) \\
&+ \sum_{k_1+k_2=k}\sum_{\beta_1+\beta_2=\beta}\sum_{i=1}^{k-k_2+1}
(-1)^{*}\frak c^t_{k_1,\beta_1}(x_1,\ldots, \frak m_{k_2,\beta_2}^t(x_i,\ldots),\ldots,x_k) \\
&- \sum_{k_1+k_2=k}\sum_{\beta_1+\beta_2=\beta}\sum_{i=1}^{k-k_2+1}
\frak m^t_{k_1,\beta_1}(x_1,\ldots, \frak c_{k_2,\beta_2}^t(x_i,\ldots),\ldots,x_k)\\
&=0.
\endaligned
\end{equation}
Here $* = \deg' x_1 + \ldots + \deg'x_{i-1}$.
\item 
$\frak m_{k,(0,0)}^t$  is independent of $t$. $\frak c_{k,(0,0)}^t = 0$.
\end{enumerate}
\end{dfn}
\begin{rem}
The condition 5 above may be a bit more restrictive than optional definition. We assume it here since it suffices for 
the purpose of this paper.
\end{rem}
We consider $x_i(t) + dt \wedge y_i(t) = \text{\bf x}_i \in C^{\infty}([0,1],\overline C)$.
We define
$$
\hat{\frak m}_{k,\beta}(\text{\bf x}_1,\ldots,\text{\bf x}_k)
= x(t) + dt \wedge y(t)
$$
where
\begin{subequations}\label{combineainf}
\begin{equation}
x(t) = {\frak m}^t_{k,\beta}(x_1(t),\ldots,x_k(t))
\end{equation}
\begin{equation}\label{combineainfmain}
\aligned
y(t) 
= 
& \frak c^t_{k,\beta}
(x_1(t),\ldots,x_k(t)) \\
&-\sum_{i=1}^k (-1)^{*_i} \frak m^t_{k,\beta}
(x_1(t),\ldots,x_{i-1}(t),y_i(t),x_{i+1}(t),\ldots,x_k(t))
\endaligned\end{equation}
if $(k,\beta) \ne (1,(0,0))$ and 
\begin{equation}
y(t) = \frac{d}{dt} x_1(t) + \frak m_{1,(0,0)}^t (y_1(t))
\end{equation}
if $(k,\beta) = (1,(0,0))$. Here $*_i$ in (\ref{combineainfmain}) is 
$*_i = \deg' x_1 +\ldots+\deg'x_{i-1}$.
\end{subequations}
\begin{lmm}\label{dtcomiAinf}
The equation $(\ref{isotopymaineq})$ is equivalent to the filtered $A_{\infty}$
relation of $\hat{\frak m}_{k,\beta}$ defined by $(\ref{combineainf})$.
\end{lmm}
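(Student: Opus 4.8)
The plan is to write out the filtered $A_\infty$ relation for $\{\hat{\frak m}_{k,\beta}\}$ on $C^\infty([0,1]\times\overline C)$ and separate it according to the decomposition of every element as $a(t)+dt\wedge b(t)$ in (\ref{elementdt}); call the $a(t)$-summand the $dt$-degree $0$ part and the $dt\wedge b(t)$-summand the $dt$-degree $1$ part. From (\ref{combineainf}) one reads off that, for $(k,\beta)\neq(1,(0,0))$, the operation $\hat{\frak m}_{k,\beta}$ is the ``$dt$-multilinear extension'' (with the Koszul sign rule of \cite{FOOO080}) of the pair $(\frak m^t_{k,\beta},\frak c^t_{k,\beta})$: its $dt$-degree $0$ part sees only the $a$-parts of the inputs and equals $\frak m^t_{k,\beta}$, while its $dt$-degree $1$ part is $\frak c^t_{k,\beta}$ of the $a$-parts together with the $\sum_i\pm\frak m^t_{k,\beta}(\dots,b_i,\dots)$ terms; and $\hat{\frak m}_{1,(0,0)}$ is this extension plus the de Rham differential $a(t)\mapsto dt\wedge\frac{d}{dt}a(t)$ of the interval, extended by Leibniz. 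A multilinear expression built from the $\hat{\frak m}_{k,\beta}$ inherits this $dt$-bidegree, so the $A_\infty$ relation for $\hat{\frak m}$ is equivalent to the conjunction of its $dt$-degree $0$ and $dt$-degree $1$ components.

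First I would dispose of the $dt$-degree $0$ component. Since the $dt$-degree $0$ part of each $\hat{\frak m}_{k,\beta}$ is literally $\frak m^t_{k,\beta}$ acting on $a$-parts and contains no $\frac{d}{dt}$, this component is exactly the family of relations (\ref{Ainfinityrelbeta}) for $\{\frak m^t_{k,\beta}\}$, one for each fixed $t\in[0,1]$, which holds by Definition \ref{pisotopydef}(2). Hence this component is automatically satisfied and carries no information beyond the standing hypotheses.

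Next I would extract the $dt$-degree $1$ component, applied to inputs $\text{\bf x}_j=x_j(t)+dt\wedge y_j(t)$, and sort the terms by the origin of the single $dt$. Three families appear: (i) the $dt$ comes out of an inner $\hat{\frak m}_{k_2,\beta_2}$ and is fed through the $dt$-degree $0$ part $\frak m^t_{k_1,\beta_1}$ of the outer factor, giving $\frak m^t_{k_1,\beta_1}(\dots,\frak c^t_{k_2,\beta_2}(\dots),\dots)$ together with $\frak m^t_{k_1,\beta_1}(\dots,\frak m^t_{k_2,\beta_2}(\dots,y_m,\dots),\dots)$; (ii) the $dt$ comes from a slot $\text{\bf x}_j$ outside the inner block, so the outer factor uses its $dt$-degree $1$ part, giving $\frak c^t_{k_1,\beta_1}(\dots,\frak m^t_{k_2,\beta_2}(\dots),\dots)$ and $\frak m^t_{k_1,\beta_1}(\dots,y_j,\dots,\frak m^t_{k_2,\beta_2}(\dots),\dots)$; (iii) $\hat{\frak m}_{1,(0,0)}$ placed outermost, contributing $\frac{d}{dt}\bigl(\frak m^t_{k,\beta}(x_1(t),\dots,x_k(t))\bigr)$, which by Leibniz equals $(\frac{d}{dt}\frak m^t_{k,\beta})(x_1,\dots,x_k)+\sum_j\frak m^t_{k,\beta}(\dots,\frac{d}{dt}x_j,\dots)$, plus the $\frak m^t_{1,(0,0)}$-of-the-$dt$-degree-$1$-output terms. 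The two bookkeeping facts to verify are: the Leibniz corrections $\frak m^t_{k,\beta}(\dots,\frac{d}{dt}x_j,\dots)$ cancel the $\frac{d}{dt}$-contributions produced by inserting $\hat{\frak m}_{1,(0,0)}$ into the $j$-th slot of $\hat{\frak m}_{k,\beta}$; and, for each fixed $j$, the totality of terms containing $y_j$ reassembles, with its signs, into the $A_\infty$ relation (\ref{Ainfinityrelbeta}) for $\{\frak m^t_{k,\beta}\}$ evaluated on $(x_1,\dots,x_{j-1},y_j,x_{j+1},\dots,x_k)$ (the case $k_2=1$, $\beta_2=(0,0)$ of the inner insertion accounting for the $\frak m^t_{1,(0,0)}(y_j)$ term), hence vanishes. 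What remains is precisely the three lines of the left side of (\ref{isotopymaineq}) — the $\frac{d}{dt}\frak m^t$ term from (iii), the $\frak c^t\!\circ\!\frak m^t$ terms from (i), and the $\frak m^t\!\circ\!\frak c^t$ terms from (ii) — so, given Definition \ref{pisotopydef}(2), the $A_\infty$ relation for $\hat{\frak m}$ is equivalent to (\ref{isotopymaineq}).

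The entire computation is parallel to the non-cyclic case in \cite{FOOO080} Chapter 4 and \cite{Ka}; the cyclic inner product plays no role in this lemma. The one genuinely delicate point, and the main obstacle, is the sign bookkeeping: tracking the Koszul signs incurred when the formal symbol $dt$ is commuted past the shifted-degree inputs and past the outputs of the $\frak m^t_{k,\beta}$, and checking that with the sign conventions fixed in (\ref{combineainf}) the Leibniz corrections and the $y_j$-terms cancel with exactly the right signs. I would handle these signs as in \cite{FOOO080} Subsection 8.10.3, reducing to the signs already recorded there.
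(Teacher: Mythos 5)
The paper omits the proof of this lemma, deferring to \cite{FOOO080} Lemma 4.2.13, so there is no in-paper argument to compare against; your sketch is exactly the intended one and is correct in substance. Separating the $A_\infty$ relation for $\hat{\frak m}_{k,\beta}$ by $dt$-degree, verifying the $dt$-degree~$0$ component gives the pointwise relations (\ref{Ainfinityrelbeta}) for each $t$, and then showing that after the Leibniz terms from $\frac{d}{dt}\frak m^t_{k,\beta}(x_1,\dots,x_k)$ cancel against the $\frac{d}{dt}x_j$ contributions of the inserted $\hat{\frak m}_{1,(0,0)}$, and after the $y_j$-blocks reassemble into the pointwise $A_\infty$ relation applied to $(x_1,\dots,y_j,\dots,x_k)$, what survives of the $dt$-degree~$1$ component is precisely the three lines of (\ref{isotopymaineq}) --- this is the standard computation, and the only genuine labor is the Koszul sign bookkeeping, as you say.
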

The proof is straightforward and is omitted.
(See \cite{FOOO080} Lemma 4.2.13.)
We define 
$\langle \cdot \rangle_{t_0}$ on $C^{\infty}([0,1],\overline C)$ by
$$
\langle x_1(t) + dt \wedge y_1(t),x_2(t) + dt \wedge y_2(t) \rangle_{t_0}
= \langle x_1(t_0),x_2(t_0)\rangle.
$$
Then $(C^{\infty}([0,1],\overline C),\langle \cdot \rangle_{t_0},\{\hat{\frak m}_{k,\beta}\})$ 
is a cyclic filtered $A_{\infty}$ algebra for any $t_0$.
\begin{dfn}
A pseudo-isotopy 
$(C,\langle \cdot \rangle,\{\frak m^t_{k,\beta}\},\{\frak c^t_{k,\beta}\})$
is said to be {\it unital} 
if there exists $\text{\bf e} \in \overline C^0$ such that
$\text{\bf e}$ is a unit of $(C,\{\frak m^t_{k,\beta}\})$ for each $t$ and if 
$$
\frak c^t_{k,\beta}(\ldots,\text{\bf e},\ldots) = 0
$$
for each $k,\beta$ and $t$.
\end{dfn}
\begin{dfn}
Let $(\overline C,\langle \cdot \rangle,\{\frak m_{k,(0,0)}\})$ be a
(unfiltered) cyclic $A_{\infty}$ algebra. 
We consider two 
$G$-gapped filtered cyclic $A_{\infty}$ algebras 
$(\overline C,\langle \cdot \rangle,\{\frak m^{i}_{k,\beta}\})$ ($i=0,1$)
such that $\frak m^{i}_{k,(0,0)} = \frak m_{k,(0,0)}$ for $i=0,1$.
\par
We say $(\overline C,\langle \cdot \rangle,\{\frak m^{0}_{k,\beta}\})$ is 
pseudo-isotopic to 
$(\overline C,\langle \cdot \rangle,\{\frak m^{1}_{k,\beta}\})$ 
if there exists a pseudo-isotopy
$(C,\langle \cdot \rangle,\{\frak m^t_{k,\beta}\},\{\frak c^t_{k,\beta}\})$
with given boundary value at $t=0,1$.
\par
Modulo $T^{E_0}$ and/or unital version is defined in a similar way.
\end{dfn}
\begin{lmm}\label{pisoequiv}
Pseudo-isotopy of $G$-gapped filtered cyclic $A_{\infty}$ algebras is an equivalence relation.
\par
Modulo $T^{E_0}$ version also holds.
\end{lmm}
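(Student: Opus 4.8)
The plan is to check the three axioms of an equivalence relation directly against Definition \ref{pisotopydef}, the only substantive point being transitivity. Reflexivity is immediate: for a given $G$-gapped cyclic filtered $A_{\infty}$ algebra $(\overline C,\langle \cdot \rangle,\{\frak m_{k,\beta}\})$ take the constant family $\frak m^t_{k,\beta} = \frak m_{k,\beta}$ together with $\frak c^t_{k,\beta} = 0$; smoothness is clear, conditions 1, 2, 3, 5 of Definition \ref{pisotopydef} are either the hypothesis or vacuous, and condition 4, equation (\ref{isotopymaineq}), collapses to $\frac{d}{dt}\frak m_{k,\beta} = 0$. For symmetry, given a pseudo-isotopy $(\overline C,\langle \cdot \rangle,\{\frak m^t_{k,\beta}\},\{\frak c^t_{k,\beta}\})$ from $\frak m^0$ to $\frak m^1$, I would reverse time by setting $\widetilde{\frak m}^s_{k,\beta} = \frak m^{1-s}_{k,\beta}$ and $\widetilde{\frak c}^s_{k,\beta} = -\frak c^{1-s}_{k,\beta}$. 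Differentiating $\frak m^{1-s}$ in $s$ produces $-1$ times the $t$-derivative of $\frak m^t$ at $t=1-s$, and the sign change $\frak c \mapsto -\frak c$ flips the sign of both $\frak c$-terms in (\ref{isotopymaineq}); hence (\ref{isotopymaineq}) for the tilded data is exactly $(-1)$ times the original one, so it holds. Conditions 1, 2, 3, 5 (and unitality, and the truncation modulo $T^{E_0}$) are manifestly preserved.

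For transitivity the standard device is reparametrization. Fix once and for all a smooth nondecreasing $\phi \colon [0,1] \to [0,1]$ with $\phi \equiv 0$ near $0$ and $\phi \equiv 1$ near $1$. Given any pseudo-isotopy $(\{\frak m^t_{k,\beta}\},\{\frak c^t_{k,\beta}\})$, I would pass to the reparametrized family $t \mapsto (\frak m^{\phi(t)}_{k,\beta}, \phi'(t)\,\frak c^{\phi(t)}_{k,\beta})$. By the chain rule every term of (\ref{isotopymaineq}) for this family acquires a factor $\phi'(t)$, so (\ref{isotopymaineq}) for it equals $\phi'(t)$ times (\ref{isotopymaineq}) for the original family evaluated at parameter $\phi(t)$; thus the reparametrized family is again a pseudo-isotopy, it retains the same filtered $A_{\infty}$ structure at $t=0$ and $t=1$, and moreover it is \emph{trivial} — all $\frak c^t$ vanish and $\frak m^t$ is constant — on a neighborhood of $\{0,1\}$, where $\phi'=0$. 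Conditions 2, 3, 5 survive because scaling $\frak c^t$ by $\phi'(t)$ respects the cyclic symmetry identity and keeps the $(0,0)$-component zero, while $\frak m^t_{k,(0,0)} = \frak m_{k,(0,0)}$ stays $t$-independent.

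Now, given pseudo-isotopies $P_{01}$ from $\frak m^0$ to $\frak m^1$ and $P_{12}$ from $\frak m^1$ to $\frak m^2$, choose smooth nondecreasing $\psi_1 \colon [0,1/2] \to [0,1]$ with $\psi_1 \equiv 0$ near $0$ and $\psi_1 \equiv 1$ near $1/2$, and $\psi_2 \colon [1/2,1] \to [0,1]$ with $\psi_2 \equiv 0$ near $1/2$ and $\psi_2 \equiv 1$ near $1$, and reparametrize $P_{01}$, $P_{12}$ by $\psi_1$, $\psi_2$ exactly as above. Define $(\{\frak m^t_{k,\beta}\},\{\frak c^t_{k,\beta}\})$ on $[0,1]$ to be the $\psi_1$-reparametrization of $P_{01}$ on $[0,1/2]$ and the $\psi_2$-reparametrization of $P_{12}$ on $[1/2,1]$. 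Near $t=1/2$ both pieces reduce to the constant family $\frak m^1$ with vanishing $\frak c$, so the concatenated family is smooth there, hence on all of $[0,1]$; conditions 1--4 of Definition \ref{pisotopydef} are conditions at each fixed $t$ and hold on each half, and condition 5 is structural and is inherited, so this is a pseudo-isotopy from $\frak m^0$ to $\frak m^2$. The modulo $T^{E_0}$ and unital versions need no new argument, since reparametrization and concatenation are $\R$-linear in each $\frak m_{k,\beta}$ and $\frak c_{k,\beta}$ (hence compatible with truncation modulo $T^{E_0}$) and never touch the unit $\text{\bf e}$. The only point genuinely requiring care is the smoothness of the concatenated family at $t=1/2$, and that is precisely what the reparametrization making each piece locally trivial near its endpoints is engineered to supply.
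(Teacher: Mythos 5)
Your proof is correct and follows the same route as the paper: the paper's argument is precisely the reparametrize-so-as-to-be-locally-constant-near-the-endpoints trick (the paper's equation~(\ref{pisopullback}) is your $(\frak m^{\phi(t)}_{k,\beta},\phi'(t)\frak c^{\phi(t)}_{k,\beta})$) followed by concatenation; you additionally spell out reflexivity via the constant family and symmetry via time reversal with $\frak c\mapsto -\frak c$, both of which the paper dismisses as "easier to check."
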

\begin{proof}
Let $(C,\langle \cdot \rangle,\{\frak m^t_{k,\beta}\},\{\frak c^t_{k,\beta}\})$ 
be a pseudo-isotopy. First we show that we can 
modify them so that 
$\frak m^t_{k,\beta}$ is locally constant of $t$ and that
$\frak c^t_{k,\beta} = 0$, in a neighborhood of $t \in \partial [0,1]$,
as follows. 
Let $t = t(s)$ be a smooth map $[0,1] \to [0,1]$ which is constant in a neighborhood of $0,1$ and 
$t(1) = 1$, $t(0) =0$. We put: 
\begin{equation}\label{pisopullback}
\frak m^s_{k,\beta} = \frak m^{t(s)}_{k,\beta},
\qquad
\frak c^s_{k,\beta} = \frac{dt}{ds}(s) \cdot \frak c^{t(s)}_{k,\beta}.
\end{equation}
It is easy to see that $(C,\langle \cdot \rangle,\{\frak m^s_{k,\beta}\},\{\frak c^s_{k,\beta}\})$ is a required 
pseudo-isotopy.
\par
Now we can easily join two pseudo-isotopies satisfying the above additional condition. 
It implies that `pseudo-isotopic' is transitive.
The other properties are easier to check.
\end{proof}
\begin{thm}\label{pisotopyextention}
Let $E_0 < E_1$ and $(\overline C,\langle \cdot \rangle,\{\frak m^{i}_{k,\beta}\})$ $(i=0,1)$
be $G$-gapped cyclic filtered $A_{\infty}$ algebra modulo $T^{E_i}$.
Let $(C,\langle \cdot \rangle,\{\frak m^t_{k,\beta}\},\{\frak c^t_{k,\beta}\})$  be a pseudo-isotopy 
modulo $T^{E_0}$ between them. 
Then:
\smallskip
\begin{enumerate}
\item We can extend $(\overline C,\langle \cdot \rangle,\{\frak m^{0}_{k,\beta}\})$ to a 
$G$-gapped cyclic filtered $A_{\infty}$ algebra modulo $T^{E_1}$.
\item We can extend  $(C,\langle \cdot \rangle,\{\frak m^t_{k,\beta}\},\{\frak c^t_{k,\beta}\})$ 
to a pseudo-isotopy 
modulo $T^{E_1}$ between them. 
\end{enumerate}
\smallskip
\par Unital version also holds.
\end{thm}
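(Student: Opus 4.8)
The plan is to carry out the step-by-step extension procedure for filtered $A_{\infty}$ structures (as in \cite{FOOO080} Chapter 7), organised so that it preserves both the cyclic symmetry and the pseudo-isotopy. Since $G$ is a discrete submonoid, the set $\{\beta\in G\mid E_0\le E(\beta)<E_1\}$ is finite (Definition \ref{discmonoid}), and the level-$\beta$ instances of (\ref{Ainfinityrelbeta}), of the pseudo-isotopy equation (\ref{isotopymaineq}) and of the cyclic relations involve only the unfiltered data and the $\beta'$-terms with $E(\beta')<E(\beta)$; so one adjoins the missing components one class at a time, by induction on $E(\beta)$. Fix the next class $\beta$ and assume that $\frak m^t_{k,\beta'}$, $\frak c^t_{k,\beta'}$, $\frak m^1_{k,\beta'}$ and $\frak m^0_{k,\beta'}$, with all required properties, have already been produced for every $\beta'$ with $E(\beta')<E(\beta)$; after applying the reparametrisation (\ref{pisopullback}) as in the proof of Lemma \ref{pisoequiv}, assume moreover that those $\frak c^t_{k,\beta'}$ vanish and those $\frak m^t_{k,\beta'}$ are constant in $t$ near $t\in\{0,1\}$.

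The key device is Lemma \ref{dtcomiAinf}: via (\ref{combineainf}) a pseudo-isotopy $(C,\langle\cdot\rangle,\{\frak m^t_{k,\beta}\},\{\frak c^t_{k,\beta}\})$ is the same datum as a single, unparametrised $G$-gapped filtered $A_{\infty}$ structure $\{\hat{\frak m}_{k,\beta}\}$ on $\hat C:=C^{\infty}([0,1]\times\overline C)$, which (by conditions 2 and 3 of Definition \ref{pisotopydef}) is cyclic for each pairing $\langle\cdot\rangle_{t_0}$, equivalently for the non-degenerate pairing on $\hat C$ built from $\langle\cdot\rangle$ by integration in $t$. With our normalisation the given $t=1$ structure $\{\frak m^1_{k,\beta}\}$ prescribes the germ of $\hat{\frak m}_{k,\beta}$ near $t=1$. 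I would choose any smooth family of multilinear maps $\hat{\frak m}^{(0)}_{k,\beta}$ agreeing with that germ near $t=1$ and constant near $t=0$; substituting it for the unknown $\beta$-term in the left-hand side of the level-$\beta$ filtered $A_{\infty}$ relation yields, by the relations already in force at lower energy, a Hochschild cocycle $\widehat O_{k,\beta}$ of the unfiltered $A_{\infty}$ algebra $\{\hat{\frak m}_{k,(0,0)}\}$ which is cyclically symmetric for the above pairing, annihilates insertions of $\text{\bf e}$, and vanishes near $t=1$.

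To close the inductive step I would kill $\widehat O_{k,\beta}$ relative to $t=1$. Evaluation at $t=1$, $\mathbf x\mapsto\mathbf x(1)$, is a strict $A_{\infty}$ morphism from $\{\hat{\frak m}_{k,(0,0)}\}$ to $\{\frak m_{k,(0,0)}\}$ and is a quasi-isomorphism (because $[0,1]$ deformation retracts onto $\{1\}$), hence induces an isomorphism on Hochschild cohomology; a cocycle vanishing near $t=1$ is therefore a coboundary of a cochain vanishing near $t=1$. Correcting $\hat{\frak m}^{(0)}_{k,\beta}$ by such a primitive gives $\hat{\frak m}_{k,\beta}$ satisfying the level-$\beta$ $A_{\infty}$ relation with the prescribed germ at $t=1$; reading $\frak m^t_{k,\beta}$ and $\frak c^t_{k,\beta}$ off (\ref{combineainf}), applying (\ref{pisopullback}) once more to make $\frak c^t_{k,\beta}\equiv 0$ and $\frak m^t_{k,\beta}$ $t$-constant near $t\in\{0,1\}$, and putting $\frak m^0_{k,\beta}:=\frak m^t_{k,\beta}\vert_{t=0}$, completes the step. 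Since for each fixed $t$ the resulting pseudo-isotopy is a cyclic filtered $A_{\infty}$ algebra modulo $T^{E_1}$, its value at $t=0$ is the extension asserted in (1) and the whole tuple is (2); the original data at $t=0$ are untouched, as $t(0)=0$ in (\ref{pisopullback}). The unital version is identical once one checks the interpolations and primitives can be chosen to annihilate insertions of $\text{\bf e}$, which follows from unitality of the given data and the unit-preserving property of the contraction used below.

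The main obstacle is to perform the cochain-level construction of the previous paragraph \emph{compatibly with cyclicity}: the primitive of $\widehat O_{k,\beta}$ (hence $\hat{\frak m}_{k,\beta}$) must in addition satisfy the cyclic symmetry (\ref{cyclicform22}) and condition 3 of Definition \ref{pisotopydef}. This demands a contracting homotopy that is simultaneously equivariant for the cyclic $\Z/(k{+}1)$-actions on the cyclic bar complex and supported away from $t=1$; it is precisely the cyclic homological algebra of \cite{Ka}, transplanted to the filtered and $[0,1]$-parametrised situation. Concretely one builds it as a cyclically invariant algebraic contraction of the unfiltered complex $(\overline C,\frak m_{1,(0,0)})$ composed with integration $\int_1^t$ along the interval, using the non-degeneracy of $\langle\cdot\rangle$ on cohomology (Definition \ref{cyclicainf}(3)) to move between the Hochschild and cyclic pictures and to replace any primitive by a cyclically symmetric one. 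When $\overline C=\Lambda(L)$ is the de Rham complex the argument must instead be run in the Fr\'echet-completed framework of Section \ref{geoisoto}; in this section $\overline C$ is treated purely algebraically.
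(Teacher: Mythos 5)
Your approach is genuinely different from the paper's, and I think it has a real gap that the paper's argument avoids entirely.

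The paper's proof is much more elementary than yours and does \emph{not} run through Hochschild-cohomological obstruction theory at all. After reducing to the case $E(G)\cap[E_0,E_1]=\{E_0,E_1\}$, for the next class $\beta$ (so $E(\beta)=E_0$) the paper simply \emph{declares} $\frak c^t_{k,\beta}\equiv 0$. This is the key move you miss: with that choice, the pseudo-isotopy equation~(\ref{isotopymaineq}) at level $\beta$ involves the unknown $\frak m^t_{k,\beta}$ only through the term $\frac{d}{dt}\frak m^t_{k,\beta}$ (every $\frak c^t$ and every $\frak m^t$ occurring in the double sums has strictly lower energy), so~(\ref{isotopymaineq}) becomes a linear first-order ODE in $t$ whose right-hand side is known. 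One then \emph{defines} $\frak m^\tau_{k,\beta}$ by integrating this ODE from $\tau=1$, with initial condition $\frak m^1_{k,\beta}$ supplied by the given modulo-$T^{E_1}$ structure at the endpoint $t=1$ (formula~(\ref{mtintdef})). The $A_\infty$ relation (Lemma~\ref{checkAiniftybydiff}) and, crucially, the cyclic symmetry of $\frak m^\tau_{k,\beta}$ (Lemma~\ref{mcancyclic}) are then proved by direct computations that only use the induction hypothesis for lower energy: cyclicity of $\frak m^\tau_{k,\beta}$ is inherited from cyclicity of the lower-order $\frak m^t,\frak c^t$ via the explicit integral formula. There is no obstruction class to compute and no primitive to construct.

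By contrast, your proof routes through the statement that a cyclically symmetric Hochschild cocycle which is exact (because evaluation at $t=1$ is a quasi-isomorphism) admits a \emph{cyclically symmetric} primitive, supported away from $t=1$ and compatible with the unit. That is the whole difficulty, and your proposal defers it to ``the cyclic homological algebra of [Ka], transplanted to the filtered and $[0,1]$-parametrised situation''. This is not a small omission: cyclic cohomology is not the same as Hochschild cohomology, and it is not automatic that a cyclic cocycle which is a Hochschild coboundary is a coboundary of a \emph{cyclic} cochain, let alone one supported away from $t=1$. (Even the support statement for an ordinary Hochschild primitive requires an argument; a naive cutoff spoils $d\hat P=\hat O$.) So as written the argument has a genuine gap precisely at the point where cyclicity must be preserved, which is the point the whole theorem is about. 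The paper sidesteps the issue entirely by the choice $\frak c^t_{k,\beta}=0$ plus the explicit integral formula, and I would strongly recommend adopting that route rather than trying to fill in the cyclic-contraction machinery.

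One smaller remark: your sketch of the cyclic contraction ``composed with integration $\int_1^t$ along the interval'' is in fact gesturing towards the same integral as~(\ref{mtintdef}); the difference is that the paper realises this integral gives a \emph{closed-form} solution once $\frak c^t_{k,\beta}$ is set to zero, so the abstract contraction is never needed.
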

The proof will be given in Section \ref{homisoto}.
\begin{thm}\label{pisotohomotopyequiv}
If  $(C,\langle \cdot \rangle,\{\frak m^t_{k,\beta}\},\{\frak c^t_{k,\beta}\})$ is a psudo-isotopy then 
there exists a filtered $A_{\infty}$ homomorphism from $(C,\langle \cdot \rangle,\{\frak m^{0}_{k,\beta}\})$ 
to $(C,\langle \cdot \rangle,\{\frak m^{1}_{k,\beta}\})$ which is cyclic and has an 
inverse.
\par
Modulo $T^{E}$ and/or unital version also holds. 
\end{thm}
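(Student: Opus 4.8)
The plan is to produce $\frak f$ as the ``time-ordered exponential'' of the family $\{\frak c^{t}_{k,\beta}\}$ --- the same mechanism by which a pseudo-isotopy induces a homotopy equivalence in \cite{FOOO080} Chapter 4 --- and then to check that the extra cyclic data is transported along the flow. Concretely, for $s\in[0,1]$ I would construct a family of $G$-gapped filtered $A_{\infty}$ homomorphisms $\frak f^{s}=\{\frak f^{s}_{k,\beta}\}$ from $(\overline C,\langle\cdot\rangle,\{\frak m^{0}_{k,\beta}\})$ to $(\overline C,\langle\cdot\rangle,\{\frak m^{s}_{k,\beta}\})$, with $\frak f^{0}_{k,\beta}$ equal to the identity for $(k,\beta)=(1,(0,0))$ and to $0$ otherwise, governed by the evolution equation
$$
\frac{d}{ds}\frak f^{s}_{k,\beta}(x_{1},\ldots,x_{k})
=\sum(-1)^{*}\frak c^{s}_{\ell,\beta_{0}}\bigl(\frak f^{s}_{k_{1},\beta_{1}}(x_{1},\ldots),\ldots,\frak f^{s}_{k_{\ell},\beta_{\ell}}(\ldots,x_{k})\bigr),
$$
the sum running over $\beta_{0}+\cdots+\beta_{\ell}=\beta$, $k_{1}+\cdots+k_{\ell}=k$, with signs as in (\ref{combineainf}). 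Because $\frak c^{t}_{k,(0,0)}=0$ (Definition \ref{pisotopydef}.5), every term on the right at a given $\beta$ uses only $\frak f^{s}_{k',\beta'}$ with $E(\beta')<E(\beta)$; ordering $G$ by energy, which is legitimate since $E(G)$ is discrete, each $\frak f^{s}_{k,\beta}$ is then obtained from previously constructed data by a single integration $\int_{0}^{s}(\,\cdot\,)\,d\sigma$, so $\frak f^{s}$ exists and is smooth in $s$. Differentiating the $A_{\infty}$ homomorphism relation (\ref{inftyhomoeq}) between $\frak m^{0}$ and $\frak m^{s}$ in $s$ and using (\ref{isotopymaineq}) to rewrite $\tfrac{d}{ds}\frak m^{s}_{k,\beta}$ shows, exactly as in \cite{FOOO080} Chapter 4, that if $\frak f^{s}$ satisfies (\ref{inftyhomoeq}) for one value of $s$ then it does for all; since this holds trivially at $s=0$, it holds throughout, and we set $\frak f:=\frak f^{1}$.

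For cyclicity I would track the two defects
$$
P^{s}(x,y)=\langle\frak f^{s}_{1,(0,0)}(x),\frak f^{s}_{1,(0,0)}(y)\rangle-\langle x,y\rangle,
$$
$$
Q^{s}_{k,\beta}(x_{1},\ldots,x_{k})=\sum_{\beta_{1}+\beta_{2}=\beta}\sum_{k_{1}+k_{2}=k}\langle\frak f^{s}_{k_{1},\beta_{1}}(x_{1},\ldots,x_{k_{1}}),\frak f^{s}_{k_{2},\beta_{2}}(x_{k_{1}+1},\ldots,x_{k})\rangle,
$$
the latter for $(k,\beta)\ne(2,(0,0))$. Substituting the evolution equation and using the cyclic symmetry of $\frak c^{s}_{k,\beta}$ (Definition \ref{pisotopydef}.3), the cyclic symmetry of each $\frak m^{s}_{k,\beta}$, the graded antisymmetry (\ref{gradedansym}) of $\langle\cdot\rangle$, and the $A_{\infty}$ homomorphism relation just established, the terms cancel in pairs, giving $\tfrac{d}{ds}P^{s}=0$ and $\tfrac{d}{ds}Q^{s}_{k,\beta}=0$. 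Since the initial conditions force $P^{0}=0$ and $Q^{0}_{k,\beta}=0$, the homomorphism $\frak f$ satisfies (\ref{homocyc1}) and (\ref{homocyc2}), hence is cyclic. Invertibility follows by running the same construction for the reversed pseudo-isotopy $t\mapsto 1-t$, reparametrised as in (\ref{pisopullback}): this yields a homomorphism $(\overline C,\{\frak m^{1}_{k,\beta}\})\to(\overline C,\{\frak m^{0}_{k,\beta}\})$ which, by uniqueness of solutions of the evolution equation, is a two-sided inverse of $\frak f$ (alternatively, $\frak f_{1,(0,0)}=\mathrm{id}$ lets one invert $\frak f$ order by order in the filtration); either way the inverse is automatically cyclic by Remark \ref{misremonequiv}.

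The unital case needs no extra work: since $\text{\bf e}$ is a unit for every $(C,\{\frak m^{t}_{k,\beta}\})$ and $\frak c^{t}_{k,\beta}(\ldots,\text{\bf e},\ldots)=0$, the evolution equation gives inductively $\frak f^{s}_{k,\beta}(\ldots,\text{\bf e},\ldots)=0$ for $k>1$ and $\frak f^{s}_{1,(0,0)}(\text{\bf e})=\text{\bf e}$, so $\frak f$ is unital. For the modulo $T^{E_{0}}$ version one restricts the entire induction to $\beta$ with $E(\beta)<E_{0}$; the pseudo-isotopy relation and cyclic symmetry are assumed only in that range, and that is all the argument uses.

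The step I expect to be the real work is the sign bookkeeping in the derivative computations of the second paragraph --- confirming that $\tfrac{d}{ds}Q^{s}_{k,\beta}=0$ genuinely follows from Definition \ref{pisotopydef}.3 and (\ref{gradedansym}) once the signs in (\ref{combineainf}) and in the evolution equation are fixed consistently. As elsewhere in the paper I would reduce this to the corresponding unsigned identity and then reinstate signs following the conventions recalled in Section \ref{cyclicmodTEsec} (and \cite{FOOO080} Subsection 8.4.2). The analytic content is negligible here, each stage of the construction being a single ordinary integration of a smooth integrand in $s$.
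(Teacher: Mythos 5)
Your proposal is correct and is essentially the paper's proof stated infinitesimally: the integral over time allocations on rooted ribbon trees that defines $\frak c(k,\beta;\tau_1,\tau_0)$ is exactly the Picard (iterated-integral) solution of your evolution equation, and both the $A_\infty$-homomorphism property (Proposition \ref{Ainftyhomobyint}) and cyclicity (Proposition \ref{gtrascyclic}) are verified there by the same differentiate-in-$s$-and-cancel argument you outline, the latter via the equivalent device of splitting the product integral by which of the two root times is largest and invoking $\langle\frak c^t_{\ell,\beta}(x_1,\ldots,x_\ell),x_0\rangle+\langle x_1,\frak c^t_{\ell,\beta}(x_2,\ldots,x_\ell,x_0)\rangle=0$. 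The one thing to watch, as you already flag, is that your schematic $(-1)^{*}$ must supply the overall minus sign built into (\ref{93})--(\ref{94}) for the signs to close, and invertibility is then obtained by the filtration induction from $\frak c_{1,(0,0)}=\mathrm{id}$ exactly as you say.
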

The proof will be given in Section \ref{homisoto}.
\begin{rem}
It is possible to prove that gapped cyclic filtered $A_{\infty}$ homomorphism 
which is homotopy equivalence (as gapped filtered $A_{\infty}$ homomorphism) has a homotopy inverse
which is cyclic. See \cite{Ka} Theorem 5.17. 
The explicite construction of homotopy inverse given in \cite{AkJo08} proves it also.
\par
One reason why we built our story without using this theorem but used 
pseudo-isotopy more is that it seems that `pseudo-isotopic' is strictly stronger that `homotopy equivalent'.
(An invariant of a kind of `Reidemeister torsion' may distinguish them.)
So for future application (especially one in \cite{F1}) to keep track of pseudo-isotopy type rather than homotopy type seems 
essential.
\par
This does not seem to be the case when we do not include cyclic symmetry and inner product 
in the story.
\end{rem}
Let $(C,\langle \cdot \rangle,\{\frak m_{k,\beta}\})$ be a $G$-gapped cyclic filtered $A_{\infty}$ algebra.
We have $\frak m_{1,(0,0)}\circ \frak m_{1,(0,0)} = 0 : \overline C \to \overline C$. We put
$$
\overline H = \frac{\operatorname{Ker} \frak m_{1,(0,0)}}{\operatorname{Im} \frak m_{1,(0,0)}}.
$$
In \cite{FOOO080} Theorem 5.4.2', $G$-gapped filtered $A_{\infty}$ structure $\{\frak m_{k,\beta}\}$
on $\overline H$ is defined. Moreover $G$-gapped filtered $A_{\infty}$ homomorphism 
$\frak f :  H \to  C$, (which is a homotopy equivalence) is defined.
By (\ref{Ainfinityrelbeta}) the inner product $\langle \cdot \rangle$ on $\overline C$ 
induces one on $\overline H$, which we denote also by $\langle \cdot \rangle$.
\begin{thm}\label{cancyclic}
We assume either $\overline C$ is finite dimensional or 
is a de Rham complex. Then,
$(H,\langle \cdot \rangle,\{\frak m_{k,\beta}\})$ is cyclic. Moreover $\frak f :  H \to  C$ is cyclic.
\par
The modulo $T^{E_0}$ and/or unital version is also true.
\end{thm}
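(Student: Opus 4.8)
The plan is to follow the homological perturbation approach of \cite{Ka}, adapting the construction of the canonical model in \cite{FOOO080} Theorem 5.4.2' so that the homotopy transfer data is compatible with the inner product. Recall that the filtered $A_{\infty}$ structure $\{\frak m_{k,\beta}\}$ on $\overline H$ and the homotopy equivalence $\frak f=\{\frak f_{k,\beta}\}:H\to C$ are produced by an induction over the discrete submonoid $G$ (with respect to $E(\beta)$), each step of which can be written as a sum over rooted ribbon trees whose vertices carry the operations $\frak m_{k,\beta'}$ of $\overline C$ and whose internal edges carry a homotopy operator $\mathsf G:\overline C\to\overline C$. The key observation is that the contraction datum $(\Pi,\iota,\mathsf G)$ --- a projection $\Pi:\overline C\to\overline H$, an inclusion $\iota:\overline H\to\overline C$ onto a ``harmonic'' subspace, and $\mathsf G$ with $\frak m_{1,(0,0)}\mathsf G+\mathsf G\,\frak m_{1,(0,0)}=\operatorname{id}-\iota\Pi$, $\mathsf G^2=0$, $\mathsf G\iota=0$, $\Pi\mathsf G=0$ --- can be chosen \emph{compatibly with} $\langle\cdot\rangle$, in the sense that $\iota$ preserves $\langle\cdot\rangle$, that $\Pi$ is the $\langle\cdot\rangle$-adjoint of $\iota$, and that $\mathsf G$ is graded skew-adjoint for $\langle\cdot\rangle$ with the sign dictated by (\ref{gradedansym}).

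First I would prove the existence of a compatible contraction datum, which is precisely where the hypothesis on $\overline C$ is used. When $\overline C$ is finite dimensional, the $k=1$ case of (\ref{cyclicform22}) gives $\langle\frak m_{1,(0,0)}x,y\rangle=\pm\langle x,\frak m_{1,(0,0)}y\rangle$, whence $\operatorname{Im}\frak m_{1,(0,0)}=(\operatorname{Ker}\frak m_{1,(0,0)})^{\perp}$ and $\langle\cdot\rangle$ descends to a nondegenerate graded anti-symmetric pairing on $\overline H$; fixing a lift $\mathcal H\cong\overline H$ inside $\operatorname{Ker}\frak m_{1,(0,0)}$, the complement $\mathcal H^{\perp}$ then carries a nondegenerate graded anti-symmetric form with Lagrangian $\operatorname{Im}\frak m_{1,(0,0)}$, and a symplectic-type linear-algebra argument produces a complementary Lagrangian $V$; setting $\mathsf G:=(\frak m_{1,(0,0)}|_V)^{-1}$ on $\operatorname{Im}\frak m_{1,(0,0)}$ and $\mathsf G:=0$ on $\mathcal H\oplus V$ yields all the required properties. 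When $\overline C=\Lambda(L)$ is a de Rham complex, fix a Riemannian metric on $L$, take $\mathcal H$ to be the harmonic forms (which represent $\overline H$ by Hodge theory and Poincar\'e duality), and let $\mathsf G$ be the associated Hodge homotopy, the Green operator composed with the codifferential; the side conditions are the classical Hodge identities, and the graded skew-symmetry of $\mathsf G$ with respect to $\langle u,v\rangle=\pm\int_L u\wedge v$ follows from Stokes' theorem together with the compatibility of the Hodge $\ast$ with this pairing.

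With a compatible datum fixed, I would run the inductive construction of \cite{FOOO080} Theorem 5.4.2' verbatim and check, by a parallel induction over $G$, that the transferred operations satisfy (\ref{cyclicform22}) and that $\frak f$ satisfies (\ref{homocyc1}) and (\ref{homocyc2}). Equation (\ref{homocyc1}) is immediate since $\frak f_{1,(0,0)}=\iota$ preserves $\langle\cdot\rangle$. For the remaining identities, pairing the output of a tree-sum expression against an extra input amounts to distinguishing one external leg; cyclically permuting the inputs then corresponds to re-rooting the ribbon trees, and the sum over all rootings of a fixed tree, weighted by the datum, is cyclically invariant because (i) each vertex operation $\frak m_{k',\beta'}$ of $\overline C$ is cyclically symmetric, (ii) $\Pi$ is the adjoint of $\iota$, and (iii) reversing the orientation of an internal edge is compensated by the skew-symmetry of $\mathsf G$. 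This is exactly Kajiura's argument \cite{Ka}; the unital and modulo $T^{E_0}$ variants require only that $\mathcal H$ contain the (harmonic) unit and that the induction be truncated at $E(\beta)<E_0$.

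The step I expect to be the main obstacle is the sign bookkeeping: matching the explicit signs in (\ref{cyclicform22}), (\ref{homocyc1}) and (\ref{homocyc2}) against the re-rooting symmetry of ribbon trees, in the presence of the degree-dependent anti-symmetry (\ref{gradedansym}), is delicate. As in the proof of Theorem \ref{existifilaAinfmodTE}, I would control it by reducing the de Rham sign conventions to the singular-homology ones of \cite{FOOO080} Subsection 8.4.2 (via Lemma \ref{hikakucyclicconv}) and \cite{Fuk07I} Section 12. A secondary technical point, specific to the de Rham case, is that the tree sums are a priori infinite; this is handled by the gappedness of $G$, which forces only finitely many trees to contribute to each $\frak m_{k,\beta}$, and by working modulo $T^{E_0}$, so that all sums are finite and the Fr\'echet-topology subtleties of Section \ref{geoisoto} do not arise.
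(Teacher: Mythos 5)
Your proposal takes essentially the same route as the paper: construct operators $\Pi$ and $G$ compatible with the pairing (the paper's Lemma \ref{Gdual} — the Lagrangian-complement argument for finite-dimensional $\overline C$, Hodge theory for the de Rham case), and then establish cyclicity of $\frak m^{\text{\rm can}}_{k,\beta}$ by showing, via the skew-adjointness (\ref{Gdualformula}) of $G$, that the pairing $\frak m(\Gamma,v,e)$ attached to a tree and a flag is independent of the flag (Proposition \ref{independofpoints}), hence is re-rooting invariant (Lemma \ref{cysycanlem}). The one place your sketch is slightly off target is the cyclicity (\ref{homocyc2}) of the morphism $\frak f$: you fold it into the re-rooting argument, but (\ref{homocyc2}) is a \emph{vanishing} statement, not a cyclic-symmetry statement, and re-rooting invariance by itself doesn't produce a zero. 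The paper (Lemma \ref{fcycl}) proves it more directly: for nontrivial trees $\Gamma$ the image of $\frak f(\Gamma)$ lies in $\overline D=\operatorname{Im}G$, and $\langle\overline D,\overline D\rangle=\langle\overline D,\overline H\rangle=0$, so every individual term $\langle\frak f(\Gamma_1)(\cdots),\frak f(\Gamma_2)(\cdots)\rangle$ already vanishes unless $(k,\beta)=(2,(0,0))$. That Lagrangian property of $\operatorname{Im}G$ does follow from $G^2=0$ together with (\ref{Gdualformula}), so your contraction-datum ingredients suffice; only the stated mechanism for that step needs to be replaced by the vanishing argument.
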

We will prove Theorem \ref{cancyclic} in Section \ref{canoalgsec}.
\begin{dfn}
We call $(H,\langle \cdot \rangle,\{\frak m_{k,\beta}\})$ the {\it canonical model} 
of cyclic filtered $A_{\infty}$ algebra $(C,\langle \cdot \rangle,\{\frak m_{k,\beta}\})$.
\end{dfn}
Weak homotoy equivalence between (cyclic) canonical  filtered $A_{\infty}$ algebras is 
an isomorphism. (\cite{FOOO080} Proposition 5.4.5.)
\begin{thm}\label{canopseudo}
If $(C,\langle \cdot \rangle,\{\frak m^{0}_{k,\beta}\})$ is pseudo-isotopic to 
$(C,\langle \cdot \rangle,\{\frak m^{1}_{k,\beta}\})$ then their canonical models are 
also pseudo-isotopic to each other.
\end{thm}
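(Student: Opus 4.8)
The plan is to run the canonical model construction \emph{fiberwise over the interval} $[0,1]$, exploiting condition 5 of Definition \ref{pisotopydef}. Let $(C,\langle\cdot\rangle,\{\frak m^t_{k,\beta}\},\{\frak c^t_{k,\beta}\})$ be a given pseudo-isotopy between $\{\frak m^0_{k,\beta}\}$ and $\{\frak m^1_{k,\beta}\}$. Since $\frak m^t_{1,(0,0)}$ is independent of $t$, the complex $(\overline C,\frak m_{1,(0,0)})$, and hence $\overline H=\operatorname{Ker}\frak m_{1,(0,0)}/\operatorname{Im}\frak m_{1,(0,0)}$, does not depend on $t$. First I would fix, once and for all, the homological perturbation data on $(\overline C,\frak m_{1,(0,0)})$ used to form canonical models: an embedding $\frak i:\overline H\to\overline C$, the corresponding projection, and a homotopy operator $G$, all chosen compatibly with $\langle\cdot\rangle$ exactly as in the proof of Theorem \ref{cancyclic} (and preserving $\text{\bf e}$ in the unital case, and truncated at $T^{E_0}$ in the modulo $T^{E_0}$ case). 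We may take the canonical models of $\{\frak m^0_{k,\beta}\}$ and $\{\frak m^1_{k,\beta}\}$ to be the ones produced from this fixed data, canonical models being unique up to isomorphism.

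Next I would define $\frak m^{t,can}_{k,\beta}$ on $\overline H$ by the usual sum over planar trees, with $\frak m^t_{k',\beta'}$ at the vertices, $G$ on the interior edges, $\frak i$ on the leaves and the projection at the root. By Theorem \ref{cancyclic} this is, for each fixed $t$, a cyclic (unital, if applicable) $G$-gapped filtered $A_\infty$ algebra, smoothness of $t\mapsto\frak m^t_{k,\beta}$ makes $t\mapsto\frak m^{t,can}_{k,\beta}$ smooth in the sense of Definition \ref{smoothmtandct}, and at $t=0,1$ we recover the chosen canonical models of $\{\frak m^0\}$ and $\{\frak m^1\}$. The connecting operations $\frak c^{t,can}_{k,\beta}$ are then the sums over the same trees with exactly one vertex labelled by $\frak c^t$ and the remaining vertices by $\frak m^t$. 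The conceptual justification is Lemma \ref{dtcomiAinf}: the pseudo-isotopy is the same datum as the cyclic filtered $A_\infty$ algebra $\widehat C=C^\infty([0,1]\times\overline C)$ with structure $\{\hat{\frak m}_{k,\beta}\}$ defined by $(\ref{combineainf})$, and the pair $(\{\frak m^{t,can}_{k,\beta}\},\{\frak c^{t,can}_{k,\beta}\})$ is precisely the canonical model of $\widehat C$ built from the homotopy operator $\widehat G$ equal to $G$ on the $C^\infty([0,1],\overline C)$ summand and $0$ on the $dt$-summand (so the interval direction is \emph{not} contracted). That $(\ref{isotopymaineq})$ holds is the statement that pushing forward an $A_\infty$-homotopy along compatible perturbation data again gives an $A_\infty$-homotopy; it follows formally from the $A_\infty$-relations for $\hat{\frak m}$ (Lemma \ref{dtcomiAinf}), $\widehat G\circ\widehat G=0$ and the homotopy identity, as in \cite{FOOO080} Chapter 4. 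Condition 5 of Definition \ref{pisotopydef} holds because $\frak m^{t,can}_{k,(0,0)}$ is assembled from the $t$-independent $\frak m^t_{k,(0,0)}$ and the $t$-independent $G,\frak i$, while every tree contributing to $\frak c^{t,can}_{k,(0,0)}$ has a $\frak c^t$-vertex and $\frak c^t_{k,(0,0)}=0$, so no such tree is purely of class $(0,0)$; hence $\frak c^{t,can}_{k,(0,0)}=0$.

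The cyclic symmetry conditions Definition \ref{pisotopydef}.2,3 are where the real work lies, and I expect the main obstacle to be threading cyclic symmetry and the attendant Koszul/orientation signs through the tree sums in this relative setting. Condition 2 is Theorem \ref{cancyclic} applied to $(\overline C,\langle\cdot\rangle,\{\frak m^t_{k,\beta}\})$ for each fixed $t$. For condition 3, the cyclicity of $\frak c^{t,can}$, I would repeat the argument used in the proof of Theorem \ref{cancyclic} to establish cyclicity of $\frak m^{t,can}$: move the distinguished output around the tree, using cyclic symmetry of $\frak c^t$ at its unique vertex, cyclic symmetry of $\frak m^t$ at the other vertices, and the $\langle\cdot\rangle$-compatibility of $G$ on the edges and $\frak i$ on the leaves, keeping track of signs; equivalently this is the cyclicity of the canonical-model structure on $C^\infty([0,1]\times\overline H)$ with respect to each $\langle\cdot\rangle_{t_0}$, which falls under Theorem \ref{cancyclic} since $\widehat C$ is (the Fr\'echet completion of) a de Rham complex and $\widehat G$ is $\langle\cdot\rangle_{t_0}$-compatible for every $t_0$. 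Unitality of the pseudo-isotopy passes to $(\{\frak m^{t,can}\},\{\frak c^{t,can}\})$ because $G,\frak i$ respect $\text{\bf e}$ and an $\text{\bf e}$-input in any tree is either absorbed by an $\frak m_2$ or annihilated, and the modulo $T^{E_0}$ version follows by truncating all trees at energy $E_0$. As in the proof of Theorem \ref{cancyclic}, I would ultimately reduce the sign bookkeeping to the computations already carried out in \cite{FOOO080} Subsections 5.4 and 8.4. This produces the desired pseudo-isotopy $(\overline H,\langle\cdot\rangle,\{\frak m^{t,can}_{k,\beta}\},\{\frak c^{t,can}_{k,\beta}\})$ between the canonical models, completing the proof.
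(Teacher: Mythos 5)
Your proposal follows the paper's proof almost exactly: fix $G,\Pi$ independently of $t$, build $\frak m^{t,\mathrm{can}}$ fiberwise, define $\frak c^{t,\mathrm{can}}$ by tree sums with exactly one $\frak c^t$-labelled vertex, verify the pseudo-isotopy equation by packaging the data into a filtered $A_\infty$ homomorphism $\frak F_{k,\beta}$ on $C^\infty([0,1]\times\overline H)$ and invoking the tree-sum argument of Lemma \ref{canhomoOK} together with Lemma \ref{dtcomiAinf}, and establish cyclicity of $\frak c^{t,\mathrm{can}}$ by rotating the output around trees as in Lemma \ref{cysycanlem}. One inaccuracy in your ``conceptual'' aside: the homotopy operator $\widehat G$ under which the proof of Lemma \ref{canhomoOK} carries over to $C^\infty([0,1]\times\overline C)$ must act by $G$ on \emph{both} the $C^\infty([0,1],\overline C)$ summand and the $dt$-summand (this is the paper's ``regard $G$ as a homomorphism $C^\infty([0,1]\times\overline C)\to C^\infty([0,1]\times\overline C)$''); if $\widehat G$ annihilated the $dt$-summand as you state, then the $dt$-component produced by any $\frak c^t$-vertex strictly below the root would be killed on its outgoing edge, forcing $\frak c^{t,\mathrm{can}}$ to receive contributions only from trees with $\frak c^t$ at the root, which contradicts your own (correct) explicit tree-sum formula.
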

We prove Theorem \ref{canopseudo} in Section \ref{canoalgsec}.
\section{Pseudo-isotopy of cyclic filtered $A_{\infty}$ algebras}
\label{homisoto}
In this section we prove Theorems \ref{pisotopyextention} and \ref{pisotohomotopyequiv}.
We begin with the proof of Theorem \ref{pisotohomotopyequiv}.
We will construct the required isomorphism by taking appropriate sum over trees 
with some additional data, which we describe below.
\par
A {\it ribbon tree} is a tree $T$ together with isotopy type of an embedding 
$T \to \R^2$. (This is equivalent to fix cyclic order of the set of edges containing given vertex.)
A {\it rooted ribbon tree} is a pair $(T,v_0)$ of a ribbon tree $T$ and its vertex $v_0$ such that 
$v_0$ has exactly one edges.
Let $G \subset \R_{\ge 0} \times 2\Z$ be a discrete submonoid.
We consider triple $\Gamma = (T,v_0,\beta(\cdot))$ together with some other date which has the following properties:
\smallskip
\begin{enumerate}
\item $(T,v_0)$ is a rooted ribbon tree.
\item The set of vertices $C_0(T)$ is divided into disjoint union 
$C_0^{\text{\rm int}}(T) \cup C_0^{\text{\rm ext}}(T)$. $v_0 \in C_0^{\text{\rm ext}}(T)$.
Each of $v\in C_0^{\text{\rm ext}}(T)$ has exactly one edge.
\item $\beta(\cdot) : C_0^{\text{\rm int}}(T) \to G$ is a map.
\item If $\beta(v) = (0,0)$ then, $v$ has at least 3 edges.
\end{enumerate}
\begin{dfn}
We write $Gr(\beta,k)$ the set of all $(T,v_0,\beta(\cdot))$ as above such that
\smallskip
\begin{enumerate}
\item $\sum_{v\in C_0^{\text{\rm int}}(T)}\beta(v) = \beta.$
\item $\# C_0^{\text{\rm ext}}(T) = k+1$.
\end{enumerate}
\par\smallskip
We call an element of $C_0^{\text{\rm ext}}(T)$ an {\it exterior vertex} and
an element of $C_0^{\text{\rm int}}(T)$ an {\it interior vertex}.
An edge is said {\it exterior} if its contains an exterior edge. 
It is called {\it interior} otherwise. 
The set of exterior edges an interior edges are denoted by 
$C_1^{\text{\rm ext}}(T)$ and $C_1^{\text{\rm int}}(T)$, respectively.
\par
We call $v_0$ the {\it root} of $(T,v_0)$.
\end{dfn}
\begin{dfn}[See \cite{FOOO080} Definition 4.6.6.]
For a rooted ribbon tree $(T,v_0)$ we define a partial order $<$ on 
$C_0(T)$ as follows.
\par
$v<v'$ if all the paths joining $v$ with $v_0$ contains $v'$.
\end{dfn} 
\begin{dfn}[See \cite{FOOO080} Definition 7.1.53]
The {\it time allocation} of an element $(T,v_0,\beta(\cdot)) \in Gr(\beta,k)$
is a map $\tau : C_0^{\text{\rm int}}(T) \to [0,1]$ such that
if $v<v'$ then $\tau(v) \le \tau(v')$.
\par
We denote by $\frak M(T,v_0,\beta(\cdot);\tau_1,\tau_0)$ 
the set of all time allocations $\tau$ such that 
$\tau(v) \in [\tau_0,\tau_1]$ for all $v$. 
We write $\frak M(T,v_0,\beta(\cdot)) = \frak M(T,v_0,\beta(\cdot);1,0)$.
\par
We may regard 
\begin{equation}
\frak M(T,v_0,\beta(\cdot)) \subseteq [0,1]^{\# C_0^{\text{\rm int}}(T)}.
\end{equation}
\end{dfn}
For $(T,v_0,\beta(\cdot)) \in Gr(\beta,k)$ and $\tau \in \frak M(T,v_0,\beta(\cdot))$
we will associate an $\R$ linear map
$$
\frak c(T,v_0,\beta(\cdot),\tau)
: B_k(\overline C[1]) \to \overline C[1]
$$
of degree $-\mu(\beta)$ by induction on $\# C_0^{\text{\rm int}}(T)$ as follows.
\par
Suppose $\# C_0^{\text{\rm int}}(T)=0$. Then $T$ has only one edge and two (exterior) vertices.
So $\beta(\cdot)$ is void. We put
\begin{equation}
\frak c(T,v_0,\beta(\cdot),\tau) = \text{\rm identity},
\end{equation}
in this case.
\par
Suppose $\# C_0^{\text{\rm int}}(T)=1$. Let $v$ be the unique interior vertex. 
$\beta = \beta(v)$. $v$ has exactly $k+1$ edges. 
We put
\begin{equation}\label{93}
\frak c(T,v_0,\beta(\cdot),\tau)(x_1,\ldots,x_k)
= 
-\frak c^{\tau(v)}_{k,\beta}(x_1,\ldots,x_k).
\end{equation}
\par
Let $\# C_0^{\text{\rm int}}(T)>1$. We take the unique edge $e_0$ containing $v_0$. 
Let $v'_0$ be the edge of $e_0$ other than $v_0$. $v'_0$ is necessarily interior. 
We remove $v_0$, $e_0$ and $v'_0$ from $T$ and then
obtain $\ell$ components $T_1,\ldots,T_{\ell}$. 
Here $\ell+1$ is the number of edges of $v'_0$. 
We number them so that $v'_0, T_1, \ldots, T_{\ell}$ respects counter clockwise cyclic 
order induced by the canonical orietation of  $\R^2$.
We take the closures of $T_i$ and denote it by the same symbol, by an 
abuse of notation.
Together with the other data which is induced in an obvious way from one of $(T,v_0,\beta(\cdot),\tau)$
we obtain 
$(T_i,v'_0,\beta_i(\cdot),\tau_i)$ for $i=1,\ldots,\ell$.
We now put
\begin{equation}\label{94}
\aligned
&\frak c(T,v_0,\beta(\cdot),\tau) \\
&= -\frak c^{\tau(v'_0)}_{\ell,\beta(v'_0)} \circ 
\left(
\frak c(T_1,v'_0,\beta_1(\cdot),\tau_1) 
\otimes \ldots\otimes 
\frak c(T_{\ell},v'_0,\beta_{\ell}(\cdot),\tau_{\ell})
\right).
\endaligned
\end{equation}
Note the right hand side is already defined by induction hypothesis.
\par
Now we integrate on $\frak M(T,v_0,\beta(\cdot))$ and define:
\begin{equation}
\frak c(T,v_0,\beta(\cdot)) 
= \int_{\tau \in \frak M(T,v_0,\beta(\cdot))}\frak c(T,v_0,\beta(\cdot),\tau)
d\tau.
\end{equation}
Here we regard $ \frak M(T,v_0,\beta(\cdot)) \subset [0,1]^{\# C_0^{\text{\rm int}}(T)}$
and use standard measure $d\tau$ to integrate.
We define $\frak c(T,v_0,\beta(\cdot);\tau_1,\tau_0)$ in the same way by integrating on 
$\frak M(T,v_0,\beta(\cdot);\tau_1,\tau_0)$.
\begin{dfn}
\begin{equation}
\frak c(k,\beta) = \sum_{(T,v_0,\beta(\cdot)) \in Gr(k,\beta)} \frak c(T,v_0,\beta(\cdot)) .
\end{equation}
We define $\frak c(k,\beta;\tau_1,\tau_0)$ in a similar way.
\end{dfn}
\begin{prp}\label{Ainftyhomobyint}
$\{\frak c(k,\beta;\tau_1,\tau_0)\}$ defines a $G$ gapped filtered $A_{\infty}$ homomorphism
from  $(C,\{\frak m^{\tau_0}_{k,\beta}\})$ to $(C,\{\frak m^{\tau_1}_{k,\beta}\})$.
\end{prp}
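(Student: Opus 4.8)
The plan is to follow the iterated-integral (``time-ordered exponential'') argument of \cite{FOOO080} Section~4.6, see also \cite{Ka}: regard the upper endpoint $\tau_1$ as a free parameter, establish a first-order ODE in it for $\{\frak c(k,\beta;\tau_1,\tau_0)\}$, and deduce the filtered $A_\infty$ homomorphism equation (\ref{inftyhomoeq}) by an ODE argument with vanishing initial data.

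\emph{Propagation identity.} Fix $\tau_0$ and abbreviate $s=\tau_1$. In any $(T,v_0,\beta(\cdot))\in Gr(k,\beta)$ the interior vertex $v_0'$ on the root edge is the unique $<$-maximal interior vertex, while interior vertices lying in distinct branches $T_1,\dots,T_\ell$ (obtained by deleting $v_0$, $e_0$, $v_0'$) are pairwise incomparable; hence the face $\{\tau(v_0')=s\}$ of $\frak M(T,v_0,\beta(\cdot);s,\tau_0)$ factors as $\prod_i \frak M(T_i;s,\tau_0)$. Differentiating the iterated integral in $s$ (legitimate by the smoothness of Definition~\ref{smoothmtandct}), using (\ref{93})--(\ref{94}), and summing over trees via the ``root corolla with grafted subtrees'' bijection gives
\begin{equation*}
\frac{d}{ds}\,\frak c(k,\beta;s,\tau_0)
=-\sum \frak c^{s}_{\ell,\beta_0}\bigl(\frak c(k_1,\beta_1;s,\tau_0),\dots,\frak c(k_\ell,\beta_\ell;s,\tau_0)\bigr),
\end{equation*}
the sum over $\ell\ge1$, $\beta_0+\dots+\beta_\ell=\beta$, $k_1+\dots+k_\ell=k$. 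I would also record that $\frak c(k,\beta;\tau_0,\tau_0)$ is the identity filtered $A_\infty$ homomorphism (trees with an interior vertex give degenerate integrals; the unique interior-vertex-free tree lies only in $Gr(1,(0,0))$ and contributes the identity), that $\frak c(k,\beta;\cdot)$ has degree $-\mu(\beta)$, and that $\frak c(0,(0,0);\cdot)=0$ since $Gr(0,(0,0))=\emptyset$; thus $G$-gappedness holds. (Conceptually this says that on the Novikov-completed bar coalgebra $\widehat{B}(\overline C[1])$ the collection $\{\frak c(k,\beta;s,\tau_0)\}$ is the coalgebra automorphism obtained by path-ordered exponentiation of the coderivation built from $\{\frak c^s_{k,\beta}\}$; the displayed equation is its component form.)

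\emph{The defect.} For $s\in[\tau_0,\tau_1]$ set
\begin{equation*}
\Phi_{k,\beta}(s)=\sum \frak m^{s}_{\ell,\beta_0}\bigl(\frak c(k_1,\beta_1;s,\tau_0)(\cdots),\dots\bigr)
-\sum(-1)^{*}\,\frak c(k_1,\beta_1;s,\tau_0)\bigl(\cdots,\frak m^{\tau_0}_{k_2,\beta_2}(\cdots),\dots\bigr),
\end{equation*}
with the sign of (\ref{inftyhomoeq}); then $\Phi_{k,\beta}(\tau_1)=0$ for all $(k,\beta)$ is precisely the claim, and $\Phi_{k,\beta}(\tau_0)=0$ by the previous step. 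I would prove $\Phi_{k,\beta}\equiv0$ by induction on $E(\beta)$ (the case $\beta=(0,0)$ is immediate: there $\frak m^s_{k,(0,0)}$ is $s$-independent and $\frak c^s_{k,(0,0)}=0$ by Definition~\ref{pisotopydef}.5, so $\Phi_{k,(0,0)}\equiv0$). Differentiating $\Phi_{k,\beta}(s)$ in $s$ and inserting the propagation identity for each $\tfrac{d}{ds}\frak c$ and the pseudo-isotopy equation (\ref{isotopymaineq}) for each $\tfrac{d}{ds}\frak m^s$ produces three groups of terms: the group where $\tfrac{d}{ds}$ hit a $\frak c$-factor inside an outer $\frak m^s$ cancels term by term against the group coming from the $\frak m^s(\dots,\frak c^s(\dots),\dots)$ sum of (\ref{isotopymaineq}); and the group coming from the $\frak c^s(\dots,\frak m^s(\dots),\dots)$ sum of (\ref{isotopymaineq}) cancels $\tfrac{d}{ds}$ of the second sum of $\Phi_{k,\beta}$, once one observes that the inner $\frak m^s(\frak c,\dots,\frak c)$ there re-sums (over the free interior decompositions) to the defect-free combination $\sum\frak m^s_{\ell,\delta_0}(\frak c,\dots,\frak c)$, which by the inductive hypothesis equals $\sum(-1)^{*}\frak c(\cdots;s,\tau_0)(\cdots,\frak m^{\tau_0},\cdots)$ --- here the relevant index $\delta$ has $E(\delta)<E(\beta)$ because a surviving outer $\frak c^s_{\ell_1,\gamma_1}$ must have $\gamma_1\ne(0,0)$ (again by Definition~\ref{pisotopydef}.5), so $E(\gamma_1)>0$. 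Hence $\tfrac{d}{ds}\Phi_{k,\beta}\equiv0$ and $\Phi_{k,\beta}(\tau_1)=\Phi_{k,\beta}(\tau_0)=0$.

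\emph{The obstacle, and variants.} The content is the short ODE above; the work --- and the only real obstacle --- is the bookkeeping: verifying that the index sets of the three groups match so that the cancellations are literally term by term, and that the signs agree with the conventions of (\ref{combineainf}). This is carried out exactly as in \cite{FOOO080} Chapter~4 (and \cite{Ka}), and I would simply reproduce it. The modulo $T^{E_0}$ statement is the identical argument with all sums truncated to $E(\beta)<E_0$; the unital case follows because $\frak c^s_{k,\beta}(\cdots,\text{\bf e},\cdots)=0$ and each $\{\frak m^s_{k,\beta}\}$ is unital, whence $\frak c(k,\beta;s,\tau_0)$ kills inputs equal to $\text{\bf e}$ for $(k,\beta)\ne(1,(0,0))$ and $\frak c(1,(0,0);s,\tau_0)(\text{\bf e})=\text{\bf e}$.
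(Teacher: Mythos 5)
Your argument is essentially the paper's own proof: the same induction on the energy level $E_j$, the same maps $\frak P$ and $\frak Q$ of (\ref{cainfleft})--(\ref{cainfright}), differentiation in the upper endpoint using (\ref{isotopymaineq}), and the same three-way cancellation scheme closing with the initial condition at $\tau=\tau_0$. The only presentational difference is that you state the propagation identity $\tfrac{d}{ds}\frak c(k,\beta;s,\tau_0)=-\sum\frak c^s_{\ell,\beta_0}(\frak c(k_1,\beta_1;s,\tau_0),\dots,\frak c(k_\ell,\beta_\ell;s,\tau_0))$ explicitly and package the conclusion as an ODE $\tfrac{d}{ds}\Phi_{k,\beta}\equiv 0$ for the defect, whereas the paper uses the same identity implicitly in cancelling ``term 2'' against the second term of (\ref{derivativefraP}).
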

\begin{proof}
Let $E(G) = \{0,E_1,\ldots,E_k,\ldots\}$ with $E_i < E_{i+1}$.
We are going to prove that $\{\frak c(k,\beta;\tau_1,\tau_0)\}$ is a filtered $A_{\infty}$
homomorphism modulo $E_j$ by induction on $j$.
\par
We remark that 
\begin{equation}
\frak c(k,(0,0))
= \begin{cases}
\text{identity}  &\text{if $k=1$}, \\
0 &\text{otherwise},
\end{cases} 
\end{equation}
by Definition \ref{pisotopydef}.5.
The case $j = 0+1=1$ follows immediately.
\par
We assume that $\{\frak c(k,\beta;\tau,\tau_0)\}$ is a filtered $A_{\infty}$
homomorphism modulo $E_j$.
Let $E(\beta) = E_j$. We will study the following two maps (\ref{cainfleft}), 
(\ref{cainfright}).
\begin{equation}\label{cainfleft}
\sum \frak m^{\tau}_{\ell,\beta_0} \circ
\left(
\frak c({k_1,\beta_1};\tau,\tau_0)\otimes\ldots
\otimes \frak c({k_{\ell},\beta_{\ell}};\tau,\tau_0)
\right)
\end{equation}
where the sum is taken over all $\ell$, $k_i$, $\beta_i$ with
$\beta_0 + \beta_1 + \ldots + \beta_{\ell} = \beta$, 
$k_1+\ldots+k_{\ell} = k$.
\begin{equation}\label{cainfright}
x_1 \otimes \ldots \otimes x_k \mapsto
\sum
(-1)^*\frak c(\beta_1,k_1;\tau,\tau_0)(x_1,\ldots,\frak m^{\tau_0}_{k_2,\beta_2}(x_i,\ldots),\ldots,x_k)
\end{equation}
where the sum is taken over $k_1$, $k_2$, $\beta_1$, $\beta_2$, $i$ with 
$k_1+k_2=k+1$, $\beta_1+\beta_2 = \beta$ and $i = 1,\ldots,k-k_2+1$ and 
$* = \deg' x_1 + \ldots + \deg'  x_{i-1}$.
\par
We denote (\ref{cainfleft}) by $\frak P(k,\beta;\tau,\tau_0)$ 
and (\ref{cainfright}) by $\frak Q(k,\beta;\tau,\tau_0)$.
To prove Proposition \ref{Ainftyhomobyint} it suffcies to show 
$\frak P(k,\beta;\tau,\tau_0)=\frak Q(k,\beta;\tau,\tau_0)$.
\par
We calculate
\begin{equation}\label{derivativefraP}
\aligned
&\frac{d}{d\tau} \frak P(k,\beta;\tau,\tau_0)
\\
&=
\sum \left(\frac{d}{d\tau}\frak m^{\tau}_{\ell,\beta_0}\right) \circ
\left(
\frak c({k_1,\beta_1};\tau,\tau_0)\otimes\ldots
\otimes \frak c({k_{\ell},\beta_{\ell}};\tau,\tau_0)
\right)
\\
&\qquad\quad+ 
\frak m^{\tau}_{\ell,\beta_0} \circ
\frac{d}{d\tau}\left(
\frak c({k_1,\beta_1};\tau,\tau_0)\otimes\ldots
\otimes \frak c({k_{\ell},\beta_{\ell}};\tau,\tau_0)
\right)
\endaligned\end{equation}
By using (\ref{isotopymaineq}) the first term of (\ref{derivativefraP}) becomes
the sum of the following two formulas:
\smallskip
\begin{enumerate}
\item The sum of the composition of
\begin{equation}\label{frakctensor}
\frak c({k_1,\beta_1};\tau,\tau_0)\otimes\ldots
\otimes \frak c({k_{\ell},\beta_{\ell}};\tau,\tau_0)
\end{equation}
and
\begin{equation}
\aligned
x_1 \otimes \ldots \otimes x_{\ell} \mapsto
(-1)^{*}\frak c^{\tau}_{\ell_1,\beta_0}(\ldots, \frak m_{\ell_2,\beta'_0}^{\tau}(x_i,\ldots),\ldots). 
\endaligned
\end{equation}
Here the sum is taken over all $\beta_0$, $\beta'_0$, $\beta_1,\ldots,\beta_{\ell}$, $\ell_1$, $\ell_2$, 
$k_1,\ldots,k_{\ell}$ such that
$\beta = \beta_0+\beta'_0 +\beta_1+\ldots +\beta_{\ell}$, $\ell_1 + \ell_2 = \ell+1$, 
$k_1+\ldots+k_{\ell} = k$. The sign is
$* = \deg' x_1+\ldots+\deg' x_{i-1}$.
\item
The composition of (\ref{frakctensor}) and
\begin{equation}\label{913}
\aligned
x_1 \otimes \ldots \otimes x_{\ell} \mapsto
-\frak m^{\tau}_{\ell_1,\beta_0}(\ldots, \frak c_{\ell_2,\beta'_0}^{\tau}(x_i,\ldots),\ldots). 
\endaligned
\end{equation}
\end{enumerate}
We remark that the minus sign in (\ref{913}) is induced by the 
minus sign in the 3rd line of (\ref{isotopymaineq}).
\smallskip\par
Using induction hypothesis, we can show that 1 above is equal to (\ref{cainfright}).
(Note the minus sign in (\ref{93}), (\ref{94}) is essential here.)
\par
On the other hand, by definition we can show that 2 above cancels with the second term of 
(\ref{derivativefraP}). 
(We again use the minus sign in (\ref{93}), (\ref{94}) here.)
\par
The proof of Proposition \ref{Ainftyhomobyint} is now complete.
\end{proof}
\begin{prp}\label{gtrascyclic}
The filtered $A_{\infty}$ homomorphism $\{\frak c(k,\beta)\}$ is cyclic.
\end{prp}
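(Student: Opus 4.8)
The plan is to establish the two defining equations of cyclicity separately. Equation (\ref{homocyc1}) will be immediate: the only element of $Gr(1,(0,0))$ is the tree with no interior vertex, so $\frak c(1,(0,0))=\text{\rm identity}$ (as already noted in the proof of Proposition \ref{Ainftyhomobyint}), whence $\langle\frak c(1,(0,0))(x),\frak c(1,(0,0))(y)\rangle=\langle x,y\rangle$. The real content is the quadratic identity (\ref{homocyc2}), and I would prove it by the same ``differentiate in the time parameter'' device used for Proposition \ref{Ainftyhomobyint}. For $\tau\in[0,1]$ introduce
$$
Q(k,\beta;\tau)(x_1,\dots,x_k)=\sum_{k_1+k_2=k}\ \sum_{\beta_1+\beta_2=\beta}\langle\frak c(k_1,\beta_1;\tau,0)(x_1,\dots,x_{k_1}),\ \frak c(k_2,\beta_2;\tau,0)(x_{k_1+1},\dots,x_k)\rangle,
$$
so that (\ref{homocyc2}) for $\frak c(k,\beta)=\frak c(k,\beta;1,0)$ asserts $Q(k,\beta;1)=0$ for $(k,\beta)\ne(2,(0,0))$. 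The excluded case is exactly the one covered by (\ref{homocyc1}): $\frak c(k,(0,0);\tau,0)$ vanishes for $k\ne1$ (every vertex of a tree with $\beta=(0,0)$ carries $\beta(v)=(0,0)$, and $\frak c^t_{k,(0,0)}=0$ for all $k$ by Definition \ref{pisotopydef}.5) while $\frak c(1,(0,0))=\text{\rm identity}$. At $\tau=0$ every time allocation in $\frak M(T,v_0,\beta(\cdot);0,0)$ is forced to be $\equiv0$, so the integral over it vanishes whenever $T$ has an interior vertex; hence $\frak c(k,\beta;0,0)$ is the identity for $(k,\beta)=(1,(0,0))$ and $0$ otherwise, and $Q(k,\beta;0)=0$ for $(k,\beta)\ne(2,(0,0))$. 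Thus it suffices to prove $\frac{d}{d\tau}Q(k,\beta;\tau)=0$.

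The key input is a derivative formula for $\frak c$. In a rooted ribbon tree every interior vertex is $<$ the unique interior vertex $v'_0$ on the root edge, so $\tau(v'_0)$ is the maximal value taken by a time allocation; differentiating the inductive definition (\ref{93})--(\ref{94}) in the upper endpoint $\tau$ therefore ``peels off the root vertex'' and, after grouping trees, gives
$$
\frac{d}{d\tau}\,\frak c(k,\beta;\tau,0)=-\sum\ \frak c^{\tau}_{\ell,\beta_0}\big(\frak c(k_1,\beta_1;\tau,0)(\cdots)\otimes\cdots\otimes\frak c(k_\ell,\beta_\ell;\tau,0)(\cdots)\big),
$$
the sum over $\ell\ge0$, $\beta_0+\beta_1+\cdots+\beta_\ell=\beta$, $k_1+\cdots+k_\ell=k$ (the $\beta_0=(0,0)$ terms dropping out). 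I would substitute this into $\frac{d}{d\tau}Q(k,\beta;\tau)$, the $\tau$-derivative falling either on the first or on the second tensor factor of $\langle\cdot,\cdot\rangle$, and then use the cyclic symmetry of $\frak c^{\tau}$ (Definition \ref{pisotopydef}.3) together with the graded antisymmetry (\ref{gradedansym}) of $\langle\cdot,\cdot\rangle$ to bring every resulting summand into the normal form
$$
\pm\,\langle\frak c^{\tau}_{\ell,\beta_0}\big(\frak c(j_1,\gamma_1;\tau,0)(\cdots)\otimes\cdots\otimes\frak c(j_\ell,\gamma_\ell;\tau,0)(\cdots)\big),\ \frak c(j_{\ell+1},\gamma_{\ell+1};\tau,0)(\cdots)\rangle,
$$
where $\beta_0+\gamma_1+\cdots+\gamma_{\ell+1}=\beta$ and the $\frak c(j_m,\gamma_m;\tau,0)$ are evaluated on the $\ell+1$ consecutive arcs of a cyclic decomposition of $(x_1,\dots,x_k)$, one of the arcs (the second argument of the pairing) being singled out as the ``outermost'' one.

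The final and decisive step is to match these normal forms. The summands coming from the derivative on the first tensor factor are precisely those whose outermost arc is the one containing $x_k$ but not $x_1$; those coming from the derivative on the second factor, after (\ref{gradedansym}) is applied, are precisely those whose outermost arc is the one containing $x_1$ but not $x_k$. Consequently a decomposition in which $x_1$ and $x_k$ lie in a common (necessarily proper) arc contributes nothing, while a decomposition in which they lie in distinct arcs contributes exactly twice --- once with each of those two arcs as outermost --- and by the cyclic symmetry of $\frak c^{\tau}$ the two normal forms coincide up to an explicit sign. I expect the main obstacle to be the verification that this sign is such that the two occurrences enter with opposite total signs and so cancel term by term; this is the usual sign bookkeeping, to be carried out with the orientation conventions of \cite{FOOO080} Subsection 8.4.2 as organized in Lemma \ref{hikakucyclicconv}. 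The degenerate cases ($k=0$, and empty component arcs when some $k_i=0$) will be handled by the same combinatorics. Granting this, $\frac{d}{d\tau}Q(k,\beta;\tau)=0$, hence $Q(k,\beta;1)=Q(k,\beta;0)=0$ for $(k,\beta)\ne(2,(0,0))$, which is (\ref{homocyc2}); everything other than the sign check is a transcription of the proof of Proposition \ref{Ainftyhomobyint}.
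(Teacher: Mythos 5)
Your proof is correct and is essentially the same as the paper's: differentiating $Q(k,\beta;\tau)$ in the upper endpoint $\tau$ and applying the ``peel off the root vertex'' derivative formula is precisely the paper's device of splitting the double integral over $\frak M(\Gamma_1)\times\frak M(\Gamma_2)$ by whether $\tau_{\max}\ge\tau'_{\max}$ or $\tau_{\max}\le\tau'_{\max}$. The term-by-term cancellation in both arguments rests on the same anti-cyclic identity $\langle \frak c^t_{\ell,\beta}(x_1,\ldots,x_{\ell}),x_0\rangle + \langle x_1,\frak c^t_{\ell,\beta}(x_2,\ldots,x_{\ell},x_0)\rangle = 0$, which the paper isolates as a lemma (combining the cyclic symmetry of $\frak c^t$ with the graded antisymmetry of $\langle\cdot,\cdot\rangle$); this is exactly the sign verification you deferred.
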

\begin{proof}
Let $(k,\beta) \ne (2,(0,0))$. We will prove:
\begin{equation}\label{innnerhomocond}
\sum_{k_1+k_2=k \atop \beta_1+\beta_2=\beta}
\left\langle
\frak c(k_1,\beta_1)(x_1,\ldots,x_{k_1}),
\frak c(k_2,\beta_2)(x_{k_1+1},\ldots,x_{k})
\right\rangle = 0.
\end{equation}
A term of (\ref{innnerhomocond}) is written as 
\begin{equation}\label{innerprodint}
\int_{\tau \in \frak M(\Gamma_1)}\int_{\tau' \in \frak M(\Gamma_2)}
\left\langle
\frak c(\Gamma_1;\tau)(x_1,\ldots,x_{k_1}),
\frak c(\Gamma_2;\tau')(x_{k_1+1},\ldots,x_{k})
\right\rangle d\tau d\tau'.
\end{equation}
Here $\Gamma_i =(T_i,v^i_0,\beta_i(\ldots)) \in Gr(k_i,\beta_i)$ with 
$k_1+k_2=k$, $\beta_1+\beta_2=\beta$.
\par
We put
$$
\tau_{\text{max}} = \max\{ \tau(v) \mid v \in C_0^{\text{int}}(T_1)\}
= \tau(v_0^{1\prime}).
$$
Here $v_0^{1\prime}$ is the unique interior vertex which is joined with $v_0^1$. 
We define 
$$
\tau'_{\text{max}} = \max\{ \tau'(v) \mid v \in C_0^{\text{int}}(T_2)\}
= \tau'(v_0^{2\prime}),
$$
in the same way. We divide the domain of integration (\ref{innerprodint}) 
into two:
\smallskip
\begin{enumerate}
\item $\tau_{\text{max}} \ge \tau'_{\text{max}}$.
\item $\tau_{\text{max}} \le \tau'_{\text{max}}$.
\end{enumerate}
\smallskip\par
Integration on the domain 1, is the sum of the terms:
\begin{equation}
\aligned
-\int_{0}^1 
\left\langle
(\frak c^t_{\ell,\beta_{(0)}}\circ
(\frak c(\Gamma(1);t,\tau_0)\otimes \ldots,\right.\otimes &\frak c(\Gamma(\ell);t,\tau_0)))
(x_1,\ldots,x_{k_1}),\\
&\frak c(\Gamma(0);t,\tau_0)(x_{k_1+1},\ldots,x_{k})
\big\rangle dt.
\endaligned
\end{equation}
Here $\Gamma(i) = (T_i,v^i_0,\beta^i(\cdot)) \in Gr(k(i),\beta^i)$
such that $\sum_{i=1}^{\ell}k(i) = k_1$, $k(0) = k_2$, $k_1+k_2=k$, 
$\sum_{i=0}^{\ell} \beta^i + \beta^0 + \beta_{(0)} = \beta$.
\par 
In a similar way, Integration on the domain 2, is the sum of the terms:
\begin{equation}
\aligned
-\int_{0}^1 
\big\langle
&\frak c(\Gamma(0);t,\tau_0)
(x_1,\ldots,x_{k_2}),\\
&(\frak c^t_{\ell,\beta_{(0)}}\circ
(\frak c(\Gamma(1);t,\tau_0)\otimes \ldots,\otimes \frak c(\Gamma(\ell);t,\tau_0)))(x_{k_2+1},\ldots,x_{k})
\big\rangle dt.
\endaligned
\end{equation}
Therefore (\ref{innnerhomocond}) follows from the following:
\begin{lmm}
\begin{equation}
\langle 
\frak c_{\ell,\beta}^t(x_1,\ldots,x_{\ell}),x_0
\rangle
+ 
\langle 
x_1,\frak c_{\ell,\beta}^t(x_2,\ldots,x_{\ell},x_0)
\rangle
= 0.
\end{equation}
\end{lmm}
\begin{proof}
$$
\aligned
&\langle 
\frak c_{\ell,\beta}^t(x_1,\ldots,x_{\ell}),x_0
\rangle \\
&= (-1)^{(\deg'x_1)\left(\sum_{i\ne 1} \deg'x_i\right)}
\langle 
\frak c_{\ell,\beta}^t(x_2,\ldots,x_{\ell},x_0),x_1
\rangle\\
&= - \langle 
x_1,\frak c_{\ell,\beta}^t(x_2,\ldots,x_{\ell},x_0)
\rangle.
\endaligned
$$
Here we use cyclic symmetry of $\frak c_{\ell,\beta}^t$ in the first 
equality and (\ref{gradedansym})
in the second equality.
\end{proof}
The proof of Proposition \ref{gtrascyclic} and  Theorem \ref{pisotopyextention} are complete.
\end{proof}
\begin{proof}[Proof of Theorem \ref{pisotohomotopyequiv}]
We may assume that $E(G) \cap [E_0,E_1] = \{E_0,E_1\}$.
(In the general case we can divide the interval $[E_0,E_1]$ into 
the pieces so that the above assumption holds.)
\par
We use the modulo $T^{E_0}$ version of Theorem  \ref{pisotopyextention} we proved above
and obtain a cyclic filtered $A_{\infty}$ homomorphism $\{\frak c_{k,\beta}\}$ modulo $T^{E_0}$.
\par
Let $E(\beta) = E_0$. We put $\frak c_{k,\beta}^t = 0$.
We then define
$\frak m^t_{k,\beta}$ by solving  (\ref{isotopymaineq}). Namely we put
\begin{equation}\label{mtintdef}
\aligned
&\frak m^{\tau}_{k,\beta}(x_1,\ldots,x_k) \\
=&\frak m^1_{k,\beta}(x_1,\ldots,x_k) \\
&- \sum_{k_1+k_2=k \atop \beta_1+\beta_2=\beta }\sum_{i=1}^{k-k_2+1}
(-1)^{*_i}\int_{\tau}^1\frak c^t_{k_1,\beta_1}(\ldots, \frak m_{k_2,\beta_2}^t(x_i,\ldots),\ldots) \,dt\\
&+\sum_{k_1+k_2=k \atop \beta_1+\beta_2=\beta}\sum_{i=1}^{k-k_2+1}
\int_{\tau}^1\frak m^t_{k_1,\beta_1}(\ldots, \frak c_{k_2,\beta_2}^t(x_i,\ldots),\ldots)
\, dt.
\endaligned
\end{equation}
Here $*_i = \deg' x_1 + \ldots + \deg'x_{i-1}$.
\par
We remark that if $\frak c^t_{k,\beta} \ne 0$ then $E(\beta) > 0$.
Therefore the right hand side of (\ref{mtintdef}) is already defined by
induction hypothesis.
\par
1,3,4,5 of Definition \ref{pisotopydef} is obvious.
\begin{lmm}\label{checkAiniftybydiff}
$\frak m^{\tau}_{k,\beta}$ in $(\ref{mtintdef})$ satisfies the filtered 
$A_{\infty}$ relation $(\ref{Ainfinityrelbeta})$.
\end{lmm}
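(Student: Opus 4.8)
The plan is to prove the relation $(\ref{Ainfinityrelbeta})$ one energy level at a time. For $E(\beta)<E_0$ the operations $\frak m^{\tau}_{k,\beta}$ are the data of the given pseudo-isotopy modulo $T^{E_0}$, so $(\ref{Ainfinityrelbeta})$ holds at every $\tau$ by Definition $\ref{pisotopydef}.2$; modulo $T^{E_1}$ there is nothing at energy strictly between $E_0$ and $E_1$ by the standing assumption $E(G)\cap[E_0,E_1]=\{E_0,E_1\}$; so only $E(\beta)=E_0$ needs to be checked. For such $(k,\beta)$ I would introduce the defect
\[
\mathcal R^{\tau}_{k,\beta}(x_1,\ldots,x_k)=\sum_{k_1+k_2=k+1}\ \sum_{\beta_1+\beta_2=\beta}\ \sum_{i=1}^{k-k_2+1}(-1)^{*}\,\frak m^{\tau}_{k_1,\beta_1}(x_1,\ldots,\frak m^{\tau}_{k_2,\beta_2}(x_i,\ldots),\ldots,x_k),
\]
with $*=\deg'x_1+\cdots+\deg'x_{i-1}$. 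Every $\beta_i$ here has $E(\beta_i)\le E_0$, so only operations already defined occur, and the lemma is the assertion $\mathcal R^{\tau}_{k,\beta}\equiv 0$.

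First I would verify the endpoint value $\mathcal R^{1}_{k,\beta}=0$. By $(\ref{mtintdef})$, $\frak m^{1}_{k,\beta}$ is the given extension of $\frak m^{1}$ to energy $E_0$, and the lower-energy $\frak m^{1}_{k,\beta'}$ agree with the given cyclic filtered $A_{\infty}$ algebra $(\overline C,\langle\cdot\rangle,\{\frak m^{1}_{k,\beta}\})$ modulo $T^{E_1}$; since $(\ref{Ainfinityrelbeta})$ at energy $E_0$ only involves $\beta_i$ with $E(\beta_i)\le E_0$, it is part of that hypothesis, whence $\mathcal R^{1}_{k,\beta}=0$. One also records that $\frak m^{\tau}_{k,(0,0)}=\frak m^{1}_{k,(0,0)}$ is $\tau$-independent and classical $A_{\infty}$.

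Next I would differentiate $\mathcal R^{\tau}_{k,\beta}$ in $\tau$. The formula $(\ref{mtintdef})$ is, by design, the $\tau$-integrated form of $(\ref{isotopymaineq})$ with prescribed value at $\tau=1$ (the signs in $(\ref{mtintdef})$ being chosen precisely so that condition $4$ of Definition $\ref{pisotopydef}$ holds), so $\tfrac{d}{d\tau}\frak m^{\tau}_{k_i,\beta_i}$ is given by the $\frak c$-terms of $(\ref{isotopymaineq})$, in which only $\frak c^{\tau}$ of energy in $(0,E_0)$ appears (recall $\frak c^{\tau}_{*,(0,0)}=0$ and $\frak c^{\tau}_{*,\beta}=0$ for $E(\beta)=E_0$). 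Substituting this produces triple compositions of the shapes $\frak m\!\circ\!(\frak c\!\circ\!\frak m)$, $\frak m\!\circ\!(\frak m\!\circ\!\frak c)$, $(\frak c\!\circ\!\frak m)\!\circ\!\frak m$, $(\frak m\!\circ\!\frak c)\!\circ\!\frak m$; rearranging these exactly as in the proof of Proposition $\ref{Ainftyhomobyint}$ — the minus signs in $(\ref{isotopymaineq})$ being what makes the terms not linear in a defect cancel in pairs — one obtains that $\tfrac{d}{d\tau}\mathcal R^{\tau}_{k,\beta}$ is a sum of terms in which some $\mathcal R^{\tau}_{*,\beta'}$ ($\beta'\le\beta$, total energy $E_0$) occurs linearly, composed with an $\frak m^{\tau}$- or $\frak c^{\tau}$-factor of complementary energy. (Conceptually this is the familiar identity $\tfrac{d}{d\tau}\big(\widehat{\frak m^{\tau}}\big)^{2}=\big[\big(\widehat{\frak m^{\tau}}\big)^{2},\widehat{\frak c^{\tau}}\,\big]$ for the associated coderivations, derived from $\tfrac{d}{d\tau}\widehat{\frak m^{\tau}}=[\widehat{\frak m^{\tau}},\widehat{\frak c^{\tau}}]$.) Now an energy count finishes it: in any surviving term the $\mathcal R^{\tau}$-factor carries $\beta'$ with $E(\beta')\le E_0$; if $E(\beta')<E_0$ it vanishes by the first paragraph, and if $E(\beta')=E_0$ its complementary factor has energy $(0,0)$, hence is the classical $\frak m_{k,(0,0)}$ or is $\frak c^{\tau}_{*,(0,0)}=0$. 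So $\tfrac{d}{d\tau}\mathcal R^{\tau}_{k,\beta}$ is a smooth-in-$\tau$ linear expression in the $\mathcal R^{\tau}_{k',\beta}$ of energy $E_0$ (the count in fact shows $\widehat{\frak c^{\tau}}$ contributes nothing at energy $(0,0)$, so this derivative is $0$ outright). With $\mathcal R^{1}_{k,\beta}=0$ this is a homogeneous linear ODE with zero endpoint value, hence $\mathcal R^{\tau}_{k,\beta}\equiv 0$, which is $(\ref{Ainfinityrelbeta})$. The unital refinement is identical, using that $\frak m^{1}$ and the $\frak c^{t}$ are unital and that $(\ref{mtintdef})$ preserves unitality.

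I expect the only real obstacle to be the rearrangement in the previous paragraph: one must match, sign by sign, the four kinds of triple composition produced by the substitution and verify that everything not linear in the defect cancels. This is the same bookkeeping as in Lemma $\ref{dtcomiAinf}$ and the proof of Proposition $\ref{Ainftyhomobyint}$, carried out against a second $\frak m^{\tau}$-factor rather than a test element, and it is the only delicate step; the endpoint computation, the energy/induction reduction, and the ODE uniqueness are routine.
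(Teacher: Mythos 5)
Your proof is correct and takes essentially the same route as the paper: differentiate the $A_\infty$ defect in $\tau$, substitute the master equation $(\ref{isotopymaineq})$, observe that after the forced cancellations the result is linear in lower-energy defects (which vanish by induction, since $\frak c^\tau$ carries strictly positive energy and $\frak c^\tau_{*,(0,0)}=0$), and close with the endpoint value $\mathcal R^1_{k,\beta}=0$. The only cosmetic difference is that you package the eight-term cancellation the paper carries out by hand as the coderivation identity $\frac{d}{d\tau}\widehat{\frak m^\tau}{}^2 = [\,\widehat{\frak m^\tau}{}^2,\widehat{\frak c^\tau}\,]$, which is a cleaner way to organize exactly the same sign bookkeeping.
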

\begin{proof}
We remark that $\frak m^{1}_{k,\beta}$ satisfies $(\ref{Ainfinityrelbeta})$ by assumption.
We prove $(\ref{Ainfinityrelbeta})$ by induction on $E(\beta)$. 
Since $\frak m^{\tau}_{k,(0,0)}$ is independent of $\tau$, 
$(\ref{Ainfinityrelbeta})$ holds for $E(\beta) = 0$.
We assume that it is satisfied for $\beta'$ with $E(\beta') < E(\beta)$ and 
consider the case of $\beta$.
We calculate
$$
\aligned
&\frac{d}{dt} \left(
\sum (-1)^{*_i} \frak m_{k_1,\beta_1}^t(\ldots, \frak m_{k_2,\beta_2}^t(x_i,\ldots),
\ldots)
\right)\\
= &\sum(-1)^{*^1_{i,j}}
\frak c_{k_1,\beta_1}^t(\ldots, \frak m_{k_2,\beta_2}^t(x_j,\ldots), 
\ldots,\frak m_{k_3,\beta_3}^t(x_i,\ldots),
\ldots) \\
&+\sum(-1)^{*^2_{i,j}}
\frak c_{k_1,\beta_1}^t(\ldots, \frak m_{k_2,\beta_2}^t(x_i,\ldots), 
\ldots,\frak m_{k_3,\beta_3}^t(x_j,\ldots),
\ldots) \\
&+\sum(-1)^{*^3_{i,j}}
\frak c_{k_1,\beta_1}^t(\ldots, \frak m_{k_2,\beta_2}^t(x_j,\ldots\frak m_{k_3,\beta_3}^t(x_i,\ldots)
,\ldots),
\ldots)\\
&+\sum(-1)^{*^4_{i,j}}
\frak m_{k_1,\beta_1}^t(\ldots, \frak c_{k_2,\beta_2}^t(x_j,\ldots), 
\ldots,\frak m_{k_3,\beta_3}^t(x_i,\ldots),
\ldots)\\
&+\sum(-1)^{*^5_{i,j}}
\frak m_{k_1,\beta_1}^t(\ldots, \frak m_{k_2,\beta_2}^t(x_i,\ldots), 
\ldots,\frak c_{k_3,\beta_3}^t(x_j,\ldots),
\ldots)\\
&+\sum(-1)^{*^6_{i,j}}
\frak m_{k_1,\beta_1}^t(\ldots, \frak c_{k_2,\beta_2}^t(x_j,\ldots\frak m_{k_3,\beta_3}^t(x_i,\ldots)
,\ldots),
\ldots)\\
&+\sum(-1)^{*^7_{i,j}}
\frak m_{k_1,\beta_1}^t(\ldots, \frak c_{k_2,\beta_2}^t(x_i,\ldots\frak m_{k_3,\beta_3}^t(x_j,\ldots)
,\ldots),
\ldots) \\
&+\sum(-1)^{*^8_{i,j}}
\frak m_{k_1,\beta_1}^t(\ldots, \frak m_{k_2,\beta_2}^t(x_i,\ldots\frak c_{k_3,\beta_3}^t(x_j,\ldots)
,\ldots),
\ldots).
\endaligned
$$
Here the first 6 terms are obtained by differentiating $\frak m_{k_1,\beta_1}^t$ and the 
last 2 terms are obtained by differentiating $\frak m_{k_2,\beta_2}^t$.
The signs are given by
$$\aligned
*^1_{i,j} &=  \deg'x_1+\ldots+\deg'x_{i-1}+ \deg'x_1+\ldots+\deg'x_{j-1}\\
*^2_{i,j} &=  \deg'x_1+\ldots+\deg'x_{i-1}+ \deg'x_1+\ldots+\deg'x_{j-1}+1\\
*^3_{i,j} &=  \deg'x_j+\ldots+\deg'x_{i-1}\\
*^4_{i,j} &=  \deg'x_1+\ldots+\deg'x_{i-1}+1\\
*^5_{i,j} &=  \deg'x_1+\ldots+\deg'x_{i-1}+1\\
*^6_{i,j} &=  \deg'x_1+\ldots+\deg'x_{i-1}+1\\
*^7_{i,j} &=  \deg'x_1+\ldots+\deg'x_{i-1}+ \deg'x_1+\ldots+\deg'x_{j-1}\\
*^8_{i,j} &=  \deg'x_1+\ldots+\deg'x_{i-1}+1.
\endaligned$$
Now the 1st and 2nd terms cancels. 
The 3rd term is $0$ by induction hypothesis. ($A_{\infty}$  relation for $\frak m$.
We remark that $\frak c_{k,\beta} \ne 0$ only if $E(\beta) >0$.)
The sum of 4th, 5th and 8th terms are $0$ by induction hypothesis also.
6th and 7th terms cancel.
The proof of Lemma \ref{checkAiniftybydiff} is now complete. 
\end{proof}
\begin{lmm}\label{mcancyclic}
$\frak m^{\tau}_{k,\beta}$ is cyclically symmetric.
\end{lmm}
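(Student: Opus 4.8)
The plan is to establish the cyclic symmetry relation (\ref{cyclicform22}) for $\frak m^{\tau}_{k,\beta}$ by induction on $E(\beta)$, of the same shape as the induction in Lemma \ref{checkAiniftybydiff}. For $E(\beta)=0$ we have $\beta=(0,0)$ and $\frak m^{\tau}_{k,(0,0)}=\frak m^{1}_{k,(0,0)}$ is independent of $\tau$ and cyclic by hypothesis, so there is nothing to prove. Assume $\frak m^{\tau}_{k',\beta'}$ is cyclic for all $k'$, all $\tau$ and all $\beta'$ with $E(\beta')<E(\beta)$. Fix $x_0,\ldots,x_k\in\overline C[1]$ and put
\[
D^{\tau}=\langle\frak m^{\tau}_{k,\beta}(x_1,\ldots,x_k),x_0\rangle-(-1)^{\sigma}\langle\frak m^{\tau}_{k,\beta}(x_0,x_1,\ldots,x_{k-1}),x_k\rangle,
\]
where $\sigma=\deg'x_0(\deg'x_1+\ldots+\deg'x_k)$. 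By (\ref{mtintdef}) the map $\tau\mapsto D^{\tau}$ is smooth, and $D^{1}=0$ since $\frak m^{1}_{k,\beta}$ is cyclic; hence it suffices to prove $\frac{d}{d\tau}D^{\tau}=0$, for then $D^{\tau}\equiv D^{1}=0$, which is the assertion.

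To compute $\frac{d}{d\tau}D^{\tau}$ I would differentiate using the relation (\ref{isotopymaineq}), which $\frak m^{\tau}_{k,\beta}$ satisfies by construction: this writes $\frac{d}{d\tau}\frak m^{\tau}_{k,\beta}(x_1,\ldots,x_k)$, and likewise $\frac{d}{d\tau}\frak m^{\tau}_{k,\beta}(x_0,\ldots,x_{k-1})$, as a signed sum of terms of the two shapes $\frak c^{\tau}_{k_1,\beta_1}(\ldots,\frak m^{\tau}_{k_2,\beta_2}(x_i,\ldots),\ldots)$ and $\frak m^{\tau}_{k_1,\beta_1}(\ldots,\frak c^{\tau}_{k_2,\beta_2}(x_i,\ldots),\ldots)$. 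Because $\frak c^{\tau}_{k,(0,0)}=0$ (Definition \ref{pisotopydef}.5), in every such term the factor carrying $\frak c^{\tau}$ has positive energy, so the factor carrying $\frak m^{\tau}$ has energy $<E(\beta)$ and is cyclic by the induction hypothesis. I would then pair each term with $x_0$ (resp.\ $x_k$) and move the outermost operation across $\langle\cdot\rangle$: for a term whose outer operation is $\frak c^{\tau}$, use the cyclic compatibility of $\frak c^{\tau}$ (Definition \ref{pisotopydef}.3), equivalently the identity $\langle\frak c^{\tau}_{\ell,\beta}(x_1,\ldots,x_{\ell}),x_0\rangle=-\langle x_1,\frak c^{\tau}_{\ell,\beta}(x_2,\ldots,x_{\ell},x_0)\rangle$ established in the proof of Proposition \ref{gtrascyclic}; for a term whose outer operation is $\frak m^{\tau}$, use (\ref{cyclicform22}) for the relevant lower-energy $\beta'$; and in both cases use the graded anti-symmetry (\ref{gradedansym}) to transpose the two slots of $\langle\cdot\rangle$. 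After this rewriting each term arising from the first half of $\frac{d}{d\tau}D^{\tau}$ should be matched by exactly one term from the second half bearing the opposite sign --- a term in which $\frak c^{\tau}$ sits inside $\frak m^{\tau}$ cancelling, after transposition, against one in which $\frak m^{\tau}$ sits inside $\frak c^{\tau}$ --- so the whole expression vanishes. This is precisely the cancellation pattern of the proof of Lemma \ref{checkAiniftybydiff}, and the minus signs built into (\ref{mtintdef}) are what make the bookkeeping close up; hence $\frac{d}{d\tau}D^{\tau}=0$ and the inductive step is complete.

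The step I expect to be the main obstacle is the sign bookkeeping, namely verifying that the Koszul signs produced when commuting $\frak m^{\tau}$ and $\frak c^{\tau}$ past $\langle\cdot\rangle$ via (\ref{cyclicform22}), (\ref{gradedansym}) and the $\frak c^{\tau}$-cyclicity identity combine so that paired terms really carry opposite signs. Since cyclic symmetry of $\frak m^{\tau}$ involves only a one-step rotation of the arguments, I would reduce this, as in \cite{FOOO080} Subsection 8.4.2, to tracking the $\deg'$-degrees alone, after which the cancellation is forced by the combinatorics already sorted out in Lemma \ref{checkAiniftybydiff}. The modulo $T^{E}$ version is handled identically, every $\beta'$ occurring having $E(\beta')<E(\beta)<E$; and the unital version follows at once, since $\frak c^{\tau}_{k,\beta}(\ldots,\text{\bf e},\ldots)=0$ and $\text{\bf e}$ is a unit of each $\frak m^{\tau}$, so inserting $\text{\bf e}$ annihilates every term of $D^{\tau}$ and of its $\tau$-derivative.
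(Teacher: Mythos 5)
Your proposal is correct and follows essentially the same route as the paper's proof: showing that the integrand in (\ref{mtintdef}) — the combination $\frak P+\frak Q$ in the paper's notation — is cyclically symmetric, by pairing terms using the cyclic symmetry of $\frak c^{\tau}$, the cyclic symmetry of $\frak m^{\tau}$ at strictly lower energy (available since $\frak c^{\tau}_{k,(0,0)}=0$), and the graded anti-symmetry of the pairing. Phrasing it as $\tfrac{d}{d\tau}D^{\tau}=0$ with $D^{1}=0$ is just a repackaging of the paper's identity (\ref{PQcyclic}); the cancellation pattern and the role of the minus signs in (\ref{mtintdef}) are the same.
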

\begin{proof}
We consider the following formulas:
\begin{equation}\label{mccylc}
\sum_{k_1+k_2=k \atop \beta_1+\beta_2=\beta }\sum_{i=1}^{k-k_2+1}
(-1)^{*_i+1}
\left\langle
\frak c^t_{k_1,\beta_1}(\ldots, \frak m_{k_2,\beta_2}^t(x_i,\ldots),\ldots),
x_0
\right\rangle,
\end{equation}
where $*_i = \deg'\rho_1+\ldots+\deg'\rho_{i-1}$ and
\begin{equation}\label{cmcylc}
\sum_{k_1+k_2=k \atop \beta_1+\beta_2=\beta}\sum_{i=1}^{k-k_2+1}
\left\langle
\frak m^t_{k_1,\beta_1}(\ldots, \frak c_{k_2,\beta_2}^t(x_i,\ldots),\ldots),
x_0
\right\rangle.
\end{equation}
We denote (\ref{mccylc}) as $\frak P(x_1,\ldots,x_k,x_0)$ and 
(\ref{cmcylc}) as $\frak Q(x_1,\ldots,x_k,x_0)$.
\par
We will prove
\begin{equation}\label{PQcyclic}
\aligned
&\frak P(x_0,x_1,\ldots,x_k)+ \frak Q(x_0,x_1,\ldots,x_k)=
\\
& (-1)^{(\deg'x_0)(\deg'x_1+\ldots+\deg'x_k)}
(\frak P(x_1,\ldots,x_k,x_0)+ \frak Q(x_1,\ldots,x_k,x_0)).
\endaligned
\end{equation}
We  have
\begin{equation}\label{923}
\aligned
&\frak P(x_0,x_1,\ldots,x_k) \\
=& -\sum 
\left\langle
\frak c^t_{k_1,\beta_1}(\frak m_{k_2,\beta_2}^t(x_0,\ldots),\ldots),
x_k
\right\rangle \\
&+ \sum (-1)^{\deg'x_0+*_i+1}
\left\langle
\frak c^t_{k_1,\beta_1}(x_0,\ldots,\frak m_{k_2,\beta_2}^t(x_i,\ldots),\ldots),
x_k
\right\rangle
\endaligned\end{equation}
and
\begin{equation}\label{924}
\aligned
&\frak Q(x_0,x_1,\ldots,x_k) \\
=& \sum 
\left\langle
\frak m^t_{k_1,\beta_1}(\frak c_{k_2,\beta_2}^t(x_0,\ldots),\ldots),
x_k
\right\rangle \\
&+ \sum 
\left\langle
\frak m^t_{k_1,\beta_1}(x_0,\ldots,\frak c_{k_2,\beta_2}^t(\ldots),\ldots),
x_k
\right\rangle.
\endaligned\end{equation}
Moreover
\begin{equation}\label{925}
\aligned
&(-1)^{(\deg'x_0)(\deg'x_1+\ldots+\deg'x_k)}
\frak P(x_1,\ldots,x_k,x_0) \\
=& \sum (-1)^{*+ 1+ *_i}
\left\langle
\frak c^t_{k_1,\beta_1}(\ldots,\frak m_{k_2,\beta_2}^t(x_i,\ldots),\ldots,x_k),
x_0
\right\rangle \\
&+ \sum (-1)^{*+*_i+1}
\left\langle
\frak c^t_{k_1,\beta_1}(\ldots,\frak m_{k_2,\beta_2}^t(x_i,\ldots,x_k)),
x_0
\right\rangle
\endaligned\end{equation}
with $*=(\deg'x_0)(\deg'x_1+\ldots+\deg'x_k)$ and
\begin{equation}\label{926}
\aligned
&(-1)^{(\deg'x_0)(\deg'x_1+\ldots+\deg'x_k)}\frak Q(x_1,\ldots,x_k,x_0) \\
=& \sum 
(-1)^{*}\left\langle
\frak m^t_{k_1,\beta_1}(\ldots,\frak c_{k_2,\beta_2}^t(x_i,\ldots),\ldots,x_k),
x_0
\right\rangle \\
&+ \sum 
(-1)^{*}\left\langle
\frak m^t_{k_1,\beta_1}(\ldots,\frak c_{k_2,\beta_2}^t(x_i,\ldots,x_k)),
x_0
\right\rangle.
\endaligned\end{equation}
The 2nd term of (\ref{923}) coincides with 1st term of (\ref{925}) by 
the cyclic symmetry of $\frak c^t$.
The 2nd term of (\ref{924}) coincides with 1st term of (\ref{926})  by 
the cyclic symmetry of $\frak m^t$.
\par
The 1st term of (\ref{923}) coincides with the 2nd term of (\ref{926}).
In fact
$$\aligned
&-
\left\langle
\frak c^t_{k_1,\beta_1}(\frak m_{k_2,\beta_2}^t(x_0,\ldots,x_{i-1}),\ldots),
x_k \right\rangle\\
&=(-1)^{1+(*_i+1)(\deg'x_i+\ldots+\deg'x_k)}
\left\langle \frak c^t_{k_1,\beta_1}(x_i,\ldots,x_k),
\frak m_{k_2,\beta_2}^t(x_0,\ldots,x_{i-1})
\right\rangle\\
&= 
\left\langle 
\frak m_{k_2,\beta_2}^t(x_0,\ldots,x_{i-1}),\frak c^t_{k_1,\beta_1}(x_i,\ldots,x_k)
\right\rangle,
\endaligned$$
which is equal to the 2nd term of (\ref{926}).
\par
In the same way the 1st term of (\ref{924}) coincides with the 
2nd term of (\ref{925}).
We thus proved (\ref{PQcyclic}).
\end{proof}
We thus constructed cyclic filtered $A_{\infty}$ homomorphism 
$\{\frak c_{k,\beta}\}$. We remark that $\frak c_{1,(0,0)}$ is identity and 
$\frak c_{k,(0,0)} = 0$ for $k\ne 1$ by definition. 
We can use this fact to show that $\{\frak c_{k,\beta}\}$ has an 
inverse, by induction on energy filtration.
The proof of Theorem \ref{pisotohomotopyequiv} is complete.
\end{proof}
\section{Canonical model of cyclic filtered $A_{\infty}$ algebra}
\label{canoalgsec}
In this section we prove Theorems \ref{cancyclic} and \ref{canopseudo}.
We first review the construction of the filtered $A_{\infty}$ structure 
$\frak m^{\text{\rm can}}_{k,\beta}$ on $H$ and 
filtered $A_{\infty}$ homomorphism $\{\frak f_{k,\beta}\} : H \to C$, 
from \cite{FOOO080} Subsection 5.4.4.
\par
We consider the chain complex $(\overline C,\frak m_{1,(0,0)})$ together 
with its inner product.
We take $\R$ linear subspace $\overline H \subset \overline C$ such that 
$\frak m_{1,(0,0)} = 0$ on $\overline H$ and $\overline H$ is
identified with the $\frak m_{1,(0,0)}$ cohomology by an obvious map.
In the case $\overline C$ is the de Rham complex of $L$ we take a 
Riemannian metric on $L$ and $\overline H$ be the space of harmonic forms.
\begin{lmm}[Compare \cite{Ka} Section 5.1.]\label{Gdual}
We assume either $\overline C$ is finite dimensional or is a de Rham complex.
There exists a map $\Pi : \overline C \to \overline C$ of degree $0$ and
$G : \overline C \to \overline C$ of degree $+1$ with the following properties.
\smallskip
\begin{enumerate}
\item $\Pi \circ \Pi = \Pi$.  The image of $\Pi$ is a $\overline H$.
\item 
\begin{equation}\label{GPIhomotopy}
\text{\rm identity} - \Pi 
= 
-\left( \frak m_{1,(0,0)}\circ G + G \circ \frak m_{1,(0,0)} \right).
\end{equation}
\item $G\circ G = 0$.
\item $\langle \Pi x,y\rangle = \langle x,\Pi y\rangle$.
\item
\begin{equation}\label{Gdualformula}
\langle x,Gy\rangle = (-1)^{\deg x\deg y}\langle y,Gx\rangle
\end{equation}
\end{enumerate}
\end{lmm}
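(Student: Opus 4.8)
The plan is to reduce both cases to producing a linear decomposition
$$
\overline C = \overline H \oplus \operatorname{Im}\frak m_{1,(0,0)} \oplus W,
\qquad
\ker\frak m_{1,(0,0)} = \overline H \oplus \operatorname{Im}\frak m_{1,(0,0)},
$$
subject to two extra requirements: (i) $\operatorname{Im}\frak m_{1,(0,0)}\oplus W = \overline H^{\perp}$, the annihilator of $\overline H$ with respect to $\langle\cdot\rangle$; and (ii) $W$ is isotropic for $\langle\cdot\rangle$. Granting such a $W$, I set $\Pi$ to be the projection onto $\overline H$ with kernel $\operatorname{Im}\frak m_{1,(0,0)}\oplus W$, and I define $G$ to vanish on $\overline H\oplus W$ and to equal $-(\frak m_{1,(0,0)}\vert_{W})^{-1}$ on $\operatorname{Im}\frak m_{1,(0,0)}$, which makes sense since $\frak m_{1,(0,0)}$ restricts to an isomorphism $W\to \operatorname{Im}\frak m_{1,(0,0)}$. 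Then 1 is clear, and 2, 3 are the elementary computation already indicated before the lemma (on $\overline H$ both sides of (\ref{GPIhomotopy}) vanish; on $W$, $\frak m_{1,(0,0)}Gw=0$ and $-G\frak m_{1,(0,0)}w=w$; on $\operatorname{Im}\frak m_{1,(0,0)}$, $G\frak m_{1,(0,0)}x=0$ and $-\frak m_{1,(0,0)}Gx=x$; and $G\circ G=0$ because $G$ carries $\operatorname{Im}\frak m_{1,(0,0)}$ into $W$ and annihilates $W$). Property 4 is equivalent to $\overline H\perp(\operatorname{Im}\frak m_{1,(0,0)}\oplus W)$, i.e. to requirement (i). Finally, writing $x=\frak m_{1,(0,0)}a+w$ and $y=\frak m_{1,(0,0)}b+w'$ with $a,b,w,w'\in W$, one has $Gx=-a$, $Gy=-b$, so (\ref{Gdualformula}) becomes
$$
\langle \frak m_{1,(0,0)}a,b\rangle + \langle w,b\rangle
= (-1)^{\deg x\deg y}\bigl(\langle\frak m_{1,(0,0)}b,a\rangle + \langle w',a\rangle\bigr);
$$
the terms $\langle w,b\rangle$, $\langle w',a\rangle$ vanish by (ii), and the remaining identity is the graded skew-adjointness $\langle\frak m_{1,(0,0)}u,v\rangle=\pm\langle u,\frak m_{1,(0,0)}v\rangle$ recorded just after Definition \ref{cyclicainf} (the Koszul sign matching by a degree count).

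It then remains to construct $W$ satisfying (i) and (ii) in each case. In the finite dimensional case, since $\langle\cdot\rangle$ is a perfect pairing on $\overline H$ we get $\overline H\cap\overline H^{\perp}=0$, hence $\overline C=\overline H\oplus\overline H^{\perp}$ and $\langle\cdot\rangle$ is non-degenerate on $\overline H^{\perp}$. Skew-adjointness of $\frak m_{1,(0,0)}$ gives $\operatorname{Im}\frak m_{1,(0,0)}\subseteq\overline H^{\perp}$ and $\langle\frak m_{1,(0,0)}a,\frak m_{1,(0,0)}b\rangle=\pm\langle a,\frak m_{1,(0,0)}\frak m_{1,(0,0)}b\rangle=0$, so $\operatorname{Im}\frak m_{1,(0,0)}$ is an isotropic subspace of $\overline H^{\perp}$; and from $\ker\frak m_{1,(0,0)}=\overline H\oplus\operatorname{Im}\frak m_{1,(0,0)}$ one gets $\dim\overline C=\dim\overline H+2\dim\operatorname{Im}\frak m_{1,(0,0)}$, so $\dim\overline H^{\perp}=2\dim\operatorname{Im}\frak m_{1,(0,0)}$ and $\operatorname{Im}\frak m_{1,(0,0)}$ is a Lagrangian subspace of the non-degenerate graded (anti)symmetric pairing on $\overline H^{\perp}$. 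A Lagrangian subspace of such a pairing always admits a complementary Lagrangian (a standard Witt-type argument; compare \cite{Ka} Section 5.1); taking $W$ to be one yields (i) and (ii).

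In the de Rham case $\overline C=\Lambda(L)$, I fix a Riemannian metric, take $\overline H$ to be the space of harmonic forms and $W=\operatorname{Im}d^{*}$ the coexact forms, let $\Pi$ be the harmonic projection, and set $G=\pm d^{*}\circ\mathcal G$ with $\mathcal G$ the Green operator of the Hodge Laplacian (the sign dictated by $\frak m_{1,(0,0)}=(-1)^{\deg}d$). Then 1, 2, 3 are the Hodge theorem together with $(d^{*})^{2}=0$ and the commutation $[\mathcal G,d^{*}]=0$, and a short check shows this $G$ coincides with the abstract $G$ above for the decomposition $\Lambda(L)=\overline H\oplus\operatorname{Im}d\oplus\operatorname{Im}d^{*}$. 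For (i): harmonic forms are $d$-closed, so $\langle h,d\alpha\rangle=\pm\langle dh,\alpha\rangle=0$ by the Poincar\'e skew-adjointness of $d$ (i.e. Stokes), while $\langle h,d^{*}\alpha\rangle=\pm(h,\star d^{*}\alpha)_{L^{2}}=\pm(h,d\star\alpha)_{L^{2}}=\pm(d^{*}h,\star\alpha)_{L^{2}}=0$ since $h$ is coclosed; combined with the Hodge decomposition and the perfect pairing on $\overline H$, this gives $\overline H^{\perp}=\operatorname{Im}d\oplus\operatorname{Im}d^{*}$, which is (i). For (ii): $\langle d^{*}\alpha,d^{*}\beta\rangle=\pm(d^{*}\alpha,\star d^{*}\beta)_{L^{2}}=\pm(d^{*}\alpha,d\star\beta)_{L^{2}}=\pm(d^{*}d^{*}\alpha,\star\beta)_{L^{2}}=0$, so $\operatorname{Im}d^{*}$ is Poincar\'e-isotropic; then 4 and 5 follow from the general discussion.

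I expect the main obstacle to be the tension between conditions 4 and 5: condition 4 pins $\ker\Pi$ down to be exactly $\overline H^{\perp}$, so no freedom is left there, and condition 5 can then only be arranged through the choice of the complement $W$ of $\operatorname{Im}\frak m_{1,(0,0)}$ inside $\overline H^{\perp}$. The structural fact that makes this possible — and which is the crux of the argument — is that $\operatorname{Im}\frak m_{1,(0,0)}$ is a Lagrangian subspace of $(\overline H^{\perp},\langle\cdot\rangle)$, incarnated in the de Rham case by the coexact forms and the vanishing $(d^{*})^{2}=0$. A secondary, purely mechanical, difficulty is tracking the Koszul signs throughout, which I would dispatch exactly as in \cite{FOOO080} Subsection 8.4.2.
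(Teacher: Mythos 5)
Your proposal is correct and follows essentially the same route as the paper: your $W$ is the paper's $\overline D$ (an isotropic complement to $\operatorname{Im}\frak m_{1,(0,0)}$ inside $\overline H^{\perp}=\overline C'$, obtained in the paper "by an easy linear algebra" and spelled out by you as a Lagrangian-complement/Witt argument), your $G$ is the paper's $-\Pi_D\circ\frak n\circ\Pi_B$, and in the de Rham case both take $W=\operatorname{Im}d^{*}$ via Hodge theory. The only nontrivial divergence is cosmetic — you name the structural fact (that $\operatorname{Im}\frak m_{1,(0,0)}$ is Lagrangian in $\overline H^{\perp}$) that the paper leaves implicit.
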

\begin{proof}
We first assume that $\overline C$ is finite dimensional.
We remark 
\begin{equation}\label{m1dual}
\langle \frak m_{1,(0,0)} x,y\rangle = (-1)^{\deg'x\deg'y +1}\langle \frak m_{1,(0,0)} y,x\rangle.
\end{equation}
We put $\overline B = \text{\rm Im}\,\,\frak m_{1,(0,0)}$.
(\ref{m1dual}) implies $\langle \overline B,\overline H\rangle = 0$, 
$\langle \overline B,\overline B\rangle = 0$.
We put 
$$
\overline C' = \{ x \in \overline C \mid \langle x,\overline H\rangle = 0\}.
$$
Since $\langle \cdot \rangle$ is nondegenerate on $\overline H$ it follows 
that it is nondegenerate also on $\overline C'$.
By an easy linear algebra we can find $\overline D \subset \overline C'$ such that
$\overline{B}\oplus \overline D = \overline C'$ and
$\langle \overline D,\overline D\rangle = 0$.
We thus have decomposition 
\begin{equation}\label{HKdecom}
\overline C = \overline{B}\oplus \overline D \oplus \overline H.
\end{equation}
We use this decomposition to define projections
$\Pi_B$, $\Pi_D$, $\Pi$ to $\overline{B}$, $\overline D$,  $\overline H$ respectively.
We have
\begin{equation}\label{HKorghogo}
\langle \Pi x,y \rangle = \langle  x,\Pi y \rangle, 
\quad \langle \Pi_B x,y \rangle = \langle x,\Pi_D y \rangle,
\quad \langle \Pi_D x,y \rangle = \langle x,\Pi_B y \rangle.
\end{equation}
Thus we have 1 and 4.
\par
By construction the restriction of $\frak m_{1,(0,0)}$ to $\overline D$ 
induces an isomorphism $: \overline D \to \overline B$.
Let $\frak n$ be its inverse. We put
\begin{equation}
G = -\frak n \circ \Pi_B = -\Pi_D \circ \frak n \circ \Pi_B.
\end{equation}
It is easy to check 2 and 3. We will prove 5.
We first show:
\begin{equation}\label{dualn}
\langle x,\frak n(y) \rangle = (-1)^{\deg x\deg y}\langle y,\frak n(x) \rangle.
\end{equation}
To prove (\ref{dualn}) we may assume $x,y \in \overline B$. 
We put $x = \frak m_{1,(0,0)}a$, $y =\frak m_{1,(0,0)}b$.
Then
$$
\langle x,\frak n(y) \rangle = 
\langle \frak m_{1,(0,0)}a,b \rangle
= (-1)^{\deg' a\deg' b}\langle \frak m_{1,(0,0)}b,a \rangle
= (-1)^{\deg x\deg y}\langle y,\frak n(x) \rangle
$$
as required.
Now we have
$$\aligned
\langle x,G(y) \rangle &= - \langle x,\Pi_D \circ\frak n \circ \Pi_B(y)\rangle 
= - \langle \Pi_B(x),\frak n(\Pi_B(y))\rangle \\
&=-(-1)^{\deg x\deg y}\langle \Pi_B(y),\frak n(\Pi_B(x))\rangle 
=(-1)^{\deg x\deg y}\langle y,G(x)\rangle.
\endaligned
$$
The proof of Lemma \ref{Gdual} is complete in the case $\overline C$ is finite 
dimensional.
In the case of de Rham complex, we take $\delta$ the $L^2$ conjugate to 
$\frak m_{1,(0,0)}$ and let $\overline D$ be the image of it.
(\ref{HKdecom}) is nothing but the Hodge-Kodaira decomposition.
(\ref{HKorghogo}) is well-known. The rest of the proof is the same.
\end{proof}
Let $Gr(k,\beta)$ be as in Section \ref{homisoto}.
For each $\Gamma =(T,v_0,\beta(\cdot)) \in Gr(k,\beta)$ 
we will define
\begin{equation}
\frak f_{\Gamma} ; B_k(\overline H[1]) \to \overline C[1]
\end{equation}
of degree $-\mu(\beta)$ and 
\begin{equation}
\frak m_{\Gamma} : B_k(\overline H[1]) \to \overline H[1]
\end{equation}
of degree $1-\mu(\beta)$ by induction on $\#C_0^{\text{\rm int}}(T)$.
\par
Suppose $\#C_0^{\text{\rm int}}(T)=0$. Then $k=1$. 
We put $\frak f_{\Gamma}=$identity and $\frak m_{\Gamma}=\frak m_{1,(0,0)}$.
\par
Suppose $\#C_0^{\text{\rm int}}(T)=1$. Then $v \in C_0^{\text{\rm int}}(T)$ has 
$k+1$ edges. We put
$$
\left\{
\aligned
\frak f_{\Gamma}(x_1,\ldots,x_k) = G(\frak m_{k,\beta(v)}(x_1,\ldots,x_k)), \\
\frak m_{\Gamma}(x_1,\ldots,x_k) = \Pi(\frak m_{k,\beta(v)}(x_1,\ldots,x_k)).
\endaligned
\right.
$$
\par
We next assume $\#C_0^{\text{\rm int}}(T)\ge 2$. Let $e$ be the edge containing 
$v_0$ and $v$ be another vertex of $e$.  $v$ is necessarily interior. We remove 
$v_0,v,e$ from $T$ and obtain $T_1,\ldots,T_{\ell}$, where $v$ has 
$\ell+1$ edges. We number $T_i$ so that 
 $e,T_1,\ldots,T_{\ell}$ respects the counter clockwise cyclic order induced by 
 the orientation of $\R^2$.
The tree $T_i$, together with the data induced from $\Gamma$ in an obvious way,
 determines $\Gamma_i \in Gr(k_i,\beta_i)$. We have
 $\beta = \beta(v) + \sum \beta(i)$, $k = \sum k_i$. 
 We put
 $$
\left\{
\aligned
\frak f_{\Gamma} = G\circ\frak m_{k,\beta(v)}\circ (\frak f_{\Gamma_1} \otimes \ldots 
\otimes
\frak f_{\Gamma_{\ell}}), \\
\frak m_{\Gamma}= \Pi\circ\frak m_{k,\beta(v)}\circ (\frak f_{\Gamma_1} \otimes \ldots 
\otimes
\frak f_{\Gamma_{\ell}}).
\endaligned
\right.
$$
We now define
\begin{equation}
\frak f_{k,\beta} = \sum_{\Gamma \in Gr(k,\beta)} \frak f_{\Gamma},
\qquad 
\frak m^{\text{\rm can}}_{k,\beta} = \sum_{\Gamma \in Gr(k,\beta)} \frak m_{\Gamma}.
\end{equation}
\begin{lmm}\label{canhomoOK}
$\{\frak m^{\text{\rm can}}_{k,\beta}\}$ defines a structure of 
filtered $A_{\infty}$ algebra on $H$. 
\par
$\{\frak f^{\text{\rm can}}_{k,\beta}\}$ defines a 
filtered $A_{\infty}$ homomorphism $: H \to C$. 
\par
Unital and/or mod $T^{E_0}$ versions also hold.
\end{lmm}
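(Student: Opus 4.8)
The plan is to verify that the explicit tree formulas defining $\{\frak f_{k,\beta}\}$ and $\{\frak m^{\text{\rm can}}_{k,\beta}\}$ satisfy the required identities, following the pattern of \cite{FOOO080} Subsection 5.4.4 and \cite{Ka} Section 5. First I would record that the pair $(\Pi,G)$ produced in Lemma \ref{Gdual} is a contraction of $(\overline C,\frak m_{1,(0,0)})$ onto $\overline H$ obeying all four side conditions $\Pi\circ\Pi=\Pi$, $G\circ G=0$, $\Pi\circ G=0$, $G\circ\Pi=0$: the first and third are items 1 and 3 of Lemma \ref{Gdual}, while $\Pi\circ G=G\circ\Pi=0$ follow from $G=-\Pi_D\circ\frak n\circ\Pi_B$ (resp.\ from the Hodge--Kodaira operators in the de Rham case), since $G$ vanishes on $\overline H=\operatorname{Im}\Pi$ and takes values in $\overline D=\ker\Pi$. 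Next I would dispose of well-definedness: for fixed $k$ and $\beta$ only finitely many $\Gamma\in Gr(k,\beta)$ contribute, because the interior vertices with $\beta(v)\ne(0,0)$ are bounded in number by discreteness of $E(G)$ (Definition \ref{discmonoid}), whereas those with $\beta(v)=(0,0)$ have at least three edges, hence at least two descendant input leaves, and so are bounded by $k$. Thus $\frak f_{k,\beta}$ and $\frak m^{\text{\rm can}}_{k,\beta}$ are finite sums of multilinear maps, continuous in the de Rham case, which is all the analysis Lemma \ref{Gdual} is required to supply.

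It is then convenient to pass to the bar coalgebra. Assembling the coderivation $\hat{\frak m}$ of the bar complex $B(\overline C[1])$ from $\{\frak m_{k,\beta}\}$, the filtered $A_{\infty}$ relation $(\ref{Ainfinityrelbeta})$ assumed by hypothesis is exactly $\hat{\frak m}\circ\hat{\frak m}=0$; likewise one forms the candidate coderivation $\hat{\frak m}^{\text{\rm can}}$ of $B(\overline H[1])$ and the candidate coalgebra map $\hat{\frak f}$. The two assertions of the lemma become $\hat{\frak m}^{\text{\rm can}}\circ\hat{\frak m}^{\text{\rm can}}=0$ and $\hat{\frak f}\circ\hat{\frak m}^{\text{\rm can}}=\hat{\frak m}\circ\hat{\frak f}$, and I would prove both by induction on $(E(\beta),k)$ in lexicographic order, the base case $\beta=(0,0)$ being the classical minimal-model (homological perturbation) theorem for the unfiltered $A_{\infty}$ algebra $(\overline C,\{\frak m_{k,(0,0)}\})$. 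In the inductive step one applies $\hat{\frak m}\circ\hat{\frak m}=0$ to $\hat{\frak f}(x_1,\ldots,x_k)$ and inserts the homotopy identity $(\ref{GPIhomotopy})$, namely $\operatorname{identity}-\Pi=-(\frak m_{1,(0,0)}\circ G+G\circ\frak m_{1,(0,0)})$, along each interior edge of each tree: the side conditions $G^2=0$, $\Pi G=G\Pi=0$, $\Pi^2=\Pi$ annihilate the spurious contributions, the minus signs built into the recursive definitions of $\frak f_\Gamma$ and $\frak m_\Gamma$ make the remaining terms collect correctly, and what is left over is precisely the recursion itself, yielding simultaneously $\hat{\frak m}^{\text{\rm can}}\circ\hat{\frak m}^{\text{\rm can}}=0$ and the homomorphism relation $(\ref{inftyhomoeq})$.

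For the unital statement I would run the same tree induction, using $\Pi(\text{\bf e})=\text{\bf e}$ and $G(\text{\bf e})=0$ (as $\text{\bf e}\in\overline H=\ker\frak m_{1,(0,0)}$, on which $G$ vanishes) together with the unitality of $\{\frak m_{k,\beta}\}$: any tree with an $\text{\bf e}$ among $x_1,\ldots,x_k$ and $(k,\beta)\ne(2,(0,0))$ then contributes zero, while $\frak f_{1,(0,0)}(\text{\bf e})=\text{\bf e}$ and $\text{\bf e}$ remains a strict unit for $\frak m^{\text{\rm can}}_{2,(0,0)}$ because it is one for $\frak m_{2,(0,0)}$ and $\Pi$, $G$ fix, resp.\ kill, $\text{\bf e}$. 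The modulo $T^{E_0}$ version is obtained by discarding every term of energy $\ge E_0$ throughout, which is consistent since no such term ever feeds into one of strictly lower energy. I expect the only genuine work to be organizational, namely matching the signed tree recursion against the single coalgebra identity $\hat{\frak m}^2=0$ edge by edge and keeping the Koszul signs straight; this bookkeeping is identical to \cite{FOOO080} Subsections 8.10.3 and 5.4.4, to which I would ultimately defer, and there is no analytic input beyond Lemma \ref{Gdual}.
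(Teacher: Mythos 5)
Your proposal is correct and amounts to the same route the paper itself takes, which is simply to defer to \cite{FOOO080} Subsection 5.4.4; you have filled in the standard homological-perturbation / tree-summation argument that reference carries out, including the needed side conditions $\Pi G=G\Pi=0$ (which indeed follow from Lemma \ref{Gdual} as you observe, even though the lemma does not state them explicitly) and the finiteness of contributing trees. One small slip worth flagging: you justify $G(\text{\bf e})=0$ by saying $G$ vanishes on $\ker\frak m_{1,(0,0)}$, but $\ker\frak m_{1,(0,0)}=\overline H\oplus\overline B$ and $G$ is nonzero on $\overline B$; the correct reason is that $\text{\bf e}\in\overline H$ and $\Pi_B\vert_{\overline H}=0$, hence $G\vert_{\overline H}=0$.
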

We omit the proof and refer \cite{FOOO080} Subsection 5.4.4.
\par
We next prove the cyclicity of $\frak m^{\text{\rm can}}_{k,\beta}$.
We need some  more notations.
Let $\Gamma = (T,v_0,\beta(\cdot)) \in Gr(k,\beta)$.
A {\it flag} of $\Gamma$ is a pair $(v,e)$, where $v$ is an interior vertex of $T$ and 
$e$ is an edge containing $v$. For each $(\Gamma,v,e)$ we will define
\begin{equation}
\frak m(\Gamma,v,e) : B_{k+1}(\overline C[1]) \to \R
\end{equation}
as follows. We remove $v$ from $T$. Let $T_0,T_1,\ldots,T_{\ell}$ be the 
component of the complement. We assume $e \in T_0$ and 
$T_0,T_1,\ldots,T_{\ell}$ respects the counter clockwise cyclic order 
induced by the standard orientation of $\R^2$.
Together with data induced from $\Gamma$ the ribbon tree $T_i$ determine 
$\Gamma_i =(T_i,v,\beta_i(\cdot)) \in Gr(k_i,\beta_i)$ such that 
$\beta(v) + \sum \beta_i  = \beta$ and $\sum k_i = k$.
(We remark that the root of $\Gamma_i$ is always $v$ by convention.)
\par
We enumerate the exterior vertices of $\Gamma$ as $v_0,v_1,\ldots,v_k$ so that 
it respects the counter clockwise cyclic order.
We take $j_i$ such that 
$v_{j_i},\ldots,v_{j_i+k_i-1}$ are vertices of $T_i$. 
(In case $j_i+k_i-1 > k$ we identify $v_{j_i+k_i-1}$ with 
$v_{j_i+k_i-1-k}$.)
\begin{dfn}
\begin{equation}
\aligned
&\frak m(\Gamma,v,e)(x_1,\ldots,x_k,x_0) \\
&= (-1)^* 
\langle \frak m_{\ell,\beta(v)}(
\frak f_{\Gamma_1}(x_{j_1},\ldots),\ldots, \frak f_{\Gamma_{\ell}}(x_{j_{\ell}},\ldots)), 
\frak f_{\Gamma_0}(x_{j_0},\ldots,x_{j_1-1})
\rangle.
\endaligned
\end{equation}
Here 
$$
* = (\deg' x_{j_1} +\deg' x_{j_1+1} + \ldots + \deg' x_{0})
(\deg' x_{1} + \ldots + \deg' x_{j_1-1}).
$$
\end{dfn}
\begin{prp}\label{independofpoints}
$\frak m(\Gamma,v,e)(x_1,\ldots,x_k,x_0)$ is independent of 
the flag $(v,e)$ but depends only on $\Gamma$, 
$x_1,\ldots,x_k,x_0$.
\end{prp}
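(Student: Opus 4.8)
The statement claims that the number $\frak m(\Gamma,v,e)(x_1,\ldots,x_k,x_0)$ attached to a ribbon tree $\Gamma \in Gr(k,\beta)$ together with a flag $(v,e)$ does not, in fact, depend on the flag. The strategy is to reduce the general case to the case of two \emph{adjacent} flags and then to verify invariance along a single edge. More precisely, since the set of flags of a fixed tree $T$ is connected under the moves "change the chosen edge at a vertex" and "move across an edge to the adjacent vertex", it suffices to prove two elementary facts: (i) $\frak m(\Gamma,v,e)$ is independent of the choice of edge $e$ among the edges containing $v$; and (ii) if $e$ is an interior edge of $T$ joining $v$ to $v'$, then $\frak m(\Gamma,v,e) = \frak m(\Gamma,v',e)$. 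Given (i) and (ii), any two flags of $\Gamma$ are connected by a chain of such moves, so $\frak m(\Gamma,v,e)$ depends only on $\Gamma$.

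\textbf{Step (i): rotational invariance at a vertex.} Fix the interior vertex $v$, with incident edges in cyclic order giving subtrees $T_0,T_1,\ldots,T_\ell$ (where $e\in T_0$). Changing $e$ to the next edge in the cyclic order amounts to cyclically rotating $(T_0,T_1,\ldots,T_\ell)$ to $(T_1,\ldots,T_\ell,T_0)$, which in the formula for $\frak m(\Gamma,v,e)$ replaces
$$
\langle \frak m_{\ell,\beta(v)}(\frak f_{\Gamma_1}(\ldots),\ldots,\frak f_{\Gamma_\ell}(\ldots)),\frak f_{\Gamma_0}(\ldots)\rangle
$$
by the corresponding expression with the roles of the factors cyclically permuted. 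This is exactly the cyclic symmetry (\ref{cyclicform22}) of $\frak m_{\ell,\beta(v)}$, applied with $\text{\bf x}_i = \frak f_{\Gamma_i}(x_{j_i},\ldots)$; one only has to check that the sign $*$ in the definition of $\frak m(\Gamma,v,e)$ is precisely engineered so that the sign produced by (\ref{cyclicform22}) matches the change in $*$ coming from the relabeling of the external vertices. This is a bookkeeping computation with the $\deg'$-degrees and the total-degree factors, which I will carry out but not reproduce in the sketch.

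\textbf{Step (ii): invariance across an interior edge --- the main obstacle.} Let $e$ be an interior edge joining interior vertices $v$ and $v'$. Deleting $v$ produces subtrees with $T_0\ni e$ containing $v'$; deleting $v'$ produces subtrees with (say) $T'_0\ni e$ containing $v$. The two expressions $\frak m(\Gamma,v,e)$ and $\frak m(\Gamma,v',e)$ differ in how the "central" $\frak m_{\ell,\beta(v)}$ versus $\frak m_{\ell',\beta(v')}$ is singled out, while the subtree on the far side gets bundled into one $\frak f_{\Gamma_0}$ (resp.\ $\frak f_{\Gamma'_0}$). The key point is the recursive definition of $\frak f_\Gamma$: the subtree $\Gamma_0$ rooted along $e$ begins with an application of $G\circ\frak m_{\ell',\beta(v')}\circ(\frak f\otimes\cdots\otimes\frak f)$ at $v'$, so $\frak f_{\Gamma_0} = G\circ(\text{something involving }\frak m_{\ell',\beta(v')})$, and symmetrically on the other side. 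Expanding $\frak f_{\Gamma_0}$ via its definition, one rewrites $\frak m(\Gamma,v,e)$ as $\langle \frak m_{\ell,\beta(v)}(\cdots),\,G(\frak m_{\ell',\beta(v')}(\cdots))\rangle$, and similarly $\frak m(\Gamma,v',e) = \langle \frak m_{\ell',\beta(v')}(\cdots),\,G(\frak m_{\ell,\beta(v)}(\cdots))\rangle$. The identification of these two is then exactly the content of the duality formula (\ref{Gdualformula}) for $G$, namely $\langle a,Gb\rangle = \pm\langle b,Ga\rangle$, combined again with the cyclic symmetry (\ref{cyclicform22}) of the two $\frak m$'s to line up the remaining tensor factors. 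The hard part is purely the sign verification: one must check that the sign in (\ref{Gdualformula}), the signs from the two uses of cyclic symmetry, and the combinatorial signs $*$ attached to $\frak m(\Gamma,v,e)$ and $\frak m(\Gamma,v',e)$ all conspire to give equality rather than equality-up-to-sign. I expect this sign chase --- handling the $\deg'$ versus $\deg$ conventions and the $e,e^{-1}$ grading on $\Lambda_0^G$ correctly --- to be the genuinely delicate part of the argument; the structural identities are immediate from Lemma \ref{Gdual} and cyclicity of each $\frak m_{k,\beta}$.

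\textbf{Conclusion.} Once (i) and (ii) are established, a straightforward connectedness argument on the set of flags of $T$ (any interior vertex can be reached from any other by crossing interior edges, and at each vertex any incident edge can be selected by rotation) yields that $\frak m(\Gamma,v,e)$ is independent of $(v,e)$, proving Proposition \ref{independofpoints}. This independence is precisely what is needed afterwards to define the cyclically-symmetric pairing on the canonical model and thereby prove Theorem \ref{cancyclic}.
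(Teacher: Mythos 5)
Your proposal is correct and takes essentially the same approach as the paper: independence of the edge at a fixed vertex follows from the cyclic symmetry (\ref{cyclicform22}) of $\frak m_{k,\beta}$, while independence of the vertex across an interior edge follows by expanding $\frak f_{\Gamma_0}$ via its recursive definition as $G(\frak m_{\ell',\beta(v')}(\cdots))$ (and symmetrically on the other side) and then applying the $G$-duality formula (\ref{Gdualformula}), with connectedness of the tree handling the general case. The paper also leaves the sign verification essentially implicit, remarking only that $\deg\frak m = +1$ is used; in the edge-crossing step it in fact invokes only (\ref{Gdualformula}) and not a further application of cyclic symmetry, since in the formulas (\ref{mgammav}) and (\ref{mgammav'}) the subtree containing the flag edge is already placed as the last argument of the pairing.
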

\begin{proof}
Independence of $e$ is a consequence of the cyclic symmetry of $\frak m_{k,\beta}$.
We will prove independence of $v$. 
Let $v$ and $v'$ be interior vertices. We assume that there exists an edge $e$ 
joining $v$ and $v'$.
\par
We first consider the flag $(v,e)$. We then obtain $\Gamma_0, 
\ldots, \Gamma_{\ell}$ as above. 
We next consider the flag $(v',e)$. We then obtain $\Gamma'_0, 
\ldots, \Gamma'_{\ell'}$ as above. 
It is easy to see the following:
\begin{equation}\label{relationamongtrees}
\left\{
\aligned
& \Gamma_1 \cup \ldots \cup  \Gamma_{\ell} \cup e \cup 
\Gamma'_1 \cup \ldots \cup  \Gamma'_{\ell'} = \Gamma, \\
& e \cup \Gamma'_1 \cup \ldots \cup  \Gamma'_{\ell'} = \Gamma_0 \\
& e \cup \Gamma_1 \cup \ldots \cup  \Gamma_{\ell} = \Gamma'_0.
\endaligned
\right.
\end{equation}
Let $v_{j_i},\ldots,v_{j_i+k_i-1}$ be the vertices of $\Gamma_i$ and 
$v_{j'_i},\ldots,v_{j'_i+k'_i-1}$ the vertices of $\Gamma'_i$.
We put
$$
y_i = \frak f_{\Gamma_i}(x_{j_i},\ldots,x_{j_i+k_i-1}), \qquad
z_i = \frak f_{\Gamma'_i}(x_{j'_i},\ldots,x_{j'_i+k'_i-1}).
$$
Now by definitions and (\ref{relationamongtrees}) we have
\begin{equation}\label{mgammav}
\aligned
&\frak m(\Gamma,v,e)(x_1,\ldots,x_k,x_0) \\
&= (-1)^{*_1} 
\langle \frak m_{\ell,\beta(v)}(y_1,\ldots,y_{\ell}),
G(\frak m_{\ell',\beta(v')}(z_1,\ldots,z_{\ell'}))\rangle 
\endaligned
\end{equation}
where 
$$
*_1 = (\deg' x_{j_1} +\deg' x_{j_1+1} + \ldots + \deg' x_{0})
(\deg' x_{1} + \ldots + \deg' x_{j_1-1}).
$$
On the other hand, we have
\begin{equation}\label{mgammav'}
\aligned
&\frak m(\Gamma,v',e)(x_1,\ldots,x_k,x_0) \\
&= (-1)^{*_2} 
\langle \frak m_{\ell',\beta(v')}(z_1,\ldots,z_{\ell'}),
G(\frak m_{\ell,\beta(v)}(y_1,\ldots,y_{\ell}))\rangle 
\endaligned
\end{equation}
where 
$$
*_2 = (\deg' x_{j'_1} +\deg' x_{j'_1+1} + \ldots + \deg' x_{0})
(\deg' x_{1} + \ldots + \deg' x_{j'_1-1}).
$$
(\ref{mgammav})  coincides with (\ref{mgammav'})  by (\ref{Gdualformula}).
(We use the fact that degree of $\frak m$ is $+1$ here.)
\end{proof}
\begin{lmm}\label{cysycanlem}
$\frak m_{k,\beta}^{\text{\rm can}}$ is cyclically symmetric.
\end{lmm}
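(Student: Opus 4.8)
The plan is to verify the cyclic relation $(\ref{cyclicform22})$ for $\frak m^{\text{\rm can}}_{k,\beta}=\sum_{\Gamma\in Gr(k,\beta)}\frak m_{\Gamma}$ one summand at a time, using Proposition \ref{independofpoints} to do the real work. Precisely, I would exhibit a bijection $\iota\colon Gr(k,\beta)\to Gr(k,\beta)$ such that for every $\Gamma$ and all $x_0,\dots,x_k\in\overline H[1]$
\[
\langle\frak m_{\Gamma}(x_1,\dots,x_k),x_0\rangle
=(-1)^{\deg'x_0(\deg'x_1+\dots+\deg'x_k)}\,
\langle\frak m_{\iota(\Gamma)}(x_0,x_1,\dots,x_{k-1}),x_k\rangle ;
\]
summing over $\Gamma$ then yields the lemma, because $\iota$ is a bijection. (Nondegeneracy of $\langle\cdot\rangle$ on $\overline H$ is Poincar\'e duality in the de Rham case and a hypothesis in the finite dimensional case, so only $(\ref{cyclicform22})$ itself needs proof.)

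The first step rewrites $\langle\frak m_{\Gamma}(x_1,\dots,x_k),x_0\rangle$ as a flag expression of the type in Proposition \ref{independofpoints}. Let $v_*$ be the interior vertex joined to the root $v_0$ and $e_0$ the root edge. Removing $v_*$ splits $T$ into the trivial component $\{v_0\}$ together with the rooted trees $\Gamma_1,\dots,\Gamma_\ell$ occurring in the inductive definition of $\frak m_{\Gamma}$. Since $\frak f$ on the trivial component is the identity, so $\frak f_{\Gamma_0}(x_0)=x_0$, and since $x_0\in\overline H$ we have $\Pi x_0=x_0$, parts (1) and (4) of Lemma \ref{Gdual} give
\[
\langle\frak m_{\Gamma}(x_1,\dots,x_k),x_0\rangle
=\langle\Pi\,\frak m_{\ell,\beta(v_*)}(\frak f_{\Gamma_1}(\cdots),\dots,\frak f_{\Gamma_\ell}(\cdots)),x_0\rangle
=\langle\frak m_{\ell,\beta(v_*)}(\cdots),x_0\rangle .
\]
The Koszul sign $*$ in the definition of $\frak m(\Gamma,v_*,e_0)$ vanishes here: its left factor $\deg'x_1+\dots+\deg'x_{j_1-1}$ is an empty sum because the distinguished block is the singleton $\{v_0\}$, so $j_1=1$. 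Hence $\langle\frak m_{\Gamma}(x_1,\dots,x_k),x_0\rangle=\frak m(\Gamma,v_*,e_0)(x_1,\dots,x_k,x_0)$, and by Proposition \ref{independofpoints} this equals $\frak m(\Gamma,v,e)(x_1,\dots,x_k,x_0)$ for \emph{every} flag $(v,e)$ of $\Gamma$.

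The second step re-roots. Let $\iota(\Gamma)$ be the element of $Gr(k,\beta)$ with the same underlying ribbon tree and the same $\beta(\cdot)$ as $\Gamma$ but with root the exterior vertex $v_k$ immediately preceding $v_0$ in the cyclic order; this is a bijection, with inverse moving the root the other way. Write $\tilde v$ for the interior vertex joined to $v_k$ and $\tilde e$ for the leaf edge at $v_k$. Applying the first step to $\iota(\Gamma)$, whose root flag is exactly $(\tilde v,\tilde e)$, gives $\langle\frak m_{\iota(\Gamma)}(x_0,\dots,x_{k-1}),x_k\rangle=\frak m(\iota(\Gamma),\tilde v,\tilde e)(x_0,\dots,x_{k-1},x_k)$, while flag independence gives $\langle\frak m_{\Gamma}(x_1,\dots,x_k),x_0\rangle=\frak m(\Gamma,\tilde v,\tilde e)(x_1,\dots,x_k,x_0)$. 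Now compare the two flag expressions for the common flag $(\tilde v,\tilde e)$: removing $\tilde v$ yields the same components in the same cyclic order, the distinguished component is the singleton $\{v_k\}$ so in both cases the output of $\frak m_{\cdot,\beta(\tilde v)}$ is paired with $x_k$, and each remaining leg $v_j$ carries the same input $x_j$ in $\Gamma$ as in $\iota(\Gamma)$; thus the $\frak m\circ(\frak f\otimes\cdots\otimes\frak f)$ content is literally identical and the two expressions differ only by their Koszul sign $*$. For $\frak m(\iota(\Gamma),\tilde v,\tilde e)$ the enumeration of exterior vertices starts at $v_k$, so the distinguished block comes first, $j_1=1$, and that sign is $0$; for $\frak m(\Gamma,\tilde v,\tilde e)$ the enumeration starts at $v_0$, and global consistency of the ribbon (cyclic) structure forces $\{v_k\}$ to be the last block with the following block beginning at $v_0$, i.e. at the $x_0$-slot, so the sign is exactly $\deg'x_0(\deg'x_1+\dots+\deg'x_k)$. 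Combining gives the displayed identity of the first paragraph, and summing over $\Gamma$ proves the lemma; the modulo $T^{E_0}$ version is identical after restricting to $E(\beta)<E_0$.

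The one place that genuinely needs care is this last sign computation: matching the combinatorial Koszul sign produced by re-enumerating the legs of the ribbon tree against the target exponent $\deg'x_0(\deg'x_1+\dots+\deg'x_k)$, consistently with the conventions of $(\ref{gradedansym})$ and the cyclic symmetry of the $\frak m_{k,\beta}$. This bookkeeping is exactly the one carried out in \cite{Ka} Section 5 for the unfiltered case, and the filtered refinement changes nothing since $\mu(\beta)$ is even and everything is phrased through $\deg'$. The remaining parts of Theorem \ref{cancyclic} go along the same lines: unitality of $\frak m^{\text{\rm can}}$ and $\frak f^{\text{\rm can}}$ is inherited from Lemma \ref{canhomoOK}, and the cyclicity relation $(\ref{homocyc2})$ for $\frak f^{\text{\rm can}}$ is proved by applying the same $\frak m(\Gamma,v,e)$ plus flag independence machinery to pairs of trees grafted along an interior flag.
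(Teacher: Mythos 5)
Your proposal is correct and follows essentially the same route as the paper: rewrite $\langle\frak m_{\Gamma}(x_1,\dots,x_k),x_0\rangle$ as the flag expression $\frak m(\Gamma,\cdot,\cdot)(x_1,\dots,x_k,x_0)$ with the root flag (whose Koszul sign vanishes), invoke Proposition~\ref{independofpoints} to move to the flag adjacent to an exterior vertex, read off the Koszul sign $*=(\deg'x_{j_1}+\dots+\deg'x_0)(\deg'x_1+\dots+\deg'x_{j_1-1})$ from the definition of $\frak m(\Gamma,v,e)$, and sum over the re-rooting bijection on $Gr(k,\beta)$. The only cosmetic difference is that the paper performs the shift for a general exterior vertex $v_i$ while you carry out the single step $i=k$, which is exactly what $(\ref{cyclicform22})$ requires.
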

\begin{proof}
We consider 
\begin{equation}\label{mcyclkeisan}
\aligned
\langle \frak m_{k,\beta}^{\text{\rm can}}(x_1,\ldots,x_k),x_0\rangle 
&= \sum_{\Gamma \in Gr(k,\beta)} 
\langle \frak m_{\Gamma}^{\text{\rm can}}(x_1,\ldots,x_k),x_0\rangle\\
&= \sum_{\Gamma \in Gr(k,\beta)}
\frak m(\Gamma,(v'_0,e_0))(x_1,\ldots,x_k,x_0).
\endaligned
\end{equation}
Here $e_0$ be the edge containing $v_0$ (the root of $\Gamma$), and 
$v'_0$ be the other edge of $e_0$.
\par
We number exterior edges of $\Gamma$ as $v_0,\ldots,v_k$ so that 
it respects counter clockwise cyclic order. Let $e_i$ be the edge 
containing $v_i$ and let $v'_i$ be the other vertex of $e_i$.
Proposition  \ref{independofpoints} implies that (\ref{mcyclkeisan}) is 
equal to 
$$
(-1)^{*}
\sum_{\Gamma \in Gr(k,\beta)}
\frak m(\Gamma,(v'_i,e_i))(x_1,\ldots,x_k,x_0).
$$
Here 
$* = (\deg'x_{i+1}+\ldots+\deg'x_0)(\deg'x_1+\ldots+\deg'x_{i})$.
Clearly this is equal to 
$$
(-1)^*\langle \frak m_{k,\beta}^{\text{\rm can}}(x_{i+1},\ldots,x_{i-1}),x_{i}\rangle 
$$
The proof of Lemma \ref{cysycanlem} is now complete.
\end{proof}
\begin{lmm}\label{fcycl}
$\{\frak f_{k,\beta}\}$ is cyclic.
\end{lmm}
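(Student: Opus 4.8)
The plan is to read off cyclicity of $\{\frak f_{k,\beta}\}$ directly from the shape of the maps $\frak f_{\Gamma}$, rather than rerunning the flag bookkeeping of Proposition \ref{independofpoints}. Condition (\ref{homocyc1}) is immediate: the unique element of $Gr(1,(0,0))$ has no interior vertex, so $\frak f_{1,(0,0)}$ is the identity of $\overline H$, and $\langle\cdot\rangle$ on $\overline H$ is by definition the restriction of $\langle\cdot\rangle$ on $\overline C$; hence $\langle\frak f_{1,(0,0)}(x),\frak f_{1,(0,0)}(y)\rangle=\langle x,y\rangle$. For (\ref{homocyc2}) the key point is that the image of each $\frak f_{k,\beta}$ is confined to the image of the homotopy $G$: any $\Gamma\in Gr(k,\beta)$ with $\#C_0^{\text{\rm int}}(T)\ge 1$ contributes a map $\frak f_{\Gamma}$ which factors as $G\circ(\cdots)$ (the $G$ sits at the root), and $\#C_0^{\text{\rm int}}(T)=0$ forces $(k,\beta)=(1,(0,0))$. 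Therefore $\operatorname{Im}\frak f_{k,\beta}\subseteq\operatorname{Im}G$ for every $(k,\beta)\ne(1,(0,0))$, while $\frak f_{1,(0,0)}$ is the identity.

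Next I invoke the orthogonality built into Lemma \ref{Gdual}: in the decomposition $\overline C=\overline B\oplus\overline D\oplus\overline H$ used in its proof (the Hodge--Kodaira decomposition in the de Rham case) one has $\operatorname{Im}G\subseteq\overline D$ together with $\langle\overline D,\overline D\rangle=0$ and $\langle\overline D,\overline H\rangle=0$. (The isotropy $\langle\operatorname{Im}G,\operatorname{Im}G\rangle=0$ also follows formally from $G\circ G=0$ together with (\ref{Gdualformula}) and (\ref{gradedansym}).) Now fix $(k,\beta)\ne(2,(0,0))$ and expand the left-hand side of (\ref{homocyc2}) over all splittings $k_1+k_2=k$, $\beta_1+\beta_2=\beta$. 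Since $(k,\beta)\ne(2,(0,0))$, at most one of $(k_1,\beta_1)$, $(k_2,\beta_2)$ can equal $(1,(0,0))$. If neither does, both arguments of $\langle\cdot\rangle$ lie in $\overline D$ and the term vanishes by $\langle\overline D,\overline D\rangle=0$; if one of them is $(1,(0,0))$, say the second, then $\frak f_{1,(0,0)}(x_k)=x_k\in\overline H$ while $\frak f_{k_1,\beta_1}(x_1,\dots,x_{k_1})\in\overline D$, and the term vanishes by $\langle\overline D,\overline H\rangle=0$; the remaining case is symmetric. Hence every summand of (\ref{homocyc2}) is zero, as required. The unital and modulo $T^{E_0}$ variants need no change: $\text{\bf e}\in\overline H$, insertion of $\text{\bf e}$ into $\frak f_{k,\beta}$ for $k>1$ already gives $0$, and the energy truncation is inert in the argument.

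There is no real obstacle here: unlike Lemma \ref{cysycanlem}, whose proof needs the flag-independence statement of Proposition \ref{independofpoints}, cyclicity of $\frak f$ is essentially formal once one observes that $G$ sits at the root of every nontrivial tree. The only point that requires attention is the elementary case analysis showing that the situation ``both split factors are the identity'' occurs precisely at the excluded pair $(2,(0,0))$, which is exactly why (\ref{homocyc2}) is not demanded there. Together with Lemma \ref{cysycanlem} and Lemma \ref{canhomoOK} (and the fact that non-degeneracy of $\langle\cdot\rangle$ on $\overline H=H$ is inherited from $C$), this completes the proof of Theorem \ref{cancyclic}.
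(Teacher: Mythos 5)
Your proof is correct and is essentially the same as the paper's: both rest on the observation that $\operatorname{Im}\frak f_{\Gamma}\subseteq\overline D=\operatorname{Im}G$ whenever the tree has an interior vertex (so $G$ sits at the root), while $\frak f_{1,(0,0)}$ is the identity with image $\overline H$, and then invoke $\langle\overline D,\overline D\rangle=\langle\overline D,\overline H\rangle=0$ from the decomposition in Lemma \ref{Gdual} to kill every term of (\ref{homocyc2}) except at $(k,\beta)=(2,(0,0))$.
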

\begin{proof}
We consider 
\begin{equation}\label{1017}
\langle \frak f(\Gamma_1)(x_1,\ldots,x_{k_1}),
\frak f(\Gamma_2)(x_{k_1+1},\ldots,x_{k}) \rangle
\end{equation}
where $\Gamma_i \in Gr(k_i,\beta_i)$, $k_1+k_2=k+1$, $\beta_1+\beta_2=\beta$.
\par
We remark that the image of $\frak f(\Gamma_i)$ is in $\overline D = \text{\rm Im}\,G$ if 
$(k_i,\beta_i) \ne (1,(0,0))$.   If $(k_i,\beta_i) = (1,(0,0))$ then the image of $\frak f(\Gamma_i)$ is in $\overline H$.
Moreover $\langle \overline D,\overline D\rangle = \langle \overline D,\overline H\rangle = 0$.
Therefore (\ref{1017}) is $0$ unless $(k,\beta) = (2,(0,0))$. In the case 
$(k,\beta) = (2,(0,0))$, (\ref{1017}) is $\langle x_1,x_2\rangle$.
The lemma follows.
\end{proof}
The proof of Theorem \ref{cancyclic} is complete.
\qed 
\begin{proof}[Proof of Theorem \ref{canopseudo}]
Let $(C,\langle \cdot \rangle,\{\frak m_{k,\beta}^t\},\{\frak c_{k,\beta}^t\})$ 
be a pseudo-isotopy. We take $G$, $\Pi$ as in Lemma \ref{Gdual}.
Since $\frak m_{1,(0,0)}^t$ is independent of $t$ we can choose $G$, $\Pi$
to be independent of $t$.
We take canonical model $(H,\langle \cdot \rangle,\{\frak m_{k,\beta}^{t \text{\rm can}}\})$
for each fixed $t$. It is easy to see from construction that $\frak m_{k,\beta}^{t \text{\rm can}}$
is smooth with respect to $t$. We next define $\frak c_{k,\beta}^{t \text{\rm can}}$.
\par
We consider a pair $(\Gamma,v_s)$ of $\Gamma \in Gr(k,\beta)$ and an 
interior vertex $v_s$ of $\Gamma$. We denote by $Gr^+(k,\beta)$ the set of 
all such pairs. We will define
\begin{equation}
\frak c^t(\Gamma,v_s) : B_k(\overline H[1]) \to \overline H[1]
\end{equation}
of degree $-\mu(\beta)$ and 
\begin{equation}
\frak h^t(\Gamma,v_s) : B_k(\overline H[1]) \to \overline c[1]
\end{equation}
of degree $-1-\mu(\beta)$, by induction on $\#C_0^{\text{\rm int}}(\Gamma)$.
\par
Since $v_s \in C_0^{\text{\rm int}}(\Gamma)$ we have 
$\#C_0^{\text{\rm int}}(\Gamma)\ge 1$. 
\par
If $\#C_0^{\text{\rm int}}(\Gamma)= 1$ we put
$$
\aligned
\frak h^t(\Gamma,v_s)(x_1,\ldots,x_k) &= G(\frak c^t_{k,\beta(v_s)}(x_1,\ldots,x_k)) \\
\frak c^t(\Gamma,v_s)(x_1,\ldots,x_k) &= \Pi(\frak c^t_{k,\beta(v_s)}(x_1,\ldots,x_k)).
\endaligned
$$
Suppose $\#C_0^{\text{\rm int}}(\Gamma)>1$.  Let $e_1$ be the edge containing $v_0$ and 
$v$ be the other vertex of $e_1$. 
\par\smallskip
\noindent(Case 1): $v=v_s$.
\par
We  remove $e,v,v_0$ from $\Gamma$ and obtain $\Gamma_1,\ldots,\Gamma_{\ell}$.
(We assume $e,\Gamma_1,\ldots,\Gamma_{\ell}$ respects counter clockwise cyclic order.)
We put
$$
\aligned
\frak h^t(\Gamma,v_s) &= G\circ \frak c^t_{{\ell},\beta(v_s)}\circ 
(\frak f^t(\Gamma_1) \otimes \ldots \otimes \frak f^t(\Gamma_{\ell})),\\
\frak c^t(\Gamma,v_s) &= \Pi\circ \frak c^t_{{\ell},\beta(v_s)}\circ 
(\frak f^t(\Gamma_1) \otimes \ldots \otimes \frak f^t(\Gamma_{\ell})).
\endaligned
$$
Here $\frak f^t(\Gamma)$ is the operator which appeared in the definition of $\frak f^t_{k,\beta}$.
\par\smallskip
\noindent(Case 2): $v\ne v_s$.
\par
We  remove $e,v,v_0$ from $\Gamma$ and obtain $\Gamma_1,\ldots,\Gamma_{\ell}$.
(We assume $e,\Gamma_1,\ldots,\Gamma_{\ell}$ respects counter clockwise cyclic order.)
We assume $v_s \in \Gamma_i$. We now put
$$
\aligned
\frak h^t(\Gamma,v_s) &= G\circ \frak m^t_{{\ell},\beta(v_s)}\circ 
(\frak f^t(\Gamma_1) \otimes \ldots \otimes \frak h^t(\Gamma_i) \otimes \ldots \otimes \frak f^t(\Gamma_{\ell})),\\
\frak c^t(\Gamma,v_s) &= -\Pi\circ \frak m^t_{{\ell},\beta(v_s)}\circ 
(\frak f^t(\Gamma_1) \otimes \ldots \otimes \frak h^t(\Gamma_i) \otimes \ldots \otimes \frak f^t(\Gamma_{\ell})).
\endaligned
$$
Note $\frak f^t(\Gamma)$ is of even degree and  $\frak h^t(\Gamma)$ is of odd degree.
We take their tensor product as follows:
\begin{equation}\label{tenssign}
\aligned
&(\frak f^t(\Gamma_1) \otimes \ldots \otimes \frak h^t(\Gamma_i) \otimes \ldots \otimes \frak f^t(\Gamma_{\ell}))
(x_1,\ldots,x_k) \\
&= (-1)^{*_i} 
\frak f^t(\Gamma_1)(x_1,\ldots,x_{k_1}) \otimes \ldots 
\otimes \frak h^t(\Gamma_i)(x_{j},\ldots,x_{j+k_i-1}) \otimes \ldots \\
&\hskip5cm\otimes \frak f^t(\Gamma_{\ell})
(x_{k-k_{\ell}+1},\ldots,x_k).
\endaligned
\end{equation}
Here $j = k_1+k_2+\ldots+k_{i-1}+1$, 
$
*_i = \deg' x_1+ \ldots + \deg'x_{j-1}.
$
\par
We remark that the minus sign in the definition of $\frak c^t(\Gamma,v_s)$ appears 
since we change the order of $\frak m^T_{\ell,\beta(v_s)}$ and $dt$.
\par
We put
$$
\frak c^{t \text{can}}_{k,\beta} = \sum_{(\Gamma,v_s) \in Gr^+(k,\beta)}
\frak c^t(\Gamma,v_s).
$$
It is easy to see that 
$(H,\langle \cdot \rangle,\{\frak m_{k,\beta}^{t \text{can}}\},\{\frak c_{k,\beta}^{t \text{can}}\})$
satisfies 1,2,5 of Definition \ref{pisotopydef}. We next prove 4 by using Lemma \ref{dtcomiAinf}.
We consider $C^{\infty}([0,1],\overline H)$ and define 
$\hat{\frak m}_{k,\beta}^{\text{\rm can}}$ by (\ref{combineainf}).
We next define
$$
\frak F_{k,\beta} : B_k(C^{\infty}([0,1],\overline H)[1]) \to C^{\infty}([0,1],\overline H)[1]
$$
as follows. 
Let $x_i(t) + dt \wedge y_i(t) = \text{\bf x}_i \in C^{\infty}([0,1],\overline H)$.
We put
$$
\frak F_{k,\beta}(\text{\bf x}_1,\ldots,\text{\bf x}_k)
= x(t) + dt \wedge y(t)
$$
where
\begin{subequations}\label{combineFF}
\begin{equation}
x(t) = \frak f^t_{k,\beta}(x_1(t),\ldots,x_k(t))
\end{equation}
\begin{equation}\label{combineainfmainFF}
\aligned
y(t) 
= 
& \frak h^t_{k,\beta}
(x_1(t),\ldots,x_k(t)) \\
&-\sum_{i=1}^k (-1)^{*_i} \frak m^t_{k,\beta}
(x_1(t),\ldots,x_{i-1}(t),y_i(t),x_{i+1}(t),\ldots,x_k(t))
\endaligned\end{equation}
if $(k,\beta) \ne (1,(0,0))$ and 
\begin{equation}
y(t) = \frac{d}{dt} x_1(t) 
\end{equation}
if $(k,\beta) = (1,(0,0))$. Here $*_i$ in (\ref{combineainfmainFF}) is 
$*_i = \deg' x_1 +\ldots+\deg'x_{i-1}$.
(We remark that $\frak f^t_{1,(0,0)} =$ identity and $\frak h^t_{1,(0,0)} = 0$.)
\end{subequations}
\begin{lmm}\label{isotopcan}
$\{\frak F_{k,\beta} \}$ is a filtered $A_{\infty}$ homomorphism.
\end{lmm}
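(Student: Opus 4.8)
The plan is to prove this as the homomorphism analogue of Lemma \ref{dtcomiAinf}. Just as that lemma reduces the pseudo-isotopy equation (\ref{isotopymaineq}) to the filtered $A_{\infty}$ relation for the $dt$-combined operators $\hat{\frak m}_{k,\beta}$, I would first record a purely formal statement: the operators $\{\frak F_{k,\beta}\}$ built from $\{\frak f^t_{k,\beta}\}$ and $\{\frak h^t_{k,\beta}\}$ by $(\ref{combineFF})$ form a filtered $A_{\infty}$ homomorphism from $(C^{\infty}([0,1],\overline H),\{\hat{\frak m}^{\text{\rm can}}_{k,\beta}\})$ to $(C^{\infty}([0,1],\overline C),\{\hat{\frak m}_{k,\beta}\})$ if and only if two families of identities hold: (i) for each fixed $t$, $\{\frak f^t_{k,\beta}\}$ is a filtered $A_{\infty}$ homomorphism from $(\overline H,\{\frak m^{t,\text{\rm can}}_{k,\beta}\})$ to $(\overline C,\{\frak m^t_{k,\beta}\})$ (this is the $a(t)$-component of the homomorphism equation for $\{\frak F_{k,\beta}\}$), and (ii) a ``$dt$-component'' differential identity expressing $\frac{d}{dt}\frak f^t_{k,\beta}$ through $\frak h^t$, $\frak c^t$, $\frak c^{t,\text{\rm can}}$, $\frak m^t$ and $\frak m^{t,\text{\rm can}}$ (the $dt\wedge b(t)$-component of that same equation). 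The verification of this equivalence is bookkeeping of $dt$-components of exactly the kind carried out in \cite{FOOO080} Lemma 4.2.13, and I would state it without the routine computation.

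Step (i) is immediate from Lemma \ref{canhomoOK}: since $\frak m^t_{1,(0,0)}$, and therefore $\Pi$ and $G$, are independent of $t$, the tree sum $\frak f^t_{k,\beta}=\sum_{\Gamma\in Gr(k,\beta)}\frak f^t(\Gamma)$ is, for each fixed $t$, the filtered $A_{\infty}$ homomorphism from the canonical model of $(\overline C,\{\frak m^t_{k,\beta}\})$ produced by that lemma; the unital and modulo $T^{E_0}$ variants are the same. Step (ii) is the substance of the lemma, and I would prove it by differentiating the tree sum in $t$. Each $\frak f^t(\Gamma)$ is a composite of the $t$-independent operators $G$, $\Pi$ with the vertex operators $\frak m^t_{\cdot,\beta(v)}$, so Leibniz gives $\frac{d}{dt}\frak f^t(\Gamma)=\sum_{v_s\in C_0^{\text{\rm int}}(\Gamma)}$ (the same composite with $\frak m^t_{\cdot,\beta(v_s)}$ replaced by $\frac{d}{dt}\frak m^t_{\cdot,\beta(v_s)}$). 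Substituting $(\ref{isotopymaineq})$ rewrites each $\frac{d}{dt}\frak m^t_{\cdot,\beta(v_s)}$ as a sum of $\frak m^t\circ\frak c^t$-type and $\frak c^t\circ\frak m^t$-type terms. Summing over all $(\Gamma,v_s)\in Gr^+(k,\beta)$, using the tree-surgery identities and the homotopy identity $\text{\rm id}-\Pi=-(\frak m_{1,(0,0)}G+G\frak m_{1,(0,0)})$ of Lemma \ref{Gdual}, the $\frak m^t\circ\frak c^t$-type terms reassemble precisely into the right-hand terms of identity (ii): the recursive definitions of $\frak h^t(\Gamma,v_s)$ and $\frak c^t(\Gamma,v_s)$ (Case 1 and Case 2, with $\frak c^{t,\text{\rm can}}_{k,\beta}=\sum_{(\Gamma,v_s)}\frak c^t(\Gamma,v_s)$ and $\frak h^t_{k,\beta}=\sum_{(\Gamma,v_s)}\frak h^t(\Gamma,v_s)$) were set up so that this matching is exact, including the minus sign in Case 2 and the Koszul sign $(-1)^{*_i}$ of $(\ref{tenssign})$ for moving the odd operator $\frak h^t$ past the even operators $\frak f^t$. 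The remaining $\frak c^t\circ\frak m^t$-type terms cancel in pairs by the $A_{\infty}$ relations among the $\frak m^t_{k,\beta}$ and the definition of $\frak f^t(\Gamma)$, exactly as in the cancellations in the proofs of Proposition \ref{Ainftyhomobyint} and Lemma \ref{checkAiniftybydiff}.

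The main obstacle is precisely this combinatorial and sign matching in Step (ii): one must check that, for each $(\Gamma,v_s)$, the subtree decomposition produced when $(\ref{isotopymaineq})$ inserts a $\frak c^t$ at $v_s$ agrees termwise with the recursive definition of $\frak h^t(\Gamma,v_s)$ — distinguishing whether $v_s$ is the vertex adjacent to the root (Case 1) or not (Case 2) — and that the signs from $(\ref{isotopymaineq})$, the minus signs built into $\frak c^t(\Gamma,v_s)$ and $\frak h^t(\Gamma,v_s)$, and the Koszul signs $(\ref{tenssign})$ all combine correctly. As throughout Section \ref{homisoto}, I would organize this as an induction on $\#C_0^{\text{\rm int}}(\Gamma)$ (equivalently, within each gap, on $E(\beta)$), mirroring the inductive definitions of $\frak f^t(\Gamma)$, $\frak h^t(\Gamma,v_s)$ and $\frak c^t(\Gamma,v_s)$; the unital and modulo $T^{E_0}$ refinements then come for free. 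Once identity (ii) is established, the formal equivalence recorded at the outset together with Step (i) yields that $\{\frak F_{k,\beta}\}$ is a filtered $A_{\infty}$ homomorphism, which is the assertion of the lemma. I note that the cyclic symmetry needed later for the pseudo-isotopy of canonical models is handled separately, by the argument already used in Lemma \ref{mcancyclic}.
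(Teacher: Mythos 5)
Your proposal is correct in outline but takes a genuinely different route from the paper's, whose proof is essentially a one-liner: extend $G$ and $\Pi$ componentwise to operators on $C^{\infty}([0,1]\times\overline C)$ and run the tree-sum construction of Lemma \ref{canhomoOK} directly on the filtered $A_{\infty}$ algebra $(C^{\infty}([0,1]\times\overline C),\{\hat{\frak m}_{k,\beta}\})$. The output of that construction coincides with $\{\frak F_{k,\beta}\}$ --- the recursive definitions of $\frak h^t(\Gamma,v_s)$, $\frak c^t(\Gamma,v_s)$, the Case 1/Case 2 split, and the Koszul sign $(\ref{tenssign})$ are precisely the $dt$-components of the thickened tree sum $\hat{\frak f}_{\Gamma}$ --- so Lemma \ref{canhomoOK} gives the filtered $A_{\infty}$ homomorphism property verbatim. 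You instead split the homomorphism equation for $\frak F$ into its $a(t)$- and $dt\wedge b(t)$-components, dispose of the first by fixing $t$ and citing Lemma \ref{canhomoOK} fiberwise, and prove the $dt$-component identity by differentiating $\frak f^t(\Gamma)$ in $t$, substituting the pseudo-isotopy equation $(\ref{isotopymaineq})$, and reassembling via tree surgery and the homotopy identity $(\ref{GPIhomotopy})$ --- a differentiate-and-cancel argument in the spirit of Lemma \ref{checkAiniftybydiff} and Proposition \ref{Ainftyhomobyint}. Both routes are sound. The paper's buys brevity and a conceptual explanation for why the definitions preceding the lemma have the shape they do: they are the canonical-model construction applied in the $[0,1]$-family. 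Yours makes the cancellation mechanism explicit, but at the price of the substantial combinatorial and sign bookkeeping that you correctly flag but do not carry out; when one does carry it out, that bookkeeping amounts exactly to the identification of $\{\frak F_{k,\beta}\}$ with the thickened tree-sum homomorphism, which is the observation the paper uses to bypass the computation entirely.
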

\begin{proof}
We regard $G$ and $\Pi$ as homomorphisms
$$
C^{\infty}([0,1]\times \overline C) \to C^{\infty}([0,1]\times \overline C).
$$
Then we can apply (the proof of) Lemma \ref{canhomoOK}.
It implies Lemma \ref{isotopcan}.
\end{proof}
\begin{crl}
${\frak c}^{t\text{\rm can}}$ and ${\frak m}^{t\text{\rm can}}$ satisfies 
$(\ref{isotopymaineq})$.
\end{crl}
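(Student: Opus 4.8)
The plan is to reduce the Corollary to Lemmas \ref{isotopcan} and \ref{dtcomiAinf}. Recall that, by Lemma \ref{dtcomiAinf}, the equation (\ref{isotopymaineq}) for a pair of families of operations is equivalent to the filtered $A_\infty$ relation of the single operator obtained from them by the recipe (\ref{combineainf}). Applied with $\frak m^{t\text{\rm can}}_{k,\beta}$ and $\frak c^{t\text{\rm can}}_{k,\beta}$ in place of $\frak m^t_{k,\beta}$ and $\frak c^t_{k,\beta}$, this says precisely that the Corollary is equivalent to the assertion that the combined operators $\hat{\frak m}^{\text{\rm can}}_{k,\beta}$ on $C^{\infty}([0,1]\times\overline H)$ (built from $\frak m^{t\text{\rm can}}$, $\frak c^{t\text{\rm can}}$ via (\ref{combineainf})) form a filtered $A_\infty$ algebra. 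So it suffices to prove that.

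First I would observe that the operators $\frak F_{k,\beta}$ of (\ref{combineFF}) together with $\hat{\frak m}^{\text{\rm can}}_{k,\beta}$ are exactly what the tree construction of this section produces when it is applied to the filtered $A_\infty$ algebra $(C^{\infty}([0,1]\times\overline C),\{\hat{\frak m}_{k,\beta}\})$ — which is a genuine filtered $A_\infty$ algebra by Lemma \ref{dtcomiAinf} applied to the given pseudo-isotopy — using the $t$-independent contraction data $G,\Pi$ of Lemma \ref{Gdual} acting fiberwise. The one point needing care is that these fiberwise $G,\Pi$ do serve as contraction data on $C^{\infty}([0,1]\times\overline C)$: since $\frak m^t_{1,(0,0)}=\frak m_{1,(0,0)}$ is independent of $t$, the fiberwise $\frak m_{1,(0,0)}$ is a differential retracting onto $C^{\infty}([0,1]\times\overline H)$ via $G,\Pi$, and the remaining $d/dt$-part of $\hat{\frak m}_{1,(0,0)}$ is carried along the trees as part of the transferred data, being transferred to the term $dt\wedge d/dt$ in $\hat{\frak m}^{\text{\rm can}}_{1,(0,0)}$ (this is exactly the content of the phrase ``we regard $G$ and $\Pi$ as homomorphisms $C^{\infty}([0,1]\times\overline C)\to C^{\infty}([0,1]\times\overline C)$ and apply the proof of Lemma \ref{canhomoOK}'' in the proof of Lemma \ref{isotopcan}). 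Matching the two descriptions is then pure bookkeeping: one expands $\hat{\frak m}_{k,\beta}(\text{\bf x}_1,\ldots,\text{\bf x}_k)=x(t)+dt\wedge y(t)$, tracks which interior vertex of a tree receives the $\frak c^t$-part of $\hat{\frak m}$ and through which edges the $dt\wedge y_i(t)$-components propagate, and checks that this reproduces precisely the two cases in the inductive definitions of $\frak h^t(\Gamma,v_s)$ and $\frak c^t(\Gamma,v_s)$ — in particular that the Koszul sign (\ref{tenssign}) is the one produced by commuting $dt$ past the $\frak f^t(\Gamma_j)$'s, and that the minus sign in Case 2 is the one produced by commuting $dt$ past $\frak m^t_{\ell,\beta(v_s)}$.

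Granting this identification, Lemma \ref{isotopcan} (whose proof runs the proof of Lemma \ref{canhomoOK} verbatim on $C^{\infty}([0,1]\times\overline C)$) gives at once that $\{\hat{\frak m}^{\text{\rm can}}_{k,\beta}\}$ is a filtered $A_\infty$ algebra — this is part of the same conclusion that makes $\{\frak F_{k,\beta}\}$ a filtered $A_\infty$ homomorphism, exactly as in Lemma \ref{canhomoOK}, where the $A_\infty$ structure on $H$ and the homomorphism $\frak f:H\to C$ are produced together. By the reduction in the first paragraph this is equivalent to (\ref{isotopymaineq}) for $\frak m^{t\text{\rm can}}$, $\frak c^{t\text{\rm can}}$, which is the Corollary. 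It remains only to note the other clauses of Definition \ref{pisotopydef}: smoothness in $t$ (clear from the construction, since $\frak m^t_{k,\beta}$, $\frak c^t_{k,\beta}$ and $G,\Pi$ enter smoothly) and condition 5, namely $\frak c^{t\text{\rm can}}_{k,(0,0)}=0$ and $\frak m^{t\text{\rm can}}_{k,(0,0)}$ independent of $t$ — both immediate because $\frak c^t_{k,\beta}$ is nonzero only for $E(\beta)>0$ and $G,\Pi,\frak m^t_{k,(0,0)}$ are $t$-independent. The main obstacle is precisely the sign bookkeeping in the identification step: confirming that splitting $\hat{\frak m}$ into its $C^{\infty}([0,1],\overline C)$-component and its $dt$-component and summing over trees reassembles, sign by sign, the operators $\frak h^t(\Gamma,v_s)$, $\frak c^t(\Gamma,v_s)$, $\frak F_{k,\beta}$ as defined, so that ``$\{\frak F_{k,\beta}\}$ is an $A_\infty$ homomorphism'' becomes literally ``$\{\hat{\frak m}^{\text{\rm can}}_{k,\beta}\}$ is an $A_\infty$ algebra''. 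Everything else is a transcription of the corresponding facts for the unparametrized canonical model.
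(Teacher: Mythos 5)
Your reduction to Lemmas \ref{isotopcan} and \ref{dtcomiAinf} is correct and is the same skeleton the paper uses. The interesting difference is in how you extract the $A_\infty$ relation for $\hat{\frak m}^{\text{\rm can}}$ from Lemma \ref{isotopcan}: you argue that the \emph{proof} of Lemma \ref{isotopcan} (running the homotopy-transfer argument of Lemma \ref{canhomoOK} on $C^{\infty}([0,1]\times\overline C)$ with fiberwise $G,\Pi$) simultaneously produces the $A_\infty$ structure on the source and the homomorphism, so the conclusion is already implicit there, modulo the sign-and-tree bookkeeping you flag. The paper instead uses only the \emph{statement} of Lemma \ref{isotopcan} plus one extra observation: $\frak F_{1,(0,0)}$ is injective. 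From the homomorphism equation (\ref{inftyhomoeq}) one gets $\hat{\frak F}\circ\hat\delta^2 = \hat{\frak d}^2\circ\hat{\frak F}=0$ at the bar-coalgebra level, and since $\frak F_{1,(0,0)}$ is injective, an induction over energy filtration and word length forces $\hat\delta^2=0$, i.e.\ the $A_\infty$ relation for $\hat{\frak m}^{\text{\rm can}}$. This shortcut is more modular (no need to reopen the transfer proof or verify the tree/sign identification you spend most of your space on), and it also makes the whole thing independent of the particular way Lemma \ref{isotopcan} is proved. Your argument buys a more conceptual picture of \emph{why} the transferred structure works — both objects come out of the same perturbation-lemma machinery — at the cost of having to match the tree outputs against (\ref{combineFF}) sign by sign, which is exactly the step you acknowledge as the main obstacle; the paper's injectivity remark lets you skip it entirely.
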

\begin{proof}
Since $\frak F_{1,(0,0)}$ is injective, 
Lemma \ref{isotopcan} implies that  $\hat{\frak m}^{t\text{\rm can}}$ satisfies 
$A_{\infty}$ relation. The Corollary then follows from Lemma \ref{dtcomiAinf}. 
\end{proof}
\begin{lmm}
${\frak c}^{t\text{\rm can}}$  is cyclically symmetric.
\end{lmm}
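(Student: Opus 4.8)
The plan is to run the argument of Lemma~\ref{cysycanlem} in the present setting, replacing Proposition~\ref{independofpoints} by an analogue for $Gr^{+}(k,\beta)$. Fix $t$. For $(\Gamma,v_{s})\in Gr^{+}(k,\beta)$ and a flag $(v,e)$ of $\Gamma$, meaning an interior vertex $v$ together with an edge $e$ containing it, I would first introduce an $\R$-linear map
$$
\frak c^{t}(\Gamma,v_{s};v,e):B_{k+1}(\overline C[1])\to\R .
$$
It is built as follows: remove $v$ from $\Gamma$, let $T_{0},T_{1},\ldots,T_{\ell}$ be the resulting components, numbered so that $e\in T_{0}$ and $T_{0},T_{1},\ldots,T_{\ell}$ respect the counter clockwise cyclic order, and let $\Gamma_{i}\in Gr(k_{i},\beta_{i})$ be the induced data, so that $\beta(v)+\sum_{i}\beta_{i}=\beta$ and $\sum_{i}k_{i}=k$. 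On each $\Gamma_{i}$ one applies $\frak f^{t}(\Gamma_{i})$, except on the unique component still containing $v_{s}$ (when $v\ne v_{s}$), on which one applies $\frak h^{t}(\Gamma_{i})$ instead; one feeds the results into $\frak c^{t}_{\ell,\beta(v)}$ if $v=v_{s}$ and into $\frak m^{t}_{\ell,\beta(v)}$ if $v\ne v_{s}$; one pairs the output against $\frak f^{t}(\Gamma_{0})(\ldots)$ by $\langle\cdot\rangle$; and one multiplies by $(-1)^{*}$, where $*$ is the sign appearing in the definition of $\frak m(\Gamma,v,e)$. The Koszul signs of $(\ref{tenssign})$ and the minus sign produced by commuting $\frak m^{t}$ past $dt$ are inserted exactly as in the recursion defining $\frak c^{t}(\Gamma,v_{s})$ and $\frak h^{t}(\Gamma,v_{s})$. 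For $\#C_{0}^{\text{\rm int}}(\Gamma)=1$ this reduces to $\pm\langle\frak c^{t}_{k,\beta(v_{s})}(x_{1},\ldots,x_{k}),x_{0}\rangle$, and for the root flag $(v_{0}',e_{0})$, with $e_{0}$ the edge at the root $v_{0}$ and $v_{0}'$ its other vertex, it equals $\langle\frak c^{t}(\Gamma,v_{s})(x_{1},\ldots,x_{k}),x_{0}\rangle$ by parts $1$ and $4$ of Lemma~\ref{Gdual}.

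The key step, which I would prove by induction on $\#C_{0}^{\text{\rm int}}(\Gamma)$ in imitation of Proposition~\ref{independofpoints}, is that $\frak c^{t}(\Gamma,v_{s};v,e)$ is independent of the flag $(v,e)$, depending only on $(\Gamma,v_{s})$ and on $x_{1},\ldots,x_{k},x_{0}$. Invariance of $e$ for fixed $v$ is the cyclic symmetry of $\frak m^{t}_{k,\beta}$ when $v\ne v_{s}$, resp.\ of $\frak c^{t}_{k,\beta}$ when $v=v_{s}$, i.e.\ items $2$ and $3$ of Definition~\ref{pisotopydef}. Invariance under sliding $v$ across an interior edge joining it to a neighbouring interior vertex $v'$ follows, exactly as in Proposition~\ref{independofpoints}, from the duality formula $(\ref{Gdualformula})$ of Lemma~\ref{Gdual} applied to the copy of $G$ on that edge; here one treats separately the cases $v_{s}=v$, $v_{s}=v'$, $v_{s}$ on the $v$-side, and $v_{s}$ on the $v'$-side, and uses that $\frak h^{t}(\Gamma_{\bullet})$ has odd degree with values in $\overline D=\text{\rm Im}\,G$ whereas $\frak f^{t}(\Gamma_{\bullet})$ has even degree, so that $\langle\overline D,\overline D\rangle=\langle\overline D,\overline H\rangle=0$ removes the cross terms.

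Granting this, one finishes as in Lemma~\ref{cysycanlem}: numbering the exterior vertices $v_{0},v_{1},\ldots,v_{k}$ of $\Gamma$ cyclically, with $e_{i}$ the edge at $v_{i}$ and $v_{i}'$ its other vertex, flag independence gives
$$
\aligned
\langle\frak c^{t\text{\rm can}}_{k,\beta}(x_{1},\ldots,x_{k}),x_{0}\rangle
&=\sum_{(\Gamma,v_{s})\in Gr^{+}(k,\beta)}\frak c^{t}(\Gamma,v_{s};v_{0}',e_{0})(x_{1},\ldots,x_{k},x_{0})\\
&=\sum_{(\Gamma,v_{s})\in Gr^{+}(k,\beta)}\frak c^{t}(\Gamma,v_{s};v_{i}',e_{i})(x_{1},\ldots,x_{k},x_{0})\\
&=(-1)^{*}\langle\frak c^{t\text{\rm can}}_{k,\beta}(x_{i+1},\ldots,x_{i-1}),x_{i}\rangle ,
\endaligned
$$
with $*=(\deg'x_{i+1}+\ldots+\deg'x_{0})(\deg'x_{1}+\ldots+\deg'x_{i})$; the last equality uses that re-rooting at $v_{i}$ is a bijection of $Gr^{+}(k,\beta)$ and that $\Pi x_{i}=x_{i}$ for $x_{i}\in\overline H$. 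This is the cyclic symmetry of $\frak c^{t\text{\rm can}}_{k,\beta}$. The unital refinement is immediate, since $\frak c^{t}_{k,\beta}(\ldots,\text{\bf e},\ldots)=0$ and both $\Pi$ and $G$ fix $\text{\bf e}$. The hard part of this argument is the sign bookkeeping in the flag independence step; it should however copy verbatim the computation already carried out for $\frak m^{\text{\rm can}}$ in Lemma~\ref{cysycanlem} and Proposition~\ref{independofpoints}.
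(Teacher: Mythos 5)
The paper omits this proof, saying only that it is ``similar to the proof of Lemma~\ref{mcancyclic}''; since $\frak c^{t\text{\rm can}}_{k,\beta}$ is defined exactly like $\frak m^{\text{\rm can}}_{k,\beta}$, as a sum over trees $(\Gamma,v_s)\in Gr^+(k,\beta)$, the intended model is almost certainly the flag-independence argument of Proposition~\ref{independofpoints} and Lemma~\ref{cysycanlem} rather than the direct $\frak P/\frak Q$ computation that Lemma~\ref{mcancyclic} actually contains (that computation lives on the integral formula (\ref{mtintdef}), where there is no tree sum and no flag to move); the reference looks like a typo for \ref{cysycanlem}. Your proposal correctly identifies and pursues that route: introduce a pointed-flag invariant $\frak c^t(\Gamma,v_s;v,e)$, reduce cyclic symmetry to flag-independence, and obtain the latter from the cyclicity of $\frak m^t$ (resp.\ $\frak c^t$) at the flag vertex together with the $G$-duality (\ref{Gdualformula}) when sliding $v$ across an interior edge. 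That is the right strategy.

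Two points need tightening. First, your description says ``one pairs the output against $\frak f^t(\Gamma_0)$''; but if $v_s$ lies in the root component $T_0$ (and $v\ne v_s$), the pairing must be against $\frak h^t(\Gamma_0)$, since the unique component carrying $v_s$ is precisely the one that receives $\frak h^t$ in the recursion. With that correction, the flag slide across an interior edge really is the $G$-duality applied to the two expressions $\langle\cdot,G(\cdot)\rangle$ read from either end of the edge, exactly parallel to (\ref{mgammav})--(\ref{mgammav'}), with the case split $v_s\in\{v,v'\}$, $v_s$ in a $\Gamma_i$, or $v_s$ in a $\Gamma'_j$. Second, the sign check does not ``copy verbatim'' from Proposition~\ref{independofpoints}: there, the cancellation of $(-1)^{*_1}$ against $(-1)^{*_2}$ times the $G$-duality sign uses that $\frak m$ has degree $+1$, whereas here one of the two vertices carries $\frak c^t$ (degree $0$) or is reached through an $\frak h^t$ (degree $-1$), so the parities feeding (\ref{Gdualformula}) shift. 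This shift is absorbed by the minus sign built into Case~2 of the recursion for $\frak c^t(\Gamma,v_s)$ and the Koszul sign in (\ref{tenssign}), but the cancellation must be redone, not imported. Finally, the appeal to $\langle\overline D,\overline D\rangle=0$ to ``remove cross terms'' is spurious in the flag-slide step: no cross terms arise there, because there is exactly one designated component carrying $v_s$ and hence exactly one placement of $\frak h^t$ on each side of the edge. That vanishing is used only in Lemma~\ref{fcycl}.
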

The proof is similar to the proof of Lemma \ref{mcancyclic} and so is omitted.
The proof of Theorem \ref{canopseudo} is now complete.
(The proof of unitality is easy.)
\end{proof}
\section{Geometric realization of pseudo-isotopy of cyclic filtered $A_{\infty}$ algebras}
\label{geoisoto}
The main result of this section is the following Theorem \ref{existpseudoisotopy}.
Let $(M,\omega)$ be a symplectic manifold and $L$ be a relatively spin Lagrangian submanifold.
We take two almost complex structures $J_0, J_1$ tamed by $\omega$.
Let $E_0 \le E_1$. We apply Theorem \ref{existifilaAinfmodTE} for $L,J_0,E_0$ and $L,J_1,E_1$.
Then we obtain $G(J_i)$ gapped cyclic filtered $A_{\infty}$ structures 
$(\Lambda(L),\{\frak m_{k,\beta}^{(i)}\},\langle \cdot \rangle)$ module $T^{E_i}$ on the de-Rham complex.
(Note the discrete submonoid $G(J)$ depends on the almost complex structure $J$.
See the begining of the proof of Theorem \ref{existifilaAinfmodTE}.)
\begin{thm}\label{existpseudoisotopy}
There exists $G \supset G(J_0), G(J_1)$ such that
$(\Lambda(L),\langle \cdot \rangle,\{\frak m_{k,\beta}^{(0)}\})$
is pseudo-isotopic to 
$(\Lambda(L),\langle \cdot \rangle,\{\frak m_{k,\beta}^{(1)}\})$
as $G$ gapped cyclic unital filtered $A_{\infty}$ algebras modulo $T^{E_0}$.
\end{thm}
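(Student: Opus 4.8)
The plan is to carry out the one-parameter family version of the constructions of Sections \ref{Seckuranishiforget}--\ref{cyclicTEderamsec}, exactly as announced in the introduction. First I would choose a smooth path $\{J_t\}_{t\in[0,1]}$ of $\omega$-tame almost complex structures with $J_t=J_0$ near $t=0$ and $J_t=J_1$ near $t=1$, and form the parametrized moduli spaces $\mathcal M_{\ell,k+1}(\beta;\{J_t\}) = \bigcup_{t\in[0,1]}\mathcal M_{\ell,k+1}(\beta;J_t)$, each equipped with the projection $t$ to $[0,1]$. Gromov compactness for the family $\{J_t\}$ shows these are compact; I let $G$ be the discrete submonoid of $\R_{\ge 0}\times 2\Z$ generated by $(\beta\cap\omega,\mu(\beta))$ over all $\beta$ with $\mathcal M_0(\beta;\{J_t\})\neq\emptyset$, so that $G\supset G(J_0),G(J_1)$. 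It suffices to work modulo $T^{E_0}$, i.e. with $\beta$ satisfying $E(\beta)<E_0$.

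Next I would establish the family analogues of Theorems \ref{forgetcompKUra} and \ref{multicontforgetmain} and of Corollaries \ref{Corkura} and \ref{Corkuramulti}: a system of Kuranishi structures on the $\mathcal M_{\ell,k+1}(\beta;\{J_t\})$ which is forgetful-map compatible, invariant under cyclic permutation of boundary marked points and arbitrary permutation of interior marked points, component-wise, for which $ev_0$ is weakly submersive, which restricts correctly to the fiberwise disc-bubbling boundary strata, and which restricts on the two end-fibers $\{t=0\}$ and $\{t=1\}$ to the structures chosen for $J_0$ and $J_1$ in the proof of Theorem \ref{existifilaAinfmodTE}; and then a forgetful-map compatible continuous family of multisections on these spaces with the same list of properties. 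The proofs are the same inductions on $(\beta\cap\omega,\ell)$ as in Sections \ref{Seckuranishiforget}--\ref{fgcompfamisec}, the only differences being the extra parameter $t$ and the two extra collar boundary components $\{t=0\}$, $\{t=1\}$ at which the data is prescribed; one therefore invokes the relative versions of Lemma \ref{famimultilocconst} and Proposition \ref{globalfamikura}. Because $\{J_t\}$ is constant near the endpoints, the family data automatically agrees there with the product of an interval factor and the data already fixed for $J_0$, resp. $J_1$.

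Then I would define the operators. For $\beta\neq(0,0)$ with $E(\beta)<E_0$, I apply the smooth correspondence to $\mathcal M_{k+1}(\beta;\{J_t\})$ with source map $(ev_1,\dots,ev_k)$ to $L^k$ and target map $(t,ev_0)$ to $[0,1]\times L$ (admissible since $(t,ev_0)$ is weakly submersive on the perturbed moduli space by the family analogue of Corollary \ref{Corkuramulti}.5), obtaining for $\rho_1,\dots,\rho_k\in\Lambda(L)$ a form $\Theta=a(t)+dt\wedge b(t)$ on $[0,1]\times L$, and I set $\frak m^t_{k,\beta}(\rho_1,\dots,\rho_k)=a(t)$ and $\frak c^t_{k,\beta}(\rho_1,\dots,\rho_k)=(-1)^{?}b(t)$, with the sign dictated by the conventions of Section \ref{cyclicmodTEsec}; for $\beta=(0,0)$ I take the classical de Rham data of Example \ref{derhamAinift}, which is $t$-independent, and $\frak c^t_{k,(0,0)}=0$. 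Smoothness in $t$ (Condition 1 and 5 of Definition \ref{pisotopydef}) is immediate from the construction and from the collar behaviour near the endpoints, which also gives $\frak m^t_{k,\beta}=\frak m^{(0)}_{k,\beta}$ near $t=0$ and $\frak m^t_{k,\beta}=\frak m^{(1)}_{k,\beta}$ near $t=1$ modulo $T^{E_0}$, i.e. the prescribed boundary values. Conditions 2 and 3, the cyclic symmetry of $\frak m^t$ and of $\frak c^t$, follow from the cyclic invariance of the family Kuranishi structure and multisections and from Lemma \ref{Corandint2}, just as in the proof of Theorem \ref{existifilaAinfmodTE}; unitality follows from forgetful-map compatibility in the same way. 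For Condition 2 one also uses that the family data restricts on each fiber $\mathcal M_{k+1}(\beta;J_t)$ to a structure of the type of Theorem \ref{existifilaAinfmodTE}, so the $A_\infty$ relations hold for $\{\frak m^t_{k,\beta}\}$ at each fixed $t$. Finally Condition 4, equation (\ref{isotopymaineq}), is the Stokes formula (Proposition \ref{stokes}) for $\mathcal M_{k+1}(\beta;\{J_t\})$ together with the composition formula (Proposition \ref{comprop}): the boundary consists of the fiberwise bubbling strata (each a fiber product over $[0,1]\times L$, whose correspondence decomposes as a composition producing the terms $\frak c^t(\dots,\frak m^t,\dots)$ and $\frak m^t(\dots,\frak c^t,\dots)$) together with the collar ends; extracting the $dt$-component of the Stokes identity yields exactly (\ref{isotopymaineq}). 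The sign bookkeeping is handled as in \cite{FOOO080} Subsection 8.10.3 and \cite{Fuk07I} Section 12.

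The hard part is entirely geometric: carrying out the forgetful-map compatible Kuranishi-structure and continuous-family-of-multisections constructions of Sections \ref{Seckuranishiforget}--\ref{fgcompfamisec} on the parametrized moduli spaces while keeping them compatible both with the already-fixed structures on the two end-fibers and with the gluing maps, that is, producing the data that make all the formal arguments above go through. Once that is in place the verification of the axioms of Definition \ref{pisotopydef} is routine.
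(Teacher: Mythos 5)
Your proposal is correct and follows essentially the same route as the paper's Section~\ref{geoisoto}: take a path $\mathcal J=\{J_t\}$ of tame almost complex structures, form the parametrized moduli spaces $\mathcal M_{k+1}(\beta;\mathcal J)$, prove the one-parameter analogues of Theorem~\ref{forgetcompKUra}, Corollary~\ref{Corkura}, Theorem~\ref{multicontforgetmain} and Corollary~\ref{Corkuramulti} by the same inductions, and then extract $\frak m^t_{k,\beta}$ and $\frak c^t_{k,\beta}$ from the $\{1,dt\}$-components of the smooth correspondence (this is exactly equation (\ref{pseudoisoshiki}) and the surrounding Lemmas~\ref{forgetcompKUrafami}--\ref{Corkuramultifami}). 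Your remarks on the collar near $t=0,1$, the use of Lemma~\ref{Corandint2} and Corollary~\ref{Corkuramulti}.4 for cyclic symmetry, and Propositions~\ref{stokes} and \ref{comprop} for (\ref{isotopymaineq}) all match the paper's argument.
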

Before proving Theorem \ref{existpseudoisotopy} we clarify the definition of 
pseudo-isotopy in the (inifite dimensional) case of de Rham complex.
We consider $\overline C = \Lambda(L)$. We put
\begin{equation}\label{Cinfty[01]deram}
C^{\infty}([0,1]\times\overline C) = \Lambda([0,1] \times L).
\end{equation}
Note an element of $C^{\infty}([0,1]\times\overline C)$ is uniquely written as 
$$
x(t) + dt \wedge y(t),
$$
where $x(t), y(t) \in \Lambda(L)$.
So this notation is consistent with one in Section \ref{homalgsec}. However 
the assumption on the smoothness here on the map
$t \mapsto x(t)$, $t \mapsto y(t)$ is different from finite dimensional case.
\par
We conider the operations 
$
\frak m^t_{k,\beta} : B_k(\overline C[1]) \to \overline C[1]
$
of degree $-\mu(\beta)+1$ and
$
\frak c^t_{k,\beta} : B_k(\overline C[1]) \to \overline C[1]
$
of degree $-\mu(\beta)$.
\begin{dfn}\label{smoothmtandct2}
We say $\frak m^t_{k,\beta}$ is {\it smooth} if for 
each $x_1,\ldots,x_k$
$$
t\mapsto \frak m^t_{k,\beta}(x_1,\ldots,x_k)
$$
is an element of $C^{\infty}([0,1]\times\overline C)$.
\par
The smoothness of $\frak c^t_{k,\beta}$ is defined in the same way.
\end{dfn}
This is the same as Definition \ref{smoothmtandct2}, except we use 
(\ref{Cinfty[01]deram}) for $C^{\infty}([0,1]\times\overline C)$.
We use this definition of the smoothness and define pseudo-isotopy on 
de-Rham complex in the same way as Definition \ref{pisotopydef}.
Theorems \ref{pisotopyextention}, \ref{pisotohomotopyequiv} can be proved in the same way.
\begin{proof}[Proof of Theorem \ref{existpseudoisotopy}]
We take a path $\mathcal J = \{J_t\}_{t\in[0,1]}$ of tame almost complex structures 
joining $J_0$ to $J_1$. We consider the moduli spaces 
$\mathcal M_k(\beta;J_t)$ of $J_t$-holomorphic discs of homology class
$\beta \in H_2(M,L;\Z)$.
We put:
\begin{equation}
\mathcal M_k(\beta;\mathcal J) = \bigcup_{t \in [0,1]} \{t\} \times
\mathcal M_k(\beta;J_t).
\end{equation}
We have evaluation maps 
$$
ev = (ev_0,\ldots,ev_{k-1}) : \mathcal M_k(\beta;\mathcal J) \to L^k
$$
together with 
$ev_t :\mathcal M_k(\beta;\mathcal J) \to [0,1]$
where $ev_t( \{t\} \times
\mathcal M_k(\beta;J_t)) = \{t\}$.
\begin{lmm}\label{forgetcompKUrafami}
There exists a system of Kuranishi structures on $\mathcal M_{1}(\beta;\mathcal J)$ and 
$\mathcal M_{0}(\beta;\mathcal J)$ with the following properties.
\smallskip
\begin{enumerate}
\item $ev_t$ extends to a strongly continuous and weakly submersive map.
So it induces a Kuranishi structure on $\mathcal M_k(\beta;J_t)$ for each of $t\in [0,1]$, $k=0,1$.
\item The induced Kuranishi structure on $\mathcal M_k(\beta;J_i)$ for $i=0,1$, $k=0,1$ coincides with 
one produced in Theorem \ref{forgetcompKUra}.
\par
\item For any $t\in [0,1]$, $k=0,1$, the induced Kuranshi structure on  $\mathcal M_k(\beta;J_t)$ 
satisfies the conclusions of Theorem \ref{forgetcompKUra}.
\end{enumerate}
\end{lmm}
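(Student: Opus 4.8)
The plan is to imitate the proof of Theorem \ref{forgetcompKUra}, carrying along the extra parameter $t\in[0,1]$ and treating the Kuranishi structures produced there on $\mathcal M_k(\beta;J_0)$ and $\mathcal M_k(\beta;J_1)$ as prescribed boundary data over $\{0\}$ and $\{1\}$. The induction is again on $\beta\cap\omega$, and as in the absolute case it suffices to build the structure on $\mathcal M_{\ell,0}(\beta;\mathcal J)$ with $ev_0$ weakly submersive, component-wise, and compatible with the boundary strata, and then to define the structure on $\mathcal M_{\ell,1}(\beta;\mathcal J)$ as its pullback via $\mathfrak{forget}$. First I would record the one-parameter analogue of Lemma \ref{ateachpointkuracomp}: for $[(D^2,z_0,\vec z\,^{\text{int}}),u]\in\mathcal M_{\ell,1}(\beta;J_t)$, applying Lemma \ref{ateachpointkuracomp} to the operator $D_u\overline{\partial}_{J_t}$ gives a finite dimensional $E(u)\subset C^{\infty}_0(U;u^*TM\otimes\Lambda^{01})$ with $Ev_{z_0}:(D_u\overline{\partial}_{J_t})^{-1}(E(u))\to T_{u(z_0)}L$ surjective for every $z_0\in\partial D^2$; since surjectivity is open this persists for $(t',u')$ near $(t,u)$, and the $t$-coordinate then enters the Kuranishi chart as a free manifold direction $V_{(t,u)}=V^0\times I$ with $I\subset[0,1]$ an interval, half-open exactly at the ends $0,1$. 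Thus $ev_t$ is automatically a submersion and $ev_0$ is submersive by the same argument as in Theorem \ref{forgetcompKUra}. For the sphere-bubble strata I would invoke the standard one-parameter family version of Lemma \ref{closedmoduliKura} (constructed in \cite{FO}, \cite{FOOO080}): a system of Kuranishi structures on $\mathcal M^{\text{\rm cl}}_{\ell}(\alpha;\mathcal J)$, permutation-invariant, component-wise, with $ev^{\text{\rm int}}$ and $ev_t$ weakly submersive, restricting over $\{0\},\{1\}$ to those fixed in Section \ref{Seckuranishiforget}.

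Next I would describe the structure along $\partial\mathcal M_{\ell,0}(\beta;\mathcal J)$, which consists of: the fibers over $t=0$ and $t=1$, where the structure is prescribed; the disc-bubble strata $\mathcal M_{\ell_1,1}(\beta_1;\mathcal J)\,{}_{(ev_0,ev_t)}\times_{L\times[0,1]}\mathcal M_{\ell_2,1}(\beta_2;\mathcal J)$ with $\beta_1+\beta_2=\beta$ and $\beta_i\cap\omega<\beta\cap\omega$, where I take the fiber product Kuranishi structure, well-defined since by induction $ev_0$, hence $(ev_0,ev_t)$, is weakly submersive on each factor, and where Lemma \ref{kurafibermap} then yields weak submersivity of $ev_0$ on the three types of boundary components (\ref{bdrycomp1})--(\ref{bdrycomp3}) of $\mathcal M_{\ell,1}(\beta;\mathcal J)$; and, when $[\partial\beta]=0$, the stratum $\mathcal M^{\text{\rm cl}}_{\ell+1}(\tilde\beta;\mathcal J)\,{}_{ev^{\text{\rm int}}_1}\times_M L$, handled by the family version of Lemma \ref{closedmoduliKura}. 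These agree on overlaps because of the inductive way of constructing them. I would then extend to a neighborhood of the boundary and, using the relative form of the $E(u)$-construction above exactly as in \cite{FOOO080} Section 7.2 (with the compact set on which the structure is already fixed taken to include the $t=0,1$ fibers together with the previously constructed boundary strata), extend over the interior of $\mathcal M_{\ell,0}(\beta;\mathcal J)$ keeping $ev_0$ weakly submersive and the choice component-wise. Pulling back to $\mathcal M_{\ell,1}(\beta;\mathcal J)$ via $\mathfrak{forget}$ gives forgetfulmap compatibility, hence, fiberwise over each $t$, all conclusions $1$--$4$ of Theorem \ref{forgetcompKUra}; conclusion $1$ of the present lemma holds since $t$ is a free chart coordinate throughout, and conclusion $2$ holds by construction over the two ends.

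The main obstacle is the simultaneous bookkeeping of the weak-submersivity of $ev_0$, of $ev_t$, and, on the sphere strata, of $ev^{\text{\rm int}}$, while keeping the whole construction compatible with $\mathfrak{forget}$ and component-wise, and while matching the pre-fixed data over $t=0,1$ at the corners where the $t$-boundary meets the disc-bubble boundary. As in the absolute case the structural point that makes the induction close is the observation (due to \cite{FOOO08I}) that weak submersivity of $ev_0$ alone is enough to run it; the only genuinely new ingredient is that the relative versions of the extension statements and of Lemma \ref{ateachpointkuracomp} must be applied relative to a compact set containing the two end-fibers, so that nothing already chosen in Theorem \ref{forgetcompKUra} is disturbed. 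Given those relative statements this is routine, and no analytic input beyond Theorem \ref{forgetcompKUra} is needed.
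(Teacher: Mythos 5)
Your proposal is correct and follows essentially the same route the paper takes: the paper's own proof is a two‑sentence sketch saying one repeats the proof of Theorem \ref{forgetcompKUra}, choosing the obstruction bundle large enough to make both $ev_0$ and $ev_t$ weakly submersive, and doing so inductively so that the data already fixed (in particular over $t=0,1$) are untouched. Your write‑up is a faithful, more detailed unpacking of exactly that sketch — the induction on $\beta\cap\omega$, the parametrized version of Lemma \ref{ateachpointkuracomp}, the family version of Lemma \ref{closedmoduliKura} for the sphere strata, the relative extension across the boundary, and the pullback via $\mathfrak{forget}$ — with no substantive deviation from the paper's argument.
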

\begin{proof}
The proof is the same as the proof of Theorem \ref{forgetcompKUra}. Namely we define 
the obstruction bundle, that is a subspace of $C^{\infty}(\Sigma,u^*TM\otimes \Lambda^{01})$, 
enough large 
so that the submersivity of $ev_0$ and $ev_t$ holds. We can do it 
inductively so that the part already defined are untouched.
\end{proof}
\begin{lmm}\label{Corkurafami}
There exists a system of Kuranishi structures on 
$\mathcal M_{\ell,k+1}(\beta;\mathcal J)$ $k\ge 0$,
with the following properties.
\par\smallskip
\begin{enumerate}
\item $ev_t$ extends to a strongly continuous and weakly submersive map.
So it induces a Kuranishi structure on $\mathcal M_k(\beta;J_t)$ for each of $t\in [0,1]$.
\item The induced Kuranishi structure on $\mathcal M_k(\beta;J_i)$ for $i=0,1$ coincides with 
one produced in Corollary \ref{Corkura}.
\item For any $t\in [0,1]$, the induced Kuranshi structure on  $\mathcal M_k(\beta;J_t)$ 
satisfies the conclusion of Corollary \ref{Corkura}.
\end{enumerate}
\end{lmm}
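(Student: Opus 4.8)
The plan is to deduce Lemma \ref{Corkurafami} from Lemma \ref{forgetcompKUrafami} exactly as Corollary \ref{Corkura} was deduced from Theorem \ref{forgetcompKUra}: I would define the Kuranishi structure on $\mathcal{M}_{\ell,k+1}(\beta;\mathcal{J})$ by pulling back, via the forgetful map $\mathfrak{forget}_{k+1,1} : \mathcal{M}_{\ell,k+1}(\beta;\mathcal{J}) \to \mathcal{M}_{\ell,1}(\beta;\mathcal{J})$ that forgets the $2$nd through $(k+1)$-st boundary marked points, the Kuranishi structure on $\mathcal{M}_{\ell,1}(\beta;\mathcal{J})$ supplied by Lemma \ref{forgetcompKUrafami}. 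Concretely, over a point of $\mathcal{M}_{\ell,1}(\beta;\mathcal{J})$ the Kuranishi chart becomes the product of the chart downstairs with a factor $(0,1)^{k-m}\times[0,1)^m$ recording the relative cyclic positions of the forgotten marked points (the half-open factors appearing where some of these points escape into a node), while the obstruction bundle, Kuranishi map, finite group action and the map $ev_t$ are the obvious products, trivial along the new factors. This determines the Kuranishi structure on $\mathcal{M}_{\ell,k+1}(\beta;\mathcal{J})$ uniquely.

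Next I would check the three properties. For (1): the map $ev_t$ on $\mathcal{M}_{\ell,k+1}(\beta;\mathcal{J})$ factors as $ev_t\circ\mathfrak{forget}_{k+1,1}$, which is strongly continuous and weakly submersive by Lemma \ref{forgetcompKUrafami}.1, and since the new product factors are trivial for $ev_t$ the extension has the same properties; hence $ev_t^{-1}(\{t\})$ carries the Kuranishi structure on $\mathcal{M}_{\ell,k+1}(\beta;J_t)$ obtained by pulling back, via $\mathfrak{forget}_{k+1,1}$, the structure on $\mathcal{M}_{\ell,1}(\beta;J_t)$ induced from Lemma \ref{forgetcompKUrafami}. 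For (2): at $t=i\in\{0,1\}$ that induced structure on $\mathcal{M}_{\ell,1}(\beta;J_i)$ is the one of Theorem \ref{forgetcompKUra} by Lemma \ref{forgetcompKUrafami}.2, and pulling it back by $\mathfrak{forget}_{k+1,1}$ is precisely the construction used to prove Corollary \ref{Corkura}, so the two structures on $\mathcal{M}_{\ell,k+1}(\beta;J_i)$ agree. For (3): fix $t\in[0,1]$; by Lemma \ref{forgetcompKUrafami}.3 the induced structure on $\mathcal{M}_{\ell,1}(\beta;J_t)$ satisfies the conclusions of Theorem \ref{forgetcompKUra}, so the argument proving Corollary \ref{Corkura} applies with $J$ replaced by $J_t$ — forgetfulmap compatibility yields its items 1 and 2, Theorem \ref{forgetcompKUra}.4 yields item 3, Theorem \ref{forgetcompKUra}.2 yields item 4, and Theorem \ref{forgetcompKUra}.3 together with the boundary decomposition $(\ref{bdcompati0})$ yields item 5 — showing the pulled-back structure on $\mathcal{M}_{\ell,k+1}(\beta;J_t)$ satisfies the conclusion of Corollary \ref{Corkura}.

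Since Lemma \ref{forgetcompKUrafami} (whose proof repeats the inductive construction of Theorem \ref{forgetcompKUra} over the interval $[0,1]$, choosing the obstruction spaces $E_a\subset C^{\infty}(\Sigma,u^*TM\otimes\Lambda^{01})$ large enough that $ev_0$ and $ev_t$ both remain weakly submersive while leaving already-constructed pieces untouched) carries all the analytic content, the deduction above is essentially formal. The one point requiring care, compared with Corollary \ref{Corkura}, is keeping track of the extra half-open boundary factors $[0,1)$ coming from the ends $t=0,1$ of the parameter interval alongside those coming from collisions of forgotten marked points, and verifying that the boundary decomposition $(\ref{bdcompati0})$ and the $t\in\{0,1\}$ compatibility in (2) continue to hold simultaneously after pullback; I expect this bookkeeping to be the only obstacle.
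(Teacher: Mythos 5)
Your proposal is correct and follows exactly the paper's route: the paper proves Lemma \ref{Corkurafami} by simply citing "the proof is the same as the proof of Corollary \ref{Corkura}," i.e., pulling back the Kuranishi structure on $\mathcal M_{\ell,1}(\beta;\mathcal J)$ from Lemma \ref{forgetcompKUrafami} via the forgetful map and reading off properties (1)--(3) from the corresponding clauses of Lemma \ref{forgetcompKUrafami}. Your additional observation about tracking the extra half-open boundary factors coming from the parameter interval $[0,1]$ alongside those from colliding marked points is exactly the kind of bookkeeping the paper elides, and you have identified it correctly.
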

\begin{proof}
The proof is the same as the proof of Corollary \ref{Corkura}.
\end{proof}
\begin{lmm}\label{multicontforgetmainfami}
For each $\epsilon > 0$, there exists a 
compatible systems of familis of multisections 
$(U_{\tilde\alpha,i},W_{\tilde\alpha},\{\frak s_{\tilde\alpha,i,j}\})$, 
$(U_{\alpha,i},W_{\alpha},\{\frak s_{\alpha,i,j}\})$ on 
$\mathcal M_{1}(\beta;\mathcal J)$, $\mathcal M_{0}(\beta;\mathcal J)$ 
for $\beta \cap \omega \le E_0$, with the following properties:
\par\smallskip
\begin{enumerate}
\item At $t=0,1$ they coincide with the family of multisections produced in Theorem 
\ref{multicontforgetmain}.
\item They are $\epsilon$ close to the Kuranishi map.
\item They are transversal to $0$ in the sense of Definition \ref{defnmultisec}.$3$.
\item $(ev_0,ev_t) : \mathcal M_{1}(\beta;\mathcal J) \to L\times [0,1]$ induces submersions 
$(ev_0)_{\tilde\alpha}\vert_{\frak s_{\tilde\alpha}^{-1}(0)} : \frak s_{\tilde\alpha}^{-1}(0) \to L\times [0,1]$.
\item They are compatible with $(\ref{gluemap})$ in the same sense as  Theorem 
\ref{multicontforgetmain}.
\end{enumerate}
\end{lmm}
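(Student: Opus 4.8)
The plan is to repeat the proof of Theorem \ref{multicontforgetmain} on the parametrized moduli spaces furnished by Lemmas \ref{forgetcompKUrafami} and \ref{Corkurafami}, treating the two endpoint slices $\{t=0\}$ and $\{t=1\}$ of $\mathcal M_{\ell,0}(\beta;\mathcal J)$ on the same footing as the bubbling boundary, and prescribing the family of multisections there to be the one already produced in Theorem \ref{multicontforgetmain}. First I would establish the local building blocks, namely the one-parameter family versions of Lemmas \ref{lemmulticonst} and \ref{famimultilocconst}. Given a Kuranishi chart $V_\alpha$ of $\mathcal M_0(\beta;\mathcal J)$ and a point $u\in V_\alpha$ lying over some $t_0\in(0,1)$, Lemma \ref{lemmulticonst} produces $E(u)$ whose branches evaluate submersively at every boundary point $z_0\in\partial D^2$; since $(ev_0,ev_t)$ is weakly submersive for the Kuranishi structure of Lemma \ref{Corkurafami}, and since near $u$ the chart splits off the parameter direction through $ev_t$, one arranges that the combined map $(Ev_{z_0},ev_t)$ on the zero set of each branch is a submersion to $L\times[0,1]$. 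The relative version of Lemma \ref{famimultilocconst}, permitting one to prescribe the family near a compact set and extend, carries over verbatim. I would also record the parametrized analogue of Lemma \ref{closedmoduliKurapert} for $\mathcal M^{\mathrm{cl}}_\ell(\alpha;\mathcal J)$, proved by the same induction with the perturbation at $t=0,1$ fixed to be that of Lemma \ref{closedmoduliKurapert}; here no forgetful-map compatibility is needed, so this step is routine.

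The main induction is then on $(\beta,\ell)$ with respect to $\beta\cap\omega$. Assuming the families have been built on all $\mathcal M_{\ell',0}(\beta';\mathcal J)$ with $\beta'\cap\omega<\beta\cap\omega$, $\ell'\le\ell$, and restrict at $t=0,1$ to the families of Theorem \ref{multicontforgetmain} (property 1), one inspects $\partial\mathcal M_{\ell,0}(\beta;\mathcal J)$. Its codimension-one strata are: the slices $\{t=i\}\times\mathcal M_{\ell,0}(\beta;J_i)$, on which we declare the family to be that of Theorem \ref{multicontforgetmain}; the disc-bubble strata $\mathcal M_{\ell_1,1}(\beta_1;\mathcal J)\,{}_{(ev_0,ev_t)}\times_{(ev_0,ev_t)}\mathcal M_{\ell_2,1}(\beta_2;\mathcal J)$, on which we take the fiber-product family coming from the induction hypothesis; and, when $[\partial\beta]=0$, the sphere-bubble stratum $\mathcal M^{\mathrm{cl}}_{\ell+1}(\tilde\beta;\mathcal J)\,{}_{ev^{\mathrm{int}}_1}\times_M L$, on which we take the parametrized analogue of Lemma \ref{closedmoduliKurapert}. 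Transversality and the submersivity of $(ev_0,ev_t)$ on these strata follow from Lemma \ref{kurafibermap} and from the corresponding properties of the factors, exactly as in Theorem \ref{multicontforgetmain}. One then checks agreement on the corners where two such strata overlap; the overlaps among bubble strata are handled as in \cite{FOOO080} Lemma 7.2.55, while the new overlaps $\{t=i\}\cap(\text{bubble stratum})$ are covered precisely by Theorem \ref{multicontforgetmain}.5 together with the inductive property 1 at $t=i$. With a consistent family on $\partial\mathcal M_{\ell,0}(\beta;\mathcal J)$ in hand, extend it to a collar and then to all of $\mathcal M_{\ell,0}(\beta;\mathcal J)$ using the relative family version of Lemma \ref{famimultilocconst}; as only finitely many steps occur for $\beta\cap\omega\le E_0$, $C^0$-closeness is achieved by shrinking the parameter spaces $W_\alpha$. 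Finally, define the family on $\mathcal M_{\ell,1}(\beta;\mathcal J)$ as the pullback under $\mathfrak{forget}$, which is legitimate because the Kuranishi structures are forgetful-map compatible by Lemma \ref{Corkurafami}; this also yields compatibility with $(\ref{gluemap})$ in the stated sense.

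The hard part is bookkeeping rather than analysis: one must organize the induction so that forgetful-map compatibility, gluing compatibility with $(\ref{gluemap})$, and the new endpoint constraints at $t=0,1$ are simultaneously consistent on all corner strata of $\partial\mathcal M_{\ell,0}(\beta;\mathcal J)$. What makes this work is the same observation that underlies Corollary \ref{Corkura} and Theorem \ref{multicontforgetmain}: submersivity of $ev_0$ alone --- here of $(ev_0,ev_t)$ --- is enough to carry out the inductive construction while keeping the perturbation disc-component-wise and invariant under forgetting boundary marked points; and since the perturbations at $t=0,1$ were themselves manufactured by exactly this recipe, the restriction of the interior-$t$ family to the endpoint slices can be taken equal to them by construction.
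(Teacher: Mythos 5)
Your proposal is correct and follows essentially the same route as the paper, which simply declares that the proof is the same as that of Theorem \ref{multicontforgetmain}; you have spelled out exactly the intended adaptation, treating the endpoint slices $\{t=0,1\}$ as extra boundary strata with prescribed data and running the same $(\beta,\ell)$-induction with $(ev_0,ev_t)$ in place of $ev_0$.
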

\begin{proof}
The proof is the same as the proof of  Theorem 
\ref{multicontforgetmain}.
\end{proof}
\begin{rem}
We remark that 4 above implies that submersivity of $ev_0: 
\mathcal M_{1}(\beta;J_t) \to L$ for {\it any} $t$. Since there exists uncountably many $t$'s 
we can not do it in case we are working with multi but finitely many valued sections. 
Since we are working with continuous family of multisections this becomes possible. 
In fact we can take the dimension of our parameter space $W$ as large as we want.
\end{rem}
\begin{lmm}\label{Corkuramultifami}
For each $\epsilon$ and $E_0$, 
there exists a system of continuous family of multisections on 
$\mathcal M_{k+1}(\beta;\mathcal J)$, $k\ge 0$,
$\beta \cap \omega \le E_0$, 
with the following properties.
\par\smallskip
\begin{enumerate}
\item At $t=0,1$ they coincides with the family of multisections produced in Corollary 
\ref{Corkuramulti}.
\item It is $\epsilon$ close to the Kuranishi map.
\item It is compatible with $\mathfrak{forget}_{k+1,1}$.
\item It is invariant under the cyclic permutation of the 
boundary marked points.
\item It is invariant by the permutation of interior marked points.
\item $(ev_0,ev_t) : \mathcal M_{k+1}(\beta;\mathcal J) \to L\times [0,1]$ induces a submersion on its zero set.
\item We consider the decomposition of the boundary:
\begin{equation}\label{bdcompati}
\aligned
\partial \mathcal M_{k+1}(\beta;\mathcal J)
\supset
&\bigcup_{1\le i\le j+1 \le k+1} 
\bigcup_{\beta_1+\beta_2=\beta}\\
&\mathcal M_{j-i+1}(\beta_1;\mathcal J) {}_{(ev_0,ev_t)} \times_{(ev_i,ev_t)} 
\mathcal M_{k-j+i}(\beta_2;\mathcal J).
\endaligned
\end{equation}
Then the restriction of our family of multisections of 
$\mathcal M_{k+1}(\beta;\mathcal J)$ in the left hand side 
coincides with the fiber product family of multisections in 
the right hand side.
\end{enumerate}
\end{lmm}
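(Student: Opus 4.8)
The plan is to carry out, over the one-parameter family $\mathcal J$, the same two-step argument by which Corollary~\ref{Corkuramulti} was deduced from Theorem~\ref{multicontforgetmain}. First I would invoke Lemma~\ref{multicontforgetmainfami} to obtain a compatible system of continuous families of multisections on $\mathcal M_{0}(\beta;\mathcal J)$ and $\mathcal M_{1}(\beta;\mathcal J)$, for $\beta\cap\omega\le E_0$, which is $\epsilon$-close to the Kuranishi map, transversal to $0$, compatible with the gluing map, which restricts at $t=0,1$ to the families of Corollary~\ref{Corkuramulti}, and for which $(ev_0,ev_t)\colon\mathcal M_{1}(\beta;\mathcal J)\to L\times[0,1]$ is submersive on the zero set. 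Then I would \emph{define} the family of multisections on $\mathcal M_{k+1}(\beta;\mathcal J)$ by pulling back the one on $\mathcal M_{1}(\beta;\mathcal J)$ along the forgetful map $\mathfrak{forget}_{k+1,1}$; this makes sense because, by Lemma~\ref{Corkurafami}, the Kuranishi structure on $\mathcal M_{k+1}(\beta;\mathcal J)$ is itself pulled back from $\mathcal M_{1}(\beta;\mathcal J)$, and property~3 (compatibility with $\mathfrak{forget}_{k+1,1}$) then holds by construction.

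With the family defined this way, the remaining properties are read off as in the proof of Corollary~\ref{Corkura}. Property~1 (the values at $t=0,1$) follows from Lemma~\ref{multicontforgetmainfami}.1 together with the fact that on the slices $t\in\{0,1\}$ the pullback along $\mathfrak{forget}_{k+1,1}$ is exactly the pullback used in Corollary~\ref{Corkuramulti}. Property~2 ($\epsilon$-closeness) is inherited from Lemma~\ref{multicontforgetmainfami}.2 after shrinking the parameter spaces toward $0$ as in the remark following Lemma~\ref{famimultilocconst}. Properties~4 and~5 are formal: the cyclic permutation of boundary marked points and the permutation of interior marked points commute with $\mathfrak{forget}_{k+1,1}$, which factors through $\mathcal M_{1}(\beta;\mathcal J)$, so a pulled-back family is automatically invariant. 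Property~6 is Lemma~\ref{multicontforgetmainfami}.4 pulled back: $(ev_0,ev_t)$ on $\mathcal M_{k+1}(\beta;\mathcal J)$ factors through $(ev_0,ev_t)$ on $\mathcal M_{1}(\beta;\mathcal J)$, so it is a submersion on the zero locus; in particular $ev_0$ is a submersion on the perturbed zero set of every slice $\mathcal M_{k+1}(\beta;J_t)$.

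Property~7 (the boundary decomposition~(\ref{bdcompati})) I would establish in the course of proving Lemma~\ref{multicontforgetmainfami}, by induction on $\beta\cap\omega$, precisely as for Theorem~\ref{multicontforgetmain}: each boundary stratum of the displayed type is identified, via the gluing map and $\mathfrak{forget}$, with a fiber product of the families already constructed on parametrized moduli spaces of strictly smaller energy, and the product family of multisections on such a fiber product is taken to be the restriction of the one being built; the remaining boundary component $\mathcal M^{\mathrm{cl}}_{\ell}(\tilde\beta;\mathcal J)\,{}_{ev^{\mathrm{int}}_1}\!\times_M L$ that occurs for $\mathcal M_{0}(\beta;\mathcal J)$ is fed in from the parametrized analogue of Lemma~\ref{closedmoduliKurapert}. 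The several boundary components are mutually consistent on their overlaps, as in \cite{FOOO080} Lemma 7.2.55. One then extends the family from a neighborhood of $\partial\mathcal M_{0}(\beta;\mathcal J)$, and from the slices $t\in\{0,1\}$, to all of $\mathcal M_{0}(\beta;\mathcal J)$ by the relative version of Lemma~\ref{famimultilocconst}, and pulls back; since there are only finitely many inductive steps, property~2 is arranged at the last step.

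The main obstacle is the point emphasized in the Remark preceding the Lemma: one must keep $ev_0$ submersive on the perturbed zero set \emph{for every} $t\in[0,1]$ at once, while the construction remains compatible with the forgetful map. This is impossible with multi- but finitely-valued sections, since $[0,1]$ is uncountable; it works only because, using continuous families of multisections, the parameter manifold $W_\alpha$ may be taken of arbitrarily large dimension, so that the local model of Lemma~\ref{lemmulticonst} — now applied to the linearization $D_v\overline{\partial}$ of the $\mathcal J$-holomorphic disc equation, with the extra parameter direction $\partial/\partial t$ included — produces a submersion of the zero locus onto $L\times[0,1]$. Everything else (signs and orientations, and the matching of boundary strata across the induction) is routine once Theorem~\ref{forgetcompKUra}, Corollary~\ref{Corkura}, Lemma~\ref{multicontforgetmainfami} and their proofs are in hand.
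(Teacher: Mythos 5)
Your proposal is correct and follows the same route the paper takes: the paper's own proof of Lemma~\ref{Corkuramultifami} is the single sentence ``The proof is the same as the proof of Corollary~\ref{Corkuramulti},'' i.e.\ take the parametrized family of multisections on $\mathcal M_{1}(\beta;\mathcal J)$ from Lemma~\ref{multicontforgetmainfami}, pull it back along $\mathfrak{forget}_{k+1,1}$ using the forgetful-map-compatible Kuranishi structures of Lemma~\ref{Corkurafami}, and read off the seven properties exactly as in the proof of Corollary~\ref{Corkura}. Your write-up simply makes explicit the bookkeeping the paper elides (which boundary strata feed into the energy induction, why properties 4--5 are formal for a pulled-back family, and where the freedom to enlarge $W_\alpha$ is used to keep $(ev_0,ev_t)$ submersive on the zero set for all $t$ at once), but does not diverge from the paper's argument.
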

\begin{proof}
The proof is the same as the proof of  Corollary 
\ref{Corkuramulti}.
\end{proof}
Now we are in the position to complete the proof of Theorem \ref{existpseudoisotopy}.
Let $\rho_1,\ldots,\rho_k \in \Lambda(L)$. We put 
\begin{equation}
\text{\rm Corr}_*(\mathcal M_{k+1}(\beta;\mathcal J);(ev_{1},\ldots,ev_k),ev_0)
(\rho_1\times\ldots\times\rho_k)
=
\rho(t) + dt \wedge \sigma(t).
\end{equation}
Here we use continuous family of multisections produced in Lemma \ref{Corkuramultifami} to define 
the left hand side. We define
\begin{equation}\label{pseudoisoshiki}
\frak m^t_{k,\beta}(\rho_1,\ldots,\rho_k) = \rho(t),  \qquad
\frak c^t_{k,\beta}(\rho_1,\ldots,\rho_k) = \sigma(t).
\end{equation}
Using Lemma \ref{Corkuramultifami} we can prove that they satisfies the required 
properties in the same way as the proof of Theorem \ref{existifilaAinfmodTE}.
\end{proof}
\section{Cyclic filtered $A_{\infty}$ structure on de-Rham 
complex and 
on de-Rham cohomology}
\label{cycAinfseccoh}
In this section, we will use Theorems  \ref{existifilaAinfmodTE} and \ref{existpseudoisotopy}
to produce gapped, cyclic and unital filtered $A_{\infty}$ structure on de Rham cohomology.
We first construct cyclic filtered $A_{\infty}$ structure on de Rham complex.
\begin{thm}\label{maindehamcomplex}
For any relatively spin Lagrangian submanifold $L$, we can 
associate a gapped cyclic unital filtered $A_{\infty}$ algebra, 
$(\Lambda(L),\langle \cdot \rangle,\{\frak m_{k,\beta}\})$
on its de Rham complex.
\par
It is independent of the choices up to 
pseudo-isotoy as cyclic unital filtered $A_{\infty}$  modulo $T^{E}$ for any $E$.
\end{thm}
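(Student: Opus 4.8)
The plan is to assemble the theorem from the results already established in Sections \ref{cyclicTEderamsec}--\ref{geoisoto}, following the same limiting argument as in \cite{FOOO080} Chapter 7. First I would fix a tame almost complex structure $J$ and for each $E > 0$ invoke Theorem \ref{existifilaAinfmodTE} to obtain a $G(J)$-gapped cyclic unital filtered $A_\infty$ structure on $\Lambda(L)$ modulo $T^{E}$, where $G(J)$ is the discrete submonoid defined at the start of that proof. The point is that these structures are not literally compatible as $E$ increases (the multisections of Corollary \ref{Corkuramulti} are constructed separately for each $E_0$), so one cannot simply take a direct limit. Instead I would first show that for $E_0 < E_1$ the modulo-$T^{E_0}$ reduction of the structure built for $E_1$ is pseudo-isotopic, as $G(J)$-gapped cyclic unital filtered $A_\infty$ algebra modulo $T^{E_0}$, to the structure built for $E_0$. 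This is exactly what Theorem \ref{existpseudoisotopy} gives when one runs its proof with the constant path $J_t \equiv J$ but with the two different choices of perturbation data for $E_0$ and $E_1$: Lemmata \ref{forgetcompKUrafami}--\ref{Corkuramultifami} produce a Kuranishi structure and family of multisections on $\mathcal M_{k+1}(\beta;\mathcal J)$ interpolating between any two given endpoint choices, and formula (\ref{pseudoisoshiki}) then yields the pseudo-isotopy.

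Second, I would use Theorem \ref{pisotopyextention} to extend: given the modulo-$T^{E_0}$ structure together with a modulo-$T^{E_0}$ pseudo-isotopy to the reduction of the modulo-$T^{E_1}$ structure, Theorem \ref{pisotopyextention}.1 extends the modulo-$T^{E_0}$ structure to a modulo-$T^{E_1}$ one, and \ref{pisotopyextention}.2 extends the pseudo-isotopy. Iterating this over an increasing sequence $E_0 < E_1 < E_2 < \cdots \to \infty$ with $E(G(J)) \cap [E_i, E_{i+1}]$ finite at each stage, and using at each step the pseudo-isotopy coming from Theorem \ref{existpseudoisotopy} for the constant path, one produces a coherent tower of structures whose inverse limit is a genuine $G(J)$-gapped cyclic unital filtered $A_\infty$ structure $(\Lambda(L),\langle\cdot\rangle,\{\frak m_{k,\beta}\})$ on $\Lambda(L)$ (the $T$-adic completeness of $\Lambda_0^{G(J)}$ from Definition \ref{Gnovikov} guarantees the limit makes sense). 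The unitality and cyclic symmetry pass to the limit since they hold modulo $T^{E}$ for every $E$; one also checks that $\Lambda(L)$ is equipped with the inner product (\ref{inpro})/(\ref{derhamAinift}) throughout.

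Third, for the independence statement: suppose two choices (of almost complex structure, Kuranishi data, perturbations) yield $(\Lambda(L),\langle\cdot\rangle,\{\frak m_{k,\beta}^{(0)}\})$ and $(\Lambda(L),\langle\cdot\rangle,\{\frak m_{k,\beta}^{(1)}\})$. Fixing $E$, I would connect the two almost complex structures by a path $\mathcal J$ and apply Theorem \ref{existpseudoisotopy} to get a pseudo-isotopy modulo $T^{E}$ between the two modulo-$T^{E}$ reductions (enlarging $G$ to contain both $G(J_0)$ and $G(J_1)$ as in the statement there). Combined with the pseudo-isotopies from the previous paragraph relating each modulo-$T^{E}$ reduction to the structure actually constructed, and the fact that pseudo-isotopy is an equivalence relation (Lemma \ref{pisoequiv}), this gives the asserted pseudo-isotopy modulo $T^{E}$ as cyclic unital filtered $A_\infty$ algebras. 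Here one must also absorb the dependence on the \emph{auxiliary} choices (good coordinate systems, the sets $W(\text{\bf p}_{\frak a})$, the forms $\omega_\alpha$) into the same scheme — each such change can be realized by a constant-path version of Theorem \ref{existpseudoisotopy}, just as in \cite{FOOO080} Chapter 7. The main obstacle I expect is bookkeeping: organizing the countably many extension-and-gluing steps so that the gapped-ness is preserved (the monoid $G$ must be chosen once and for all, which forces some care in how the enlargements in Theorem \ref{existpseudoisotopy} are arranged), and verifying that all the pseudo-isotopies fit together coherently rather than just pairwise — i.e. that the relevant diagram of pseudo-isotopy classes commutes. The promotion of this to a statement about $\Lambda_{0,nov}$-coefficients (rather than a fixed $G$) is routine once the $G$-gapped version is in hand, by passing to the inductive limit over $G$; the completion of the independence proof using this machinery is deferred, as announced, to Section \ref{pisoofpisosec}.
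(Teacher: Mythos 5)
Your proposal is correct and follows essentially the same route as the paper: fix $J$, build the structures modulo $T^{E_i}$ via Theorem \ref{existifilaAinfmodTE}, connect consecutive levels by the constant-path version of Theorem \ref{existpseudoisotopy}, extend inductively via Theorem \ref{pisotopyextention}, and handle uniqueness modulo $T^E$ by choosing $E_i > E$, comparing the two extended structures to their respective $E_i$-level reductions, and bridging those via Theorem \ref{existpseudoisotopy} for a path $\mathcal J$ together with the transitivity from Lemma \ref{pisoequiv}.
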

\begin{proof}
We fix $J$ and take $E_1 < E_2 < \ldots$. For each $E_i$ we apply 
Theorem  \ref{existifilaAinfmodTE} to obtain 
gaped cyclic unital filtered $A_{\infty}$ algebra modulo $T^{E_i}$, 
$(\Lambda(L),\langle \cdot \rangle,\{\frak m_{k,\beta}^{(i)}\})$ 
on de Rham complex.
By Theorem  \ref{existpseudoisotopy} there exists a pseudo-isotopy 
$(\Lambda(L),\langle \cdot \rangle,\{\frak m_{k,\beta}^{t,(i)}\},\{\frak c_{k,\beta}^{t,(i)}\})$
of gaped cyclic unital filtered $A_{\infty}$ algebra modulo $T^{E_i}$
between 
$(\Lambda(L),\langle \cdot \rangle,\{\frak m_{k,\beta}^{(i)}\})$  and 
$(\Lambda(L),\langle \cdot \rangle,\{\frak m_{k,\beta}^{(i+1)}\})$. 
Now we use 
Theorem \ref{pisotohomotopyequiv} 
to extend unital pseudo-isotpy modulo $T^{E_0}$,
$(\Lambda(L),\langle \cdot \rangle,\{\frak m_{k,\beta}^{t,(i)}\},\{\frak c_{k,\beta}^{t,(i)}\})$
and unital and cyclic filtered $A_{\infty}$ algebra modulo $T^{E_0}$,
and 
$(\Lambda(L),\langle \cdot \rangle,\{\frak m_{k,\beta}^{(i)}\})$ to a unital pseudo-isotopy and 
unital and cyclic filtered $A_{\infty}$ algebra.
(See \cite{FOOO080} Subsection 7.2.8.)
\par
Let us take two choices $J_j$ $(j=0,1)$ of $J$ and perturbation etc.
and obtain cyclic unital filtered $A_{\infty}$ structure 
extending one on  $(\Lambda(L),\langle \cdot \rangle,\{\frak m_{k,\beta}^{(i_0,j)}\})$.
We take $i$ such that $E_i > E$. Then 
by construction 
$(\Lambda(L),\langle \cdot \rangle,\{\frak m_{k,\beta}^{(i_0,j)}\})$ is 
pseudo-isotopic to 
$(\Lambda(L),\langle \cdot \rangle,\{\frak m_{k,\beta}^{(i,j)}\})$ as 
cyclic unital filtered $A_{\infty}$ algebra. (Here modulo $T^E$ is superfluous.)
On the other hand, by Theorem  \ref{existpseudoisotopy} 
$(\Lambda(L),\langle \cdot \rangle,\{\frak m_{k,\beta}^{(i,0)}\})$ is 
pseudo-isotopic to 
$(\Lambda(L),\langle \cdot \rangle,\{\frak m_{k,\beta}^{(i,1)}\})$
as cyclic unital filtered $A_{\infty}$ algebra modulo $T^E$.
The uniqueness part of Theorem \ref{maindehamcomplex} follows.
\end{proof} 
Theorems \ref{maindehamcomplex}, \ref{cancyclic} and \ref{canopseudo} immediately 
imply the following.
\begin{crl}\label{maindehamcoh}
For any relatively spin Lagrangian submanifold $L$, we can 
associate a gapped cyclic unital filtered $A_{\infty}$ algebra, 
$(H(L),\langle \cdot \rangle,\{\frak m_{k,\beta}\})$
on its de Rham cohomology.
\par
It is independent of the choices up to 
homotopy equivalence as cyclic unital filtered $A_{\infty}$  modulo $T^{E}$ for any $E$.
\end{crl}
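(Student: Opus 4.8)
The plan is to assemble the corollary mechanically from the three cited results. First I would invoke Theorem \ref{maindehamcomplex} to fix, for the given relatively spin $L$, a gapped cyclic unital filtered $A_{\infty}$ algebra $(\Lambda(L),\langle \cdot \rangle,\{\frak m_{k,\beta}\})$ on the de Rham complex. Choosing a Riemannian metric on $L$, let $H(L)$ be the space of harmonic forms, which represents the $\frak m_{1,(0,0)}$-cohomology. Since $\overline C = \Lambda(L)$ is a de Rham complex, Theorem \ref{cancyclic} applies and produces the canonical model $(H(L),\langle \cdot \rangle,\{\frak m^{\text{\rm can}}_{k,\beta}\})$, which is cyclic and unital, together with a cyclic unital filtered $A_{\infty}$ homomorphism $\frak f : H(L) \to \Lambda(L)$ that is a homotopy equivalence; the modulo $T^{E_0}$ version holds as well. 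This defines the structure asserted in the statement.

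For independence of the choices, suppose two sets of auxiliary data (almost complex structure, Kuranishi structures and perturbations, metric, homotopy data) yield cyclic unital filtered $A_{\infty}$ algebras $(\Lambda(L),\langle \cdot \rangle,\{\frak m^{(0)}_{k,\beta}\})$ and $(\Lambda(L),\langle \cdot \rangle,\{\frak m^{(1)}_{k,\beta}\})$ on the de Rham complex. Fix $E$. By the uniqueness clause of Theorem \ref{maindehamcomplex} these two are pseudo-isotopic as cyclic unital filtered $A_{\infty}$ algebras modulo $T^E$ (after enlarging to a common discrete submonoid $G$ containing those coming from both almost complex structures, exactly as in Theorem \ref{existpseudoisotopy}). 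Theorem \ref{canopseudo} then shows their canonical models on $H(L)$ are pseudo-isotopic modulo $T^E$ as cyclic (unital) filtered $A_{\infty}$ algebras. Finally Theorem \ref{pisotohomotopyequiv} converts this pseudo-isotopy into a cyclic unital filtered $A_{\infty}$ homomorphism $H(L) \to H(L)$ modulo $T^E$ admitting an inverse; in particular it is a homotopy equivalence, which is precisely the claimed uniqueness up to homotopy equivalence modulo $T^E$.

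The only bookkeeping point to watch is that the canonical model's construction uses the auxiliary operators $G,\Pi$ of Lemma \ref{Gdual}, so a priori two choices give models built with different such data; but these enter only through Lemma \ref{Gdual} and the underlying space is canonically the $\frak m_{1,(0,0)}$-cohomology $H(L)$, so the pseudo-isotopy produced above already connects any two of them. I expect no real obstacle here: all the analytic and homological-algebra work has been carried out in the preceding sections, and this corollary is a straightforward assembly of Theorems \ref{maindehamcomplex}, \ref{cancyclic}, \ref{canopseudo}, together with Theorem \ref{pisotohomotopyequiv}.
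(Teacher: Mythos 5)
Your assembly of the corollary via Theorems \ref{maindehamcomplex}, \ref{cancyclic}, \ref{canopseudo}, and \ref{pisotohomotopyequiv} matches the paper's route, and the first two paragraphs are sound provided the operators $G,\Pi$ of Lemma \ref{Gdual} are held fixed. The gap is precisely in the closing ``bookkeeping'' paragraph. You assert that ``the pseudo-isotopy produced above already connects any two of them,'' but Theorem \ref{canopseudo} does not deliver this: its proof explicitly chooses $G$ and $\Pi$ \emph{once} and independent of the isotopy parameter $t$, so it compares two canonical models built with the \emph{same} $(G,\Pi)$. If the two canonical models under comparison use different $(G_0,\Pi_0)$ and $(G_1,\Pi_1)$, Theorem \ref{canopseudo} does not yield a pseudo-isotopy between them, and the fact that the underlying graded vector space $H(L)$ is canonical does not repair this --- the higher $\frak m^{\mathrm{can}}_{k,\beta}$ genuinely depend on $G$ and $\Pi$. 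The paper's own Remark~\ref{rem19} flags exactly this point: it states that independence of $G,\Pi$ up to \emph{pseudo-isotopy} (mod $T^E$) does not seem immediate, and that the \emph{homotopy-equivalence} statement of the corollary is instead obtained from the fact that the map $\frak f$ of Theorem \ref{cancyclic} is a cyclic homotopy equivalence.

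Concretely, the route that does handle varying $(G,\Pi)$ is: use Theorem \ref{maindehamcomplex} and Theorem \ref{pisotohomotopyequiv} to produce a cyclic homotopy equivalence between the two $\Lambda(L)$-structures modulo $T^E$; pre- and post-compose with $\frak f^{(0)}$ and a cyclic homotopy inverse of $\frak f^{(1)}$ to obtain a cyclic weak homotopy equivalence between the two canonical models. Producing the cyclic homotopy inverse is the step you elide; the paper points to the external results (\cite{Ka} Theorem 5.17, and \cite{AkJo08}) in the remark following Theorem \ref{pisotohomotopyequiv} for exactly that. So while your conclusion is correct and the first two paragraphs are essentially the intended proof, the last paragraph replaces a genuine issue --- one the authors single out --- with an unsupported claim that in fact asserts something stronger (pseudo-isotopy invariance of the canonical model across changes of $G,\Pi$) than the corollary even states.
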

\begin{rem}\label{rem19}
In Corollary \ref{maindehamcoh} we proved that the cyclic filtered $A_{\infty}$ structure on 
the de Rham cohomology is well defined up to homotopy equivalence modulo $T^E$ but 
{\it not} up to up to pseudo-isotopy modulo $T^E$ .
Theorem \ref{canopseudo} implies that it is well defined up to pseudo-isotopy modulo $T^E$ once we fix 
operators $G$ and $\Pi$ satisfying the conclusion of Lemma \ref{Gdual}.
It does not seem to be so immediate to prove its independence of $G$ and $\Pi$ 
up to pseudo-isotopy (modulo $T^E$). The proof up to homotopy equivalence follows from 
the fact $\frak f$ in Theorem \ref{cancyclic} is homotopy equivalence.
\par
So Theorem 
\ref{maindehamcomplex} gives stronger conclusion than Corollary \ref{maindehamcoh}.
\end{rem}
\begin{rem}
The difference between `up to pseudo-isotopy modulo $T^E$ for arbitrary $E$' and 
`up to pseudo-isotopy' is not important for most of the applications. 
To improve the statement of Theorem 
\ref{maindehamcomplex} to `up to pseudo-isotopy' we need to work out 
`pseudo-isotopy of pseudo-isotopies'. We will present the detail of this construction 
in Section \ref{pisoofpisosec}, for completeness.
\end{rem}
\begin{rem}
Here we are using bifurcation method rather than cobordism method.
(See \cite{FOOO080} Subsection 7.2.14 for the comparison between these two methods.)
In \cite{FOOO080} we used cobordism method mainly. 
In \cite{AkJo08} bifurcation method is used.
The reason why we use bifurcation method here is related to Remark \ref{rem19}.
Namely pseudo-isotopy seems stronger than homotopy equivalence.
\par
By carefully looking the proof of Theorem \ref{pisotohomotopyequiv}, we find that 
they finally give the same homotopy equivalence. In fact 
`time ordered product', which was used in \cite{FOOO080} appears 
during the proof of Theorem \ref{pisotohomotopyequiv}.
\end{rem}
\section{Adic convergence of filtered $A_{\infty}$ structure}
\label{adicconv}
In this section we prove Theorem
\ref{main2}. 
We begin with the proof of properties 1 and 2 in Theorem
\ref{main2}.  
We define convergence used here first.
Let $G \subset \R_{\ge 0} \times 2\Z$ be a discrete submonoid.
Let $\overline C$ be a finite dimensional $\R$ vector space and 
$C_G = \overline C \otimes_{\R} \Lambda_0^G$, 
$C = \overline C \otimes_{\R} \Lambda_{0,nov}$.
\begin{dfn}\label{mixconv}
A sequence of elements $v_i \in C_G$ said to converge to 
$v$ if 
$$
v_i = \sum_{\beta \in G} v_{i,\beta} T^{E(\beta)}e^{\mu(\beta)/2}, 
\quad v = \sum_{\beta \in G} v_{\beta} T^{E(\beta)}e^{\mu(\beta)/2} 
$$
and if seach of $v_{i,\beta}$ converges to $v_{\beta}$ in the topology of $\overline C$, 
(induced by the ordinary topology of $\R$.)
\par
 $v_i \in C$ said to converge to 
$v$ if there exists $G$ independent of $i$ such that $v_i,v \in C_G$
and $v_i$ converges to $v$.
\end{dfn}
\par
Let $L$ be a relatively spin Lagrangian submanifold of $M$. 
We take an almost complex structure $J$ on $M$ tamed by $\omega$.
We consider the cyclic unital filtered $A_{\infty}$ algebra 
$(\Lambda(L),\langle \cdot \rangle,\{\frak m_{k,\beta}\})$ in 
Theorem \ref{maindehamcomplex}
and 
$(H(L),\langle \cdot \rangle,\{\frak m_{k,\beta}^{\text{\rm can}}\})$ in 
Corollary \ref{maindehamcoh}. 
Let $\text{\bf e}_1,\ldots,\text{\bf e}_{b_1}$ be a basis of $H^1(L;\Z)$.
We take its representative as a closed one form and denote it by the same symbol.
We put 
\begin{equation}\label{deg1b}
\text{\bf b} = \sum_{i=1}^{b_1} x_i \text{\bf e}_1
\end{equation}
with $x_i \in \Lambda_G^{(0)}$. (Namely $x_i$ does not contain $e$ the grading parameter of 
$\Lambda_{0,nov}$ 
and $\text{\bf x}_i \in \Lambda(L)$ ($i=1,\ldots,k$).
\begin{lmm}\label{m1jikurasireru}
\begin{equation}\label{forgetexponent}
\aligned
\sum_{m_0+\ldots+m_k = m}
&\frak m_{k+m,\beta}(\text{\bf b}^{\otimes m_0},\text{\bf x}_1,\text{\bf b}^{\otimes m_1},\ldots,
\text{\bf b}^{\otimes m_{k-1}},\text{\bf x}_k,\text{\bf b}^{\otimes m_k})
\\
&= \frac{1}{m !}\left(\sum_{i=1}^{b_1} (\partial \beta\cap \text{\bf e}_i) x_i\right)
^{m}\frak m_{k}(\text{\bf x}_1,\ldots,\text{\bf x}_k).
\endaligned
\end{equation}
\end{lmm}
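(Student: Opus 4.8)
The plan is to prove the identity \eqref{forgetexponent} by exploiting the forgetful-map compatibility of the Kuranishi structure and the continuous family of multisections established in Corollary \ref{Corkuramulti}. The key geometric input is that the moduli space $\mathcal M_{\ell,k+m}(\beta)$ carries a Kuranishi structure pulled back from $\mathcal M_{\ell,k}(\beta)$ (more precisely, via the tower of maps $\mathfrak{forget}_{k+1,1}$ and the iterated versions forgetting the extra $m$ marked points among the $k$ "essential" ones), together with the fact that each $\text{\bf e}_i$ is a closed $1$-form representing a cohomology class with $\int_{\partial D^2} (ev_j)^*\text{\bf e}_i = \partial\beta\cap\text{\bf e}_i$ when restricted to the disc boundary in class $\beta$.

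First I would set up notation: expand $\text{\bf b} = \sum_i x_i \text{\bf e}_i$ and observe that the left-hand side of \eqref{forgetexponent}, being multilinear in the inserted $\text{\bf b}$'s, is a sum over assignments of indices $i$ to the $m$ boundary slots occupied by $\text{\bf b}$, weighted by the corresponding monomials in the $x_i$. So it suffices to treat the case where each inserted entry is one of the $\text{\bf e}_i$ (allowing repeats), and then reassemble. Next I would use the forgetful-map compatibility: the correspondence defining $\frak m_{k+m,\beta}$ is computed via $\operatorname{Corr}(\mathcal M_{k+m+1}(\beta); (ev_1,\ldots,ev_{k+m}), ev_0)$, and since the Kuranishi structure and multisections are pulled back under $\mathfrak{forget}$ forgetting the $m$ extra points, the integration along the fiber of $\mathfrak{forget}$ factors. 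The fiber of the forgetful map forgetting one boundary marked point is the open arc $\partial D^2$ minus the remaining marked points (an interval), and pulling back $\text{\bf e}_i$, a closed one-form, the fiber integral $\int_{\text{fiber}} (ev)^*\text{\bf e}_i$ over that arc is computed by Stokes to be the difference of primitives at the endpoints — and summing over all $m!$ orderings / all positions of the $m$ extra points along the boundary telescopes to $\frac{1}{m!}(\sum_i (\partial\beta\cap\text{\bf e}_i)x_i)^m$, exactly as in the well-known "divisor axiom" for Gromov--Witten invariants. The factor $\frac{1}{m!}$ comes from the fact that the $m$ forgotten points are unordered once we sum over all insertion patterns $m_0+\cdots+m_k = m$.

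More concretely, I would argue inductively on $m$. For $m=1$: the left-hand side is $\sum_{j=0}^{k}\frak m_{k+1,\beta}(\ldots,\text{\bf b},\ldots)$ with $\text{\bf b}$ inserted in the $j$-th gap. Using Lemma \ref{Corandint2} and the factorization of $\operatorname{Corr}$ through $\mathfrak{forget}_{k+1,1}$ (Corollary \ref{Corkuramulti}.2), this sum equals $\operatorname{Corr}(\mathcal M_{k+1}(\beta);\ldots)$ of $\text{\bf x}_1\times\cdots\times\text{\bf x}_k$ wedged with the fiber integral of $(ev)^*\text{\bf b}$ over the boundary circle of the forgotten point, which by Stokes on $S^1$ (and the relatively spin / Maslov data identifying $[\partial\beta]$ as the boundary homology class) gives $(\partial\beta\cap\text{\bf b}) = \sum_i(\partial\beta\cap\text{\bf e}_i)x_i$ times $\frak m_{k,\beta}(\text{\bf x}_1,\ldots,\text{\bf x}_k)$. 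The inductive step adds one more forgotten point, and the combinatorial bookkeeping of where it lands relative to the previously forgotten ones produces the binomial factor turning $\frac{1}{(m-1)!}(\cdots)^{m-1}$ into $\frac{1}{m!}(\cdots)^m$.

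The main obstacle I anticipate is twofold. First, the precise form of the fiber-integration argument: one must verify that the fiber of the iterated forgetful map is genuinely the configuration space of $m$ unordered points on the arc, that the continuous family of multisections is literally a pullback (so that $i_X$ of the pulled-back form and the parameter form $\omega_\alpha$ vanishes, exactly as in the unitality argument in the proof of Theorem \ref{existifilaAinfmodTE}), and that Stokes' theorem applies cleanly despite the boundary/corner structure and the marked-point punctures — here the closedness of $\text{\bf e}_i$ and the fact that its periods compute $\partial\beta\cap\text{\bf e}_i$ are what make the boundary contributions collapse. Second, sign bookkeeping: $\text{\bf e}_i$ has degree $1$, hence degree $0$ in $\overline C[1]$, so reordering insertions is sign-free in the shifted complex, which is why no signs appear in \eqref{forgetexponent}; I would record this carefully but expect it to be routine. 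No genuinely new technique beyond what is already assembled in Sections \ref{Seckuranishiforget}--\ref{fgcompfamisec} is needed — the statement is essentially the divisor/string-type relation \eqref{compforetres} organized for all $m$ at once.
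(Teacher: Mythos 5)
Your proposal uses the same essential mechanism as the paper: compatibility of the Kuranishi structure and multisections with the forgetful map lets you factor $\frak m_{k+m,\beta}$ through fiber integration over the forgotten marked points, and the sum over all insertion patterns $\vec m$ assembles those fibers into the configuration of $m$ ordered points on the boundary circle, whose integral against $\text{\bf b}\times\cdots\times\text{\bf b}$ produces $\frac{1}{m!}(\partial\beta\cap\text{\bf b})^m$. The paper does this in one step, without induction: it shows $\sum_{\vec m}(ev_{\vec m})_*((\mathfrak{forget}^{\frak s}_{\vec m})^{-1}(\text{\bf p}))$ is literally the simplex $\{0\le t_1\le\cdots\le t_m<1\}\subset(\partial\Sigma)^m$ as a current, and the integral of a product of pulled-back closed $1$-forms over that simplex is $\frac{1}{m!}\bigl(\int_{\partial\Sigma}u^*\text{\bf b}\bigr)^m$ by symmetrization. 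Your inductive bookkeeping (insert one $\text{\bf b}$ at a time, correct for the $m$-fold overcount) gets to the same place but is a bit more delicate combinatorially; the paper's simplex picture is cleaner and worth adopting. One phrasing caution: the arc integral of $(ev)^*\text{\bf e}_i$ is \emph{not} a difference of primitives of $\text{\bf e}_i$ on $L$ (the class is nontrivial), and nothing "telescopes"; rather, the arcs for the various insertion positions concatenate to the full boundary circle, and the circle integral of the closed pullback gives the period $\partial\beta\cap\text{\bf e}_i$ directly. Your closing remark that the closedness and the period identity are what matter is the correct version of this; lead with that rather than the primitive/Stokes language.
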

\begin{proof}
We consider the set of $(m_0,\ldots,m_k) \in \Z_{\ge 0}$ with $m_0+\ldots+m_k = m$ and denote it by 
$A(m)$. For each  $\vec{m} = (m_0,\ldots,m_k) \in A(m)$ we take a copy of 
$\mathcal M_{k+m}(\beta)$ and denote it by $\mathcal M_{\vec{m}\,}(\beta)$.
We then cosider the forgetful map
\begin{equation}\label{lforget}
\frak{forget}_{\vec{m}} : \mathcal M_{\vec{m}\,}(\beta)  \to \mathcal M_{k}(\beta).
\end{equation}
which forget
1st,\dots,$m_0$th, $m_0+2$nd, $m_0+m_1+1$st, $m_0+m_1+3$rd, and, 
\dots,$m_0+\ldots+m_i+1$st, $m_0+\ldots+m_i+i+3$rd, \dots marked points.
In other words, we forget the marked points where $\text{\bf b}$ are assigned in the 
left hand side of (\ref{forgetexponent}).
\par
We consider 
\begin{equation}\label{extraeval}
ev_{\vec{m}} : \mathcal M_{\vec{m}\,}(\beta) \to L^{m}
\end{equation}
the evaluation map at the marked points which we forget in (\ref{lforget}).
We take the (continuous family of) perturbations as in Corollary \ref{Corkuramulti}.
We write its zero set as $\mathcal M_{\vec{m}\,}(\beta)^{\frak s}$,
$\mathcal M_{k}(\beta)^{\frak s}$. Since the perturbation is compatible with 
forgetful map there exists a map
\begin{equation}\label{lforget2}
\frak{forget}_{\vec{m}}^{\frak s} : \mathcal M_{\vec{m}\,}(\beta)^{\frak s}  \to \mathcal M_{k}(\beta)^{\frak s}.
\end{equation}
For each $\text{\bf p} \in \mathcal M_{k}(\beta)^{\frak s}$ the fiber 
$(\frak{forget}_{\vec{m}}^{\frak s})^{-1}(\text{\bf p})$ is $m$ dimensional.
Moreover we have the following. We represent $\text{\bf p}$ by $(\Sigma,u)$. Then 
the cycle
$$
\sum_{\vec{m}} (ev_{\vec{m}})_*((\frak{forget}_{\vec{m}}^{\frak s})^{-1}(\text{\bf p}))
$$
is equal to 
$$
\{ (u(t_1),\ldots,u(t_{m})) \mid t_1,\ldots,t_{m} \in [0,1),\,\,\,
t_1 \le \ldots \le t_{m} \}
$$
as currents. Here we identify $\partial\Sigma = [0,1)$ so that $0$ corresponds to the $0$th boundary 
marked point.
\par
Therefore we have
$$
\sum_{\vec{m}} \int_{(\frak{forget}_{\vec{m}}^{\frak s})^{-1}(\text{\bf p})}ev_{\vec{m}\,}^* 
(\text{\bf b} \times \ldots \times \text{\bf b})
=  \frac{1}{m !}\left(\sum_{i=1}^{b_1} (\partial \beta\cap \text{\bf e}_i) x_i\right)^{m}.
$$
(\ref{forgetexponent}) follows.
($1/m!$ is the volume of the domain $\{(t_1,\ldots,t_{m}) \mid t_1,\ldots,t_{m} \in [0,1),\,\,\,
t_1 \le \ldots \le t_{m}\}$.)
\end{proof}
\begin{lmm}\label{m1jikurasireru2}
\begin{equation}\label{forgetexponent2}
\aligned
\sum_{m_0+\ldots+m_k = m}
&\frak m^{\text{\rm can}}_{k+m,\beta}(\text{\bf b}^{\otimes m_0},\text{\bf x}_1,\text{\bf b}^{\otimes m_1},\ldots,
\text{\bf b}^{\otimes m_{k-1}},\text{\bf x}_k,\text{\bf b}^{\otimes m_k})
\\
&= \frac{1}{m !}\left(\sum_{i=1}^{b_1} (\partial \beta\cap \text{\bf e}_i) x_i\right)^{m}\frak m_{k}^{\text{\rm can}}(\text{\bf x}_1,\ldots,\text{\bf x}_k).
\endaligned
\end{equation}
\end{lmm}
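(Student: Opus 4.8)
The proof of Lemma \ref{m1jikurasireru2} is purely algebraic once Lemma \ref{m1jikurasireru} is available: it amounts to expanding $\frak m^{\text{\rm can}}_{k+m,\beta}$ by the tree formula of Section \ref{canoalgsec} and applying the de Rham identity $(\ref{forgetexponent})$ vertex by vertex. (As in $(\ref{forgetexponent})$ the identity is read for each fixed $\beta$, so its right hand side is $\tfrac1{m!}\bigl(\sum_i(\partial\beta\cap\text{\bf e}_i)x_i\bigr)^m\,\frak m^{\text{\rm can}}_{k,\beta}(\text{\bf x}_1,\ldots,\text{\bf x}_k)$.) Recall $\frak m^{\text{\rm can}}_{k+m,\beta}=\sum_{\Gamma\in Gr(k+m,\beta)}\frak m_\Gamma$, where $\frak m_\Gamma$ composes the operators $\frak m_{\ell(v),\beta(v)}$ over the interior vertices $v$ of $\Gamma$, with $G$ inserted along interior edges and $\Pi$ at the root. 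In a term $\frak m_\Gamma(\text{\bf b}^{\otimes m_0},\text{\bf x}_1,\ldots,\text{\bf x}_k,\text{\bf b}^{\otimes m_k})$ each of the $m$ inputs equal to $\text{\bf b}$ is an exterior leaf of $\Gamma$, hence a \emph{direct} argument of the operator $\frak m_{\ell(v),\beta(v)}$ at the unique interior vertex $v$ to which that leaf is attached; the operators $G$ and $\Pi$ never act on a $\text{\bf b}$ (so harmonicity of $\text{\bf b}$ is not needed for this argument), and since $\deg'\text{\bf b}=0$ moving $\text{\bf b}$'s past other inputs produces no sign.

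The key step is the reorganization
$$
\sum_{m_0+\cdots+m_k=m}\;\sum_{\Gamma\in Gr(k+m,\beta)}\frak m_\Gamma(\text{\bf b}^{\otimes m_0},\text{\bf x}_1,\ldots,\text{\bf x}_k,\text{\bf b}^{\otimes m_k})
=\sum_{\Gamma'\in Gr(k,\beta)}\Bigl(\sum_{(j_v):\,\sum_v j_v=m}\;\prod_{v}\tfrac1{j_v!}\,c_{\beta(v)}^{\,j_v}\Bigr)\,\frak m_{\Gamma'}(\text{\bf x}_1,\ldots,\text{\bf x}_k),
$$
where $c_{\beta'}:=\sum_{i=1}^{b_1}(\partial\beta'\cap\text{\bf e}_i)x_i$, the products run over interior vertices $v$ of $\Gamma'$, and $(j_v)$ ranges over assignments of nonnegative integers to them. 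To see this I would group the trees $\Gamma$ on the left by the planar tree $\Gamma'$ obtained by deleting all $\text{\bf b}$-leaves. For fixed $\Gamma'$ and fixed numbers $j_v$ of leaves deleted at each vertex, summing over all cyclic interleavings of the $j_v$ deleted leaves among the edges of $v$ is precisely the sum over $(m_0,\ldots,m_{\ell(v)})$ occurring in $(\ref{forgetexponent})$ for $\frak m_{\ell(v)+j_v,\beta(v)}$ with the remaining (subtree- or $\text{\bf x}$-) inputs held fixed; it therefore contributes the central scalar $\tfrac1{j_v!}c_{\beta(v)}^{\,j_v}$ times the value with those leaves removed. Carrying this out at every vertex (the $c_{\beta(v)}$ are central, so the order is irrelevant) gives the displayed formula. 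Two points need checking: a $\text{\bf b}$-leaf sitting at a vertex with $\beta(v)=(0,0)$ contributes the factor $c_{(0,0)}=0$ and hence drops out automatically, so no illegal univalent or bivalent $(0,0)$-vertex is ever created and the degenerate cases (where $\frak m_{k,(0,0)}=0$ for $k\ne1,2$ on the de Rham complex) are consistent; and the number of interior $v$ with $\beta(v)\ne(0,0)$ is finite (at most $E(\beta)/\min(E(G)\setminus\{0\})$, by discreteness of $G$), so the inner sum is a finite sum.

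Finally, $\partial\beta'\cap\text{\bf e}_i$ is additive in $\beta'$ and $\sum_{v\in C_0^{\text{\rm int}}(\Gamma')}\beta(v)=\beta$ for $\Gamma'\in Gr(k,\beta)$, so $\sum_v c_{\beta(v)}=c_\beta$; the multinomial theorem then gives $\sum_{\sum_v j_v=m}\prod_v\tfrac1{j_v!}c_{\beta(v)}^{\,j_v}=\tfrac1{m!}c_\beta^{\,m}$, which is independent of $\Gamma'$. Pulling this factor out of the sum over $\Gamma'$ and using $\sum_{\Gamma'\in Gr(k,\beta)}\frak m_{\Gamma'}=\frak m^{\text{\rm can}}_{k,\beta}$ yields exactly $(\ref{forgetexponent2})$; the unital version needs no change. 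The only real obstacle is the bookkeeping in the key step — setting up the correspondence ``(tree in $Gr(k+m,\beta)$ with a marked set of $m$ $\text{\bf b}$-leaves placed in the prescribed cyclic pattern) $\leftrightarrow$ (tree in $Gr(k,\beta)$ together with $m$ $\text{\bf b}$-leaves interleaved among its vertices)'' carefully enough that the per-vertex application of $(\ref{forgetexponent})$ is manifestly legitimate — but no analytic input beyond Lemma \ref{m1jikurasireru} and the tree construction of the canonical model is required.
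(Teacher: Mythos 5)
Your argument is correct and is essentially the paper's own proof made explicit: the paper first records the analogous identity (\ref{forgetexponent3}) for $\frak f_{k,\beta}$ as a consequence of the inductive tree construction and Lemma \ref{m1jikurasireru}, and then deduces (\ref{forgetexponent2}) from it, whereas you unroll that same induction into a single global regrouping of the tree sum, applying (\ref{forgetexponent}) vertex by vertex and then using additivity of $\partial\beta'\cap\text{\bf e}_i$ together with the multinomial theorem. Both rest on precisely the same two facts — the tree expansion of $\frak m^{\text{\rm can}}_{k,\beta}$ and Lemma \ref{m1jikurasireru} — and your observations about the $\beta(v)=(0,0)$ degeneracy and the finiteness of the inner sum spell out points the paper leaves implicit.
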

\begin{proof}
We have
\begin{equation}\label{forgetexponent3}
\aligned
\sum_{m_0+\ldots+m_k = m}
&\frak f_{k+m,\beta}(\text{\bf b}^{\otimes m_0},\text{\bf x}_1,\text{\bf b}^{\otimes m_1},\ldots,
\text{\bf b}^{\otimes m_{k-1}},\text{\bf x}_k,\text{\bf b}^{\otimes m_k})
\\
&= \frac{1}{m !}\left(\sum_{i=1}^{b_1} (\partial \beta\cap \text{\bf e}_i) x_i\right)^{m}\frak f_{k}(\text{\bf x}_1,\ldots,\text{\bf x}_k).
\endaligned
\end{equation}
by its inductive construction and (\ref{forgetexponent}).
(\ref{forgetexponent2}) then follows from definition, (\ref{forgetexponent}) and  (\ref{forgetexponent3}).
\end{proof}
\begin{crl}\label{m1jikurasireru3}
\begin{equation}\label{forgetexponent22}
\aligned
\lim_{N\to\infty}\sum_{m_0+\ldots+m_k = m \le N}
&\frak m^{\text{\rm can}}_{k+m,\beta}(\text{\bf b}^{\otimes m_0},\text{\bf x}_1,\text{\bf b}^{\otimes m_1},\ldots,
\text{\bf b}^{\otimes m_{k-1}},\text{\bf x}_k,\text{\bf b}^{\otimes m_k})
\\
&= \exp\left(\sum_{i=1}^{b_1} (\partial \beta\cap \text{\bf e}_i) x_i\right)\frak m_{k}^{\text{\rm can}}(\text{\bf x}_1,\ldots,\text{\bf x}_k).
\endaligned
\end{equation}
Namely the left hand side converges to the right hand side in the usual topology 
of $H(L;\R)$, that is the topology induced by the usual topology of $\R$.
\par
The right hand side depends only on $y_i = e^{x_i}$ and $\text{\bf x}_i$. Namely it is independent of 
the change $x_i \mapsto x_i + 2\pi\sqrt{-1}a_i$ for $a_i \in H(L;\Lambda_{0,nov}^{\Z})$.
\end{crl}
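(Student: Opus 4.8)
Proof proposal for Corollary \ref{m1jikurasireru3}.

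The plan is to obtain the convergence statement as a direct consequence of Lemma \ref{m1jikurasireru2}, and then to deduce the two structural claims (dependence only on $y_i=e^{x_i}$, and invariance under $x_i\mapsto x_i+2\pi\sqrt{-1}a_i$) from the resulting formula together with the Taylor expansion of the exponential. First I would fix $\beta\in G$ with $\beta\neq(0,0)$ and set $c_\beta=\sum_{i=1}^{b_1}(\partial\beta\cap\text{\bf e}_i)x_i$, which is an element of $\Lambda_0^{(0)}$ (indeed of the maximal ideal, since each $x_i$ is assumed to lie in $\Lambda_{0,nov}^+$ when we are discussing $T$-adic statements, but for the present Corollary we only need that $c_\beta$ is a fixed element of a fixed $\Lambda_0^G$). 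By Lemma \ref{m1jikurasireru2} the partial sum over $m\le N$ equals
\begin{equation}
\sum_{m=0}^{N}\frac{1}{m!}c_\beta^{\,m}\,\frak m_{k,\beta}^{\text{\rm can}}(\text{\bf x}_1,\ldots,\text{\bf x}_k),
\end{equation}
so the assertion is literally the statement that $\sum_{m=0}^N \frac{1}{m!}c_\beta^{\,m}$ converges to $\exp(c_\beta)$ in the topology of Definition \ref{mixconv}. This is where I expect the only real (and still modest) work: one must check convergence coefficient-by-coefficient in $\R$ after expanding in the basis $T^{E(\beta')}e^{\mu(\beta')/2}$ of $\Lambda_0^G$. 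Since the exponent of $T$ appearing in $c_\beta$ is bounded below by a positive constant only in the $T$-adic regime, in general I would argue as follows: group the sum by the monomial $T^{E(\beta')}e^{\mu(\beta')/2}$; for each fixed such monomial only finitely many $(m,\text{multi-index})$ contribute at each total order because $G$ is a discrete submonoid (Definition \ref{discmonoid}, conditions 2 and 3), and the real coefficient is then an absolutely convergent series of reals, namely the corresponding coefficient of the classical exponential series. Hence the limit exists and equals $\exp(c_\beta)\,\frak m_{k,\beta}^{\text{\rm can}}(\text{\bf x}_1,\ldots,\text{\bf x}_k)$; summing over $\beta$ (each contribution landing in a fixed $\Lambda_0^G$ and the $\beta$ with $E(\beta)\le E_0$ being finite in number for each $E_0$) gives the displayed formula with $\frak m_k^{\text{\rm can}}=\sum_\beta T^{E(\beta)}e^{\mu(\beta)/2}\frak m_{k,\beta}^{\text{\rm can}}$.

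For the last paragraph of the statement: the right hand side of (\ref{forgetexponent22}) depends on $x_i$ only through the factor $\exp(\sum_i(\partial\beta\cap\text{\bf e}_i)x_i)=\prod_i y_i^{\,\partial\beta\cap\text{\bf e}_i}$, where $y_i=e^{x_i}$; since $\partial\beta\cap\text{\bf e}_i\in\Z$, this is a monomial in the $y_i$ and their inverses, hence genuinely a function of $(y_1,\ldots,y_{b_1})$ and $\text{\bf x}_1,\ldots,\text{\bf x}_k$ alone. Invariance under $x_i\mapsto x_i+2\pi\sqrt{-1}a_i$ is then immediate from $e^{x_i+2\pi\sqrt{-1}a_i}=e^{x_i}$ — that is, $y_i$ is unchanged — provided one has set up $\Lambda_{0,nov}^{\R}$ so that $e^{2\pi\sqrt{-1}a}=1$ for $a$ in the relevant integral lattice; this is exactly the point where the convergence of the exponential series established above is used to make sense of $e^{x_i}$ in the first place (cf. the forward reference to Section \ref{canoalgsec} in the introduction). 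I do not expect any obstacle here beyond bookkeeping; the substantive input is Lemma \ref{m1jikurasireru2}, whose proof already extracted all the geometry via compatibility of the perturbations with the forgetful map.

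I would then remark that the same argument, applied verbatim with $\frak m^{\text{\rm can}}$ replaced by $\frak m$ and Lemma \ref{m1jikurasireru} in place of Lemma \ref{m1jikurasireru2}, gives the corresponding convergence statement at the chain level on $\Lambda(L)$; this is what feeds into the proof of parts 1 and 2 of Theorem \ref{main2}. The main obstacle overall is really upstream — it is the construction of the forgetful-map-compatible Kuranishi structures and multisections of Sections \ref{Seckuranishiforget} and \ref{fgcompfamisec} that makes the clean identity (\ref{forgetexponent}) available — so at the level of this Corollary there is essentially nothing left but to recognize the exponential series and invoke discreteness of $G$ for the convergence in the sense of Definition \ref{mixconv}.
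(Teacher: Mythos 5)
Your proposal is correct and is essentially the paper's approach: the paper disposes of this Corollary with the single sentence ``This is immediate from Lemma~\ref{m1jikurasireru2}.'', and your reduction to the exponential series is exactly what that sentence elides.

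One sentence of your argument is self-contradictory and worth tightening. You write that ``for each fixed such monomial only finitely many $(m,\text{multi-index})$ contribute at each total order because $G$ is a discrete submonoid,'' and then in the same breath appeal to an ``absolutely convergent series of reals.'' The point of Theorem~\ref{main2}.1 is precisely that $x_i\in\Lambda_{0,nov}$ may have a nonzero constant term $a_0\in\R$, so that $c_\beta=\sum_i(\partial\beta\cap\text{\bf e}_i)x_i$ does too; in that regime \emph{every} $m$ contributes to a fixed monomial $T^{E'}e^{\mu'/2}$, and finiteness from discreteness of $G$ alone is not what saves you. What makes it work is splitting $c_\beta=a_0+c_\beta^+$ with $v(c_\beta^+)>0$: for a fixed $T^{E'}$ only boundedly many factors of $c_\beta^+$ can occur (that is the discreteness input), and the remaining sum over the number of $a_0$-factors is $\sum_m a_0^m/m!=e^{a_0}$, the factorial decay being the genuinely Archimedean ingredient of Definition~\ref{mixconv}. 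Your final clause (``the corresponding coefficient of the classical exponential series'') shows you have the right mechanism in mind; just drop the ``only finitely many $(m,\ldots)$'' claim, which is false exactly in the interesting case.
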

This is immediate from Lemma \ref{m1jikurasireru2}.
\par
So far we consider bounding chain $\text{\bf b}$ consisting of cohomology class of degree $1$.
The degree zero class does not appear in the bounding cochain. We next consider 
the class of degree $>1$.
We put
\begin{equation}\label{deghighb}
\text{\bf b}_{\text{\rm high}} = \sum_{i>b_1} x_i \text{\bf e}_i.
\end{equation}
Here $\text{\bf e}_i$, $i=b_1+1,\ldots$ is a basis of $\bigoplus_{d\ge1}H^{2d+1}(L;\Z)$.
\begin{lmm}\label{highconv}
There exists $E(m)$ such that $\lim_{m\to\infty} E(m)= \infty$ and that
\begin{equation}
T^{E(\beta)}\frak m^{\text{\rm can}}_{k+m,\beta}(\text{\bf b}_{\text{\rm high}}^{\otimes m_0},\text{\bf x}_1,\ldots,
\text{\bf x}_k,\text{\bf b}_{\text{\rm high}}^{\otimes m_k}) 
\equiv 0 \mod T^{E(m)}
\end{equation}
if $m = m_0+\ldots+m_k$.
$E(m)$ is independent of $\beta$.
\end{lmm}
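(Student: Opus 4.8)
The plan is to exploit the degree count together with the fact that the canonical operators $\frak m^{\text{\rm can}}_{k+m,\beta}$ are built from the geometric $\frak m_{k,\beta}$ via the tree sum of Section \ref{canoalgsec}, so it suffices to get an energy lower bound for the geometric side and then propagate it through the homotopy retract. First I would recall that each class $\text{\bf e}_i$ appearing in $\text{\bf b}_{\text{\rm high}}$ has degree $\ge 3$, hence $\deg' \text{\bf e}_i \ge 2$ in the shifted grading. The operator $\frak m_{k+m,\beta}$ has degree $1-\mu(\beta)$, so if it is nonzero on an input word containing $m$ copies of $\text{\bf b}_{\text{\rm high}}$ together with $\text{\bf x}_1,\dots,\text{\bf x}_k$, comparing degrees gives a lower bound of the shape $\mu(\beta) \ge c\, m - C(k,\vec{\text{\bf x}})$ for a fixed constant $c>0$ (coming from $\deg'\text{\bf e}_i-1\ge 1$) and a constant depending only on $k$ and the degrees of the $\text{\bf x}_i$. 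By the discreteness and properness of $G$ (Definition \ref{discmonoid}) a lower bound on the Maslov index $\mu(\beta)$ forces a lower bound on the energy $E(\beta)$: more precisely, since $G\cap E^{-1}([0,\lambda])$ is finite for every $\lambda$, the quantity $\min\{E(\beta)\mid \beta\in G,\ \mu(\beta)\ge N\}$ tends to $\infty$ as $N\to\infty$. Composing these two estimates yields a function $E(m)\to\infty$, independent of $\beta$, such that $T^{E(\beta)}\frak m_{k+m,\beta}(\cdots)\equiv 0\bmod T^{E(m)}$ whenever $m$ copies of $\text{\bf b}_{\text{\rm high}}$ occur among the arguments.

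Next I would pass from $\frak m$ to $\frak m^{\text{\rm can}}$. By the inductive construction of $\frak f_{k,\beta}$ and $\frak m^{\text{\rm can}}_{k,\beta}$ in Section \ref{canoalgsec}, each $\frak m^{\text{\rm can}}_{k+m,\beta}$ evaluated on the relevant word is a finite sum over trees $\Gamma\in Gr(k+m,\beta)$, and in each summand the energies of the interior vertices add up to $\beta$ while the total number of exterior inputs carrying $\text{\bf b}_{\text{\rm high}}$ is $m$; by the pigeonhole principle some interior vertex $v$ receives at least $m/(\#C_0^{\text{\rm int}}(T))$ such inputs, but one must be careful since the inputs to $v$ are outputs of the $\frak f_{\Gamma_i}$, not the $\text{\bf e}_i$ themselves. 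The cleanest way around this is to run the degree argument directly on the composite: $\frak m^{\text{\rm can}}_{k+m,\beta}$ has degree $1-\mu(\beta)$ and maps into $\overline H = H^*(L)$, which is degree-bounded, so nonvanishing on a word containing $m$ copies of $\text{\bf b}_{\text{\rm high}}$ again forces $\mu(\beta)\gtrsim m$ up to a constant in $k$ and the $\deg\text{\bf x}_i$. (Here the boundedness of $\overline H$ replaces the unbounded target $\overline C$ of the chain-level statement and is what makes the canonical version cleaner than the chain version.) Then the same properness argument for $G$ finishes it, with the \emph{same} $E(m)$ up to adjusting the constant.

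The main obstacle I anticipate is making the degree bookkeeping uniform: one needs the constant $C$ in $\mu(\beta)\ge c m - C$ to depend only on $k$ and the degrees of $\text{\bf x}_1,\dots,\text{\bf x}_k$ (which are fixed once and for all in the intended application, where the $\text{\bf x}_i$ range over a basis), and not on $\beta$ or on the distribution $(m_0,\dots,m_k)$. This is straightforward once one writes the shifted-degree balance for a single operator, but care is needed because the $\text{\bf b}_{\text{\rm high}}$ here carries mixed degree (a sum of $\text{\bf e}_i$ of various odd degrees $\ge 3$); one simply takes $c$ to be the minimum of $\deg'\text{\bf e}_i-1$ over the basis elements of $\bigoplus_{d\ge 1}H^{2d+1}(L)$, which is $\ge 1$, and the rest is immediate. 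A minor secondary point is to confirm that the exponent function $E(m)$ can be chosen the same for all $\beta\in G$ simultaneously; this is exactly the content of Definition \ref{discmonoid}(3), so no further work is needed.
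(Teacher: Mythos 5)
Your final argument coincides with the paper's proof: a shifted-degree count applied directly to $\frak m^{\text{\rm can}}_{k+m,\beta}$ gives $\mu(\beta) \ge cm + C$ with $c>0$ and $C$ depending only on the fixed inputs $\text{\bf x}_1,\ldots,\text{\bf x}_k$ and on $L$, and Definition \ref{discmonoid}(2),(3) (which the paper traces back to Gromov compactness) then force $E(\beta)\to\infty$ as $m\to\infty$. One peripheral aside in your write-up is inaccurate but harmless: $\overline C = \Lambda(L)$ is also degree-bounded since $L$ is an $n$-manifold, so the degree balance works equally well at chain level and the passage to $\overline H$ is not what rescues the argument.
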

\begin{proof}
By degree reason we have
$$
\mu(\beta) > m d + C
$$
where $C$ depends only on $\text{\bf x}_1, \ldots, \text{\bf x}_k$ and $L$.
By Gromov compactness (Definition \ref{discmonoid} 2,3) it implies that 
$E(\beta) \to \infty$ as $m\to \infty$.
\end{proof}
We put
\begin{equation}
\frak m_k = \sum_{\beta \in G} T^{E(\beta)}e^{\mu/2} \frak m_{k,\beta}.
\end{equation}
Then 1,2 of Theorem \ref{main2} follow from Corollary \ref{m1jikurasireru3} and Lemma \ref{highconv}.
\par\smallskip
We turn to the proof of 3,4 of Theorem \ref{main2}.
We take a Weinstein neighborhood $U$ of $L$.
Namely $U$ is symplectomorphic to a neighborhood $U'$ of zero section 
in $T^*L$. 
We choose $\delta_1$ so that for $c = (c_1,\ldots,c_b) \in [-\delta_1,+\delta_1]^b$ 
the graph of the closed one form $\sum_{i=1}^{b_1} c_i \text{\bf e}_i$ is contained in $U'$. 
We send it by the symplectomorphism to $U$ and denote it by 
$L(c)$.
We may take $\delta_2 < \delta_1$ such that if 
$c = (c_1,\ldots,c_b) \in [-\delta_2,+\delta_2]^b$ then there exists a diffeomorphism 
$F_{c} : M \to M$ such that 
\begin{eqnarray}
&&F_c(L) = L(c), \\
&&\text{$(F_c)_*J$ is tamed by $\omega$.}
\end{eqnarray}
\begin{rem}
It is essential here to consider tame almost complex structure rather than compatible almost complex structure.
In fact the compatibility is used 
to prove Gromov compactness which was actually 
proved in \cite{Grom85} for the tame almost complex structure.
In fact we can not take $F_c$ to be symplectomorphism in general.
So in general $(F_c)_*J$ is not compatible with $\omega$.
However it is tamed by $\omega$ if $c$ is sufficiently small.
This is because the condition for almost complex structure to be tame is an open condition.
\end{rem}
We consider the cyclic filtered $A_{\infty}$ algebra 
$(\Lambda(L(c)),\langle \cdot \rangle,\{\frak m_{k,\beta}^{(F_c)_*J}\})$.
We compare it with $(\Lambda(L),\langle \cdot \rangle,\{\frak m_{k,\beta}^{J}\})$.
(Here we include $(F_c)_*J$ and $J$ in the notation to specify the complex structure we use.)
The closed one forms $\text{\bf e}_i$ representing the basis $H^1(L;\Z)$ is transformed 
to a closed one form $\text{\bf e}_i(c)$ on $L(c)$ by the diffeomorphism $F_c$.
For $\text{\bf b}$ in (\ref{deg1b}) we put
$$
\text{\bf b}_c(x_1,\ldots,x_{b_1}) = \text{\bf b} = \sum_{i=1}^{b_i} (x_i  \,\cdot( F_{c})_*(\text{\bf e}_i)).
$$
We define 
$$
\text{\bf b}_{\text{\rm high},c}(x_{b_1+1},\ldots,x_b) = 
\text{\bf b}_{\text{\rm high},c} = \sum_{i>b_1} (x_i \,\cdot ( F_{c})_*(\text{\bf e}_i))
$$
and $\text{\bf b}_{c+} = \text{\bf b}_c + \text{\bf b}_{\text{\rm high},c}$, 
$\text{\bf b}_{+} = \text{\bf b} + \text{\bf b}_{\text{\rm high}}$.
\begin{lmm}\label{134}
We may choose perturbation etc. of the choices entering in the definition of  $\frak m^{(F_c)_*J}_{k,\beta}$, 
$\frak m^{J}_{k,\beta}$ such that the following holds.
\begin{equation}\label{changetocnochange}
\aligned
&\frak m^{(F_c)_*J}_{k+m,(F^{-1}_c)^*\beta}(\text{\bf b}_{c+}^{\otimes m_0},(F^{-1}_c)^*\text{\bf x}_1,
\text{\bf b}_{c+}^{\otimes m_1},\ldots,
\text{\bf b}_{c+}^{\otimes m_{k-1}},(F^{-1}_c)^*\text{\bf x}_k,\text{\bf b}_{c+}^{\otimes m_k})
\\
&=\frak m^{J}_{k+m,\beta}(\text{\bf b}_{+}^{\otimes m_0},\text{\bf x}_1,
\text{\bf b}_{+}^{\otimes m_1},\ldots,\text{\bf b}_{+}^{\otimes \ell_{k-1}},\text{\bf x}_k, 
\text{\bf b}_{+}^{\otimes m_k}).
\endaligned
\end{equation}
\end{lmm}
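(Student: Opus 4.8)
The proof is a \emph{naturality under a diffeomorphism} argument: the construction of Sections \ref{kurareview}--\ref{fgcompfamisec} refers only to the pseudo-holomorphic curve equation and to homology classes, never directly to $\omega$, so it is transported by $F_c$. First I would observe that $u\mapsto F_c\circ u$ induces a homeomorphism
$$
(F_c)_*:\mathcal M_{\ell,k}(\beta;L;J)\ \xrightarrow{\ \cong\ }\ \mathcal M_{\ell,k}\bigl((F_c^{-1})^*\beta;L(c);(F_c)_*J\bigr)
$$
and likewise for $\mathcal M^{\text{\rm cl}}_{\ell}(\alpha)$; it intertwines the evaluation maps ($ev_i^{L(c)}\circ(F_c)_* = F_c\circ ev_i^L$), the forgetful maps $\mathfrak{forget}_{k+1,1}$, the gluing maps $\mathfrak{glue}$ of (\ref{gluemap}), the closed-open maps $\mathfrak{clop}$ of (\ref{closeopencllapse}), and the boundary decomposition (\ref{bdcompati}). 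Moreover $dF_c$ carries the linearized operator $D_u\overline{\partial}$ over $L$ to the one over $L(c)$, hence transports the obstruction spaces $E_a$ of (\ref{whereEa}), the added marked points, the Kuranishi charts, the coordinate changes, and the tangent-bundle identification (\ref{identifi}). Thus $(F_c)_*$ upgrades to an isomorphism of Kuranishi spaces once the auxiliary data are transported as well.

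Next, having run the inductive construction of Corollary \ref{Corkura} and Corollary \ref{Corkuramulti} for $(L,J)$, I would simply \emph{define} all the corresponding data on $\mathcal M_{\ell,k}(\cdot;L(c);(F_c)_*J)$ — Kuranishi structures, the parameter manifolds $W_\alpha$, the continuous families of multisections $\frak s_{\alpha,i,j}$, the fibrewise volume forms $\omega_\alpha$, and the partitions of unity — to be the $F_c$-images of those for $(L,J)$. Every property in Corollaries \ref{Corkura} and \ref{Corkuramulti} (forgetful-map compatibility, cyclic and interior-permutation invariance, submersivity of $ev_0$, $\epsilon$-closeness, and the boundary-restriction identity) is preserved by a diffeomorphism, so this is a legitimate choice of the data entering the definition (\ref{geodef}) of $\frak m^{(F_c)_*J}_{k,\beta}$; since the cyclic unital filtered $A_\infty$ structure on $\Lambda(L(c))$ produced by Theorem \ref{maindehamcomplex} is only pinned down after such choices, we are free to use it.

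With these choices, naturality of the smooth correspondence finishes the proof: for a diffeomorphism, pullback of forms commutes with pullback by evaluation maps and with integration along the fibre (the measures $\omega_\alpha$ and the partition of unity being transported), so
$$
\operatorname{Corr}_*\bigl(\mathcal M_{k+1}((F_c^{-1})^*\beta;L(c);(F_c)_*J);(ev_1,\dots,ev_k),ev_0\bigr)\bigl((F_c^{-1})^*\rho_1\times\cdots\times(F_c^{-1})^*\rho_k\bigr)
= (F_c^{-1})^*\operatorname{Corr}_*\bigl(\mathcal M_{k+1}(\beta;L;J);(ev_1,\dots,ev_k),ev_0\bigr)(\rho_1\times\cdots\times\rho_k).
$$
Since $(F_c)_*\text{\bf e}_i=(F_c^{-1})^*\text{\bf e}_i$ and the coefficients $x_i$ are central, we have $\text{\bf b}_{c+}=(F_c^{-1})^*\text{\bf b}_{+}$; feeding $\text{\bf b}_{c+}$ and the $(F_c^{-1})^*\text{\bf x}_j$ into the displayed identity and unwinding $\frak m^{(F_c)_*J}_{k+m,(F_c^{-1})^*\beta}=\operatorname{Corr}_*(\cdots)$ exhibits both sides of (\ref{changetocnochange}) as identified by $F_c^{*}$, which is (\ref{changetocnochange}).

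The main obstacle is not any individual step — each is elementary diffeomorphism invariance — but the bookkeeping of carrying $F_c$ coherently through the \emph{entire} inductive construction of Sections \ref{Seckuranishiforget}--\ref{fgcompfamisec}: one must check that "transport the data for $(L,J)$" and "run the inductive construction for $(L(c),(F_c)_*J)$" can be made to coincide. Because each inductive step (the choice of $E(u)$ in Lemma \ref{ateachpointkuracomp}, of $s_u$ and $W(u)$ in Lemma \ref{lemmulticonst}, of the closed-moduli data of Lemmas \ref{closedmoduliKura} and \ref{closedmoduliKurapert}, and of the partitions of unity) is itself a free choice subject only to constraints inherited from earlier steps which are also $F_c$-transports, one simply makes the $(L(c),(F_c)_*J)$-side choices equal to the $F_c$-images of the $(L,J)$-side ones, and the compatibility is automatic. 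I would also record the identification $\beta\leftrightarrow(F_c^{-1})^*\beta$ between $H_2(M,L;\mathbb Z)$ and $H_2(M,L(c);\mathbb Z)$ and of the associated submonoids $G$, compatible with the Maslov index; the energy shift $\beta\cap\omega\rightsquigarrow ((F_c^{-1})^*\beta)\cap\omega$ is precisely the term $\sum_i c_i(\partial\beta\cap\text{\bf e}_i)$ that, together with Lemma \ref{m1jikurasireru}, reproduces the change of variables $y_i=e^{x_i}$ used elsewhere, so at the level of the individual operations $\frak m_{k,\beta}$ (without Novikov weights) the assertion (\ref{changetocnochange}) is exactly the clean naturality statement above.
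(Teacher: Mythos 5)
Your proof is correct and follows the same route as the paper's: transport everything by the isomorphism $(F_c)_*\colon\mathcal M_k(\beta;L;J)\to\mathcal M_k((F_c^{-1})^*\beta;L(c);(F_c)_*J)$, choose the Kuranishi/multisection data for $(L(c),(F_c)_*J)$ to be the $F_c$-image of that for $(L,J)$, and invoke naturality of $\operatorname{Corr}_*$ under a diffeomorphism. The paper compresses this into three sentences where you spell out the bookkeeping; the content is identical.
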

\begin{proof}
We remark that $F_c$ gives an isomorphism 
\begin{equation}\label{isomoduli}
(F_c)_* : \mathcal M_{k}(\beta;J) \to \mathcal M_{k}((F_c)_*\beta;(F_c)_*J).
\end{equation}
We can extend this isomorphism to one of Kuranishi structures. 
Therefore we can take the continuous family of perturbations in 
Corollary \ref{Corkuramulti} so that it is preserved by (\ref{isomoduli}).
The lemma follows immediately.
\end{proof}
\begin{lmm}\label{weightchange}
If $\partial\beta \cap \text{\bf e}_i = g_i$ then
$$
(F_c)_*(\beta) \cap (F^{-1}_c)^*(\text{\bf e}_i) 
= \beta\cap \text{\bf e}_i + \sum_{j=1}^{b_1} c_ig_i.
$$
\end{lmm}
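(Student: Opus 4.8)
The content of Lemma \ref{weightchange} is that the intersection number attached to the transported disc class $(F_c)_*\beta$ differs from the one attached to $\beta$ by a flux term: the $\text{\bf e}_j$‑periods of the boundary loop $\partial\beta$ are unaffected by $F_c$ (naturality of the pairing under the diffeomorphism $F_c|_L:L\to L(c)$), so the whole correction $\sum_{j=1}^{b_1}c_jg_j$ comes from the displacement of the boundary from the zero section $L$ to the graph $L(c)$ of $\alpha_c=\sum_{j=1}^{b_1}c_j\text{\bf e}_j$ inside the Weinstein neighborhood $U\cong U'\subset T^*L$ fixed before the statement. The plan is to extract this from one application of Stokes' theorem, exactly as in the classical computation of the flux homomorphism applied to the one‑parameter family $\{L(tc)\}_{t\in[0,1]}$.

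First I would use the freedom in the choice of $F_c$. Beyond $F_c(L)=L(c)$ and $(F_c)_*J$ tame, one may arrange that on a smaller neighborhood of $L$ inside $U$ the map $F_c$ is the fibrewise translation $(q,p)\mapsto(q,p+(\alpha_c)_q)$, cut off to the identity outside $U$; for $c$ small this cut‑off perturbation is $C^1$‑small, so $F_c$ remains a diffeomorphism with $(F_c)_*J$ still $\omega$‑tame (the taming condition is open, as in the Remark preceding the lemma). This comes with the isotopy $\{F_{tc}\}_{t\in[0,1]}$ from $\mathrm{id}_M$ to $F_c$ obtained by scaling $\alpha_c$ to $\alpha_{tc}=t\alpha_c$, with $F_{tc}(L)=L(tc)$. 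Then I would represent $\beta$ by a smooth $2$‑chain $u$ with $\partial u=\gamma$, a $1$‑cycle in $L$, so that $(F_c)_*\beta$ is represented by $F_c(u)$ with $\partial F_c(u)=F_c(\gamma)\subset L(c)$, and introduce the translation cylinder $W=\bigcup_{t\in[0,1]}F_{tc}(\gamma)\subset U$, which satisfies $\partial W=F_c(\gamma)-\gamma$.

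The two chains $F_c(u)-u-W$ form a $2$‑cycle in $M$ which bounds the $3$‑chain $\bigcup_{t}F_{tc}(u)$ swept out by the isotopy, hence is null‑homologous; so the intersection number in question, computed on $F_c(u)$, equals the one computed on $u$ plus the contribution of $W$. The $u$‑part reproduces $\beta\cap\text{\bf e}_i$ unchanged. The $W$‑part is evaluated by Stokes: since the zero section carries $\lambda|_L=0$ for the canonical $1$‑form $\lambda$ of $T^*L$, and the graph section $s_{\alpha_{tc}}$ satisfies $s_{\alpha_{tc}}^*\lambda=\alpha_{tc}$, the $W$‑term collapses to $\int_{F_c(\gamma)}\lambda-\int_\gamma\lambda=\int_\gamma\alpha_c=\sum_{j=1}^{b_1}c_j\int_\gamma\text{\bf e}_j=\sum_{j=1}^{b_1}c_j\,(\partial\beta\cap\text{\bf e}_j)=\sum_{j=1}^{b_1}c_jg_j$, using the hypothesis $\partial\beta\cap\text{\bf e}_j=g_j$ for every $j$. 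Combining the two pieces gives the asserted identity.

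The only real obstacle is the compatibility bookkeeping: one must check that imposing the Weinstein normal form on $F_c$ does not disturb the earlier requirements (that $F_c$ globalize over $M$ and that $(F_c)_*J$ be $\omega$‑tame), and one must be careful about which cocycle representative is paired so that the boundary term in Stokes is exactly $\int_\gamma\alpha_c$. Once the normal form is in place the rest is routine, and the proof is the standard flux computation for the family $\{L(tc)\}$.
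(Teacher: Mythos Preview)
Your argument is correct and is exactly the standard flux computation that the paper's one-sentence proof (``This follows from the fact that $F_c(L)=L(c)$ is the graph of $\sum c_i\text{\bf e}_i$'') is invoking; you have simply written out explicitly the Stokes argument on the swept cylinder. One remark: the lemma as printed contains a typo---the pairing on the left should be with $\omega$ (as is clear from how the lemma is used immediately afterwards to compare the energy weights $T^{(F_c)_*\beta\cap\omega}$ and $T^{\beta\cap\omega}$)---so your phrase ``the $u$-part reproduces $\beta\cap\text{\bf e}_i$ unchanged'' should read $\beta\cap\omega$.
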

\begin{proof}
This follows from the fact that $F_c(L) = L(c)$ is the graph of 
$\sum c_i\text{\bf e}_i$.
\end{proof}
Let
$\text{\bf b}_{+} = \sum_{i} x_i\text{\bf e}_i$.
We put $y_i = e^{x_i}$, $i=1,\ldots,b_1$ and 
$\vec x = (y_1,\ldots,y_{b_1},x_{b_1+1},\ldots,x_{b})$.
We define
\begin{equation}
\aligned
&\frak m^{(F_c)_*J,\vec x}_{k,\beta}((F^{-1}_c)^*\text{\bf x}_1,\ldots,(F^{-1}_c)^*\text{\bf x}_k)
\\
&= \sum_m\sum_{m=m_0+\ldots+m_k}T^{(F_c)_*(\beta) \cap\omega}
e^{\mu(\beta)/2}\\
&\qquad\qquad\frak m^{(F_c)_*J}_{k+m,(F^{-1}_c)^*\beta}(\text{\bf b}_{c+}^{\otimes m_0},(F^{-1}_c)^*\text{\bf x}_1,
\ldots,
(F^{-1}_c)^*\text{\bf x}_k,\text{\bf b}_{c+}^{\otimes m_k}).
\endaligned
\end{equation} 
We write
\begin{equation}
\aligned
&\frak m^{J,\vec x}_{k,\beta}(\text{\bf x}_1,\ldots,\text{\bf x}_k)
\\
&= \sum_m\sum_{m=m_0+\ldots+m_k=m}T^{\beta \cap\omega}
e^{\mu(\beta)/2}\frak m^{J}_{k+m,\beta}(\text{\bf b}_{+}^{\otimes m_0},\text{\bf x}_1,
\ldots,
\text{\bf x}_k,\text{\bf b}_{+}^{\otimes m_k}).
\endaligned
\end{equation} 
We put 
$$
\vec x(c) = (T^{c_1}y_1,\ldots,T^{c_{b_1}}y_{b_1},x_{b_1+1},\ldots,x_{b}).
$$
Then Lemmas \ref{134},\ref{weightchange} imply
\begin{equation}\label{1318}
\frak m^{(F_c)_*J,\vec x}_{k,\beta}((F^{-1}_c)^*\text{\bf x}_1,\ldots,(F^{-1}_c)^*\text{\bf x}_k)
=  \frak m^{J,\vec x(c)}_{k,\beta}(\text{\bf x}_1,\ldots,\text{\bf x}_k)
\end{equation}
We apply Corollary \ref{maindehamcoh} to $L(c)$ and $(F_c)_*J$. Then 
the sum of left hand side of (\ref{1318}) over $\beta$ converges for $y_i \in 1+\Lambda_{0,nov}^+$ and 
$v(x_i) > -\delta$ ($i=b_1+1,\ldots$).
Therefore the sum of the right hand side of (\ref{1318}) over $\beta$ also converges there.
Hence by taking various $c_i$ we obtain 3,4 of Theorem \ref{main2}.
\qed
\begin{rem}
By the construction of this section, we can prove a 
similar convergence result for the pseudo-isotopy we constructed in 
Sections \ref{geoisoto},\ref{cycAinfseccoh},\ref{pisoofpisosec}.
Therefore the family of unital cyclic filtered $A_{\infty}$ algebras in Theorem  \ref{main2}.4 
is well defined up to homotopy equivalence of unital cyclic filtered $A_{\infty}$ algebras.
\end{rem}
\section{Pseudo-isotopy of pseudo-isotopies}
\label{pisoofpisosec}
Let $\overline C$ be a finite dimensional $\R$ vector space or 
de Rham complex $\Lambda(L)$.
The vector space $C^{\infty}([0,1]^2,\overline C)$ is the 
set of all smooth maps $[0,1]^2 \to \overline C$.
Let $C^{\infty}([0,1]^2 \times \overline C)$ be the set of formal 
expression
\begin{equation}\label{pspselement}
x(t,s) + dt \wedge y(t,s) + ds \wedge z(t,s) + dt \wedge ds \wedge w(t,s)
\end{equation}
where $x(t,s), y(t,s), z(t,s), w(t,s) \in C^{\infty}([0,1]^2 \times \overline C)$.
We define degree by putting $\deg dt = \deg ds =1$.
\par
In the case of de Rham complex $\overline C=\Lambda(L)$ we put
$$
C^{\infty}([0,1]^2 \times \overline C) = \Lambda([0,1]^2\times L).
$$
(In this case also an element of $\Lambda([0,1]^1\times L)$ can be uniquely written as 
(\ref{pspselement}).)
\par
We assume that, for each $(t,s)\in [0,1]^2$, we have operations:
\begin{equation}\label{param2}
\frak m^{t,s}_{k,\beta} : B_k(\overline C[1]) \to \overline C[1]
\end{equation}
of degree $-\mu(\beta)+1$,
\begin{equation}\label{paracd2}
\frak c^{t,s}_{k,\beta} : B_k(\overline C[1]) \to \overline C[1], \quad
\frak d^{t,s}_{k,\beta} : B_k(\overline C[1]) \to \overline C[1]
\end{equation}
of degree $-\mu(\beta)$ 
and 
\begin{equation}\label{parae2}
\frak e^t_{k,\beta} : B_k(\overline C[1]) \to \overline C[1]
\end{equation}
of degree $-\mu(\beta)-1$.
We assume that they are smooth in the following sense:
$$
(s,t) \mapsto  \frak m^{t,s}_{k,\beta}(x_1,\ldots,x_k) \in C^{\infty}([0,1],\overline C)
$$
and similarly for $\frak c^{t,s}_{k,\beta}, \frak d^{t,s}_{k,\beta}, \frak e^{t,s}_{k,\beta}$.
We use them to define
\begin{equation}
\mathfrak M_{k,\beta} : 
B_k(C^{\infty}([0,1]^2 \times \overline C)[1]) \to C^{\infty}([0,1]^2 \times \overline C),
\end{equation}
as follows.
Let
$$
\text{\bf x}_i =
x_i(t,s) + dt \wedge y_i(t,s) + ds \wedge z_i(t,s) + dt \wedge ds \wedge w_i(t,s)
$$
We put
$$
\mathfrak M_{k,\beta}(\text{\bf x}_1,\ldots,\text{\bf x}_k)
= 
x(t,s) + dt \wedge y(t,s) + ds \wedge z(t,s) + dt \wedge ds \wedge w(t,s)
$$
where $x(t,s), y(t,s), z(t,s), w(t,s)$ are defined as follows.
\begin{subequations}
\begin{equation}
x(t,s) =\frak m^{t,s}_{k,\beta}(x_1(t,s),\ldots,x_k(t,s)).
\end{equation}
\begin{equation}
\aligned
y(s,t)=&\frak c^{t,s}_{k,\beta}(x_1(t,s),\ldots,x_k(t,s)) \\
&+ \sum (-1)^{*_i} \frak m^{t,s}_{k,\beta}(x_1(t,s),\ldots,y_i(t,s),\ldots,x_k(t,s))
\endaligned\end{equation}
where $*_i = \deg'x_1+\ldots+\deg'x_{i-1}+1$.
\begin{equation}
\aligned
z(s,t)=&\frak c^{t,s}_{k,\beta}(x_1(t,s),\ldots,x_k(t,s)) \\
&+ \sum (-1)^{*_i} \frak m^{t,s}_{k,\beta}(x_1(t,s),\ldots,z_i(t,s),\ldots,x_k(t,s)).
\endaligned\end{equation}
\begin{equation}
\aligned
w(s,t) &=\frak e^{t,s}_{k,\beta}(x_1(t,s),\ldots,x_k(t,s)) \\
&+ \sum (-1)^{*^1_i} \frak c^{t,s}_{k,\beta}(x_1(t,s),\ldots,z_i(t,s),\ldots,x_k(t,s)) \\
&+\sum (-1)^{*^2_i} \frak d^{t,s}_{k,\beta}(x_1(t,s),\ldots,y_i(t,s),\ldots,x_k(t,s)) \\
&+ \sum_{i<j} (-1)^{*^3_{ij}}\frak m^{t,s}_{k,\beta}(x_1(t,s),\ldots,y_i(t,s),\ldots, 
z_j(t,s),\ldots,x_k(t,s)) \\
&+ \sum_{i>j} (-1)^{*^4_{ij}}\frak m^{t,s}_{k,\beta}(x_1(t,s),\ldots,z_j(t,s),\ldots, 
y_i(t,s),\ldots,x_k(t,s)).
\endaligned\end{equation}
Here 
$$
\aligned
*^1_i &=  \deg' x_1 + \ldots + \deg'x_{i-1}\\
*^2_i &=  \deg' x_1 + \ldots + \deg'x_{i-1}\\
*^3_{ij} &= \deg' y_i + \deg' x_{i+1} + \ldots + \deg'x_{j-1}\\
*^4_{ij} &= 1 + \deg' z_i + \deg' x_{i+1} + \ldots + \deg'x_{j-1}.
\endaligned$$
\end{subequations}
In the case $(k,\beta) = (1,(0,0))$ we put
$$
\aligned
x(t,s) &= \frak m_{1,(0,0)}(x_1(t,s)) \\
y(t,s) &= \frac{d}{dt} x_1(t,s) - \frak m_{1,(0,0)}(y_1(t,s)) \\
z(t,s) &= \frac{d}{ds} x_1(t,s) - \frak m_{1,(0,0)}(z_1(t,s)) \\
w(s,t) &= \frac{d}{ds} y_1(t,s) - \frac{d}{dt} z_1(t,s) + \frak m_{1,(0,0)}(w_1(t,s)).
\endaligned
$$
\begin{dfn}
We say $(C,\langle \cdot \rangle,\{\frak m^{t,s}_{k,\beta}\},\{\frak c^{t,s}_{k,\beta}\},\{\frak d^{t,s}_{k,\beta}\},\{\frak e^{t,s}_{k,\beta}\})$ is a 
{\it pseudo-isotpy of pseudo-isotopies} if the following holds.
\smallskip
\begin{enumerate}
\item $\mathfrak M_{k,\beta}$ satisfies filtered $A_{\infty}$ formula (\ref{Ainfinityrelbeta}).
\item $\frak m^{t,s}_{k,\beta},\frak c^{t,s}_{k,\beta},\frak d^{t,s}_{k,\beta},\frak e^{t,s}_{k,\beta}$ are all 
cyclically symmetric.
\item $\frak m^{t,s}_{k,(0,0)}$ is independent of $t,s$. Moreover 
$\frak c^{t,s}_{k,(0,0)}$,$\frak e^{t,s}_{k,(0,0)}$,$\frak e^{t,s}_{k,(0,0)}$ are all zero.
\end{enumerate}
\par\smallskip
Unital and/or mod $T^{E_0}$ version is defined in the same way.
\end{dfn}
If $(C,\langle \cdot \rangle,\{\frak m^{t,s}_{k,\beta}\},\{\frak c^{t,s}_{k,\beta}\},\{\frak d^{t,s}_{k,\beta}\},\{\frak e^{t,s}_{k,\beta}\})$ is a 
pseudo-isotpy of pseudo-isotopies.
Then, for each  $s_0 \in [0,1]$, 
$(C,\langle \cdot \rangle,\{\frak m^{t,s_0}_{k,\beta}\},\{\frak c^{t,s_0}_{k,\beta}\})$ is 
a pseudo-isotopy, and, for each 
$t_0 \in [0,1]$, 
$(C,\langle \cdot \rangle,\{\frak m^{t_0,s}_{k,\beta}\},\{\frak d^{t_0,s}_{k,\beta}\})$ is 
also a pseudo-isotopy. We call them the restrictions.
\begin{thm}\label{isotopyisotopyext}
Let $E_0 < E_1$.
Let $(C,\langle \cdot \rangle,\{\frak m^{t_0,s_0}_{k,\beta}\})$ be cyclic filtered 
$A_{\infty}$ algebras modulo $T^{E_1}$ for $s_0,t_0 \in \{0,1\}$.
For $s_0 = 0,1$, let $(C,\langle \cdot \rangle,\{\frak m^{t,s_0}_{k,\beta}\},\{\frak c^{t,s_0}_{k,\beta}\})$ be 
a pseudo-isotopy  modulo $T^{E_1}$.
For $t_0 =1$, let $(C,\langle \cdot \rangle,\{\frak m^{t_0,s}_{k,\beta}\},\{\frak d^{t_0,s}_{k,\beta}\})$
be 
a pseudo-isotopy  modulo $T^{E_1}$.
\par
Let $(C,\langle \cdot \rangle,\{\frak m^{t,s}_{k,\beta}\},\{\frak c^{t,s}_{k,\beta}\},\{\frak d^{t,s}_{k,\beta}\},\{\frak e^{t,s}_{k,\beta}\})$ 
be a pseudo-isotopy of pseudo-isotopies modulo $T^{E_0}$.
\par
We assume that the restriction of  pseudo-isotopy of pseudo-isotopies 
to $s_0 = 0,1$ or to $t_0 =1$ coincides with the above 
pseudo-isotopy as pseudo-isotopies modulo $T^{E_0}$.
\par
We assume Assumption  \ref{assum} below.
\par\smallskip
Then, $(C,\langle \cdot \rangle,\{\frak m^{t,s}_{k,\beta}\},\{\frak c^{t,s}_{k,\beta}\},\{\frak d^{t,s}_{k,\beta}\},\{\frak e^{t,s}_{k,\beta}\})$ 
extends to a pseudo-isotopy of pseudo-isotopies modulo $T^{E_1}$ 
so that its restriction to $s_0 = 0,1$ or to $t_0 =1$ coincides with the above 
pseudo-isotopy as pseudo-isotopies modulo $T^{E_1}$.
\par
The unital version also holds.
\end{thm}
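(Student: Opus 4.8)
The plan is to imitate the proof of Theorem~\ref{pisotopyextention}, together with the integral argument used in the proof of Theorem~\ref{pisotohomotopyequiv}, one parameter dimension higher, carrying out the extension inductively over the energy filtration and compatibly with the prescribed faces of $[0,1]^2$. First I would reduce to the case $E(G)\cap[E_0,E_1]=\{E_0,E_1\}$, by subdividing the interval $[E_0,E_1]$ at the finitely many values of $E(G)$ lying strictly between $E_0$ and $E_1$ (this set is finite by Definition~\ref{discmonoid}) and extending across one intermediate level at a time. After this reduction the only operations to be constructed are $\frak m^{t,s}_{k,\beta}$, $\frak c^{t,s}_{k,\beta}$, $\frak d^{t,s}_{k,\beta}$, $\frak e^{t,s}_{k,\beta}$ for $\beta$ with $E(\beta)=E_0$, and all of the defining relations (the filtered $A_\infty$ relation for $\mathfrak M$, the cyclic symmetries, the degeneracy conditions on $(0,0)$) become inhomogeneous linear equations whose right-hand sides involve only the already-constructed lower-energy data.

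The key device is to package the four families of operations into the single filtered $A_\infty$ structure $\mathfrak M_{k,\beta}$ on $C^\infty([0,1]^2\times\overline C)$ introduced before the statement: extending the pseudo-isotopy of pseudo-isotopies modulo $T^{E_1}$ is precisely extending $\mathfrak M_{k,\beta}$ modulo $T^{E_1}$, compatibly with the three faces $s_0=0$, $s_0=1$, $t_0=1$ on which $\mathfrak M$ is already given modulo $T^{E_1}$. On each of those faces the prescribed data were themselves produced by the $t$-directional integral formula of the proof of Theorem~\ref{pisotohomotopyequiv} (see (\ref{mtintdef})), so I would propagate that formula into the interior: treat $t$ as the flow variable, take the top operation $\frak e^{t,s}_{k,\beta}$ to vanish at energy $E_0$ exactly as $\frak c^t_{k,\beta}$ is taken to vanish in that proof, impose the prescribed $t_0=1$ operations $\frak m^{1,s}_{k,\beta}$, $\frak d^{1,s}_{k,\beta}$ as initial data, and define $\frak m^{t,s}_{k,\beta}$, $\frak c^{t,s}_{k,\beta}$, $\frak d^{t,s}_{k,\beta}$ for $E(\beta)=E_0$ by integrating the equation (\ref{isotopymaineq}) and its $\frak d$--counterpart from $t=1$ downward. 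Every integrand is built from $\frak c$-- and $\frak e$--type operations, which vanish unless $E(\beta)>0$, together with $\frak m$-- and $\frak d$--type operations of strictly smaller energy, so the right-hand side is defined by the inductive hypothesis; restricting to $s\in\{0,1\}$ reproduces the very formula defining the prescribed face data, and restricting to $t=1$ returns the initial conditions, so compatibility along those faces is automatic. It then remains to verify that $\mathfrak M_{k,\beta}$ so defined satisfies the filtered $A_\infty$ relation (\ref{Ainfinityrelbeta}) and that each of $\frak m^{t,s}_{k,\beta}$, $\frak c^{t,s}_{k,\beta}$, $\frak d^{t,s}_{k,\beta}$, $\frak e^{t,s}_{k,\beta}$ is cyclically symmetric; this is done by differentiating in $t$ and invoking the lower-energy relations, exactly as in Lemmas~\ref{checkAiniftybydiff} and~\ref{mcancyclic}, the minus signs in the various structure maps once more producing the needed cancellations.

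The main obstacle I expect is the compatibility of the interior extension with the edges along which the three prescribed faces meet, namely $\{t=1,\,s=0\}$ and $\{t=1,\,s=1\}$, where the $t_0=1$ pseudo-isotopy and the $s_0=0,1$ pseudo-isotopies must agree not merely modulo $T^{E_0}$ but modulo $T^{E_1}$, and where the $t$--directional integral must be shown to respect this agreement; equivalently, one must check that the $dt\wedge ds$--component of the $A_\infty$ relation can be solved with $\frak e^{t,s}_{k,\beta}=0$ and with the boundary values fixed. This is precisely the point at which Assumption~\ref{assum} is used: it forces the obstruction to choosing the residual freedom in these operations consistently along the two edges to lie in a subspace that the assumption makes vanish, so the energy-$E_0$ linear equation is solvable with the prescribed boundary behavior. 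Once the solution is produced, the unital case follows by the same bookkeeping as in the earlier extension theorems, since every tree-sum and every integrand vanishes as soon as one input equals $\text{\bf e}$; and subdividing back over the intermediate energy levels yields the statement for arbitrary $E_0<E_1$.
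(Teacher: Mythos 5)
Your proposal diverges from the paper's proof and contains a genuine gap. The central claim that makes your construction plausible---``On each of those faces the prescribed data were themselves produced by the $t$-directional integral formula of the proof of Theorem~\ref{pisotohomotopyequiv}''---is false as stated. In the hypotheses of Theorem~\ref{isotopyisotopyext} the pseudo-isotopies on the faces $s_0=0,1$ and $t_0=1$ are arbitrary cyclic pseudo-isotopies modulo $T^{E_1}$; nothing requires them to have been built by \eqref{mtintdef}, and nothing forces the $\frak c^{t,s_0}_{k,\beta}$ at energy $E_0$ to vanish (which is the ansatz underlying \eqref{mtintdef}). Consequently, if you integrate \eqref{isotopymaineq} from $t=1$ with $\frak e^{t,s}_{k,\beta}=0$ and whatever $\frak c^{t,s}$ you choose in the interior, the resulting $\frak m^{t,s}_{k,\beta}$ at $s=0,1$ will agree with the prescribed $\frak m^{t,s_0}_{k,\beta}$ only if you use precisely the given $\frak c^{t,s_0}$, and then you still need a two-parameter compatibility (the $dt\wedge ds$ component of the $A_\infty$ relation), which pins down $\frak d^{t,s}$ once $\frak e$ is chosen, and you give no argument that this ODE system has a consistent solution matching all three faces at once. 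Your appeal to Assumption~\ref{assum} as an ``obstruction-killing'' device is also off: in the paper that assumption is used purely to guarantee \emph{smoothness} of a reparametrized pull-back at two specific interior points, not to make any cohomological obstruction vanish.

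The paper's actual proof uses a fundamentally different device that you do not anticipate: a reparametrization $h:[0,1]_u\times[0,1]_v\to[0,1]_t\times[0,1]_s$ sending the face $\{u=0\}$ homeomorphically onto the \emph{union} of the three prescribed faces $\{t=0\}\cup\{s=0\}\cup\{s=1\}$, collapsing $v=0$ and $v=1$ to the corners $(1,0)$ and $(1,1)$, and fixing $\{u=1\}$. (Side remark: the displayed identification in the paper reads $(\{0\}\times[0,1]_s)\cup([0,1]_t\times\{0,1\})$, i.e.\ $t=0$ together with $s=0,1$; given property~2 of $h$, which fixes $t=1$, the three ``already known modulo $T^{E_1}$'' faces are $s=0$, $s=1$, and $t=1$, so there is a $t=0\leftrightarrow t=1$ reversal of convention somewhere, but the geometric idea is the one just described.) After pulling back the mod-$T^{E_0}$ pseudo-isotopy of pseudo-isotopies by $h$, the three-face boundary condition becomes a \emph{single} prescribed endpoint of a one-parameter pseudo-isotopy in the $u$-variable; Theorem~\ref{pisotohomotopyequiv} (equivalently the extension mechanism of Theorem~\ref{pisotopyextention}, with $\frak c$ set to zero at the top energy level and $\frak m$ recovered by the integral formula) then applies verbatim to extend from mod $T^{E_0}$ to mod $T^{E_1}$. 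Pushing forward by $h^{-1}$ finishes the proof. Assumption~\ref{assum} is invoked exactly twice in this argument---once to ensure that the concatenated boundary pseudo-isotopy is smooth across $v=1/3$ and $v=2/3$, and once to ensure that the pulled-back operators $\frak c^{u,v}$, $\frak d^{u,v}$, $\frak e^{u,v}$ (built from $dh_t$, $dh_s$, $dh_t\wedge dh_s$) remain smooth where $h$ fails to be a diffeomorphism. This reparametrization trick is the missing key idea; without it your proposal does not resolve the compatibility along the faces, and the analytic bookkeeping you describe does not close.
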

\begin{rem}
This theorem is a cyclic and de Rham version of 
\cite{FOOO080} Theorem 7.2.212.
\end{rem}
\begin{assump}\label{assum}
\begin{enumerate}
\item
If $E(\beta) < E_0$ then $\frak c_{k,\beta}^{t,s}, \frak d_{k,\beta}^{t,s}, \frak e_{k,\beta}^{t,s}$
are $0$ in a neighborhood of $\{0,1\}^2$ and 
$\frak m_{k,\beta}^{t,s}$ is locally constant there.
\item
If $E(\beta) < E_1$ then $\frak c_{k,\beta}^{t,s_0}$ ($s_0=0,1$) is zero 
if $t$ is in a neighborhood of  $\{0,1\}$. 
Moreover $\frak d_{k,\beta}^{0,s}$ is zero 
if $s$ is in a neighborhood of  $\{0,1\}$. 
Furthermore $\frak m^{t,s_0}_{k,\beta}$, $\frak m^{0,s}_{k,\beta}$ are 
locally constant there.
\end{enumerate}
\end{assump}
We remark that by the method of the proof of Lemma \ref{pisoequiv}, 
we may change $(C,\langle \cdot \rangle,\{\frak m^{t,s}_{k,\beta}\},\{\frak c^{t,s}_{k,\beta}\},\{\frak d^{t,s}_{k,\beta}\},\{\frak e^{t,s}_{k,\beta}\})$ so that it satisfies Assumption  \ref{assum}.
\begin{proof}[Proof of Theorem \ref{isotopyisotopyext}]
We take a map 
$$
h = (h_t(u,v),h_s(u,v)) : [0,1]_u \times [0,1]_v \to  [0,1]_t \times [0,1]_s
$$
with the following properties. (We write $[0,1]_s$ etc. to show that the parameter 
of this factor is $s$.)
\smallskip
\begin{enumerate}
\item  $h(u,0) = (1,0)$, $h(u,1) = (1,1)$.
\item $h(1,v) = (1,v)$. 
\item $h(0,1/3) = (0,0)$. $h(0,2/3) = (0,1)$.
\item The restriction of $h$ determines a homeomorphism:
$$
[0,1]^2 \setminus ([0,1] \times \{0\}) \to [0,1]^2 \setminus (([0,1] \times \{0,1\}) \cup
\{0\} \times [0,1])
$$
\item It is a diffeomorphism outside $\{(0,1/3),(0,2/3))\}$ of the domain and 
$\{(0,0),(0,1)\}$ of the target.
\end{enumerate}
\par\smallskip
In particular $h$ determines a homeomorphism
\begin{equation}\label{h1dim}
\{0\} \times [0,1]_v \cong (\{0\} \times [0,1]_s) \cup ([0,1]_t \times \{0,1\}).
\end{equation}
$(C,\langle\cdot\rangle,\{\frak m^{0,s}_{k,\beta}\},\{\frak d^{0,s}_{k,\beta}\})$  and
$(C,\langle\cdot\rangle,\{\frak m^{t,s_0}_{k,\beta}\},\{\frak c^{t,s_0}_{k,\beta}\})$  define  
pseudo-isotopies parametrized by the target of (\ref{h1dim}).
We pull it back by (\ref{h1dim}) and 
obtain a pseudo-isotpy 
$(C,\langle\cdot\rangle,\{\frak m^{0,v}_{k,\beta}\},\{\frak c^{0,v}_{k,\beta}\})$ modulo $T^{E_1}$ 
parametrized by $[0,1]_v$.
(The pull back is defined by a similar formula as (\ref{pisopullback}).
We use Assumption \ref{assum} to show the smoothness of the pull back at $v=1/3,2/3$.)
\par
We next pull back $(C,\langle \cdot \rangle,\{\frak m^{t,s}_{k,\beta}\},\{\frak c^{t,s}_{k,\beta}\},\{\frak d^{t,s}_{k,\beta}\},\{\frak e^{t,s}_{k,\beta}\})$ by $h$ as follows.
We consider
$$\aligned
&dh_t = \frac{dh_t}{du}du + \frac{dh_t}{dv}dv , \quad 
dh_s = \frac{dh_s}{du}du + \frac{dh_s}{dv}dv, \\
&dh_t \wedge dh_s = 
\left(\frac{dh_t}{du}\frac{dh_s}{dv} - \frac{dh_t}{dv}\frac{dh_s}{du}\right) du \wedge dv,
\endaligned
$$
and put
\begin{equation}
\aligned
&\frak c^{u,v}_{k,\beta} = \frac{dh_t}{du}\frak c^{h(u,v)}_{k,\beta}  + \frac{dh_s}{du}\frak d^{h(u,v)}_{k,\beta}, \\
&\frak d^{u,v}_{k,\beta} = \frac{dh_t}{dv}\frak c^{h(u,v)}_{k,\beta}  + \frac{dh_s}{dv}\frak d^{h(u,v)}_{k,\beta},  \\
&\frak e^{u,v}_{k,\beta} = \left(\frac{dh_t}{du}\frac{dh_s}{dv} - \frac{dh_t}{dv}\frac{dh_s}{du}\right)  
\frak e^{h(u,v)}_{k,\beta}.
\endaligned
\end{equation}
(We use Assumption \ref{assum} to show that the pull back is smooth.)
It may be regarded as a pseudo-isotopy modulo $T^{E_0}$ from the pull back of 
$(C^{\infty}([0,1]\times \overline C),\langle \cdot \rangle,\{\frak m^{0,v}_{k,\beta}\},\{\frak c^{0,v}_{k,\beta}\})$
to the restriction of  $(C,\langle \cdot \rangle,\{\frak m^{t,s}_{k,\beta}\},\{\frak c^{t,s}_{k,\beta}\},\{\frak d^{t,s}_{k,\beta}\},\{\frak e^{t,s}_{k,\beta}\})$ to the line $s=1$.
We now apply Theorem \ref{pisotohomotopyequiv} and extend 
$\frak c^{u,v}_{k,\beta}$, $\frak d^{u,v}_{k,\beta}$, $\frak e^{u,v}_{k,\beta}$
so that we obtain isotopy of isotopy module $T^{E_1}$.
\par
Therefore we pull back again by the inverse of $h$ to obtain 
required isotopy of isotopies. The proof of Theorem \ref{isotopyisotopyext} is complete.
\end{proof}
Now the main result of this section is:
\begin{thm}\label{isotopystrong}
The cyclic unital filtered $A_{\infty}$ structure in Theorem \ref{maindehamcomplex} 
is independent of the choices up to pseudo-isotopy.
\end{thm}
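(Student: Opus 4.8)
The plan is to run the same telescoping construction as in the proof of Theorem \ref{maindehamcomplex}, but one level higher: instead of gluing cyclic filtered $A_{\infty}$ structures modulo $T^{E}$ into an honest one, we glue the pseudo-isotopies modulo $T^{E}$ furnished by Theorem \ref{maindehamcomplex} into an honest pseudo-isotopy, the gluing being powered by Theorem \ref{isotopyisotopyext}. Fix two systems of choices and let $(\Lambda(L),\langle\cdot\rangle,\{\frak m^{[0]}_{k,\beta}\})$, $(\Lambda(L),\langle\cdot\rangle,\{\frak m^{[1]}_{k,\beta}\})$ be the resulting honest cyclic unital filtered $A_{\infty}$ algebras; their classical parts coincide with the operations of Example \ref{derhamAinift}. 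Write $E(G)=\{0=\lambda_0<\lambda_1<\lambda_2<\cdots\}$. Since the whole construction is $G$-gapped and $G$ is a discrete submonoid (Definition \ref{discmonoid}), it suffices to produce pseudo-isotopies $Q_j$ modulo $T^{\lambda_j}$ between $\{\frak m^{[0]}_{k,\beta}\}$ and $\{\frak m^{[1]}_{k,\beta}\}$ with $Q_{j+1}\equiv Q_j\bmod T^{\lambda_j}$: for each $(k,\beta)$ the operators $\frak m^t_{k,\beta},\frak c^t_{k,\beta}$ then stabilize once $E(\beta)<\lambda_j$, so $Q:=\lim_j Q_j$ is an honest pseudo-isotopy. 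The base case $Q_1$ is the constant pseudo-isotopy at the classical structure, because modulo $T^{\lambda_1}$ the only operations present are the $t$-independent $\frak m_{k,(0,0)}$.

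For the inductive step I would pass from $Q_j$ to $Q_{j+1}$ via Theorem \ref{isotopyisotopyext} with $E_0=\lambda_j$, $E_1=\lambda_{j+1}$, applied to the $[0,1]^2$-square in which the output $t_0=0$ edge is the pseudo-isotopy $Q_{j+1}$ to be constructed, the $s_0=0$ and $s_0=1$ edges are the constant pseudo-isotopies at $\{\frak m^{[0]}_{k,\beta}\}$ and $\{\frak m^{[1]}_{k,\beta}\}$ (these are honest, hence a fortiori defined modulo $T^{\lambda_{j+1}}$, with vanishing $\frak c$-term), and the $t_0=1$ edge is any pseudo-isotopy $P$ modulo $T^{\lambda_{j+1}}$ between $\{\frak m^{[0]}_{k,\beta}\}$ and $\{\frak m^{[1]}_{k,\beta}\}$ provided by Theorem \ref{maindehamcomplex}. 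Because the four corners of this square are the honest structures themselves and the two $s_0$-edges are pinned, the conclusion of Theorem \ref{isotopyisotopyext} gives a $t_0=0$ edge whose endpoints are exactly $\{\frak m^{[0]}_{k,\beta}\}$ and $\{\frak m^{[1]}_{k,\beta}\}$ and which extends the $t_0=0$ edge of the given mod-$T^{\lambda_j}$ square; so the remaining task at this stage is to supply that given square, i.e. a pseudo-isotopy of pseudo-isotopies modulo $T^{\lambda_j}$ between $Q_j$ and $P|_{T^{\lambda_j}}$ rel the two constant side edges. One first replaces all the data by reparametrized data of the type used in the proof of Lemma \ref{pisoequiv} (formula (\ref{pisopullback}) and its $[0,1]^2$-analogue) so that Assumption \ref{assum} holds.

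To produce this mod-$T^{\lambda_j}$ square I would carry it along as part of the induction rather than as an afterthought: the inductive datum at stage $j$ is the pair consisting of $Q_j$ together with a pseudo-isotopy of pseudo-isotopies $W_j$ modulo $T^{\lambda_j}$ relating $Q_j$ to the truncation of the auxiliary pseudo-isotopy used at that stage, with $W_{j+1}\equiv W_j\bmod T^{\lambda_j}$ after the obvious reparametrization; the base case is again trivial because modulo $T^{\lambda_1}$ every pseudo-isotopy, and every pseudo-isotopy of pseudo-isotopies, is the constant one. At stage $j+1$ one splices $W_j$ with the mod-$T^{\lambda_j}$ square that witnesses the change of auxiliary pseudo-isotopy (built the same way, one energy level down), using the $[0,1]^2$-reparametrization and concatenation device from the proof of Theorem \ref{isotopyisotopyext} together with the transitivity of the pseudo-isotopy-of-pseudo-isotopies relation (proved as in Lemma \ref{pisoequiv}), and then feeds the result into Theorem \ref{isotopyisotopyext} exactly as above. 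The unital case is handled identically: one keeps $\text{\bf e}=1$ fixed throughout, all constant edges and the auxiliary $P$ are unital, and every invocation of Theorems \ref{maindehamcomplex} and \ref{isotopyisotopyext} is made in its unital form.

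The step I expect to be the main obstacle is precisely the bookkeeping of the previous paragraph: making the nested induction close up, i.e. guaranteeing that at each energy jump the pseudo-isotopy of pseudo-isotopies required as the mod-$T^{\lambda_j}$ input to Theorem \ref{isotopyisotopyext} is genuinely available, which forces one to track a psps to a reference as part of the inductive data and to verify its compatibility under the successive reparametrizations and concatenations. This is the cyclic and de Rham analogue of the argument behind \cite{FOOO080} Theorem 7.2.212, and the two genuinely new features — the cyclic symmetry constraints on $\frak m^{t,s}_{k,\beta},\frak c^{t,s}_{k,\beta},\frak d^{t,s}_{k,\beta},\frak e^{t,s}_{k,\beta}$, and the need to work with the Fréchet space $\Lambda([0,1]^2\times L)$ rather than a finite-dimensional $C^{\infty}([0,1]^2,\overline C)$ — are exactly the places where Theorem \ref{isotopyisotopyext} (and behind it Theorem \ref{pisotohomotopyequiv}) was arranged to absorb the difficulty, so once its hypotheses are checked the remaining verifications are routine.
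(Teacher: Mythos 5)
Your high-level strategy---telescoping over the energy levels of $E(G)$ and gluing the successive mod-$T^{\lambda_j}$ extensions with Theorem \ref{isotopyisotopyext}---is exactly the one the paper uses. However, there is a genuine gap at precisely the point you flag as ``the main obstacle.'' Theorem \ref{isotopyisotopyext} only \emph{extends} a given pseudo-isotopy of pseudo-isotopies modulo $T^{E_0}$; it does not produce one. The paper supplies this input geometrically via Lemma \ref{geokouseipipi}: one builds a \emph{two-parameter} family of Kuranishi structures and continuous families of multisections on the moduli spaces of pseudo-holomorphic disks (the $(t,s)$-analogue of the one-parameter construction of Section \ref{geoisoto}, with $t$ interpolating between the two energy stages and $s$ between $J_0$ and $J_1$), and then defines the operators $\frak m^{t,s}_{k,\beta},\frak c^{t,s}_{k,\beta},\frak d^{t,s}_{k,\beta},\frak e^{t,s}_{k,\beta}$ by the $(t,s)$-analogue of formula (\ref{pseudoisoshiki}). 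This is a substantive geometric step that correlates all four sides of the square at once because they come from restrictions of a single moduli problem. Your proposal tries to replace it by a purely algebraic device---carrying a psps $W_j$ as inductive data and splicing it with ``the mod-$T^{\lambda_j}$ square that witnesses the change of auxiliary pseudo-isotopy (built the same way, one energy level down).'' That clause is circular: the witness you invoke is exactly the object whose construction is under discussion, one step earlier in the same recursion, and the recursion is never grounded in anything that actually relates two independently chosen perturbation systems. Modulo $T^{\lambda_1}$ everything is constant so the base case says nothing; at the first level where $\frak c$, $\frak d$, $\frak e$ become nonzero you genuinely need the geometric square, not a formal splice.

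A secondary, related difference: you set up the $s_0=0,1$ edges as constant pseudo-isotopies at the two honest structures $\frak m^{[0]},\frak m^{[1]}$. The paper instead uses the (nonconstant) pseudo-isotopies between successive energy truncations of the \emph{same} $J$ (the $\frak m^{t,(j,i)}_{k,\beta}$ produced by Theorem \ref{existpseudoisotopy} with $J_0=J_1=J_j$) on those edges, and the $J_0$-to-$J_1$ pseudo-isotopies on the $t_0$ edges. This is not a cosmetic choice: it is the square that the geometric construction of Lemma \ref{geokouseipipi} actually outputs, and the boundary compatibilities in Theorem \ref{isotopyisotopyext} then hold automatically because all four edges and the interior arise as restrictions of one moduli space. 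With constant side edges you would have to prove those compatibilities by hand, and---again---you would still need to produce the interior psps from somewhere. To repair the proposal, replace the splicing argument by the statement and (sketch of) proof of a two-parameter geometric psps in the spirit of Lemma \ref{geokouseipipi}; once that is in place, the rest of your outline (reparametrize to satisfy Assumption \ref{assum}, then iterate Theorem \ref{isotopyisotopyext}) does match the paper.
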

\begin{proof}
The proof is similar to \cite{FOOO080} Subsection 7.2.13.
\par
We take tame almost complex structures $J_0$ and $J_1$ and 
$E_0 < \ldots < E_i \ldots$. 
We use $J_0$ and $E_i$ to define cyclic filtered $A_{\infty}$ structures
$(\Lambda(L),\langle \cdot \rangle,\{\frak m_{k,\beta}^{(0,i)}\})$  
modulo $T^{E_i}$ and pseudo-isotopy modulo $T^{E_i}$
$(\Lambda(L),\langle \cdot \rangle,\{\frak m_{k,\beta}^{t,(0,i)}\},\{\frak c_{k,\beta}^{t,(0,i)}\})$
between $(\Lambda(L),\langle \cdot \rangle,\{\frak m_{k,\beta}^{(0,i)}\})$  
and $(\Lambda(L),\langle \cdot \rangle,\{\frak m_{k,\beta}^{(0,i+1)}\})$.
We then extend them to  cyclic filtered $A_{\infty}$ structures and 
pseudo-isotopies.
We next use $J_1$ and $E_i$ to define 
$(\Lambda(L),\langle \cdot \rangle,\{\frak m_{k,\beta}^{(1,i)}\})$  
and $(\Lambda(L),\langle \cdot \rangle,\{\frak m_{k,\beta}^{t,(1,i)}\},\{\frak c_{k,\beta}^{t,(0,i)}\})$.
And extend them to cyclic filtered $A_{\infty}$ structures and 
pseudo-isotopies.
\par
To prove Theorem \ref{isotopystrong} it suffices to 
show that the extension of $(\Lambda(L),\langle \cdot \rangle,\{\frak m_{k,\beta}^{(0,i)}\})$  
is pseudo-isotopic to the extension of $(\Lambda(L),\langle \cdot \rangle,\{\frak m_{k,\beta}^{(1,i)}\})$.
\par
Theorem \ref{existpseudoisotopy} implies that there exists a 
pseudo-isotopy modulo $T^{E_i}$, 
$(\Lambda(L),\langle \cdot \rangle,\{\frak m_{k,\beta}^{s,i}\},\{\frak d_{k,\beta}^{s,i}\})$
between $(\Lambda(L),\langle \cdot \rangle,\{\frak m_{k,\beta}^{(0,i)}\})$  
ando $(\Lambda(L),\langle \cdot \rangle,\{\frak m_{k,\beta}^{(1,i)}\})$.
\begin{lmm}\label{geokouseipipi}
There exists a pseudo-isotopy of pseudo-isotopy modulo $T^{E_i}$, 
\linebreak
$(\Lambda(L),\langle \cdot \rangle,\{\frak m^{t,s;i}_{k,\beta}\},\{\frak c^{t,s;i}_{k,\beta}\},\{\frak d^{t,s;i}_{k,\beta}\},\{\frak e^{t,s;i}_{k,\beta}\})$
so that its restriction to $t =j$ $(j=0,1)$ coincides with 
$(\Lambda(L),\langle \cdot \rangle,\{\frak m_{k,\beta}^{s,i+j}\},\{\frak c_{k,\beta}^{s,i+j}\})$
as pseudo-isotopy modulo $T^{E_{i+j}}$.
Moreover its restriction to $s_j$ $(j=0,1)$ coincides with 
$(\Lambda(L),\langle \cdot \rangle,\{\frak m_{k,\beta}^{s,i}\},\{\frak d_{k,\beta}^{s,i}\})$ 
as pseudo-isotopy modulo $T^{E_i}$.
\end{lmm}
\begin{proof}
We construct two parameter family of Kuranishi structures 
and multisections, in a way similar to Section \ref{geoisoto}.
Then we use it to construct pseudo-isotopy of pseudo-isotopies 
in the same way as (\ref{pseudoisoshiki}).
\end{proof}
By Lemma \ref{geokouseipipi} and Theorem \ref{isotopyisotopyext} 
we can extend the pseudo-isotopy modulo $T^{E_i}$
$(\Lambda(L),\langle \cdot \rangle,\{\frak m_{k,\beta}^{s,i}\},\{\frak d_{k,\beta}^{s,i}\})$
to a pseudo-isotopy. The proof of Theorem \ref{isotopystrong} is complete.
\end{proof}
The uniquness part of Theorem \ref{main1} follows from Theorems \ref{isotopystrong}, 
\ref{pisotopyextention} and \ref{pisotohomotopyequiv}.
{\scshape
\begin{flushright}
\begin{tabular}{l}
Kyoto University \\
Kitashirakawa, Sakyo-ku, \\
Kyoto 602-8502, \\
Japan \\
{\upshape e-mail: fukaya@math.kyoto-u.ac.jp}\\
\end{tabular}
\end{flushright}
}

\end{document}